\documentclass[11pt,reqno]{amsart}

\usepackage[T1]{fontenc}
\usepackage[utf8]{inputenc}
\usepackage{lmodern}
\usepackage{comment}
\usepackage{microtype}
\usepackage{amsmath,amssymb,amsthm,mathtools,mathrsfs}
\usepackage{aliascnt}
\usepackage{enumitem}
\usepackage{booktabs}
\usepackage{array}
\usepackage{xcolor}
\usepackage[colorlinks=true,linkcolor=blue!55!black,citecolor=green!45!black,urlcolor=blue!60!black]{hyperref}
\usepackage[nameinlink,noabbrev]{cleveref}
\usepackage[left=2cm,right=2cm]{geometry}

\usepackage{longtable}

\usepackage{framed}
\usepackage{mdframed}
\newtheorem*{ftheo}{Main Theorem}
\newenvironment{fthm}
  {\begin{mdframed}[innertopmargin = 3pt, innerbottommargin=7pt,skipabove=5pt,skipbelow=5pt,linewidth=0.25pt,nobreak=true,align=center]\begin{ftheo}}
  {\end{ftheo}\end{mdframed}}

  \newtheorem*{ftheo2}{Main Conjecture}
\newenvironment{fthm2}
  {\begin{mdframed}[innertopmargin = 3pt, innerbottommargin=7pt,skipabove=5pt,skipbelow=5pt,linewidth=0.25pt,nobreak=true,align=center]\begin{ftheo2}}
  {\end{ftheo2}\end{mdframed}}

\usepackage{array,booktabs,tabularx}
\usepackage{graphicx}
\usepackage{microtype}
\usepackage{enumitem}
\usepackage{tikz}
\usepackage{tikz-cd}
\usetikzlibrary{arrows}
\usetikzlibrary{arrows.meta,positioning,calc}
\def \vertbar [#1](#2,#3,#4){
    \draw [#1] (#2,#3) -- (#2,#4);
    \draw [fill=white] (#2,#3) circle [radius=0.1];
    \draw [fill=black] (#2,#4) circle [radius=0.1];
}

\usepackage{pict2e} 

\newsavebox{\crossing}
\savebox{\crossing}(10,10)[bl]{
\thicklines 
\qbezier(5,5)(7,10)(10,10)
\qbezier(5,5)(3,0)(0,0)
\qbezier(5,5)(3,10)(0,10)
\qbezier(5,5)(7,0)(10,0)
}

\newsavebox{\closing}
\savebox{\closing}(10,10)[bl]{
\thicklines 
\qbezier(0,0)(5,0)(5,5)
\qbezier(0,10)(5,10)(5,5)
}

\newsavebox{\cclosing}
\savebox{\cclosing}(10,30)[bl]{
\thicklines 
\qbezier(0,0)(10,0)(10,15)
\qbezier(0,30)(10,30)(10,15)
}

\newsavebox{\opening}
\savebox{\opening}(10,10)[bl]{
\thicklines 
\qbezier(0,0)(-5,0)(-5,5)
\qbezier(0,10)(-5,10)(-5,5)
}

\newsavebox{\ssone}
\savebox{\ssone}(10,20)[bl]{
\thicklines 
\qbezier(5,5)(7,10)(10,10)
\qbezier(5,5)(3,0)(0,0)
\qbezier(5,5)(3,10)(0,10)
\qbezier(5,5)(7,0)(10,0)
\put(0,20){\line(1,0){10}} 
}

\newsavebox{\sstwo}
\savebox{\sstwo}(10,20)[bl]{
\thicklines 
\qbezier(5,15)(7,20)(10,20)
\qbezier(5,15)(3,10)(0,10)
\qbezier(5,15)(3,20)(0,20)
\qbezier(5,15)(7,10)(10,10)
\put(0,0){\line(1,0){10}} 
}

\newsavebox{\sssone}
\savebox{\sssone}(10,30)[bl]{
\thicklines 
\qbezier(5,5)(7,10)(10,10)
\qbezier(5,5)(3,0)(0,0)
\qbezier(5,5)(3,10)(0,10)
\qbezier(5,5)(7,0)(10,0)
\put(0,20){\line(1,0){10}} 
\put(0,30){\line(1,0){10}} 
}

\newsavebox{\ssstwo}
\savebox{\ssstwo}(10,30)[bl]{
\thicklines 
\qbezier(5,15)(7,20)(10,20)
\qbezier(5,15)(3,10)(0,10)
\qbezier(5,15)(3,20)(0,20)
\qbezier(5,15)(7,10)(10,10)
\put(0,0){\line(1,0){10}} 
\put(0,30){\line(1,0){10}} 
}

\newsavebox{\sssthree}
\savebox{\sssthree}(10,30)[bl]{
\thicklines 
\qbezier(5,25)(7,30)(10,30)
\qbezier(5,25)(3,20)(0,20)
\qbezier(5,25)(3,30)(0,30)
\qbezier(5,25)(7,20)(10,20)
\put(0,0){\line(1,0){10}} 
\put(0,10){\line(1,0){10}} 
}

\newsavebox{\sssonethree}
\savebox{\sssonethree}(10,30)[bl]{
\thicklines 
\qbezier(5,25)(7,30)(10,30)
\qbezier(5,25)(3,20)(0,20)
\qbezier(5,25)(3,30)(0,30)
\qbezier(5,25)(7,20)(10,20)
\qbezier(5,5)(7,10)(10,10)
\qbezier(5,5)(3,0)(0,0)
\qbezier(5,5)(3,10)(0,10)
\qbezier(5,5)(7,0)(10,0)
}

\newcommand{\LissajousFiveFourQuiverPicture}{%
\begin{tikzpicture}[
  x=1.5pt,
  y=1.5pt,
  >=latex,
  dividepiece/.style={
    inner sep=0pt,
    outer sep=0pt,
    anchor=center
  },
  qarrow/.style={
    draw=red,
    ->,
    line width=.55pt,
    shorten <=1.4pt,
    shorten >=1.4pt
  },
  qvertex/.style={
    circle,
    fill=green,
    draw=blue,
    inner sep=0pt,
    minimum size=2.4pt
  }
]

  \node[dividepiece] at (0,0)
    {\usebox{\opening}};

  \node[dividepiece] at (0,0)
    {\usebox{\sssonethree}};

  \node[dividepiece] at (10,0)
    {\usebox{\ssstwo}};

  \node[dividepiece] at (20,0)
    {\usebox{\sssonethree}};

  \node[dividepiece] at (30,0)
    {\usebox{\ssstwo}};

  \node[dividepiece] at (40,-10)
    {\usebox{\closing}};

  \node[dividepiece] at (40,10)
    {\usebox{\closing}};

  \coordinate (nUL) at (0,10);
  \coordinate (nLL) at (0,-10);
  \coordinate (nC)  at (10,0);
  \coordinate (nUR) at (20,10);
  \coordinate (nLR) at (20,-10);
  \coordinate (nR)  at (30,0);

  \coordinate (mL)  at (0,0);
  \coordinate (pU)  at (10,10);
  \coordinate (pD)  at (10,-10);
  \coordinate (mC)  at (20,0);
  \coordinate (pRU) at (30,10);
  \coordinate (pRD) at (30,-10);

  \draw[qarrow] (nUL) -- (pU);
  \draw[qarrow] (nLL) -- (pD);

  \draw[qarrow] (nC) -- (pU);
  \draw[qarrow] (nC) -- (pD);

  \draw[qarrow] (nUR) -- (pU);
  \draw[qarrow] (nUR) -- (pRU);

  \draw[qarrow] (nLR) -- (pD);
  \draw[qarrow] (nLR) -- (pRD);

  \draw[qarrow] (nR) -- (pRU);
  \draw[qarrow] (nR) -- (pRD);

  \draw[qarrow] (pU) -- (mL);
  \draw[qarrow] (pD) -- (mL);

  \draw[qarrow] (pU) -- (mC);
  \draw[qarrow] (pD) -- (mC);

  \draw[qarrow] (pRU) -- (mC);
  \draw[qarrow] (pRD) -- (mC);

  \draw[qarrow] (mL) -- (nUL);
  \draw[qarrow] (mL) -- (nLL);
  \draw[qarrow] (mL) -- (nC);

  \draw[qarrow] (mC) -- (nC);
  \draw[qarrow] (mC) -- (nUR);
  \draw[qarrow] (mC) -- (nLR);
  \draw[qarrow] (mC) -- (nR);

\foreach \v in {nUL,nLL,nC,nUR,nLR,nR,mL,pU,pD,mC,pRU,pRD}
  {\node[qvertex] at (\v) {};}

\end{tikzpicture}%
}

\numberwithin{equation}{section}

\newtheorem{theorem}{Theorem}[section]
\newaliascnt{proposition}{theorem}
\newtheorem{proposition}[proposition]{Proposition}
\aliascntresetthe{proposition}
\newaliascnt{lemma}{theorem}
\newtheorem{lemma}[lemma]{Lemma}
\aliascntresetthe{lemma}
\newaliascnt{corollary}{theorem}
\newtheorem{corollary}[corollary]{Corollary}
\aliascntresetthe{corollary}
\newaliascnt{conjecture}{theorem}

\aliascntresetthe{conjecture}
\theoremstyle{definition}
\newaliascnt{definition}{theorem}
\newtheorem{definition}[definition]{Definition}
\aliascntresetthe{definition}
\newaliascnt{example}{theorem}
\newtheorem{example}[example]{Example}
\aliascntresetthe{example}
\newaliascnt{remark}{theorem}
\newtheorem{remark}[remark]{Remark}
\aliascntresetthe{remark}

\newtheorem*{conjecture*}{Main Conjecture}
\newtheorem*{theorem*}{Main Theorem}

\crefname{theorem}{theorem}{theorems}
\Crefname{theorem}{Theorem}{Theorems}
\crefname{proposition}{proposition}{propositions}
\Crefname{proposition}{Proposition}{Propositions}
\crefname{lemma}{lemma}{lemmas}
\Crefname{lemma}{Lemma}{Lemmas}
\crefname{corollary}{corollary}{corollaries}
\Crefname{corollary}{Corollary}{Corollaries}
\crefname{conjecture}{conjecture}{conjectures}
\Crefname{conjecture}{Conjecture}{Conjectures}
\crefname{definition}{definition}{definitions}
\Crefname{definition}{Definition}{Definitions}
\crefname{example}{example}{examples}
\Crefname{example}{Example}{Examples}
\crefname{remark}{remark}{remarks}
\Crefname{remark}{Remark}{Remarks}

\newcommand{\Tors}{\operatorname{Tors}}
\newcommand{\lk}{\operatorname{lk}}

\DeclareMathOperator{\GL}{GL}
\DeclareMathOperator{\coker}{coker}

\DeclareMathOperator{\Aut}{Aut}
\DeclareMathOperator{\Hom}{Hom}
\DeclareMathOperator{\Ext}{Ext}

\DeclareMathOperator{\im}{im}
\DeclareMathOperator{\id}{id}
\DeclareMathOperator{\diag}{diag}

\DeclareMathOperator{\per}{per}
\newcommand{\kk}{\Bbbk}
\newcommand{\bG}{\mathbb G}
\newcommand{\C}{\mathbb C}
\newcommand{\N}{\mathbb N}
\newcommand{\Z}{\mathbb Z}
\newcommand{\R}{\mathbb R}
\newcommand{\m}{\mathfrak m}

\DeclareMathOperator{\Gclass}{Dil}
\newcommand{\Pclass}{\mathscr{P}}

\newcommand{\Eul}{\mathcal E}
\newcommand{\Poin}{\mathcal P}

\newcommand{\cyc}{\mathrm{cyc}}
\newcommand{\gr}{\mathrm{gr}}

\newcommand{\angles}[1]{\langle #1\rangle}

\newcommand{\set}[1]{\left\{#1\right\}}
\newcommand{\suchthat}{\,\middle|\,}

\newcommand{\lr}{\longrightarrow}
\newcommand{\sse}{\subseteq}

\newcommand{\cE}{\mathcal E}
\newcommand{\FNDelta}{\Delta^{\mathrm{FN}}}

\newcommand{\Gm}{\mathbb{G}_{m}}

\newcommand{\HOMFLY}{\operatorname{P}}

\renewcommand{\gr}{\mathrm{gr}}
\newcommand{\fd}{\mathrm{fd}}
\newcommand{\cont}{\mathrm{cont}}

\newcommand{\D}{\mathcal D}

\renewcommand{\Eul}{\mathcal E}
\renewcommand{\Poin}{\mathcal P}
\DeclareMathOperator{\RHom}{RHom}

\DeclareMathOperator{\thick}{thick}

\tikzset{
  qvertex/.style={circle,draw,inner sep=1.4pt,minimum size=5.7mm,font=\small},
  qarrow/.style={-{Stealth[length=2.2mm,width=1.5mm]},line width=.6pt},
  rootvertex/.style={qvertex,double,double distance=.8pt}
}

\usepackage{etoolbox}

\makeatletter
\patchcmd{\@tocline}
  {\hfil}
  {\dotfill}
  {}{}

\def\l@subsection{\@tocline{2}{0pt}{3.5pc}{4pc}{}}
\makeatother

\usepackage{hyperref}
\hypersetup{
    colorlinks=true,   
    linkcolor=blue,    
    citecolor=blue,    
    urlcolor=blue      
}

\title[Monodromy of plane curve singularities and quiver mutation]
{Monodromy of plane curve singularities and quiver mutation}
\author{Roger Casals}

\begin{document}

\begin{abstract}
The main result of this article shows that the quiver mutation class of a malleable real Morsification uniquely determines the integral monodromy module of a plane curve singularity. In particular, we show that the quiver mutation class of a malleable divide determines the complex topological type of an irreducible plane curve singularity. This establishes the algebraic-to-topological implication of a conjecture of S.~Fomin, P.~Pylyavskyy, E.~Shustin and D.~Thurston in the malleable irreducible case. The result is proven by developing representation-theoretic techniques based on an equivariant Euler pairing in the derived category of continuous finite-dimensional dg modules over a differential bigraded Ginzburg algebra. A key step uses these techniques to show that the quiver mutation class of a plabic fence uniquely recovers the torsion part of the Alexander module of the associated smooth link.
\end{abstract}

\maketitle
\setcounter{tocdepth}{2}
\tableofcontents

\section{Introduction}\label{sec:introduction}

In their work on real Morsifications and quiver mutations, S.~Fomin, P.~Pylyavskyy, E.~Shustin and D.~Thurston put forth the following conjecture:\\
\vspace{-0.2cm}
\begin{fthm2}[\cite{FPST}]\label{conj:FPST}
Given two real morsifications of real isolated plane curve singularities, the following are equivalent:
\begin{itemize}
\item[$(i)$] the two singularities have the same complex topological type;
\item[$(ii)$] the quivers associated with the two morsifications are mutation equivalent.
\end{itemize}
\end{fthm2}
\vspace{0.2cm}
\noindent The precise reference is \cite[Conjecture 5.5]{FPST}. In my view, the Main Conjecture is remarkable, as it asserts that isolated plane curve singularities are
topologically classified by the mutation classes of their associated quivers. The former are central objects of study in algebraic geometry and the theory of singularities. The study of plane curves and their singularities has had a prominent role in mathematics for centuries, see e.g.~\cite{brieskorn1986, casas2000, ghys2017, milnor1968, wall2004, zariski2006}. The latter, the study of quivers and their mutation, is a much more recent enterprise, particularly bolstered by the series of articles \cite{Berenstein2005,Fomin2001,Fomin2003} and the subsequent study of cluster algebras. In a sense, $(i)$ and $(ii)$ in the Main Conjecture belong to apparently different areas of mathematics: topology and algebraic geometry, for $(i)$, and combinatorics and graph theory, for $(ii)$.\\

Heretofore, both directions of the equivalence in the Main Conjecture are open. The case of simple singularities, equivalently quivers of finite type, is established in \cite[Theorem 18.3]{FPST}. See also \cite[Conjecture 7.11 \& Corollary 9.10]{FPST}. The object of this article is to establish the equivalence $(ii)\Longrightarrow(i)$ of the Main Conjecture in a central case. Namely, when the quivers are associated to irreducible malleable divides. It is a central case because of the likelihood of \cite[Conjecture 14.10]{FPST}, which asserts that all algebraic divides are malleable. See \Cref{fig:divides-lissajous} for an instance of a divide $D$ and its quiver $Q(D)$. More generally, our main result establishes that the quiver mutation class of any malleable divide, irreducible or not, uniquely recovers the integral monodromy module of the singularity.\\

The precise main result of the article reads as follows:
\vspace{0.1cm}
\begin{fthm}\label{thm:main} Let \((C,0)\) be an isolated plane curve singularity, $D$ a divide arising from a real Morsification of $(C,0)$ and $Q(D)$ its quiver. Suppose that $D$ is malleable. Then the quiver mutation class $[Q(D)]$ uniquely determines the integral monodromy module of $(C,0)$.
\end{fthm}
\vspace{0.2cm}
In particular, the Main Theorem proves the implication $(ii)\Longrightarrow(i)$ of the ``Main Conjecture'' \cite[Conjecture 5.5]{FPST} for irreducible plane curve singularities with malleable divides. Indeed, the integral monodromy module of $(C,0)$ determines the Alexander polynomial $\Delta_C(t)$, which itself determines the topological type of an irreducible plane curve singularity. There are also infinitely many instances of reducible plane curve singularities which are determined by the integral monodromy module, or even its one-variable Alexander polynomial, cf.~\Cref{sec:illustrative_examples}, and thus the Main Theorem can be applied more generally to resolve $(ii)\Longrightarrow(i)$ of the ``Main Conjecture'' in such cases.\\

Note that the Main Theorem can also be phrased as follows. Let \((C_1,0)\) and \((C_2,0)\) be two isolated plane curve singularities, with $D_1,D_2$ two divides arising from two real Morsifications of these two singularities. Suppose that $D_1$ and $D_2$ are malleable and that the integral monodromy module of either \((C_1,0)\) or \((C_2,0)\) determines its complex topological type. Then\\
\begin{center}
\fbox{$Q(D_1)\sim_{\mathrm{mut}}Q(D_2)\Longrightarrow (C_1,0) \mbox{ and }(C_2,0) \mbox{ have the same complex topological type.}$}
\end{center}
\vspace{0.2cm}

\begin{figure}[htbp]
\centering

\scalebox{2}{\LissajousFiveFourQuiverPicture}

\caption{An irreducible malleable divide $D$ for the singularity $(C,0)=\{x^4=y^5\}$, in black, together with its
associated quiver $Q(D)$, with blue vertices and red arrows. The quiver $Q(D)$ is the quiver associated to the Grassmannian $\mbox{Gr}(4,9)$, cf.~\cite[Remark 16.2]{FPST}. The link of this irreducible singularity is the $(4,5)$-torus knot.}
\label{fig:divides-lissajous}
\end{figure}

A core challenge of the implication $(ii)\Longrightarrow (i)$ of the Main Conjecture, which the Main Theorem overcomes, is as follows. Given a divide $D$ of a real Morsification of an isolated plane curve singularity $(C,0)$, its quiver $Q(D)$ has a direct connection to the geometry of $(C,0)$. To wit, the vertices of $Q(D)$ can be identified with the vanishing cycles associated to the Morsification, and the arrows of $Q(D)$ exactly capture the intersection numbers between such vanishing cycles. In fact, \cite[Theorem 4.4]{FPST} shows that $Q(D)$ determines the topological type of the singularity. The challenge is that another quiver $\mu(Q)\in[Q(D)]$ in the quiver mutation class of $Q(D)$ will, a priori, have little to do with the geometry of the singularity $(C,0)$. Among other issues: such quiver $\mu(Q)$ will almost never be associated to a divide, nor there will be a compatible $\{\oplus,\ominus,\bullet\}$-coloring, nor the arrows of $\mu(Q)$ will have any direct meaning in relation to $(C,0)$. Plainly, suppose we asked a computer to mutate the quiver $Q(D)$, say 8937284651872 times, and it would return the resulting quiver $\mu(Q)$ to us, but without specifying the mutation sequence it used. How would we recover from such output $\mu(Q)$ any information such as the characteristic polynomial of the monodromy of the singularity $(C,0)$?\\

In order to prove the Main Theorem, we develop the study of a 3-Calabi-Yau differential bigraded Ginzburg algebra $\Gamma(Q,W,d)$ associated to a graded quiver with potential $(Q,W,d)$. Such a dg algebra incorporates an Adams grading associated to an arrow-grading $d:Q_1\lr\Z$ for which the potential $W$ is homogeneous of degree one. The two key new ideas to prove the Main Theorem are:
\begin{itemize}
\item[$(i)$] First, for quivers associated to plabic fences, we show that the cokernel of a certain equivariant Euler pairing in the derived category of continuous finite-dimensional dg modules over $\Gamma(Q,W,d)$, weighted with respect to the internal $d$-grading, is an invariant of the quiver mutation class $[Q]$. Here we choose a non-degenerate potential $W$ and a corresponding arrow-grading $d$, which we show are uniquely associated to such quivers. The content of Sections \ref{sec:graded-QP}, \ref{sec:euler} and \ref{sec:dilation_plabic_fence} is aimed at developing and establishing these results.\\

\item[$(ii)$] Second, we show that such cokernel is isomorphic, as a $\Z[t,t^{-1}]$-module, to the torsion part of the Alexander module of the smooth link associated to the plabic fence. This is achieved in \Cref{sec:polynomial_plabic_fence}, by establishing a combinatorial expression for the equivariant Euler pairing and relating it to a Seifert matrix. Such results, along with $(i)$, are then specialized to algebraic links in \Cref{sec:proof_main} to finally establish the Main Result.
\end{itemize}

The results needed to implement $(i)$ are representation-theoretic in nature, building on the study of quivers with potential, as initiated in \cite{DWZ} by H.~Derksen, J.~Weyman, and A.~Zelevinsky, and the derived categorical enhancements of B.~Keller and D.~Yang in \cite{KellerYang}, and C.~Amiot and S.~Oppermann in \cite{AO}. Part of the new results establishing $(i)$ are developed in Sections \ref{sec:graded-QP}, \ref{sec:euler} and \ref{sec:dilation_plabic_fence} and might be of independent interest. It should also be noted that differential bigraded Ginzburg algebras and equivariant Euler pairings have been explored by L.~Fan, B.~Keller and Y.~Qiu in \cite{FanKellerQiu2024}, by R.~Fujita and K.~Murakami in \cite{FujitaMurakami2022}, and see also \cite{Keller2020Cartan} and recent work of Y.~Wu and L.~Fan on Higgs categories for differential bigraded Ginzburg algebras \cite{Wu2026Graded}.\\

From a combinatorial perspective, we obtain a perhaps surprising invariance result from our representation-theoretic results. Namely, that for any quiver $Q\in[Q(\bG)]$ in the mutation class of a quiver $Q(\bG)$ of a plabic fence $\bG$, there exists a canonical arrow grading $d:Q_1\lr\Z$, unique up to certain equivalence, such that the cokernel of the $|Q_0|\times|Q_0|$ matrix $\Eul(t)$ with entries
\begin{equation}\label{eq:euler-formula-intro}
 \boxed{
 \Eul(t)_{ij}
 :=(1-t)\delta_{ij}
 -\sum_{a:j\to i}t^{d(a)}
 +\sum_{a:i\to j}t^{1-d(a)}}
\end{equation}
is the torsion part of the Alexander module of the smooth link associated to $\bG$. In particular, its determinant is the one-variable Alexander polynomial of such a link. The sums in \eqref{eq:euler-formula-intro} run over the arrows $a\in Q_1$ of the quiver $Q$, and $a:i\to j$ denotes an arrow with source vertex $i\in Q_0$ and target vertex $j\in Q_0$. See~\Cref{ssec:graded_Euler_pairing} and \Cref{thm:unique-fence-G1}. Thus, in the case of a malleable divide $D$ of a singularity $(C,0)$, even if many aspects of a quiver $Q$ in $[Q(D)]$ might not have a direct interpretation in terms of $(C,0)$, the Main Theorem shows that it still remembers the action of the algebraic monodromy of $(C,0)$. A combinatorial proof of such fact remains unknown to the author, as I obtained the expression \eqref{eq:euler-formula-intro} by using the weight decomposition of the Ext groups of the simple modules of a differential bigraded Ginzburg algebra, and its invariance is deduced from rather categorical considerations.\\

\noindent The article concludes with \Cref{sec:illustrative_examples} and \Cref{sec:comments_and_examples}, where several detailed examples and related applications are provided. In particular:

\begin{enumerate}
    \item We provide a counterexample to \cite[Conjecture 6.17]{FPST}, exhibiting two plabic graphs which are not move-and-switch equivalent but have mutation-equivalent quivers. These are distinguished by using the top $a$-degree part of their HOMFLY polynomials.\\

    \item We show that the implication $(ii)\Longrightarrow(i)$ in the Main Conjecture holds for all plane curve singularities with Milnor number $\mu\leq16$. In fact, a table of all such singularities with their associated Alexander polynomials is provided, to illustrate how the Main Result and known invariants suffice to classify all of these 74 singularities via their quiver mutation classes.\\
\end{enumerate}

\noindent{\bf Acknowledgements}. I am very thankful to Sergey Fomin and Bernhard Keller for valuable comments that helped improve the original manuscript. I am grateful to the authors of \cite{FPST} for producing such an enticing conjecture. Finally, I thank PCMI-IAS for its hospitality during the research program ``Knotted Surfaces in Four-Manifolds'', where parts of this article were developed. R.C.~is supported by the National Science Foundation under the grant DMS-2505760, and a UC Davis College of L\&S Dean's Fellowship.\hfill$\Box$

\vspace{0.3cm}
\noindent{\bf Conventions}.
We work over \(\kk=\C\).  Quivers are finite and have no loops unless stated
otherwise.  Path multiplication is right-to-left: if \(a:i\to j\) and
\(b:j\to k\), then \(ba\) means first \(a\), then \(b\). Throughout the manuscript $\Lambda:=\Z[t,t^{-1}]$ will denote the Laurent polynomial ring in the $t$-variable, whose units are $\pm t^{m}$ for any $m\in\Z$. \hfill$\Box$


\section{Graded quivers with potential and dilation classes}\label{sec:graded-QP}

The main goals of this section are to develop the notion of dilation classes, cf.~\Cref{ssec:dilation_classes}, and establish that graded mutations of quivers with potential provide a bijection between dilation classes, cf.~\Cref{thm:G1-mutation}.

\subsection{Completed path algebras and QPs}\label{ssec:prelim_pathalgebras_and_QPs}

Let us summarize the ingredients from \cite{DWZ} that we use in this section, referring to \cite{DWZ} for further details. Let $Q$ be a quiver with vertex set $Q_0$ and arrow set $Q_1$. Consider
\[
  R_Q:=\bigoplus_{i\in Q_0}\kk e_i,
  \qquad
  A_Q:=\bigoplus_{a\in Q_1}\kk a,
\]
where $A_Q$ is an $R_Q$-bimodule by letting the idempotents $e_i\in R_Q$ act via concatenation, where $e_i$ is considered as a trivial constant path at the vertex $i\in Q_0$.  The completed path algebra $\widehat{\kk Q}$ of the quiver $Q$ and its cyclic quotient $\widehat{\kk Q}_{\cyc}$ are
\[
  \widehat{\kk Q}:=\widehat T_{R_Q}(A_Q)
  =\prod_{n\ge 0}A_Q^{\otimes_{R_Q}n},\qquad
  \widehat{\kk Q}_{\cyc}
  :=\widehat{\kk Q}/\overline{[\widehat{\kk Q},\widehat{\kk Q}]}.
\]
They respectively contain the ideals
\[
  \m_Q:=\prod_{n\ge1}A_Q^{\otimes_{R_Q}n}\sse \widehat{\kk Q},\qquad \m^2_{\cyc}\sse \widehat{\kk Q}_{\cyc}.
\]

\noindent The algebra morphisms that we will consider are all continuous for the \(\m_Q\)-adic topology. The following is \cite[Definition 4.1]{DWZ}:

\begin{definition}[Quiver with potential]
A potential $W$ for a quiver $Q$ is an element $W\in\m^2_{\cyc}$, and a quiver with potential (QP) is a pair $(Q,W)$.
\hfill$\Box$\end{definition}

\noindent The foundational aspects of the study of quivers with potential are developed in \cite{DWZ}. Quivers with potential will be considered up to right-equivalence, cf.~\cite[Definition 4.2]{DWZ}:

\begin{definition}[Right-equivalence]\label{def:right_equivalent}
Let $(Q,W)$ and $(Q',W')$ be two QPs on the same vertex set. A right-equivalence between $(Q,W)$ and $(Q',W')$ is a $\kk$-algebra isomorphism $\varphi:\widehat{\kk Q}\lr\widehat{\kk Q'}$ such that $\varphi|_{R_Q}=\mbox{id}$ and $\varphi(W)$ is cyclically equivalent to $W'$.\hfill$\Box$
\end{definition}

\noindent In Definition \ref{def:right_equivalent}, two potentials $W,W'$ are said to be cyclically equivalent if their difference $W-W'$ lies in the closure of the span of all elements of the form $a_1\cdots a_d-a_2\cdots a_da_1$, where $a_1,\ldots,a_d$ is a cyclic path, cf.~\cite[Definition 3.2]{DWZ}. We often denote a right-equivalence $\varphi$ from $(Q,W)$ to $(Q',W')$ by $\varphi:(Q,W)\lr(Q',W')$.\\

In \cite{DWZ} the notions of {\it reduced} and {\it trivial} QPs are also introduced, as follows. A QP $(Q,W)$ is said to be reduced if the degree-2 homogeneous part $W^{(2)}$ vanishes, i.e.~$W^{(2)}=0$. That is, $(Q,W)$ is reduced if $W$ contains no quadratic monomial terms, i.e.~no terms of the form $ab$, $a,b\in Q_1$. A QP $(Q,W)$ is said to be trivial if $W$ is entirely quadratic, i.e.~ $W\in \widehat{\kk Q}^{(2)}$ belongs to the degree-2 homogeneous part of the path algebra $\widehat{\kk Q}$, and the Jacobian algebra of $(Q,W)$ is isomorphic to the semisimple vertex algebra $R_Q$.

\begin{example}\label{ex:trivialQP} Let $Q$ be a quiver with two vertices $i,j\in Q_0$, $n\in\N$, and choose $2n$ arrows $x_\nu:i\to j$, $y_\nu:j\to i$, for $\nu\in[1,n]$. If we choose the potential $W=x_1y_1+\ldots+x_ny_n$, then $(Q,W)$ is a trivial QP.\hfill$\Box$
\end{example}

\noindent The Splitting Theorem \cite[Theorem~4.6]{DWZ} decomposes a QP, uniquely up to right-equivalence, as a direct sum of a reduced QP and a trivial quadratic QP, the latter as in \Cref{ex:trivialQP}.

\begin{definition}[Stable QP class]\label{def:stable-class}
The stable right-equivalence class $[Q,W]$ of a QP \((Q,W)\) is the set of QPs which can be obtained from $(Q,W)$ via a sequence intertwining right-equivalences, and adjoining or deleting trivial quadratic QP summands.
\hfill$\Box$\end{definition}

\noindent Note that QPs representing the same stable right-equivalence class have the same vertex set, up to relabeling. The notion of mutation of a quiver with potential consists of a pre-mutation followed by reduction to the reduced QP part, cf.~\cite[Section 5]{DWZ}. It is an involutive operation up to right-equivalence by \cite[Theorem~5.7]{DWZ}. We will also use the notion of a non-degenerate QP $(Q,W)$: a reduced QP is said to be non-degenerate if every finite sequence of QP mutations ends at a QP with a $2$-acyclic reduced underlying quiver, cf.~\cite[Section 7]{DWZ}.

\subsection{$\Z$-graded QPs and vertex gauge}

We use the notion of $\Z$-graded QPs and their mutations, as introduced in \cite[Section 6]{AO}; see also \cite{Keller2026_GradedPreprint} and \cite{HerschendIyama2010}. We must add a new notion beyond those in \cite{AO}, that of a vertex gauge. This subsection presents the necessary ingredients on $\Z$-graded QPs and vertex gauge. The following definition appears in \cite[Section 6.3]{AO}:

\begin{definition}[Degree-one $\Z$-grading]\label{def:degree-one-grading}
Let $(Q,W)$ be a QP. By definition, a $\Z$-grading of $(Q,W)$ is a function
\[
 d:Q_1\longrightarrow\Z.
\]
By extending additively to paths, $W$ is said to be of degree one with respect to $d$ if every homogeneous component of \(W\) has total internal degree
one. Equivalently, \(W\) is homogeneous of degree one in the completed
cyclic quotient.
\hfill$\Box$\end{definition}
\noindent In this manuscript we only consider $\Z$-graded QPs of degree one: to ease notation, a graded QP $(Q,W,d)$ is henceforth understood to be a QP $(Q,W)$ endowed with a degree-one $\Z$-grading as in \Cref{def:degree-one-grading}. Note that a $\Z$-grading $d:Q_1\lr\Z$ for which $W$ is homogeneous of degree one and $d$ only takes value in $\{0,1\}\sse\Z$ relates to the notion of a cut, cf.~\cite[Definition 3.1]{HerschendIyama2010}.\\

Note that \Cref{def:degree-one-grading} grades the arrows of $Q$, rather than its vertices. Grading vertices yields an equivalence relation on arrow $\Z$-gradings, as follows:

\begin{definition}[Vertex gauge]\label{def:vertex-gauge}
Let $(Q,W,d)$ be a graded QP. Given a vertex $\Z$-grading \(h:Q_0\to\Z\) of $Q$, consider the arrow $\Z$-grading
\begin{equation}\label{eq:gauge}
 d^h(a):=d(a)+h(t(a))-h(s(a)).
\end{equation}
By definition, the two graded QPs $(Q,W,d)$ and $(Q,W,d^h)$ are said to be vertex-gauge equivalent to each other, i.e.~two graded QPs are vertex-gauge equivalent if one is obtained from the
other by changing the $\Z$-grading via \eqref{eq:gauge}.
\hfill$\Box$\end{definition}

\noindent In \Cref{def:vertex-gauge}, $s(a),t(a)\in Q_0$ denote the source and target vertices of an arrow $a\in Q_1$. The notion of a vertex gauge is well-defined since vertex-gauging preserves the degree-one property of $W$. Indeed, suppose that $W$ is of degree one with respect to $d$. Then for a cyclic monomial \(a_m\cdots a_1\) in $W$, the difference between the $d$ and $d^h$ degrees of the monomial $a_m\cdots a_1$ is
\[
 \sum_{r=1}^m\bigl(h(t(a_r))-h(s(a_r))\bigr)=0,
\]
since the sum telescopes around the cycle.  Hence every cyclic monomial has the same
degree for \(d\) and \(d^h\) and \Cref{def:vertex-gauge} is well-defined.\\

\subsection{Graded right-equivalence and homogeneous stabilization}

In the same manner that QPs are considered up to right-equivalence and stabilizations (i.e.~adjoining or deleting a trivial quadratic QP summand), and there is a Splitting Theorem, these notions also exist for graded QPs, as explained in \cite[Section 6.3]{AO}. For completeness, we recall them, see also \cite{Keller2026_GradedPreprint}. First, graded right-equivalence is \cite[Definition~6.5]{AO}:

\begin{definition}[Graded right-equivalence]\label{def:graded-right-equivalence}
Let $(Q,W,d),(Q',W',d')$ be two graded QPs. A graded right-equivalence $\varphi:\widehat{\kk Q}\lr\widehat{\kk Q'}$ is a right-equivalence from $(Q,W)$ to $(Q',W')$ such that $(\widehat{\kk Q},d)\lr(\widehat{\kk Q'},d')$ is an isomorphism of graded algebras.
\hfill$\Box$\end{definition}

\begin{definition}[Homogeneous stabilization]\label{def:hom-trivial}
By definition, a graded QP $(Q,W,d)$ is said to be trivial if the potential $W$ is entirely quadratic, homogeneous of $d$-degree one, and its Jacobian algebra is isomorphic to the vertex algebra $R_Q$. By definition, two graded QPs are said to be related by a homogeneous stabilization or reduction if one is obtained from the other by adjoining or deleting a trivial graded QP.
\hfill$\Box$\end{definition}

The graded version of the Splitting Theorem \cite[Theorem~4.6]{DWZ} is \cite[Theorem~6.6]{AO}: it states that a graded QP splits into reduced and
trivial graded parts, each unique up to graded right-equivalence. See also \cite{Keller2026_GradedPreprint} for more details.

\subsection{Dilation classes for QPs}\label{ssec:dilation_classes}

Let us now introduce a set that encodes the possible gradings of a QP $(Q,W)$ and incorporates the necessary equivalence relations to be able to work invariantly:

\begin{definition}[Dilations]\label{def:G1}
Let $[Q,W]$ be a stable right-equivalence class of a QP $(Q,W)$. By definition, the set of dilations $\Gclass[Q,W]$ associated to $[Q,W]$ is the set of all equivalence classes of graded QPs \((Q',W',d')\) such that $[Q',W']=[Q,W]$, where the equivalence relation is generated by the following:
\begin{enumerate}[label=(\alph*)]
\item vertex gauge, as in \Cref{def:vertex-gauge},
\item graded right-equivalence, as in \Cref{def:graded-right-equivalence},
\item homogeneous trivial stabilization and reduction, as in \Cref{def:hom-trivial}.
\end{enumerate}
By definition, an element \(\delta\in\Gclass[Q,W]\) is said to be a dilation or a dilation class.
\hfill$\Box$\end{definition}

\noindent By \Cref{def:G1}, the quiver $Q'$ of any representative of a dilation class $\delta=[(Q',W',d')]\in\Gclass[Q,W]$ has a vertex set $Q'_0$ identical to the vertex set $Q_0$ of Q, as neither right-equivalence nor stabilization of QPs, and none of the three equivalence relations (a), (b), (c) in \Cref{def:G1}, affect the vertex set of $Q$. Similarly, the arrow set $Q'_1$ of any such quiver differs from the arrow set $Q_1$ of $Q$ solely in adding or subtracting canceling pairs of arrows.

\begin{remark} If one considers $Q$ up to relabeling of the vertices, one can incorporate simultaneous vertex relabeling as a fourth equivalence relation to be imposed in \Cref{def:G1}. The main results of the article are independent of vertex relabeling.\hfill$\Box$
\end{remark}

\noindent Note that a right-equivalence of (ungraded) QPs from $(Q,W)$ to $(Q',W')$ might not be compatible with some given gradings $(Q,W,d)$ to $(Q',W',d')$ on those QPs. That is, the dilations associated to $(Q,W,d)$ and $(Q',W',d')$, where $d,d'$ are chosen independently of each other, are typically different in $\Gclass[Q,W]$ even if $(Q,W)$ and $(Q',W')$ are right-equivalent (or even identical). That said, given a right-equivalence $\varphi$ from $(Q,W)$ to $(Q',W')$ and a grading $(Q,W,d)$, it is possible to transport $d$ via $\varphi$ to a grading of $(Q',W')$, as follows.

\begin{lemma}[Homogeneous rectification]\label{lem:rectification}
Let \((Q,W,d)\) be a graded QP, $(Q',W')$ an ungraded QP, and $\varphi:(Q,W)\lr(Q',W')$ an $($ungraded$)$ right-equivalence. Then the following three objects exist:
\begin{enumerate}
\item a QP $(Q',V)$ right-equivalent to $(Q',W')$,
\item a graded QP $(Q',V,d')$,
\item a graded right-equivalence
\[
 \psi:(Q,W,d)\longrightarrow(Q',V,d').
\]
\end{enumerate}
In addition, the dilation \([(Q',V,d')]\in\Gclass[Q',W']\) is
independent of all choices.
\end{lemma}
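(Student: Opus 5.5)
The plan is to build the graded object by transporting everything along a \emph{relabeling} of arrows, rather than along $\varphi$ itself. Then $\varphi$ is used only to see that the resulting ungraded QP is right-equivalent to $(Q',W')$. Independence of choices should follow formally from the fact that composites and inverses of graded right-equivalences are graded right-equivalences.

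\emph{Step 1: matching arrows.} Since $\varphi:\widehat{\kk Q}\to\widehat{\kk Q'}$ is a continuous algebra isomorphism with $\varphi|_{R_Q}=\id$, it maps $\m_Q$ onto $\m_{Q'}$. It therefore induces an $R$-bimodule isomorphism on linear parts,
\[
\m_Q/\m_Q^2\cong A_Q\longrightarrow A_{Q'}\cong\m_{Q'}/\m_{Q'}^2 .
\]
Consequently, for every ordered pair of vertices $i,j$, the number of arrows $i\to j$ is the same in $Q$ and in $Q'$. Fix, for each pair $(i,j)$, a bijection between the arrows $i\to j$ of $Q$ and those of $Q'$. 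This gives a bijection $\beta:Q_1\to Q'_1$ with $s(\beta(a))=s(a)$ and $t(\beta(a))=t(a)$. Let $\psi:\widehat{\kk Q}\to\widehat{\kk Q'}$ be the continuous algebra isomorphism that is the identity on $R_Q$ and sends $a\mapsto\beta(a)$.

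\emph{Step 2: the grading and the potential.} Define $d'(\beta(a)):=d(a)$ and $V:=\psi(W)$. By construction $\psi$ is an isomorphism of graded algebras $(\widehat{\kk Q},d)\to(\widehat{\kk Q'},d')$, since it maps paths to paths of the same degree. Every homogeneous component of $W$ has degree one, so every component of $V$ does too, and $(Q',V,d')$ is a graded QP. Since $\psi$ trivially carries $W$ to $V$, it is a graded right-equivalence $(Q,W,d)\to(Q',V,d')$; this gives items (2) and (3). For item (1), the composite $\varphi\circ\psi^{-1}$ is an algebra isomorphism fixing $R_{Q'}$ that sends $V$ to $\varphi(W)$. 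The latter is cyclically equivalent to $W'$, so $(Q',V)$ is right-equivalent to $(Q',W')$. In particular $[Q',V]=[Q',W']$, so $[(Q',V,d')]$ indeed lies in $\Gclass[Q',W']$.

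\emph{Step 3: independence.} Let $(V_1,d'_1,\psi_1)$ and $(V_2,d'_2,\psi_2)$ be any two triples satisfying (1)--(3), not necessarily produced by Step 1. Then
\[
\theta:=\psi_2\circ\psi_1^{-1}:\widehat{\kk Q'}\longrightarrow\widehat{\kk Q'}
\]
is the identity on $R_{Q'}$, and it is a graded isomorphism $(\widehat{\kk Q'},d'_1)\to(\widehat{\kk Q'},d'_2)$, being a composite of graded isomorphisms. The one point needing care is that $\theta$ sends $V_1$ to a potential cyclically equivalent to $V_2$. This holds because continuous algebra isomorphisms fixing $R$ preserve the closed span of cyclic-rotation differences, so $\psi_1^{-1}(V_1)\sim W$ implies $\theta(V_1)\sim\psi_2(W)\sim V_2$. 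Hence $\theta$ is a graded right-equivalence, and relation (b) of \Cref{def:G1} gives $[(Q',V_1,d'_1)]=[(Q',V_2,d'_2)]$.

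I expect the only genuine obstacle to be this bookkeeping about cyclic equivalence under isomorphisms, together with checking that $\psi$ being graded is enough without any homogeneity of $\varphi$ itself. The non-homogeneity of $\varphi$ is precisely why $\psi$ is built from the relabeling $\beta$ and $\varphi$ is only used to certify item (1).
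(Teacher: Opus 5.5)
Your proof is correct, and for the existence part it takes a more elementary route than the paper. The paper transports the grading through $\varphi$ itself, declaring $\widehat{\kk Q'}_r:=\varphi(\widehat{\kk Q}_r)$. It then chooses a homogeneous section of $\m_{Q'}\to\m_{Q'}/\m_{Q'}^2$ and a homogeneous basis identified with the arrows. From these it builds an automorphism $\theta$ of $\widehat{\kk Q'}$ by the formal-inverse argument, and sets $V:=\theta^{-1}(W')$ and $\psi:=\theta^{-1}\circ\varphi$. You instead observe that $\varphi$ already forces $Q\cong Q'$ over the common vertex set. A plain relabeling $\beta$ then gives $\psi$, $V=\psi(W)$ and $d'=d\circ\beta^{-1}$ directly, and $\varphi\circ\psi^{-1}$ certifies item (1).

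What the paper's route buys is a $\psi$ that is a homogeneous correction of the given $\varphi$, and a $V$ that is an automorphic image of $W'$ itself. Your route avoids the section, the formal-inverse step, and any check that the transported grading is a complete grading compatible with infinite sums. One small correction to your framing: $\varphi$ is used not only for item (1) but also in Step 1, to match arrow counts via $\m_Q/\m_Q^2\cong\m_{Q'}/\m_{Q'}^2$.

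For independence the two arguments coincide, since the paper's $\theta_2^{-1}\theta_1$ equals your $\psi_2\circ\psi_1^{-1}$. Your version quantifies over arbitrary triples satisfying (1)--(3), however, so it also gives independence of $\varphi$. It also shows that the resulting class is just the class of $(Q,W,d)$ itself in the common set $\Gclass[Q,W]=\Gclass[Q',W']$. So the bijection of \Cref{cor:G1-presentation} induced by rectification is tautological.
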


\begin{proof}
First, note that the arrow grading \(d\) gives a complete internal \(\Z\)-grading on
\(\widehat{\kk Q}\): each path is homogeneous, with degree the sum of its arrow
degrees, and an arbitrary formal series is resolved coefficient-wise into its
homogeneous parts. We transport this grading through the (ungraded) right-equivalence \(\varphi\) to
\(\widehat{\kk Q'}\) by declaring
\[
 \widehat{\kk Q'}_r:=\varphi(\widehat{\kk Q}_r),
\]
where the subindex $r$ indicates the piece with internal degree $r$. Because \(\varphi\) fixes the vertex algebra and preserves the arrow ideal $\mathfrak{m}_Q$,
\(R_{Q'}\) is concentrated in degree zero, \(\m_{Q'}\) is homogeneous, and
\(\m_{Q'}/\m_{Q'}^2\) is a finite-dimensional graded
\(R_{Q'}\)-bimodule. Now choose a homogeneous \(R_{Q'}\)-bimodule section
\[
 s:\m_{Q'}/\m_{Q'}^2\longrightarrow\m_{Q'}.
\]
Such a section exists: e.g.~choose a linear section separately in every
internal degree and every \(e_jA_{Q'}e_i\).  Choose a homogeneous basis
of \(\m_{Q'}/\m_{Q'}^2\) and identify it with the displayed arrow space
\(A_{Q'}\).  Let \(d'\) be the resulting arrow grading.  By the universal
property of the completed tensor algebra, the assignment sending each
displayed arrow to its lift under the section \(s\) extends to a continuous endomorphism
\[
 \theta:\widehat{\kk Q'}\longrightarrow \widehat{\kk Q'},
\]
which fixes the idempotent elements. Its linear part on \(\m_{Q'}/\m_{Q'}^2\) is invertible, so the formal inverse
lemma applied to these complete path algebras shows that \(\theta\) is an automorphism. We then define
\[
 V:=\theta^{-1}(W'),
 \qquad
 \psi:=\theta^{-1}\circ\varphi.
\]
By construction, the map \(\psi\) is graded from \(d\) to \(d'\), and
\(\psi(W)=V\) in the cyclic quotient.  Hence \(V\) is homogeneous of degree
one and \(\psi\) is a graded right-equivalence. This establishes the existence of the objects $(1),(2),(3)$.\\

For independence of choices, let \(\theta_1,\theta_2\) be two rectifications, as above, producing
\((V_1,d_1')\) and \((V_2,d_2')\).  Since both identify their corresponding graded
algebras with the same transported graded algebra \(\widehat{\kk Q'}\) we conclude that
\[
 \theta_2^{-1}\theta_1:(Q',V_1,d_1')\longrightarrow(Q',V_2,d_2')
\]
is a graded right-equivalence, sending $V_1$ to $V_2$ in the corresponding cyclic quotients.
\end{proof}
\color{black}
Since homogeneous stabilizations provide a bijection between dilation classes, it then follows from the homogeneous rectification provided by \Cref{lem:rectification} that we have a canonical bijection between the sets of dilations associated to two QPs that are right-equivalent:

\begin{corollary}\label{cor:G1-presentation}
If \((Q,W)\) and \((Q',W')\) are stably right-equivalent, then homogeneous
rectification induces a canonical bijection
\[
 \Gclass[Q,W]
 \xrightarrow{\sim}
 \Gclass[Q',W']
\]
between their sets of dilations.\hfill$\Box$
\end{corollary}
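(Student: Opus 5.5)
A first observation simplifies the setup. Since \((Q,W)\) and \((Q',W')\) are stably right-equivalent, \([Q,W]=[Q',W']\) as stable right-equivalence classes. By \Cref{def:G1}, the sets \(\Gclass[Q,W]\) and \(\Gclass[Q',W']\) are therefore literally the same set: both consist of equivalence classes of graded QPs whose underlying ungraded QP lies in this common stable class.

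The content of the statement is thus that the bijection given by homogeneous rectification is well defined, and that it agrees with this identification. I would argue as follows.

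\textbf{Step 1: reduce to elementary moves.} The stable equivalence is a finite chain of elementary moves, each either an ungraded right-equivalence \(\varphi\) or the adjoining or deleting of a trivial quadratic summand. It suffices to define a map \(\Gclass\to\Gclass\) for each elementary move, show it is well defined and bijective, and compose.

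\textbf{Step 2: right-equivalences.} Given a class \(\delta\) represented by some \((Q'',W'',d'')\), we have a right-equivalence \((Q'',W'')\to(Q,W)\to(Q',W')\), after stabilizations if necessary. \Cref{lem:rectification} applied to this composite produces \((Q',V,d')\) together with a graded right-equivalence \(\psi\) from \((Q'',W'',d'')\). The class \([(Q',V,d')]\) is independent of choices by the last clause of \Cref{lem:rectification}. It coincides with \(\delta\), because \(\psi\) is a graded right-equivalence, which is relation (b) of \Cref{def:G1}.

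\textbf{Step 3: compatibility with the other generators.} I would check that the rectified class respects relations (a) and (c).
\begin{itemize}
\item For a vertex gauge \(h\): transporting \(d^h\) through \(\varphi\) yields the transported grading \(d'\) gauged by the same \(h\). This holds because \(\varphi\) fixes the idempotents, so the homogeneous pieces \(e_j\widehat{\kk Q}_r e_i\) shift uniformly by \(h(j)-h(i)\).
\item For stabilization: apply the graded Splitting Theorem \cite[Theorem~6.6]{AO}. A trivial graded summand can be split off homogeneously on both sides, and \(\varphi\) may be adjusted, by the ungraded splitting uniqueness, to respect the splitting.
\end{itemize}
The inverse map is obtained by rectifying along \(\varphi^{-1}\). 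The two composites are the identity, again by the uniqueness clause of \Cref{lem:rectification}.

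\textbf{Main obstacle.} I expect the hardest point to be compatibility with trivial stabilization when \(\varphi\) mixes the trivial part with the reduced part. My approach would be to first use the graded splitting to put the source QP in split form. Then I would invoke \cite[Theorem~4.6]{DWZ}, which says the reduced part is unique up to right-equivalence, to replace \(\varphi\) by a right-equivalence that is a direct sum on the two parts. The rectification can then be computed summand-wise.
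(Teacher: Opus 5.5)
Your proposal is correct and follows the same route as the paper. The paper justifies the corollary by noting that homogeneous stabilizations act bijectively on dilation classes and that \Cref{lem:rectification}, with its independence-of-choices clause, transports gradings along ungraded right-equivalences. Your opening observation is that both sides are $\Gclass$ of the same stable class, and your Step 2 shows that the rectified graded QP is related to the original by relation (b), after homogeneous (de)stabilization under relation (c). Together these already show the induced map is the identity, so it is automatically well defined on classes, and Step 3 is not needed, including the anticipated difficulty with $\varphi$ mixing trivial and reduced parts.
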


\subsection{Graded QP mutation and vertex gauge}\label{sec:graded-mutation}

The QP mutation of \cite[Section 5]{DWZ} is generalized to a graded QP mutation in \cite[Section 6.3]{AO}, and see also \cite{Keller2026_GradedPreprint}. After briefly recalling such notion, we show in \Cref{thm:G1-mutation} that graded QP mutation induces a canonical bijection between the dilations of a QP $(Q,W)$ and its QP-mutation $\mu_k(Q,W)$ at a vertex $k$. Let \((Q,W,d)\) be a graded QP and \(k\in Q_0\) be a vertex incident
to no loop or oriented two-cycle. Let us also choose a cyclic representative of $W$ so that no monomial begins or ends at \(k\).\\

The graded QP mutation of \cite[Section 6.3]{AO} consists of two steps, as does QP-mutation: a pre-mutation and a reduction, the latter using the corresponding Splitting Theorem. The pre-mutation step is as follows. Given two arrows
\[
 a:i\longrightarrow k,
 \qquad
 b:k\longrightarrow j,\qquad i,j,k\in Q_0,\quad a,b\in Q_1,
\]
the QP pre-mutation step creates a composite \([ba]:i\to j\), reverses \(a,b\), and
adds the corresponding cubic term to the potential $W$, resulting in a QP $(\tilde{\mu}_k^L Q,\tilde{\mu}_k^L W)$. By definition, the {\it left graded} pre-mutation assigns gradings
\begin{align}
 \widetilde d^L([ba])&=d(b)+d(a),\label{eq:mut-composite}\\
 \widetilde d^L(a^*)&=1-d(a),\label{eq:mut-incoming}\\
 \widetilde d^L(b^*)&=-d(b),\label{eq:mut-outgoing}
\end{align}
while unchanged arrows retain their degree. The pre-mutated potential $\tilde{\mu}_k^LW$ is
homogeneous of degree one with respect to this pre-mutated grading $\tilde{\mu}_k^L(d):=\widetilde{d}^L$. This thus defines a new graded QP
$$\tilde{\mu}_k^L(Q,W,d):=(\tilde{\mu}_k^L Q,\tilde{\mu}_k^L W,\tilde{\mu}_k^L d).$$
Then \cite[Definition 6.8]{AO}, which uses \cite[Thm.~6.6 \& Prop.~6.7]{AO}, reads as follows:

\begin{definition}[Left graded QP-mutation]
Let $(Q,W,d)$ be a graded QP and $k\in Q_0$ as above. By definition, its left graded QP-mutation
$$\mu_k^L(Q,W,d)=(\mu_k^LQ,\mu_k^LW,\mu_k^Ld)$$
is the graded QP given by the reduced graded part of the pre-mutated graded QP $\tilde{\mu}_k^L(Q,W,d)$.\hfill$\Box$
\end{definition}

\begin{remark}
There is an analogous notion of right graded QP-mutation $\mu_k^R(Q,W,d)$: the only change is that the pre-mutated degree $d_R$ assigns $d_R(a^*)=-d(a)$ and $d_R(b^*)=1-d(b)$.\hfill$\Box$
\end{remark}

Before establishing that both left and right graded QP-mutations induce bijections between the sets of dilations, we must show that the vertex gauging in \Cref{def:vertex-gauge} commutes with graded QP-mutations. The statement reads as follows:

\begin{lemma}[Graded QP-mutation commutes with vertex gauge]\label{prop:gauge-mutation}
Let $(Q,W,d)$ be a graded QP and \(h:Q_0\to\Z\) a vertex grading. Then:

\begin{enumerate}
    \item We have an equality of (arrow) gradings
    \begin{equation}\label{eq:gauge-premutation}
 \widetilde\mu_k^L(d^h)
 =\bigl(\widetilde\mu_k^L(d)\bigr)^h,
\end{equation}
where $h$ is understood as the same function \(h\) on the mutated vertex set, and the right hand side of \Cref{eq:gauge-premutation} denotes applying the corresponding vertex gauge to $\widetilde\mu_k^L(d)$.\\

 \item The left-graded mutated QP satisfies
 \begin{equation}\label{eq:gauge-reduced-mutation}
 \mu_k^L(Q,W,d^h)
 \simeq
 \mu_k^L(Q,W,d)^h,
\end{equation}
where $\simeq$ in \Cref{eq:gauge-reduced-mutation} denotes graded right-equivalence, and its right hand side denotes applying the corresponding vertex gauge to $\mu_k^L(d)$.\\

\item Both statements above hold for right graded QP-mutations.

\end{enumerate}
\end{lemma}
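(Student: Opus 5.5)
Part (1) is a direct arrow-by-arrow comparison of the two gradings on the arrow set of \(\widetilde\mu_k^L Q\), which splits into four types: unchanged arrows, composites \([ba]\), reversed incoming arrows \(a^*\), and reversed outgoing arrows \(b^*\).

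\emph{Unchanged arrows.} Both sides equal \(d(c)+h(t(c))-h(s(c))\).

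\emph{Composites.} For \(a:i\to k\) and \(b:k\to j\), formula \eqref{eq:mut-composite} applied to \(d^h\) gives
\[
d(b)+h(j)-h(k)+d(a)+h(k)-h(i)=d(b)+d(a)+h(j)-h(i).
\]
This is the gauge of \(\widetilde d^L([ba])\) along \([ba]:i\to j\).

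\emph{Reversed incoming arrows.} By \eqref{eq:mut-incoming}, \(\widetilde\mu_k^L(d^h)(a^*)=1-d(a)-h(k)+h(i)\). On the other side, \(a^*:k\to i\), so \(\bigl(\widetilde\mu_k^L d\bigr)^h(a^*)=1-d(a)+h(i)-h(k)\). These agree.

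\emph{Reversed outgoing arrows.} The computation via \eqref{eq:mut-outgoing} is the same, with \(b^*:j\to k\). Both sides give \(-d(b)-h(j)+h(k)\).

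The key point throughout is that the constants \(0,1\) in the pre-mutation rules are independent of \(h\), and that reversing an arrow negates its gauge term, which matches reversing its source and target. The pre-mutated potential \(\widetilde\mu_k^L W\) does not depend on the grading, so (1) gives an equality of graded QPs \(\widetilde\mu_k^L(Q,W,d^h)=\widetilde\mu_k^L(Q,W,d)^h\).

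For part (2), I apply the graded Splitting Theorem \cite[Theorem~6.6]{AO} to \(\widetilde\mu_k^L(Q,W,d)\). It gives a graded right-equivalence
\[
\psi:\widetilde\mu_k^L(Q,W,d)\lr (Q_{\mathrm{red}},W_{\mathrm{red}},d_{\mathrm{red}})\oplus(\text{trivial}).
\]
The main obstacle is to show that \(\psi\) is still graded after gauging both sides by the same \(h\).

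I will handle this with a lemma. A continuous \(R\)-algebra homomorphism fixing the idempotents sends each arrow \(c:i\to j\) to a combination of paths from \(i\) to \(j\). Under gauging by \(h\), every path from \(i\) to \(j\) shifts in degree by the same amount \(h(j)-h(i)\), by the telescoping argument used after \Cref{def:vertex-gauge}. So if \(\psi(c)\) is homogeneous of \(d'\)-degree \(d(c)\), then it is homogeneous of \(d'^h\)-degree \(d^h(c)\). Hence graded right-equivalences remain graded after gauging by the same \(h\).

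The same lemma shows that the gauge of a trivial graded summand is still trivial. Its potential stays quadratic and of degree one, since each quadratic term is a cycle and so its degree is unchanged. Its Jacobian algebra is untouched, since gauging does not change the potential.

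Putting these together, \(\psi\) is a graded right-equivalence from \(\widetilde\mu_k^L(Q,W,d)^h\) to the gauged splitting. The reduced part of the latter is \(\mu_k^L(Q,W,d)^h\). By (1), the former equals \(\widetilde\mu_k^L(Q,W,d^h)\). The uniqueness part of the graded Splitting Theorem, namely that the reduced part is unique up to graded right-equivalence, then yields \eqref{eq:gauge-reduced-mutation}.

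For part (3), the only change is that the degrees of \(a^*\) and \(b^*\) become \(-d(a)\) and \(1-d(b)\). The computation in (1) is identical, because only the constants change and these are independent of \(h\). The argument for (2) uses nothing specific to the left convention, so it carries over verbatim.
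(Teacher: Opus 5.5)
Your proposal is correct and follows the paper's proof. Part (1) is the same arrow-by-arrow gauge computation for $[ba]$, $a^*$ and $b^*$. Part (2) rests on the same observation, that a graded splitting right-equivalence stays graded after gauging because every path from $i$ to $j$ shifts by $h(j)-h(i)$, combined with graded reduction. You also make explicit several points the paper leaves implicit: unchanged arrows, that gauging preserves triviality of the trivial summand, and the uniqueness clause of the graded Splitting Theorem.
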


\begin{proof}
For Part (1), consider the composite arrow \([ba]:i\to j\). Then
\begin{align*}
 \widetilde d^{\,h}([ba])
 &=d^h(b)+d^h(a)\\
 &=d(b)+h(j)-h(k)+d(a)+h(k)-h(i)\\
 &=\widetilde d([ba])+h(j)-h(i).
\end{align*}
Similarly, for \(a^*:k\to i\) we have
\[
 \widetilde d^{\,h}(a^*)
 =1-d^h(a)
 =1-d(a)+h(i)-h(k),
\]
and for \(b^*:j\to k\) we obtain
\[
 \widetilde d^{\,h}(b^*)
 =-d^h(b)
 =-d(b)+h(k)-h(j).
\]
These are exactly the vertex-gauge corrections for the mutated arrows, thus proving
\Cref{eq:gauge-premutation}. For Part (2), a graded splitting right-equivalence remains
graded after applying the same vertex-gauge, because every path from \(i\) to \(j\)
has its degree shifted by the common amount \(h(j)-h(i)\). Hence graded
reduction and \Cref{eq:gauge-premutation} together imply \Cref{eq:gauge-reduced-mutation}. Part (3) is proven analogously.
\end{proof}

\begin{theorem}[Graded QP-mutation gives bijection on dilation classes]\label{thm:G1-mutation}
Let $[Q,W]$ be a stable right-equivalence class of a reduced
non-degenerate QP $(Q,W)$ and let \(k\) be a vertex. Then left graded QP-mutation induces a
canonical bijection
\[
 \mu_k^L:\Gclass[Q,W]
 \xrightarrow{\sim}
 \Gclass[\mu_k(Q,W)].
\]
Furthermore, right graded QP-mutation provides its inverse.
\end{theorem}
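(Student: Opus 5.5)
To prove \Cref{thm:G1-mutation}, I would first define $\mu_k^L$ on representatives and then check that it respects each of the three generating relations of \Cref{def:G1}. Given a dilation $\delta\in\Gclass[Q,W]$, choose a representative $(Q',W',d')$ with $[Q',W']=[Q,W]$. Using the Splitting Theorem in its graded form \cite[Theorem~6.6]{AO}, I will pass to the reduced graded part. This may change the representative only by homogeneous reduction, so it does not change $\delta$.

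Since $(Q,W)$ is reduced and non-degenerate, the reduced part of $(Q',W')$ is right-equivalent to $(Q,W)$. Hence $k$ is incident to no loop or oriented two-cycle, and after cyclic rotation no monomial of $W'$ starts or ends at $k$. Rotation is itself a graded operation, since cyclic equivalence preserves degrees. So $\mu_k^L(Q',W',d')$ is defined, and by \cite[Theorem~5.2/5.7]{DWZ} its underlying QP lies in $[\mu_k(Q,W)]$. I then set $\mu_k^L(\delta)$ to be its class.

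Well-definedness is checked relation by relation.
\begin{enumerate}[label=(\alph*)]
\item Vertex gauge is handled by \Cref{prop:gauge-mutation}(2): gauging before mutating agrees, up to graded right-equivalence, with mutating and then gauging by the same $h$.
\item For graded right-equivalence, I would follow the proof of \cite[Theorem~5.2]{DWZ}. A right-equivalence $\varphi:(Q',W')\to(Q'',W'')$ fixing idempotents induces a right-equivalence of pre-mutations. It sends $[ba]\mapsto \varphi(b)\varphi(a)$ on composites, and acts on the reversed arrows $a^*,b^*$ by the inverse transpose of the linear part of $\varphi$ at $k$, corrected by higher-order terms. If $\varphi$ is graded, each of these assignments is homogeneous, because the degrees in \eqref{eq:mut-composite}--\eqref{eq:mut-outgoing} are additive or affine in the original degrees. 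The induced map is therefore a graded right-equivalence of pre-mutations, and by uniqueness in the graded Splitting Theorem it descends to the reduced parts.
\item Homogeneous trivial stabilization at vertices other than $k$ commutes with pre-mutation directly. A trivial summand involving $k$ can first be removed by (b) and the graded splitting, since the representative was reduced.
\end{enumerate}

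For the inverse, I would show that $\mu_k^R\circ\mu_k^L$ is the identity on dilations; the other composite is symmetric. Following \cite[Theorem~5.7]{DWZ}, the double pre-mutation $\tilde\mu_k\tilde\mu_k(Q,W)$ is right-equivalent to $(Q,W)$ plus a trivial QP. The trivial part consists of the pairs $[ba]$, $[b^*a^*]$-type cancellations together with the doubled reversed arrows $a^{**}, b^{**}$. I need to track degrees through this. The right graded premutation assigns
\[
 d(a^{**})=-(1-d(a))=d(a)-1,\qquad d(b^{**})=1-(-d(b))=d(b)+1.
\]
This is exactly the vertex gauge by $h=\mathbb 1_k$ (up to sign convention). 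Composites $[a^*b^*]$ pair homogeneously with $[ba]$, with degrees summing to one, so those quadratic terms are homogeneous of degree one. Consequently the right-equivalence of \cite[Theorem~5.7]{DWZ} can be rectified to a graded one via \Cref{lem:rectification}. The result is graded right-equivalent to $(Q,W,d^{h})$ plus a homogeneous trivial part, which is the same dilation.

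The main obstacle I expect is this last bookkeeping step, together with step (b): making sure the explicit right-equivalences of \cite{DWZ} are homogeneous, rather than merely existing as ungraded maps. If direct verification is messy, I would instead invoke \Cref{lem:rectification} to replace each ungraded equivalence by a graded one. Independence of choices in that lemma should then guarantee that the dilation class is canonical.
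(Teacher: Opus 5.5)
Your well-definedness argument is the paper's: relations (a)--(c) are handled by \Cref{prop:gauge-mutation}, graded functoriality of pre-mutation, and the graded Splitting Theorem \cite[Theorem~6.6]{AO}. You depart from the paper at the inverse step, and there the argument does not go through as written. The paper simply cites \cite[Lemma~6.9]{AO}, which supplies a \emph{graded} right-equivalence $\mu_k^R\mu_k^L(Q,W,d)\simeq(Q,W,d)$.

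Your reproof first has a bookkeeping error. In $\mu_k Q$ the arrow $a^*:k\to i$ is \emph{outgoing} at $k$ and $b^*:j\to k$ is \emph{incoming}. The right rule therefore gives $d(a^{**})=1-(1-d(a))=d(a)$ and $d(b^{**})=-(-d(b))=d(b)$, so right mutation undoes left mutation on the nose, with no gauge. Your values $d(a)-1$ and $d(b)+1$ are those of $\mu_k^L\mu_k^L$. This is harmless for dilation classes, since $\mu_k^L$ and $\mu_k^R$ differ only by the gauge $h$ with $h(k)=-1$ and $h=0$ elsewhere, but it should be fixed. Also, $a^{**}$ and $b^{**}$ are not part of the trivial summand: they are identified, up to sign, with $a$ and $b$. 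The trivial part consists only of the pairs $[ba]$, $[a^*b^*]$.

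The genuine gap is the step ``consequently the right-equivalence of \cite[Theorem~5.7]{DWZ} can be rectified to a graded one via \Cref{lem:rectification}'', and the same applies to your fallback. Rectify the ungraded equivalence $\varphi:\widetilde\mu_k\widetilde\mu_k(Q,W)\to(Q,W)\oplus(C,S)$. You obtain a graded QP $(Q\oplus C,V,d')$ with $V=\theta^{-1}(W\oplus S)$ and $d'$ the \emph{transported} grading. By construction it lies in the dilation class of $\mu_k^R\mu_k^L(Q,W,d)$, which is tautological. It gives no comparison with the given $(Q,W,d)$: you would still need $d'$ to be gauge-equivalent to $d$, and $V$ to be \emph{graded} right-equivalent to $W\oplus S$ for that same grading.

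An ungraded right-equivalence cannot supply this in general. Take $W=0$ on the acyclic triangle with arrows $1\to2$, $2\to3$, $1\to3$, and compare $d\equiv0$ with $d'$ given by $d'(1\to3)=1$. These are related by the identity right-equivalence, yet the gauge-invariant cyclic product $\Eul_{12}\Eul_{23}\Eul_{31}$ equals $-t^2$ versus $-t^3$, so they are different dilations. Homogeneity of the quadratic pairs does not make the maps graded either. The independence of choices in \Cref{lem:rectification} only concerns the section for a fixed $\varphi$, so it does not help.

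What is missing is exactly \cite[Lemma~6.9]{AO}. Alternatively, verify directly that every map in the proof of \cite[Theorem~5.7]{DWZ} is homogeneous:
\begin{itemize}
\item the identification of $a^{**},b^{**}$ with $\pm a,b$, which is degree-preserving once the degrees are computed correctly;
\item the substitution shifting $[ba]$ by the path $ba$, which is homogeneous since $\deg[ba]=d(a)+d(b)$;
\item the subsequent reductions via the graded Splitting Theorem, using that the trivial part of $\widetilde\mu_k^L(Q,W,d)$ avoids $k$ and hence commutes with the second pre-mutation.
\end{itemize}
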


\begin{proof}
We must verify the three equivalence relations from \Cref{def:G1}. For vertex gauge, \Cref{prop:gauge-mutation} gives compatibility with vertex gauge. For graded right-equivalence: graded pre-mutation is functorial under graded
right-equivalence, and the Graded Splitting Theorem \cite[Theorem 6.6]{AO} makes the reduced output
well-defined up to graded right-equivalence. Similarly, homogeneous stabilization is absorbed by the Graded Splitting Theorem. Therefore both left and
right graded QP-mutations descend to maps on dilation classes. Since \cite[Lemma 6.9]{AO} shows that
\[
 \mu_k^R\mu_k^L(Q,W,d)
 \simeq(Q,W,d),
 \qquad
 \mu_k^L\mu_k^R(Q,W,d)
 \simeq(Q,W,d),
\]
where $\simeq$ denotes a graded right-equivalence, the induced maps on the set of dilations must therefore be inverse bijections.
\end{proof}

\section{Bigraded Ginzburg algebras and an invariant polynomial}
\label{sec:euler}

The object of this section is to develop the study of the graded Euler pairing of a differential bigraded Ginzburg algebra associated to a graded quiver with potential. Two conceptually important facts we establish are \Cref{thm:intrinsic-euler}, showing that the determinant of the graded Euler pairing is a well-defined invariant of the dilation class, and \Cref{prop:mutation-euler}, showing that it is invariant under graded QP mutations. Several examples are provided in \Cref{ssec:two_examples} and \Cref{ssec:tree_quivers}.

\subsection{The internal grading of the Ginzburg dg algebra of a graded QP}

Let \((Q,W)\) be a QP and $\Gamma(Q,W)$ its complete 3-Calabi-Yau Ginzburg dg algebra, cf.~\cite[Section 4.2]{Ginzburg} and \cite[Section~2.6]{KellerYang}. That is, $\Gamma(Q,W)=\widehat{\kk{\tilde{Q}}}$ is the completed path algebra of the doubled quiver $\tilde{Q}$, where we take the completion as in \cite{KellerYang}. The original arrows $a:i\to j$, $a\in Q_1$, are graded in cohomological degree 0, the dual arrows $a^*:j\to i$, $a^*\in \tilde{Q}_1$, are given cohomological degree -1 and the loop arrows $t_i:i\to i$, dual to the vertices, are given cohomological degree -2. The differential $\Gamma(Q,W)$ is declared on these generators to be

\begin{align*}
 \partial(a)=0,\quad \partial(a^*)=\partial_aW,\quad \partial(t_i)=e_i\sum_{a\in Q_1}[a,a^*]e_i.
\end{align*}

If $(Q,W,d)$ is a graded QP, then the dg algebra $\Gamma(Q,W)$ can be further endowed with an internal grading, a.k.a.~an Adams grading, as follows:

\begin{definition}[Bigraded Ginzburg dga of a graded QP]\label{def:bigradedGinzburg} Let \((Q,W,d)\) be a graded QP. By definition, the internal (or Adams) grading on the Ginzburg dg algebra $\Gamma(Q,W)$ of $(Q,W)$ is given by:
\begin{enumerate}
    \item The original arrows $a:i\to j$ in $Q_1\sse \tilde{Q}_1$ are assigned internal degree $d(a)$,
    \item The dual arrows $a^*:j\to i$ in $\tilde{Q}_1$ are assigned internal degree $1-d(a)$,
    \item The loop arrows $t_i:i\to i$ in $\tilde{Q}_1$ are assigned internal degree $1$.
\end{enumerate}
The resulting differential bigraded Ginzburg algebra is denoted by $\Gamma(Q,W,d)$. In table form, the bidegrees of the generators of $\Gamma(Q,W,d)$ are
\begin{center}
\begin{tabular}{c|c|c}\label{table:gradings}
 generator & cohomological degree & internal degree\\
\midrule
 \(a:i\to j\) & \(0\) & \(d(a)\)\\
 \(a^*:j\to i\) & \(-1\) & \(1-d(a)\)\\
 \(t_i:i\to i\) & \(-2\) & \(1\)
\end{tabular}
\end{center}
\hfill$\Box$
\end{definition}

\noindent Since \(W\) has internal degree one, the differential in $\Gamma(Q,W,d)$ has bidegree
\((1,0)\). Note also that sending all arrows to zero produces an augmentation $\Gamma(Q,W,d)\to R_Q$.

\subsection{Module categories of the bigraded Ginzburg algebra}\label{ssec:module_categories_bigraded} The topology of $\Gamma(Q,W)$ -- and in our case $\Gamma(Q,W,d)$ -- should be considered, as in \cite[Appendix]{KellerYang} and see also \cite{Keller2026_GradedPreprint}. The complete Ginzburg algebra $\Gamma(Q,W)$ is pseudocompact for the $\mathfrak{m}_{\tilde{Q}}$-adic topology, and we consider {\it continuous} right dg modules over it, cf.~\cite[Appendix]{KellerYang}. In the graded case, for $\Gamma:=\Gamma(Q,W,d)$, the internal grading is incorporated by considering continuous dg modules $M$ over $\Gamma(Q,W)$ equipped with closed subcomplexes $M^{(r)}$, $r\in\Z$, and a topological product decomposition
\[
  M\cong\prod_{r\in\Z}M^{(r)},\quad\mbox{such that}\quad
  M^{(r)}\Gamma^{(s)}\subset M^{(r+s)}\quad\mbox{and}\quad
  \partial_M(M^{(r)})\subset M^{(r)}.
\]
In addition, we require a neighborhood basis by homogeneous dg-submodules of finite total codimension.  Equivalently, such a continuous differential bigraded module $M$ over $\Gamma(Q,W,d)$ is an inverse limit of finite-dimensional \emph{internally graded} discrete dg modules and degree-preserving transition maps. As before, the cohomological differential therefore has bidegree $(1,0)$, and a morphism in the graded category is continuous of bidegree $(0,0)$.\\

Localizing the homotopy category of these continuous internally graded dg modules over $\Gamma(Q,W,d)$ at quasi-isomorphisms gives the derived category $\D^{\gr}_{\cont}(\Gamma)$. For $M,N\in D^{\mathrm{gr}}_{\mathrm{cont}}(\Gamma)$, we define
\[
\operatorname{Ext}^p_\Gamma(M,N)_r
:=
H^p\!\left(
\operatorname{RHom}_\Gamma(M,N)_{-r}
\right)
\cong
\operatorname{Hom}_{D^{\mathrm{gr}}_{\mathrm{cont}}(\Gamma)}
\bigl(M,N[p]\langle-r\rangle\bigr).
\]
Here $[p]$ denotes the cohomological shift and $\langle r\rangle$
the internal shift. Our internal-shift convention is $(N\langle r\rangle)_p:=N_{p+r}$. Thus an element of internal degree $p$ in $N$ has internal degree
$p-r$ in $N\langle r\rangle$. In particular,
$\Bbbk\langle r\rangle$ is concentrated in ordinary internal degree
$-r$. The subscript $r$ on
$\operatorname{Ext}^p_\Gamma(M,N)_r$ records the shift of the internal
weight, i.e.~ the negative of the (standard) internal
degree of the corresponding homogeneous morphism.\\

We will be working with the following subcategories of $\D^{\gr}_{\cont}(\Gamma)$:

\begin{definition}\label{def:categories_over_bigraded}
Let $(Q,W,d)$ be a graded QP and $\D^{\gr}_{\cont}(\Gamma)$ its derived category of continuous internally graded dg modules over $\Gamma:=\Gamma(Q,W,d)$. By definition:

\begin{enumerate}
    \item The perfect category $\per^{\gr}(\Gamma)$ is the smallest idempotent-complete triangulated subcategory of $\D^{\gr}_{\cont}(\Gamma)$ containing the free module and all of its internal shifts:
\begin{equation}\label{eq:perfect-category}
  \per^{\gr}(\Gamma)
  :=\thick\set{\Gamma\angles r\suchthat r\in\Z}
  =\thick\set{e_i\Gamma[p]\angles r
      \suchthat i\in Q_0,\ p,r\in\Z}.
\end{equation}

\item The finite-dimensional category $\D^{\gr}_{\fd}(\Gamma)$ is
\[
  \D^{\gr}_{\fd}(\Gamma)
  :=\set{M\in\D^{\gr}_{\cont}(\Gamma)\suchthat
  \sum_p\dim_\kk H^p(M)<\infty}.
\]
\end{enumerate}
\end{definition}

For $\per^{\gr}(\Gamma)$, its objects are precisely those objects which are isomorphic in the derived category to direct summands of finite-cell, topologically semi-free dg modules.\footnote{That is, we start with finitely many shifted modules $e_i\Gamma[p]\angles r$, form finitely many cones, and then take finite direct sums and direct summands.} For $\D^{\gr}_{\fd}(\Gamma)$ note that, since the total cohomologies in $\D^{\gr}_{\fd}(\Gamma)$ are finite-dimensional, only finitely many internal weights can occur in its objects.\\

Let us now focus on certain objects of $\D^{\gr}_{\cont}(\Gamma)$, namely, the simple modules associated to the vertices of $Q$. By definition, the simple right dg module $S_i$ over $\Gamma$ associated to the vertex $i\in Q_0$ is
\begin{equation}\label{eq:Si}
  S_i:=e_iR_Q,
\end{equation}

\noindent where $\Gamma$ acts via the augmentation $\Gamma\lr R_Q$ that sends all arrows to zero. See \Cref{ssec:two_examples} for some examples of such simple modules. Such simple modules $S_i$ are concentrated in cohomological and internal degree zero.

\begin{proposition}[Finiteness of Ext groups between simples]\label{prop:ext-finite} Let $(Q,W,d)$ be a graded QP and $\Gamma=\Gamma(Q,W,d)$ its associated Ginzburg algebra. Then:
\begin{enumerate}
    \item Each simple $S_i$ belongs to $\D^{\gr}_{\fd}(\Gamma)\cap\per^{\gr}(\Gamma)$.
    
    \item For every pair of vertices $i,j\in Q_0$, the vector spaces $\Ext^{p}_{\Gamma}(S_i,S_j)$ vanish for $p\notin\{0,1,2,3\}$. In particular, $\Ext^{p}_{\Gamma}(S_i,S_j)_r$ vanish for $p\notin\{0,1,2,3\}$

    \item For every pair of vertices $i,j\in Q_0$, the vector spaces $\Ext^{p}_{\Gamma}(S_i,S_j)$ are finite-dimensional for all $p\in\Z$. In particular, so are $\Ext^{p}_{\Gamma}(S_i,S_j)_r$.
\end{enumerate}
\end{proposition}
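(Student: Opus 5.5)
The plan is to build an explicit finite-cell, homogeneous, topologically semi-free resolution of each $S_i$. This is the standard resolution for complete Ginzburg algebras (Keller--Yang, \cite[Section~2.6]{KellerYang}), and we will check that it is compatible with the internal grading. All three parts then follow from it together with the bidegree table in \Cref{def:bigradedGinzburg}.

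\textbf{The resolution.} For a vertex $i$ we take the graded module
\[
P_i:\quad e_i\Gamma\langle\cdot\rangle \;\oplus\; \bigoplus_{a:\,j\to i}e_j\Gamma[1]\langle\cdot\rangle \;\oplus\; \bigoplus_{b:\,i\to j}e_j\Gamma[2]\langle\cdot\rangle \;\oplus\; e_i\Gamma[3]\langle\cdot\rangle.
\]
Its underlying graded module is free on four kinds of generators, each placed in the bidegree of the corresponding element of $\Gamma$:
\begin{itemize}
\item the idempotent $e_i$, in bidegree $(0,0)$;
\item the arrows $a$ ending at $i$, in bidegree $(0,d(a))$;
\item the dual arrows $b^*$ with $b$ starting at $i$, in bidegree $(-1,1-d(b))$;
\item the loop $t_i$, in bidegree $(-2,1)$.
\end{itemize}
The differential is the usual Koszul-type one: $e_i\mapsto 0$, generators of arrow type map to arrows, generators of dual type map to $\partial_bW$ expressed through the arrow generators, and the top generator maps to $\sum\pm[a,a^*]$.

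Since $W$ is homogeneous of internal degree one and $\partial$ has bidegree $(1,0)$, each component of this differential is homogeneous of internal degree zero once the shifts $\langle\cdot\rangle$ are fixed by the degrees listed above. Hence $P_i$ is a finite-cell object of $\per^{\gr}(\Gamma)$. The quasi-isomorphism $P_i\to S_i$ is the one of \cite[Section~2.6]{KellerYang} (after Ginzburg): forgetting the internal grading it is exactly their resolution, and the internal grading is simply carried along, so no new acyclicity argument is needed. This gives $S_i\in\per^{\gr}(\Gamma)$. Membership in $\D^{\gr}_{\fd}(\Gamma)$ is immediate, since $S_i$ is one-dimensional and concentrated in bidegree $(0,0)$; this proves Part (1).

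\textbf{Computing the Ext groups.} Applying $\Hom_\Gamma(-,S_j)$ to the semi-free $P_i$ gives a complex whose $p$-th term is
\[
\Hom_\Gamma(P_i,S_j)^p=\bigoplus_{\text{generators } g \text{ of } P_i \text{ of shift } p,\ \text{ending at } j}\kk\cdot g^\vee .
\]
The shifts occurring in $P_i$ are only $0,1,2,3$, so this complex is concentrated in degrees $0,1,2,3$. Each term is finite-dimensional, being indexed by finitely many arrows or loops of $Q$, since $Q$ is finite. Because the differential of $P_i$ lands in $\m\,P_i$, the induced differential on $\Hom_\Gamma(P_i,S_j)$ is zero. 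We therefore obtain
\[
\Ext^p_\Gamma(S_i,S_j)=\Hom_\Gamma(P_i,S_j)^p,
\]
which is finite-dimensional and vanishes for $p\notin\{0,1,2,3\}$. Decomposing by internal weight, each $\Ext^p_\Gamma(S_i,S_j)_r$ is a direct summand of $\Ext^p_\Gamma(S_i,S_j)$, so both Parts (2) and (3) follow for the weighted pieces as well.

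\textbf{Main obstacle.} The expected difficulty is justifying that, in the continuous graded derived category $\D^{\gr}_{\cont}(\Gamma)$, $\RHom$ out of the finite-cell semi-free $P_i$ may be computed termwise. This requires $P_i$ to be cofibrant in the continuous graded sense, and it requires checking that the product decomposition $M\cong\prod_r M^{(r)}$ does not interfere. My first approach is to note that $P_i$ is a finite iterated cone of the objects $e_j\Gamma[p]\langle r\rangle$, and that
\[
\Hom_{\D^{\gr}_{\cont}(\Gamma)}\bigl(e_j\Gamma[p]\langle r\rangle,N\bigr)=H^{-p}\bigl(Ne_j\bigr)_{\text{weight}},
\]
which holds by the continuous version of the Yoneda lemma in \cite[Appendix]{KellerYang}. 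Induction on the number of cells, via the associated long exact sequences, then reduces the computation to the termwise one above.
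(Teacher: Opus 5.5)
Your proposal is correct and follows essentially the same route as the paper. In both, the Keller--Yang resolution, checked to be homogeneous for the internal grading, gives perfectness, and the Ext groups are read off from its finitely many generators in shifts $0,1,2,3$; your extra paragraph on computing $\RHom$ cell-by-cell in $\D^{\gr}_{\cont}(\Gamma)$ fills in a point the paper leaves to the citation.

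One caveat: your claim that the differential on $\Hom_\Gamma(P_i,S_j)$ vanishes needs $W$ to have no quadratic terms. A $2$-cycle in $W$ makes some component of the differential of $P_i$ a nonzero multiple of an idempotent. Parts (2) and (3) do not need that claim, however, since the cohomology of a finite-dimensional complex concentrated in degrees $0,\dots,3$ is again finite-dimensional and concentrated there.
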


\begin{proof}
For Part (1), note that the module $S_i$ is one-dimensional by \Cref{eq:Si}, and hence it necessarily lies in $\D^{\gr}_{\fd}(\Gamma)$. For its perfectness, consider the explicit cofibrant resolution of $S_i$ constructed in \cite[Section~2.14]{KellerYang} and note that their construction is homogeneous for the additional internal grading because the Ginzburg differential has bidegree $(1,0)$. (See also \Cref{ssec:two_examples} below for some explicit instances of these cofibrant resolutions.) Since the resolution is a finite semi-free complex built from the projectives $e_j\Gamma$, it follows that $S_i$ is perfect and thus Part (1) is established.\\

\noindent For Part (2), it follows from \cite[Lemma~2.15]{KellerYang} that a basis of $\Hom_{\D(\Gamma)}(S_i,S_j[p])$ is given by generators of the Ginzburg quiver from $j$ to $i$ of cohomological degree $-p+1$, together with the identity when $i=j$ and $p=0$.  Since the Ginzburg quiver has arrows only in cohomological degrees $0,-1,-2$, Part (2) follows. Finally, finiteness of $Q$ gives finite-dimensionality and finite internal-weight support, concluding Part (3).
\end{proof}

\subsection{Graded Euler pairing}\label{ssec:graded_Euler_pairing} Let $(Q,W,d)$ be a graded QP and $\Gamma=\Gamma(Q,W,d)$ its associated differential bigraded Ginzburg algebra, as in \Cref{def:bigradedGinzburg}. If the internal grading is not taken into account, the Euler form in the Grothendieck group of the derived category of finite-dimensional modules over $\Gamma(Q,W)$ is an integer, to be interpreted as an element of the Grothendieck group of the trivial group. By incorporating the internal grading of $\Gamma$, as in \Cref{def:bigradedGinzburg}, the Euler form in the Grothendieck group of $\D^{\gr}_{\fd}(\Gamma)$ should be valued in $K_0(\mathbb{G}_m)\cong\Z[t,t^{-1}]$, i.e.~it should be a $\mathbb{G}_m$-equivariant (or $\Z$-graded) Euler characteristic.\footnote{It is possible to phrase the study of $\Z$-graded QPs in terms of $\mathbb{G}_m$ actions $\rho:\mathbb{G}_m\lr\mbox{Aut}_{R_Q}(\widehat{\kk Q})$ on the completed path algebra which act via $\rho_z(W)=zW$. In that viewpoint, it is natural to consider an equivariant Euler characteristic.} Here we use the coordinate $t$ to correspond with the internal shift, so that $[M\langle r\rangle]=t^r[M]$.\\

The simple modules $S_i$ are such that the Grothendieck group $K_0(\D^{\gr}_{\fd}(\Gamma))$ is a free $\Z[t,t^{-1}]$-module with basis $[S_i]$. Thus we can express the graded Euler pairing in terms of this basis of simple classes:

\begin{definition}[Graded Euler pairing in the simple basis]\label{def:euler}
Let $(Q,W,d)$ be a graded QP and $\Gamma=\Gamma(Q,W,d)$ its associated differential bigraded Ginzburg algebra. By definition, the graded Euler matrix of
\((Q,W,d)\) is
\begin{equation}\label{eq:euler-ext}
 \Eul_{(Q,W,d)}(t)_{ij}
 :=\sum_{p,r\in\Z}(-1)^p
 \dim_\kk\Ext^{p}_{\Gamma}(S_i,S_j)_r\cdot t^{r}.
\end{equation}
\hfill$\Box$\end{definition}
\noindent That is, the entry $\Eul_{(Q,W,d)}(t)_{ij}$ in \Cref{def:euler} is the graded Euler characteristic of $\RHom_{\Gamma}(S_i,S_j)$. It is written in
the internal-shift coordinate $t$, so that a copy of
$\Bbbk\langle r\rangle$ contributes $t^r$. The expansion \eqref{eq:euler-ext} can also be written as

\begin{equation}\label{eq:euler-ext2}
 \Eul_{(Q,W,d)}(t)_{ij}
 =\sum_{p\in\Z}(-1)^p
 \mbox{ch}_t\Ext^{p}_{\Gamma}(S_i,S_j)=\sum_{p=0}^3(-1)^p
 \mbox{ch}_t\Ext^{p}_{\Gamma}(S_i,S_j)
\end{equation}
where the second equality in \eqref{eq:euler-ext2} is \Cref{prop:ext-finite}.(2). In \eqref{eq:euler-ext2}, $ \mbox{ch}_t\Ext^{p}_{\Gamma}(S_i,S_j)$ denotes the equivariant character for the corresponding $\Gm$-action on $\Ext^{p}_{\Gamma}(S_i,S_j)$. Specifically, we used the equivariant character
\[
\operatorname{ch}_t(V)
:=
\sum_{r\in\mathbb Z}
\dim_\Bbbk\!\left(V_{-r}\right)t^r
\]
for a graded vector space $V$. Equivalently, if $V\cong\bigoplus_{r\in\mathbb Z}V_r\otimes_\Bbbk\Bbbk\langle r\rangle$ then $\operatorname{ch}_t(V)=\sum_r(\dim_\Bbbk V_r)t^r$. Such graded (or equivariant) Euler characteristic \eqref{eq:euler-ext} is additive in distinguished triangles and therefore defines an Euler pairing on the equivariant Grothendieck group. It is a sesquilinear form for the involution $t\to t^{-1}$. Note that \Cref{prop:ext-finite} implies that the entries \eqref{eq:euler-ext} are Laurent polynomials in the $t$-variable, as there are only finitely many non-vanishing coefficients and those are themselves finite. To ease notation we often denote $\Eul_{ij}(t):=\Eul_{(Q,W,d)}(t)_{ij}$ if $(Q,W,d)$ is clear by context.

\begin{remark} (1) The more ambitious prospect of considering the bivariate polynomial
\begin{align*}
  \Poin_{\Gamma}(u,t)_{ij}
  &=\sum_{p,r\in\Z}
  \dim_\kk\Ext^{p}_{\Gamma}(S_i,S_j)_ru^pt^{r}
\end{align*}
does not work for our purposes, as its behavior under equivalences and mutations is not by congruence. In part, this failure of functoriality for the above bivariate polynomial comes from the fact that such double-Euler characteristic is not additive in distinguished triangles.\\

\noindent (2) Some formulas in the quiver literature index the Euler matrix by arrows
\(i\to j\) rather than by \(\Ext(S_i,S_j)\): the resulting matrix is then \(\Eul(t)^{\mathsf T}\). Note that, nevertheless, their determinants are unaffected.
\hfill$\Box$\end{remark}

It will be useful to have a more hands-on description of the graded Euler pairing directly in terms of the quiver and its grading. We establish combinatorial formulas for the entries as follows:

\begin{proposition}[Combinatorial Euler pairing in the simple basis]\label{prop:euler-formula}
Let $(Q,W,d)$ be a graded QP. In the notation above,
\begin{equation}\label{eq:euler-formula}
 \boxed{
 \Eul_{ij}(t)
 =(1-t)\delta_{ij}
 -\sum_{a:j\to i}t^{d(a)}
 +\sum_{a:i\to j}t^{1-d(a)}.}
\end{equation}
\end{proposition}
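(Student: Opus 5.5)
The plan is to compute each entry $\Eul_{ij}(t)$ directly from an explicit basis of $\RHom_\Gamma(S_i,S_j)$, keeping track of the internal weight of every basis vector. As in the proof of \Cref{prop:ext-finite}, I would use the description of \cite[Lemma~2.15]{KellerYang}: a basis of $\Hom_{\D(\Gamma)}(S_i,S_j[p])$ consists of the generators of the Ginzburg quiver $\tilde Q$ from $j$ to $i$ of cohomological degree $1-p$, together with the identity when $i=j$ and $p=0$. The underlying reason is that the cofibrant resolution of $S_i$ from \cite[Section~2.14]{KellerYang} is minimal. Its terms are $e_i\Gamma$, the modules $e_{s(\alpha)}\Gamma$ indexed by the generators $\alpha$ of $\tilde Q$ ending at $i$, shifted by $|\alpha|-1$, and the differential lands in the arrow ideal. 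Applying $\Hom_\Gamma(-,S_j)$ therefore kills the differential, so the Ext groups are just the duals of the generating spaces.

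First I would check that this resolution is homogeneous for the internal grading. This holds because $\partial$ has bidegree $(1,0)$ (cf.~\Cref{def:bigradedGinzburg}). Consequently each summand $e_{s(\alpha)}\Gamma$ appears with internal shift equal to the internal degree of $\alpha$. Dualizing, the Ext class attached to $\alpha$ is a morphism of internal degree $-\deg_{\mathrm{int}}(\alpha)$. By the convention $\Ext^p_\Gamma(M,N)_r=H^p(\RHom_\Gamma(M,N)_{-r})$, this places it in weight $r=\deg_{\mathrm{int}}(\alpha)$, so it contributes $(-1)^p t^{\deg_{\mathrm{int}}(\alpha)}$ to \eqref{eq:euler-ext}.

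Next I would sort the contributions by $p$, using the table in \Cref{def:bigradedGinzburg}:
\begin{itemize}
\item $p=0$: only the identity when $i=j$. It has weight $0$ and contributes $\delta_{ij}$.
\item $p=1$: the generators of cohomological degree $0$ from $j$ to $i$, i.e.~the arrows $a:j\to i$ of $Q$, of internal degree $d(a)$. These contribute $-\sum_{a:j\to i}t^{d(a)}$.
\item $p=2$: the generators of cohomological degree $-1$ from $j$ to $i$, i.e.~the duals $a^*$ of arrows $a:i\to j$, of internal degree $1-d(a)$. These contribute $+\sum_{a:i\to j}t^{1-d(a)}$.
\item $p=3$: the loop $t_i$ when $i=j$, of internal degree $1$. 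It contributes $-t\,\delta_{ij}$.
\end{itemize}
Summing these four contributions gives exactly \eqref{eq:euler-formula}. As a consistency check, the same bookkeeping should reproduce the $3$-Calabi--Yau symmetry $\Eul_{ji}(t)=-t\,\Eul_{ij}(t^{-1})$, which the right-hand side of \eqref{eq:euler-formula} visibly satisfies.

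The main obstacle is getting the sign of the weight right, namely confirming that the arrow $a$ contributes $t^{d(a)}$ and not $t^{-d(a)}$. This depends on the internal-shift convention $(N\langle r\rangle)_p=N_{p+r}$ and on the fact that the relevant morphisms are duals of generators rather than the generators themselves. I would settle this by first working out the smallest example: a single arrow $a:j\to i$ with $W=0$. There the resolution of $S_i$ begins $e_j\Gamma\langle\,\cdot\,\rangle\xrightarrow{a\cdot}e_i\Gamma$, and one can read off that the degree-one extension lies in weight $r=d(a)$. Only after fixing the convention in that case would I run the general argument. The examples in \Cref{ssec:two_examples} would serve as a further check.
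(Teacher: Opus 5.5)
Your proposal is correct and is essentially the paper's proof: both invoke \cite[Lemma~2.15]{KellerYang} to identify $\Ext^p_\Gamma(S_i,S_j)$ with the generators of $\tilde Q$ from $j$ to $i$ of cohomological degree $1-p$ (plus the identity). Both then sort the identity, the arrows $a:j\to i$, the duals $a^*$ of arrows $a:i\to j$, and the loop $t_i$ into $p=0,1,2,3$ with weights $0$, $d(a)$, $1-d(a)$, $1$.

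One small caveat applies equally to the paper: your minimality claim, that the differential lands in the arrow ideal, fails if $W$ has quadratic terms, as for the stabilized QPs in \Cref{lem:trivial-cancel}. The formula is nevertheless unaffected, because the graded Euler characteristic of $\mathcal{H}om_\Gamma(pS_i,S_j)$ can be computed at chain level from the generators.
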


\begin{proof}
It suffices to use \cite[Lemma~2.15]{KellerYang}, which computes
\(\Hom(S_i,S_j[p])\) from the generators of the Ginzburg quiver that are directed from
\(j\) to \(i\). The contributions are thus as follows:

\begin{enumerate}
    \item The identity contributes \(+1\) for \(i=j,p=0\), corresponding to the summand $\delta_{ij}$ in \Cref{eq:euler-formula}.\\
    \item An original arrow \(a:j\to i\), $a\in Q_1$, has cohomological degree zero and internal degree \(d(a)\). The corresponding element of
$\operatorname{Ext}^1_\Gamma(S_i,S_j)$ has (standard) internal degree
$-d(a)$ and therefore contributes $-t^{d(a)}$. These are the contributions to the first (negative) sum in \Cref{eq:euler-formula}.\\
    \item If \(a:i\to j\), the opposite arrow \(a^*:j\to i\) has cohomological degree \(-1\) and internal degree \(1-d(a)\). The corresponding element of $\operatorname{Ext}^2_\Gamma(S_i,S_j)$ has standard internal degree $d(a)-1$ and hence contributes \(+t^{1-d(a)}\). These are the contributions to the second sum in \Cref{eq:euler-formula}.\\
    \item Finally, a loop \(t_i:i\to i\) has internal degree 1 and the
corresponding element of
$\operatorname{Ext}^3_\Gamma(S_i,S_i)$ has internal degree
$-1$. Thus a loop contributes $-t$. This is the term $-t\delta_{ij}$ in \Cref{eq:euler-formula}.
\end{enumerate}
Thus adding these four contributions gives \eqref{eq:euler-formula}.
\end{proof}

\noindent Note that \Cref{eq:euler-formula} depends only on the graded arrow bimodule and not
on the coefficients of \(W\). It is particularly apparent from \Cref{eq:euler-formula} that the ungraded Euler pairing, obtained by setting $t=1$, yields the exchange matrix of the quiver $Q$. In this sense, the graded Euler pairing can be understood as a deformation of the exchange matrix with parameter $t$.\\

The key point now is to understand how the equivariant Euler characteristics in \Cref{eq:euler-ext} behave in the following situations:\\

\begin{itemize}
    \item[(a)] The first task is to show that, while $\Eul_{(Q,W,d)}(t)_{ij}$ is defined using $(Q,W,d)$, the conjugacy class of the matrix $(\Eul_{(Q,W,d)}(t)_{ij})\in\mbox{Mat}(\Z[t,t^{-1}])$ is actually a well-defined invariant associated to any dilation class $\delta\in\Gclass[Q,W]$. In fact, more is true, as its monomial-conjugacy class will be well-defined for a dilation class.\\

    \item[(b)] The second task is to show that the congruence class of the matrix $(\Eul_{(Q,W,d)}(t)_{ij})$ is an invariant of the graded QP-mutation class of $(Q,W,d)$.\\
\end{itemize}

\noindent These verifications can be made using categorical arguments or by explicitly computing with \Cref{eq:euler-formula}. \Cref{sec:euler-dilation} implements task $(a)$ and \Cref{sec:euler-mutation} addresses task $(b)$.

\subsection{Two examples}\label{ssec:two_examples} Let us provide the details for how to compute in some examples the graded Euler pairing \eqref{eq:euler-ext}, in the basis of simple classes, as defined in \Cref{def:euler}.\\

First, let $(Q,W)$ be the 4-cycle QP, with quiver 
\begin{center}
\begin{tikzcd}[row sep=large, column sep=large]
    4 \arrow[d, "d"'] & 3 \arrow[l, "c"'] \\
    1 \arrow[r, "a"'] & 2 \arrow[u, "b"']
\end{tikzcd}
\end{center}
and potential $W = dcba$. For the arrow grading $\deg:Q_1\lr\Z$, let us assign internal degrees as
$$\deg(a)=1,\deg(b)=\deg(c)=\deg(d)=0,$$ and thus \Cref{def:bigradedGinzburg} dictates $$\deg(a^*)=0,\deg(b^*)=\deg(c^*)=\deg(d^*)=1,$$
while all the loops $t_i$ have $\deg(t_i)=1$. Note that the potential $W$ is indeed homogeneous of degree 1. This defines the differential bigraded Ginzburg algebra $\Gamma(Q,W,\deg)$ in this case.\\

\noindent Both the proof of \Cref{prop:ext-finite} and \Cref{prop:euler-formula} implicitly use cofibration resolutions of the simple modules $S_i$, as built in \cite[Section 2.14]{KellerYang}. Let us recall here that, as the proof of \cite[Section 3.12]{KellerYang} illustrates, such cofibrant resolution $pS_i$ of $S_i$ is constructed as a shifted mapping cone, with underlying vector space:

\begin{equation}\label{eq:cofibrant_resolution}
pS_i := \Sigma^3 P_i \oplus \left(\bigoplus_{\alpha \in Q_1: s(\alpha)=i} \Sigma^2 P_{t(\alpha)}\right) \oplus \left(\bigoplus_{\beta \in Q_1: t(\beta)=i} \Sigma P_{s(\beta)} \oplus P_i\right),
\end{equation}
where $(P_i,\partial_{P_i})$ denotes the projective dg module associated to the vertex $i\in Q_0$, and $\Sigma$ denotes suspension. The corresponding differential $\partial_{pS_i}$ acting on the vector space \eqref{eq:cofibrant_resolution} is given by the following block lower-triangular matrix, where the entries operate via left multiplication:
\begin{equation}\label{eq:cofibration_resolution_differential}
\partial_{pS_i} := \begin{pmatrix} 
\partial_{\Sigma^3 P_i} & 0 & 0 & 0 \\ 
\alpha & \partial_{\Sigma^2 P_{t(\alpha)}} & 0 & 0 \\ 
-\beta^* & -\partial_{\alpha\beta}W & \partial_{\Sigma P_{s(\beta)}} & 0 \\ 
t_i & \alpha^* & \beta & \partial_{P_i} 
\end{pmatrix}.
\end{equation}
Recall that we use the internal grading convention $(M\langle r \rangle)_{q} = M_{q+r}$,  cf.~\Cref{ssec:module_categories_bigraded}. In particular, with this convention, if a component of a differential is given by left multiplication by a homogeneous element of internal degree $s$, then the corresponding source projective module must be shifted by
$\langle-s\rangle$ in order for that component to have
internal degree zero. The minimal semi-free cofibrant resolutions $pS_i$ for the four simples in our example of $(Q,W)$ then are:
\begin{align*}
pS_1 &= P_1[3]\langle -1 \rangle \oplus P_2[2]\langle 0 \rangle \oplus P_4[1]\langle 0 \rangle \oplus P_1[0]\langle 0 \rangle \\
pS_2 &= P_2[3]\langle -1 \rangle \oplus P_3[2]\langle -1 \rangle \oplus P_1[1]\langle -1 \rangle \oplus P_2[0]\langle 0 \rangle \\
pS_3 &= P_3[3]\langle -1 \rangle \oplus P_4[2]\langle -1 \rangle \oplus P_2[1]\langle 0 \rangle \oplus P_3[0]\langle 0 \rangle \\
pS_4 &= P_4[3]\langle -1 \rangle \oplus P_1[2]\langle -1 \rangle \oplus P_3[1]\langle 0 \rangle \oplus P_4[0]\langle 0 \rangle
\end{align*}

\begin{table}[h]
\centering
\caption{Non-vanishing internally graded Ext groups $\Ext^p(S_i,S_j)$ for the non-degenerate 4-cycle graded QP $(Q,W,\deg)$.}
\label{table:Ext_groups1}
\begin{tabular}{@{}l c c c c@{}}
\toprule
Pair $(S_i, S_j)$ & $p=0$ & $p=1$ & $p=2$ & $p=3$ \\ 
\midrule
$(S_1, S_1)$ & $k\langle 0 \rangle$ & 0 & 0 & $k\langle 1 \rangle$ \\
$(S_2, S_2)$ & $k\langle 0 \rangle$ & 0 & 0 & $k\langle 1 \rangle$ \\
$(S_3, S_3)$ & $k\langle 0 \rangle$ & 0 & 0 & $k\langle 1 \rangle$ \\
$(S_4, S_4)$ & $k\langle 0 \rangle$ & 0 & 0 & $k\langle 1 \rangle$ \\
\addlinespace
$(S_1, S_2)$ & 0 & 0 & $k\langle 0 \rangle$ & 0 \\
$(S_2, S_3)$ & 0 & 0 & $k\langle 1 \rangle$ & 0 \\
$(S_3, S_4)$ & 0 & 0 & $k\langle 1 \rangle$ & 0 \\
$(S_4, S_1)$ & 0 & 0 & $k\langle 1 \rangle$ & 0 \\
\addlinespace
$(S_2, S_1)$ & 0 & $k\langle 1 \rangle$ & 0 & 0 \\
$(S_3, S_2)$ & 0 & $k\langle 0 \rangle$ & 0 & 0 \\
$(S_4, S_3)$ & 0 & $k\langle 0 \rangle$ & 0 & 0 \\
$(S_1, S_4)$ & 0 & $k\langle 0 \rangle$ & 0 & 0 \\
\bottomrule
\end{tabular}
\end{table}

\noindent By considering the homology of the morphism complex $\mathcal{H}om_{\Gamma}(pS_i, S_j)$, a summand $P_j[h]\langle r\rangle$ contributes a one-dimensional
vector space in internal degree $r$, equivalently to the component $\operatorname{Ext}^p_\Gamma(S_i,S_j)_{-r}$. The resulting non-zero graded Ext groups between the simples of $\Gamma(Q,W,d)$ are computed in \Cref{table:Ext_groups1}.\\

In order to compute the graded Euler pairing, as in \Cref{def:euler}, let $t$ be the formal parameter keeping track of the shift in internal degree. From \Cref{table:Ext_groups1}, the equivariant Euler pairing in the basis of the simples gives the matrix:

\[
\Eul_{(Q,W,d)}(t) = \begin{pmatrix} 
1 - t & 1 & 0 & -1 \\ 
-t & 1 - t & t & 0 \\ 
0 & -1 & 1 - t & t \\ 
t & 0 & -1 & 1 - t 
\end{pmatrix}.
\]
This concludes this first example. For our second example, we will consider the following QP $(Q',W')$, which adds two vertices to $(Q,W)$:

\begin{center}
$Q':=$\begin{tikzcd}[row sep=large, column sep=large]
    4 \arrow[d, "d"'] & 3 \arrow[l, "c"'] \arrow[r, "g"] & 6 \arrow[d, "f"] \\
    1 \arrow[r, "a"'] & 2 \arrow[u, "b"'] & 5 \arrow[l, "e"]
\end{tikzcd}
\end{center}
with potential $W' := dcba + efgb$. The underlying quiver is of finite mutation type $E_6$. The internal grading $\deg'$ for the graded QP $(Q',W',\deg')$ is that of $(Q,W,d)$ now extended such that
$$\deg(e)=1,\quad \deg(f)=\deg(g)=0,$$
and so their duals have degrees
$$\deg(e^*)=0,\quad \deg(f^*)=\deg(g^*)=1.$$ 

\noindent Following \eqref{eq:cofibrant_resolution} as above, the cofibrant resolutions $pS_i$ for the simples of $Q'$ have underlying vector spaces:
\begin{align*}
pS_1 &= P_1[3]\langle -1 \rangle \oplus P_2[2]\langle 0 \rangle \oplus P_4[1]\langle 0 \rangle \oplus P_1[0]\langle 0 \rangle \\
pS_2 &= P_2[3]\langle -1 \rangle \oplus P_3[2]\langle -1 \rangle \oplus P_1[1]\langle -1 \rangle \oplus P_5[1]\langle -1 \rangle \oplus P_2[0]\langle 0 \rangle \\
pS_3 &= P_3[3]\langle -1 \rangle \oplus P_4[2]\langle -1 \rangle \oplus P_6[2]\langle -1 \rangle \oplus P_2[1]\langle 0 \rangle \oplus P_3[0]\langle 0 \rangle \\
pS_4 &= P_4[3]\langle -1 \rangle \oplus P_1[2]\langle -1 \rangle \oplus P_3[1]\langle 0 \rangle \oplus P_4[0]\langle 0 \rangle \\
pS_5 &= P_5[3]\langle -1 \rangle \oplus P_2[2]\langle 0 \rangle \oplus P_6[1]\langle 0 \rangle \oplus P_5[0]\langle 0 \rangle \\
pS_6 &= P_6[3]\langle -1 \rangle \oplus P_5[2]\langle -1 \rangle \oplus P_3[1]\langle 0 \rangle \oplus P_6[0]\langle 0 \rangle
\end{align*}

\begin{table}[h]
\centering
\caption{Non-vanishing internally graded Ext groups $\Ext^p(S_i,S_j)$ for our second example, the non-degenerate graded QP $(Q',W',\deg')$.}
\label{table:Ext_groups2}
\begin{tabular}{@{}l c c c c@{}}
\toprule
Pair $(S_i, S_j)$ & $p=0$ & $p=1$ & $p=2$ & $p=3$ \\ 
\midrule
All $(S_i, S_i)$ & $k\langle 0 \rangle$ & 0 & 0 & $k\langle 1 \rangle$ \\
\addlinespace
$(S_1, S_2)$ & 0 & 0 & $k\langle 0 \rangle$ & 0 \\
$(S_2, S_3)$ & 0 & 0 & $k\langle 1 \rangle$ & 0 \\
$(S_3, S_4)$ & 0 & 0 & $k\langle 1 \rangle$ & 0 \\
$(S_4, S_1)$ & 0 & 0 & $k\langle 1 \rangle$ & 0 \\
$(S_5, S_2)$ & 0 & 0 & $k\langle 0 \rangle$ & 0 \\
$(S_3, S_6)$ & 0 & 0 & $k\langle 1 \rangle$ & 0 \\
$(S_6, S_5)$ & 0 & 0 & $k\langle 1 \rangle$ & 0 \\
\addlinespace
$(S_2, S_1)$ & 0 & $k\langle 1 \rangle$ & 0 & 0 \\
$(S_3, S_2)$ & 0 & $k\langle 0 \rangle$ & 0 & 0 \\
$(S_4, S_3)$ & 0 & $k\langle 0 \rangle$ & 0 & 0 \\
$(S_1, S_4)$ & 0 & $k\langle 0 \rangle$ & 0 & 0 \\
$(S_2, S_5)$ & 0 & $k\langle 1 \rangle$ & 0 & 0 \\
$(S_6, S_3)$ & 0 & $k\langle 0 \rangle$ & 0 & 0 \\
$(S_5, S_6)$ & 0 & $k\langle 0 \rangle$ & 0 & 0 \\
\bottomrule
\end{tabular}
\end{table}

The corresponding differentials, as above, follow from \eqref{eq:cofibration_resolution_differential}. The non-vanishing graded Ext groups between the simples of $\Gamma(Q',W',d')$ are summarized in \Cref{table:Ext_groups2}. Consequently, the graded Euler pairing for $(Q',W',\deg')$ in the basis of simple classes is given by:

\[
\Eul_{(Q',W',\deg')}(t) = \begin{pmatrix} 
1 - t & 1 & 0 & -1 & 0 & 0 \\ 
-t & 1 - t & t & 0 & -t & 0 \\ 
0 & -1 & 1 - t & t & 0 & t \\ 
t & 0 & -1 & 1 - t & 0 & 0 \\
0 & 1 & 0 & 0 & 1 - t & -1 \\
0 & 0 & -1 & 0 & t & 1 - t
\end{pmatrix}
\]

\noindent The determinant of $\Eul_{(Q',W',\deg')}(t)$ exactly recovers the Alexander polynomial of the $(3,4)$-torus knot:
\begin{equation}\label{eq:E6_determinant_Alexander}
\det(\Eul_{(Q',W',\deg')}(t)) = t^{6} - t^{5} + t^{3} - t + 1 = \Delta_{3,4}(t).
\end{equation}

\noindent To motivate such connection, note that the $(3,4)$-torus knot is the link of the $E_6$ simple plane curve singularity. By \cite[Theorem 18.3]{FPST}, the quiver associated to any real Morsification of such $E_6$ singularity is of mutation type $E_6$. The example of the $E_6$ simple plane curve singularity and its quivers will be further discussed in \Cref{ssec:examples}. The equality in \eqref{eq:E6_determinant_Alexander} is an instance of the more general phenomenon proven in \Cref{cor:cokernel_module_is_Alexander_algebraic_links}.

\subsection{Graded Euler pairing and dilation classes}\label{sec:euler-dilation} Let $(Q,W,d)$ be a graded QP and $(\Eul_{ij}(t))$ the matrix of graded Euler characteristics in the simple basis, as in \Cref{def:euler}. By \Cref{def:G1}, a dilation is an equivalence class under vertex gauging, homogeneous stabilizations and graded right-equivalences. Let us study how $(\Eul_{ij}(t))$ behaves under these equivalence relations:

\begin{lemma}[Effect of vertex gauge]\label{prop:euler-gauge}
Let $(Q,W,d)$ be a graded QP, $h:Q_0\lr\Z$ a vertex grading, and \(d^h\) the result of applying a vertex-gauge to $d$, as in \eqref{eq:gauge}. Consider the diagonal matrices
\[
 D_h(t)=\diag(t^{h(i)})_{i\in Q_0}.
\]
Then the resulting matrix $\Eul_{(Q,W,d^h)}(t)$ of graded Euler characteristics is given by
\begin{equation}\label{eq:euler-gauge}
 \Eul_{(Q,W,d^h)}(t)
 =D_h(t)\Eul_{(Q,W,d)}(t)D_h(t)^{-1}.
\end{equation}
\end{lemma}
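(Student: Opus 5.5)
The plan is to verify \eqref{eq:euler-gauge} entry by entry, using the combinatorial formula of \Cref{prop:euler-formula}. Since $D_h(t)$ is diagonal, the $(i,j)$ entry of the right-hand side is
\[
 \bigl(D_h(t)\Eul_{(Q,W,d)}(t)D_h(t)^{-1}\bigr)_{ij}=t^{h(i)-h(j)}\,\Eul_{(Q,W,d)}(t)_{ij}.
\]
So it suffices to show that each of the three kinds of terms in \eqref{eq:euler-formula} for the grading $d^h$ equals the corresponding term for $d$, multiplied by $t^{h(i)-h(j)}$. The potential $W$ plays no role, since \eqref{eq:euler-formula} depends only on the graded arrow bimodule. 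The quiver $Q$ and its arrow set are also unchanged by a vertex gauge, so the two sides sum over the same arrows.

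The terms are handled as follows.
\begin{enumerate}
\item Diagonal term $(1-t)\delta_{ij}$. This is only present when $i=j$, in which case the factor $t^{h(i)-h(j)}=1$, so it is unchanged.
\item Arrows $a:j\to i$. Here $s(a)=j$ and $t(a)=i$, so by \eqref{eq:gauge} we have $d^h(a)=d(a)+h(i)-h(j)$. Hence $t^{d^h(a)}=t^{h(i)-h(j)}t^{d(a)}$.
\item Arrows $a:i\to j$. Here $d^h(a)=d(a)+h(j)-h(i)$, so $1-d^h(a)=1-d(a)+h(i)-h(j)$. Hence $t^{1-d^h(a)}=t^{h(i)-h(j)}t^{1-d(a)}$.
\end{enumerate}
Summing these contributions gives \eqref{eq:euler-gauge}. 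Quivers have no loops by convention, so no arrow has $i=j$ and no arrow term interferes with the diagonal case.

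No step is a genuine obstacle; the only point needing care is the source/target and sign conventions in \eqref{eq:gauge} versus \eqref{eq:euler-formula}, which the bookkeeping above settles.

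A categorical sanity check is also available. The vertex gauge corresponds to an isomorphism of bigraded algebras $\Gamma(Q,W,d)\cong\Gamma(Q,W,d^h)$, obtained by regrading the idempotent pieces. Under it, $S_i$ corresponds to $S_i\langle \pm h(i)\rangle$, and the identity $[M\langle r\rangle]=t^r[M]$ together with sesquilinearity then produces the conjugation by $D_h(t)$. I would nonetheless present the direct combinatorial verification as the proof.
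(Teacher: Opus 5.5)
Your proposal is correct and follows the paper's proof essentially verbatim: both check the identity entrywise using \Cref{prop:euler-formula}, observing that each arrow term in the $(i,j)$-entry (for $a:j\to i$ and for $a:i\to j$) picks up exactly the factor $t^{h(i)-h(j)}$ under $d\mapsto d^h$, which is conjugation by $D_h(t)$. Your categorical aside is only heuristic, since the gauge gives an equivalence of graded module categories by shifting the vertex pieces rather than literally an isomorphism of bigraded algebras, but the argument does not need it.
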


\begin{proof}
This follows from \Cref{prop:euler-formula}. Indeed, for an arrow \(a:j\to i\),
\[
 t^{d^h(a)}=t^{h(i)-h(j)}t^{d(a)}.
\]
For an arrow \(a:i\to j\),
\[
 t^{1-d^h(a)}=t^{h(i)-h(j)}t^{1-d(a)}.
\]
Thus every \((i,j)\)-entry of \Cref{eq:euler-formula} is multiplied by
\(t^{h(i)-h(j)}\), which implies
\Cref{eq:euler-gauge}.
\end{proof}

\begin{lemma}[Effect of homogeneous stabilization]\label{lem:trivial-cancel}
Let $(Q,W,d)$ be a graded QP. Then, adjoining or deleting a homogeneous trivial QP summand to $(Q,W,d)$ does not change the Euler matrix, i.e.~ the resulting Euler matrix is equal to $\Eul_{(Q,W,d)}(t)$.
\end{lemma}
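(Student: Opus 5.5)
The plan is to argue directly from \Cref{prop:euler-formula}, which expresses $\Eul_{(Q,W,d)}(t)$ purely in terms of the graded arrow bimodule of $Q$, with no dependence on the coefficients of $W$. Adjoining a homogeneous trivial summand $(Q^{\mathrm{triv}},W^{\mathrm{triv}},d^{\mathrm{triv}})$ on the same vertex set produces the graded QP $(Q\oplus Q^{\mathrm{triv}},W+W^{\mathrm{triv}},d\sqcup d^{\mathrm{triv}})$. Its arrow set is $Q_1\sqcup Q^{\mathrm{triv}}_1$. By \eqref{eq:euler-formula}, the new Euler matrix is therefore
\[
\Eul_{(Q,W,d)}(t)+\bigl(\Eul_{(Q^{\mathrm{triv}},W^{\mathrm{triv}},d^{\mathrm{triv}})}(t)-(1-t)\mathrm{Id}\bigr).
\]
So it suffices to show that the arrow contributions of the trivial summand cancel entry by entry. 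Deletion is the same statement read backwards.

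\textbf{Step 1: graded normal form for the trivial summand.} Decompose the arrow space of the trivial summand as
\[
A^{\mathrm{triv}}=\bigoplus_{i,j,r}A_{ij,r},
\]
where $A_{ij,r}$ is spanned by arrows $i\to j$ of $d$-degree $r$. The potential $W^{\mathrm{triv}}$ is quadratic and homogeneous of degree one. Up to cyclic equivalence it is therefore a sum of bilinear pairings
\[
A_{ij,r}\times A_{ji,1-r}\longrightarrow\kk .
\]
The Jacobian algebra is isomorphic to $R_Q$. This means the cyclic derivatives $\partial_aW^{\mathrm{triv}}$, which are linear combinations of arrows, span the whole arrow space. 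Equivalently, each of these pairings is non-degenerate. I expect this step to be the main obstacle, although it is only linear algebra. The point to check is that homogeneity forces the pairing to respect the splitting by $(i,j,r)$. Non-degeneracy then gives
\[
\dim A_{ij,r}=\dim A_{ji,1-r}.
\]
Choosing dual bases also puts the summand in the form of \Cref{ex:trivialQP}, with $d(x_\nu)+d(y_\nu)=1$. Alternatively, this normal form can be quoted from the graded Splitting Theorem \cite[Theorem~6.6]{AO}.

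\textbf{Step 2: cancellation.} Take a homogeneous pair $x:i\to j$ and $y:j\to i$ with $d(y)=1-d(x)$. In the entry $\Eul_{ij}$, the arrow $x$ contributes $+t^{1-d(x)}$ through the second sum of \eqref{eq:euler-formula}. The arrow $y$ contributes $-t^{d(y)}=-t^{1-d(x)}$ through the first sum, so these cancel. In the entry $\Eul_{ji}$, the arrow $x$ contributes $-t^{d(x)}$ and the arrow $y$ contributes $+t^{1-d(y)}=+t^{d(x)}$, which again cancel. Summing over the dual bases from Step 1 shows that the trivial summand adds zero to every entry. This proves the lemma.

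\textbf{Categorical alternative.} One could instead note that the bigraded Ginzburg algebras of $(Q,W,d)$ and of its homogeneous stabilization are quasi-isomorphic by a bigraded quasi-isomorphism fixing the simples, and then invoke the intrinsic definition \eqref{eq:euler-ext}. The combinatorial route above is shorter, so I would use it.
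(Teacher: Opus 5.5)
Your proposal is correct and follows the paper's proof: both apply the combinatorial formula of \Cref{prop:euler-formula} and check that a homogeneous pair $x:i\to j$, $y:j\to i$ with $d(x)+d(y)=1$ contributes cancelling terms to the $(i,j)$- and $(j,i)$-entries. Your Step 1 makes explicit the reduction to such pairs, which the paper asserts without justification (``it suffices to consider''); in fact only the resulting equality $\dim A_{ij,r}=\dim A_{ji,1-r}$ is needed, since the formula depends only on the multiset of graded arrow degrees.
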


\begin{proof}
It suffices to consider arrows
\[
 x:i\to j,
 \qquad
 y:j\to i,
 \qquad\mbox{with }
 d(x)+d(y)=1,
\]
and the potential \(yx\).  In the \((j,i)\)-entry, \(x\) contributes
\(-t^{d(x)}\) and \(y\) contributes
\(+t^{1-d(y)}=+t^{d(x)}\), and so these contributions cancel. The two contributions in the
\((i,j)\)-entry cancel similarly, and no other entries change, thus proving that $\Eul_{(Q,W,d)}(t)$ remains the same.
\end{proof}

\begin{lemma}[Effect of graded right-equivalence]\label{lem:right-equivalence}
Let $(Q,W,d)$ be a graded QP. Then, applying a graded right-equivalence does not change the Euler matrix, i.e.~ the resulting Euler matrix is equal to $\Eul_{(Q,W,d)}(t)$.
\end{lemma}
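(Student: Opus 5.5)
The plan is to exploit \Cref{prop:euler-formula}, which shows that $\Eul_{(Q,W,d)}(t)$ depends only on the graded $R_Q$-bimodule of arrows $(A_Q,d)$ and not on $W$. Equivalently, it depends only on the numbers
\[
 n_{ij}(r):=\#\{a:i\to j,\ d(a)=r\}
\]
for all $i,j\in Q_0$ and $r\in\Z$. The statement therefore reduces to showing that a graded right-equivalence $\varphi:(Q,W,d)\to(Q',W',d')$ preserves these numbers. Since right-equivalences fix the vertex set, the indexing of the matrix is unchanged.

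The first step uses the fact that $\varphi$ fixes $R_Q$ and is an isomorphism of graded algebras. Hence $\varphi(\m_Q)=\m_{Q'}$, because $\m_Q$ is the unique two-sided ideal complementary to $R_Q$ that is topologically nilpotent; one can also cite \cite{DWZ}, where right-equivalences are continuous and preserve arrow ideals. It then follows that $\varphi(\m_Q^2)=\m_{Q'}^2$. Consequently $\varphi$ induces an isomorphism of $R_Q$-bimodules
\[
 \varphi^{(1)}:\m_Q/\m_Q^2\cong A_Q\longrightarrow A_{Q'}\cong\m_{Q'}/\m_{Q'}^2 .
\]

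The second step checks that $\varphi^{(1)}$ is homogeneous for the internal gradings. The ideals $\m_Q$ and $\m_Q^2$ are homogeneous for the complete internal grading induced by $d$, so the quotient inherits a grading in which $A_Q$ has the grading $d$; the same holds for $d'$ on $Q'$. A graded isomorphism induces a degree-preserving map on these quotients. Restricting to $e_jA_Qe_i$ in each internal degree $r$ then gives $n_{ij}(r)=n'_{ij}(r)$, and \eqref{eq:euler-formula} yields equality of Euler matrices. As a sanity check one could also argue categorically: $\varphi$ induces a bigraded quasi-isomorphism (indeed an isomorphism) of Ginzburg algebras $\Gamma(Q,W,d)\cong\Gamma(Q',W',d')$ that commutes with the augmentations. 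This matches simples with simples and preserves the bigraded Ext groups entering \eqref{eq:euler-ext}. I prefer the first argument because it avoids constructing the extension of $\varphi$ to the doubled quiver, which involves cyclic derivatives and the cyclic-equivalence ambiguity of $\varphi(W)$.

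The main obstacle is the homogeneity subtlety in the second step. The internal grading on $\widehat{\kk Q}$ is a complete grading, meaning a product over degrees and not a direct sum. I therefore need to confirm that ``isomorphism of graded algebras'' in \Cref{def:graded-right-equivalence} means that $\varphi$ maps each component $\widehat{\kk Q}_r$ into $\widehat{\kk Q'}_r$, and that $\m_Q^2$ is closed and homogeneous, so that the quotient grading is well defined. Both properties follow since every path is homogeneous and these ideals are spanned topologically by paths. With this in place, the rest is the bookkeeping above.
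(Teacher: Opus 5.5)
Your proposal is correct and follows the paper's own argument. The graded right-equivalence induces a graded $R_Q$-bimodule isomorphism on $\m/\m^2$, so it preserves the number of arrows $i\to j$ in each internal degree, and \eqref{eq:euler-formula} then gives literal equality of the Euler matrices. Your additional checks, that $\varphi(\m_Q)=\m_{Q'}$ and that $\m_Q^2$ is a closed homogeneous ideal, supply details the paper leaves implicit.
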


\begin{proof}
Indeed, under a graded right-equivalence, the induced map on
\(\m/\m^2\) is an isomorphism of graded vertex-bimodules.  Therefore, for
every ordered pair \((i,j)\), it preserves the multiset of arrow degrees
from \(i\) to \(j\). Thus Formula \eqref{eq:euler-formula} gives literal equality
of the matrices.
\end{proof}

Lemmas \ref{prop:euler-gauge}, \ref{lem:trivial-cancel} and \ref{lem:right-equivalence} together imply:

\begin{corollary}[Graded Euler pairing and dilations]\label{thm:intrinsic-euler}
Let $(Q,W)$ be a QP and \(\delta\in\Gclass[Q,W]\) a dilation class. Then the following holds:
\begin{enumerate}
    \item The conjugation class of
\(\Eul_{(Q',W',d')}(t)\) is independent of the representative $(Q',W',d')$ of the dilation class $\delta\in\Gclass[Q,W]$. That is, if $(Q_1,W_1,d_1),(Q_2,W_2,d_2)$ represent the same class in $\Gclass[Q,W]$ then $\Eul_{(Q_1,W_1,d_1)}(t)$ and $\Eul_{(Q_2,W_2,d_2)}(t)$ are conjugate by a matrix in $\GL_{|Q_0|}(\Z[t,t^{-1}])$.\\

\item The isomorphism type of the cokernel $\coker\Eul_\delta(t)$, considered as a $\Z[t,t^{-1}]$-module, is a well-defined invariant of the dilation class $\delta\in\Gclass[Q,W]$.\\

\item The determinant
\begin{equation}\label{eq:intrinsic-det}
 \det\Eul_\delta(t)\in\Z[t,t^{-1}]
\end{equation}
is a well-defined invariant of the dilation class $\delta\in\Gclass[Q,W]$.
\end{enumerate} 
\end{corollary}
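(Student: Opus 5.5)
The plan is to reduce everything to the three preceding lemmas, since a dilation class is, by \Cref{def:G1}, generated by exactly three elementary moves. Two representatives $(Q_1,W_1,d_1)$ and $(Q_2,W_2,d_2)$ of the same class $\delta\in\Gclass[Q,W]$ are therefore joined by a finite chain of graded QPs. Consecutive members of the chain differ by a single vertex gauge, a single graded right-equivalence, or a single homogeneous stabilization or reduction. It then suffices to show that each elementary move changes $\Eul(t)$ by conjugation with an element of $\GL_{|Q_0|}(\Z[t,t^{-1}])$. Conjugacy is an equivalence relation, so the effects compose along the chain.

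For Part (1), I would check the three moves using the matrices in the simple basis indexed by the common vertex set $Q_0$. This vertex set is unchanged along the chain, as remarked after \Cref{def:G1}, so all the matrices have the same size.
\begin{itemize}
\item For a vertex gauge, \Cref{prop:euler-gauge} gives conjugation by $D_h(t)=\diag(t^{h(i)})$. This matrix is invertible over $\Lambda$ because its diagonal entries are units $t^{h(i)}$.
\item For a graded right-equivalence, \Cref{lem:right-equivalence} gives literal equality of the two Euler matrices.
\item For a homogeneous stabilization, \Cref{lem:trivial-cancel} also gives literal equality. The one point to verify is that the lemma's proof, written for a single pair $x,y$, covers a general trivial graded summand. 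I would invoke the graded Splitting Theorem \cite[Theorem 6.6]{AO}: up to graded right-equivalence, a trivial graded QP is a direct sum of pairs $x_\nu y_\nu$, as in \Cref{ex:trivialQP}, with $d(x_\nu)+d(y_\nu)=1$. The cancellation can then be applied pair by pair, and the residual right-equivalence is handled by \Cref{lem:right-equivalence}.
\end{itemize}
Composing along the chain yields a single conjugating matrix in $\GL_{|Q_0|}(\Lambda)$. In fact it is a monomial diagonal matrix, namely the product of the $D_h$'s, which also justifies the parenthetical claim that the monomial-conjugacy class is well defined.

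For Part (2), write $\Eul_2=P\,\Eul_1P^{-1}$ with $P\in\GL_{|Q_0|}(\Lambda)$. Multiplication by $P$ is an automorphism of $\Lambda^{|Q_0|}$ carrying $\im\Eul_1=\Eul_1(\Lambda^{|Q_0|})$ onto $P\Eul_1 P^{-1}(\Lambda^{|Q_0|})=\im\Eul_2$. It therefore induces a $\Lambda$-module isomorphism $\coker\Eul_1\cong\coker\Eul_2$.

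For Part (3), the determinant satisfies $\det(P\Eul_1P^{-1})=\det\Eul_1$. This holds exactly, with no unit ambiguity, so $\det\Eul_\delta(t)$ is a genuine element of $\Lambda$ and not merely an element up to $\pm t^m$.

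I expect no real obstacle here. The only step needing care is the stabilization step, where the reduction of a general trivial graded summand to standard pairs must respect gradings. The graded Splitting Theorem is precisely what guarantees this.
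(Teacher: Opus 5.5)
Your proposal is correct and takes the same route as the paper, which simply deduces the corollary from \Cref{prop:euler-gauge}, \Cref{lem:trivial-cancel} and \Cref{lem:right-equivalence}, with the cokernel and determinant statements following from conjugation invariance as in \Cref{rmk:linear_algebra}. Your extra details — the chain of elementary moves, the pair-by-pair treatment of a general trivial graded summand, and the observation that the composite conjugating matrix is diagonal-monomial — are consistent with the paper and with the remark that follows the corollary.
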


\begin{remark}
Lemmas \ref{prop:euler-gauge}, \ref{lem:trivial-cancel} and \ref{lem:right-equivalence} actually imply the stronger statement that the diagonal-monomial conjugacy class of the graded Euler matrix is well-defined for a given dilation class. That is, for any two representatives of the same dilation class, the corresponding graded Euler matrices lie in the same orbit under conjugation by diagonal matrices with monomial entries in the $t$-variable. If vertex relabeling is allowed, the conclusion is modified by using permutation matrices with monomial entries in the $t$-variable, instead of just diagonal matrices.\hfill$\Box$
\end{remark}

\subsection{Graded Euler pairing and graded QP-mutation}\label{sec:euler-mutation}
Consider the involution \(\overline{f(t)}:=f(t^{-1})\) of \(\Lambda=\Z[t,t^{-1}]\), and for a
matrix \(M\in\mbox{GL}(\Lambda)\) set  $M^\dagger=\overline M^{\mathsf T}$. This is needed because the graded Euler pairing is sesquilinear. Indeed, in our convention $[M\langle r\rangle]=t^r[M]$, the graded
Euler pairing satisfies
\[
\chi_t(M\langle r\rangle,N)
=
t^{-r}\chi_t(M,N),
\qquad
\chi_t(M,N\langle r\rangle)
=
t^r\chi_t(M,N).
\]
Thus it is sesquilinear for the involution
$\overline{f(t)}=f(t^{-1})$, as claimed. The matrix for the graded Euler pairing in the simple basis, as in \Cref{def:euler}, behaves as follows under graded QP-mutation:

\begin{proposition}[Effect of graded QP-mutation]\label{prop:mutation-euler}
Let \((Q,W,d)\) be a reduced graded QP, $k\in Q_0$ a mutable vertex and $\mu_k^L(Q,W,d)$ its left graded QP-mutation. Then there is a matrix \(M_k(t)\in\mathrm{GL}_{Q_0}(\Z[t,t^{-1}])\) such that
\begin{equation}\label{eq:mutation-congruence}
 \Eul_{\mu_k^L(Q,W,d)}(t)
 =M_k(t)^\dagger\Eul_{(Q,W,d)}(t)M_k(t)
\end{equation}
In particular, we conclude that
\begin{equation}\label{eq:mutation-coker}
 \coker\Eul_{\mu_k^L(Q,W,d)}(t)
 \cong\coker\Eul_{(Q,W,d)}(t).
\end{equation}
as $\Z[t,t^{-1}]$-modules, and thus also

\begin{equation}\label{eq:mutation-det}
 \det\Eul_{\mu_k^L(Q,W,d)}(t)
 =\det\Eul_{(Q,W,d)}(t).
\end{equation}
\end{proposition}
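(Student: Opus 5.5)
The plan is to reduce to the pre-mutation and then verify the congruence \eqref{eq:mutation-congruence} entry by entry with \Cref{prop:euler-formula}. The candidate matrix $M_k(t)$ will be read off from the graded Keller--Yang derived equivalence. By definition, $\mu_k^L(Q,W,d)$ is the reduced graded part of $\widetilde\mu_k^L(Q,W,d)$. By the Graded Splitting Theorem \cite[Theorem~6.6]{AO}, the two differ by a graded right-equivalence and the deletion of a trivial graded summand. \Cref{lem:right-equivalence} and \Cref{lem:trivial-cancel} then give
\[
\Eul_{\mu_k^L(Q,W,d)}(t)=\Eul_{\widetilde\mu_k^L(Q,W,d)}(t).
\]
So it suffices to prove \eqref{eq:mutation-congruence} for the pre-mutated graded quiver. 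Its arrows, with degrees given by \eqref{eq:mut-composite}--\eqref{eq:mut-outgoing}, are completely explicit.

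To guess $M_k(t)$, I use the Keller--Yang equivalence $\Phi:\D(\Gamma(\mu_k(Q,W)))\to\D(\Gamma(Q,W))$, which restricts to $\D_{\fd}$ and carries simples to simple-tilted objects. Its graded refinement is compatible with internal gradings exactly when one uses the left graded degrees (cf.~\cite[Section~6]{AO}). On the level of $K_0$, this predicts the following. The class $[\Phi S'_k]$ is $-t^{c}[S_k]$ for a suitable exponent $c$ (I expect $c=0$ or $c=1$ from the loop degree). For $i\neq k$,
\[
[\Phi S'_i]=[S_i]+\Bigl(\sum_{b:k\to i}t^{\epsilon(b)}\Bigr)[S_k],
\]
where the sum runs over the arrows on one fixed side of $k$ (the side determined by the left convention), and the exponents $\epsilon(b)$ are linear in $d(b)$. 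Accordingly, $M_k(t)$ is the identity matrix except in its $k$-th column, whose entries are these Laurent polynomials, with diagonal entry $-t^{c}$. Since $\Phi$ is an equivalence preserving internal shifts, the sesquilinearity of $\chi_t$ noted before the statement yields $\Eul'=M_k^\dagger\Eul M_k$, for the appropriate choice of transposing $M_k$ to match the indexing convention $\Eul_{ij}=\chi_t(S_i,S_j)$.

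Rather than relying fully on the graded version of Keller--Yang, I will verify the identity directly. The entries of $M_k^\dagger\Eul M_k$ split into three cases. The $(k,k)$ entry is $(1-t)$ times a unit of the form $t^{-c}t^{c}$, which equals $1-t$. The $(i,k)$ and $(k,i)$ entries, once the row and column contributions are combined with $\Eul_{kk}=1-t$, must reproduce $-t^{1-d(a)}$ and $t^{-d(b)}$-type terms, which is exactly what \eqref{eq:mut-incoming}--\eqref{eq:mut-outgoing} give. The $(i,j)$ entries with $i,j\neq k$ must produce the additional composite arrows $[ba]$ with degree $d(a)+d(b)$, plus cancelling terms. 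The main obstacle is this last bookkeeping: one must check that the cross terms $\overline{(M_k)_{ki}}\,\Eul_{kk}\,(M_k)_{kj}$ and $\overline{(M_k)_{ki}}\Eul_{kj}+\Eul_{ik}(M_k)_{kj}$ combine to exactly $\sum_{a,b}\bigl(t^{1-d(a)-d(b)}-t^{d(a)+d(b)}\bigr)$ with the correct orientations. If a sign or exponent fails, I would first adjust $c$ and the $\epsilon(b)$ (equivalently, pass to the other side of $k$, i.e.\ the incoming arrows), since these are exactly the choices that the graded Keller--Yang equivalence pins down.

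Once \eqref{eq:mutation-congruence} holds, \eqref{eq:mutation-coker} follows because $M_k^\dagger$ and $M_k$ are invertible over $\Lambda$: $M_k$ is unitriangular up to the unit diagonal entry $-t^{c}$, and the involution $t\mapsto t^{-1}$ preserves units. Finally, $\det M_k^\dagger\det M_k=(-t^{-c})(-t^{c})=1$, which gives \eqref{eq:mutation-det}.
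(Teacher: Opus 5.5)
Your route differs from the paper's and it works. As written, though, the decisive identity is only announced, and one detail is wrong. With $\Eul'=M_k^\dagger\Eul M_k$ and column $j$ of $M_k$ equal to $[\Phi S'_j]$, the nontrivial entries of $M_k$ must sit in row $k$, not column $k$. A column-$k$ modification has $M_ke_j=e_j$ for $j\neq k$, so it cannot change the block $i,j\neq k$, which is exactly where the composite arrows must appear. Your cross terms already use $(M_k)_{ki}$, so only the earlier sentence needs fixing.

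Your guess succeeds with $c=0$ and $\epsilon(b)=-d(b)$. Write $A_{ij}(t)=\sum_{a:i\to j}t^{d(a)}$ and $\bar A_{ij}(t)=A_{ij}(t^{-1})$. Then \Cref{prop:euler-formula} reads $\Eul=(1-t)I-A^{\mathsf T}+t\bar A$. The left pre-mutation replaces $A$ by $A'_{ij}=A_{ij}+A_{ik}A_{kj}$ for $i,j\neq k$, together with $A'_{ki}=t\bar A_{ik}$ and $A'_{ik}=\bar A_{ki}$. Let $M_k$ be the identity except for $(M_k)_{kj}=\bar A_{kj}$ when $j\neq k$, and $(M_k)_{kk}=-1$. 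Then for $i,j\neq k$
\[
(M_k^\dagger\Eul M_k)_{ij}=\Eul_{ij}+A_{ki}\Eul_{kj}+\Eul_{ik}\bar A_{kj}+(1-t)A_{ki}\bar A_{kj}=\Eul_{ij}-A_{jk}A_{ki}+t\bar A_{ik}\bar A_{kj}.
\]
The remaining entries are as follows:
\begin{itemize}
\item the $(i,k)$ entry is $tA_{ki}-t\bar A_{ik}$;
\item the $(k,j)$ entry is $A_{jk}-\bar A_{kj}$;
\item the $(k,k)$ entry is $1-t$.
\end{itemize}
These are exactly the entries of $\Eul_{\widetilde\mu_k^L(Q,W,d)}(t)$. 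Your reduction via the graded Splitting Theorem, \Cref{lem:right-equivalence} and \Cref{lem:trivial-cancel} is correct, so this equals $\Eul_{\mu_k^L(Q,W,d)}(t)$. The incoming-side choice $(M_k)_{kj}=A_{jk}$, $(M_k)_{kk}=-t$ also works. No further adjustment of exponents is needed, and $\det M_k^\dagger\det M_k=1$. With this computation inserted, your proof is complete.

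The paper argues categorically. It asserts that the Keller--Yang mutation bimodule is homogeneous for the left graded degrees, so the derived equivalence is internally graded and preserves $\chi_t$. It then takes $M_k$ to have columns $[F(S'_i)]=[S_i]+p_i(t)[S_k]$ and $[F(S'_k)]=-t^m[S_k]$ from the mutation triangles, without determining $p_i$ or $m$.

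Your argument uses Keller--Yang only as a heuristic. Logically it rests on graded splitting, the two invariance lemmas and the combinatorial formula, followed by a formal matrix identity. What you gain is an explicit $M_k$, an identity that does not depend on $W$, and a self-contained proof that avoids the homogeneity claim about the bimodule, which the paper asserts without detailed verification. What the paper's route gives is the conceptual reason for the congruence: it is the $K_0$-shadow of a graded derived equivalence, which is what one would need for anything finer than the Euler form.
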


\begin{proof} Let us ease notation by writing $(Q',W',d'):=\mu_k^L(Q,W,d)$ and let \(\Gamma,\Gamma'\) be the corresponding complete differential bigraded Ginzburg algebras. By \cite[Theorem 3.2.(b)]{KellerYang}, QP-mutation induces mutually inverse derived equivalences of $\D_{\cont}(\Gamma)$,
restricting to derived equivalences of the corresponding perfect and finite-dimensional categories. This is achieved by constructing a $(\Gamma,\Gamma')$-bimodule $T$ such that left and right derived tensoring with $T$ induces such derived equivalences, i.e.~they construct a Fourier-Mukai kernel $T$, cf.~\cite[Section 3.4]{KellerYang}. In the case of a graded (homogeneous)
QP-mutation, every generator and differential in their mutation bimodule $T$ is
homogeneous for the internal grading prescribed by
\eqref{eq:mut-composite}--\eqref{eq:mut-outgoing}.  Thus the derived equivalence is
internal-degree preserving.\\

In order to understand the effect on the simples, and the corresponding Euler pairing, we use the mutation triangles for the simples as in
\cite[Remark 3.3]{KellerYang}. Indeed, their derived equivalence for QP-mutation implies that on the graded Grothendieck group
we have
\begin{equation}\label{eq:K0-nonk}
 [F(S_i')]=
 \begin{cases}
[S_i]+p_i(t)[S_k] & \mbox{if }i\ne k\\
-t^m[S_k] & \mbox{if }i=k,
 \end{cases}
\end{equation}
where each $p_i(t)\in\Z[t,t^{-1}]$ is a Laurent polynomial and $m\in\Z$.
The required matrix $M_k(t)\in\Z[t,t^{-1}]$ follows from \Cref{eq:K0-nonk}. Indeed, let \(M_k(t)\)
have these new classes $F(S_i')$ as columns, expressed in the old simple basis. Namely, \(M_k(t)\) is obtained from the identity by adding Laurent multiples of the \(k\)-th basis vector to
other basis vectors and replacing the \(k\)-th basis vector by
\(-t^m\) times itself. Therefore \(\det M_k(t)=-t^m\) and $M_k(t)$ is invertible. Since the derived equivalence in \cite[Theorem 3.2]{KellerYang} preserves the equivariant Euler pairing, by the discussion above, the corresponding matrices satisfy \eqref{eq:mutation-congruence}.\end{proof}

\subsection{A brief summary of perestroikas for the graded Euler form}\label{sec:euler-summary} It might be helpful to summarize the effect on $\Eul_{(Q,W,d)}(t)$ that certain operations on a graded QP have, as follows from \Cref{prop:euler-gauge}, \Cref{lem:trivial-cancel}, \Cref{lem:right-equivalence} and \Cref{prop:mutation-euler}:

\[
\boxed{
\begin{array}{c}
\text{right-equivalence}\\
(Q,W)\rightsquigarrow (Q,W')
\end{array}
\quad\stackrel\Longrightarrow\quad
\quad\Eul_{(Q,W',d)}(t)= \Eul_{(Q,W,d)}(t)
}
\]

\[
\boxed{
\begin{array}{c}
\text{homogeneous stabilization}\\
(Q,W)\rightsquigarrow (Q',W')
\end{array}
\quad\stackrel\Longrightarrow\quad
\quad\Eul_{(Q',W',d)}(t)= \Eul_{(Q,W,d)}(t)
}
\]

\[
\boxed{
\begin{array}{c}
\text{vertex gauging}\\
d\rightsquigarrow d+\delta h
\end{array}
\quad\Longrightarrow\quad
\Eul_{(Q,W,d^h)}(t)= D_h(t)\Eul_{(Q,W,d)}(t)D_h(t)^{-1}
}
\]

\[
\boxed{
\begin{array}{c}
\text{graded QP mutation}\\
(Q,W,d)\rightsquigarrow(Q',W',d')
\end{array}
\quad\Longrightarrow\quad
\Eul_{(Q',W',d')}(t)= M(t)^\dagger \Eul_{(Q,W,d)}(t)M(t)
}
\]

\noindent Therefore, under any of these four operations above, the isomorphism type of the $\Z[t,t^{-1}]$-module $\coker\Eul_{(Q,W,d)}(t)$ remains invariant, see \Cref{rmk:linear_algebra}.

\begin{remark}\label{rmk:linear_algebra} (A commutative algebra reminder) Consider a commutative ring $R$ and a linear map $f:R^n\lr R^m$. Given any $A\in\GL_n(R)$ and $B\in\GL_m(R)$, then the $R$-modules
$$\coker_R(f)\cong \coker_R(A\circ f\circ B)$$
are isomorphic, as $R$-modules. By taking $R=\Z[t,t^{-1}]$ we deduce that both congruence and conjugation each preserves the isomorphism type of the cokernel module $\coker\Eul_{(Q,W,d)}(t)$. For $n=m$, the determinant of $f$ is recovered from the cokernel, up to units of $R$, as the generator of its $0$th Fitting ideal. Since the units of the ring of Laurent polynomials $\Z[t,t^{-1}]$ are $\pm t^{m}$, $m\in\Z$, the determinant $\det\Eul_{(Q,W,d)}(t)$ is recovered from $\coker\Eul_{(Q,W,d)}(t)$ up to a product with $\pm t^{m}$, $m\in\Z$.\hfill$\Box$
\end{remark}

\subsection{Canonically graded quivers}\label{ssec:canonically_gradedQPs} The construction in this section leads to defining the following class of quivers:

\begin{definition}[Canonically graded quivers]\label{def:canonically_graded} A quiver $Q$ is said to be canonically graded if:
\begin{enumerate}
    \item There exists a unique non-degenerate reduced QP $(Q,W)$, up to right-equivalence.

    \item For the unique non-degenerate potential $W$, we have $|\Gclass[Q,W]|=1$.\hfill$\Box$
\end{enumerate}
\end{definition}

\begin{corollary}\label{cor:canonically_graded_invariant}
Let $Q$ be a canonically graded quiver, with its unique non-degenerate reduced QP $(Q,W)$ and dilation $\delta\in \Gclass[Q,W]$. Then:

\begin{enumerate}
    \item The isomorphism type of the $\Z[t,t^{-1}]$-module
    $$\coker \Eul_{(Q,W,\delta)}(t)$$
    is an invariant of the quiver mutation class $[Q]$.\\

    \item The determinant
$$\det\Eul_{(Q,W,\delta)}(t)\in\Z[t,t^{-1}]$$
of the graded Euler pairing is an invariant of the quiver mutation class $[Q]$.
\end{enumerate}
\end{corollary}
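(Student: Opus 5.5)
The plan is to transport the invariant along a mutation sequence and then to use canonical gradedness to remove every choice made along the way. Let $Q'\in[Q]$ be obtained from $Q$ by a sequence of quiver mutations $\mu_{k_s}\cdots\mu_{k_1}$. Since $(Q,W)$ is non-degenerate, each QP-mutation $\mu_{k_r}\cdots\mu_{k_1}(Q,W)$ has a $2$-acyclic reduced quiver, namely the corresponding quiver mutation. Hence $(Q',W'):=\mu_{k_s}\cdots\mu_{k_1}(Q,W)$ is a non-degenerate reduced QP on $Q'$. We then apply \Cref{thm:G1-mutation} iteratively, which gives canonical bijections
\[
\Gclass[Q,W]\xrightarrow{\sim}\Gclass[\mu_{k_1}(Q,W)]\xrightarrow{\sim}\cdots\xrightarrow{\sim}\Gclass[Q',W'].
\]
In particular $|\Gclass[Q',W']|=1$, and the unique dilation $\delta'$ is the image of $\delta$. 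Concretely, we pick a representative $(Q,W,d)$ of $\delta$ and apply left graded QP-mutations step by step. \Cref{prop:mutation-euler} gives, at each step, a congruence by an invertible matrix. \Cref{thm:intrinsic-euler} lets us replace representatives freely inside a dilation class, up to conjugation. \Cref{rmk:linear_algebra} then shows that $\coker\Eul$ is preserved up to isomorphism at every step, and that $\det\Eul$ is preserved up to a unit $\pm t^m$ (exactly, at each mutation step, by \eqref{eq:mutation-det}).

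The second step handles the ambiguity in the choice of potential on the target. The invariant attached to $Q'$ must not depend on the path chosen, nor on $Q'$ admitting some other non-degenerate potential. The statement is naturally read as comparing two canonically graded quivers $Q_1,Q_2$ in the same mutation class, or equivalently as saying that the value computed at any canonically graded $Q'\in[Q]$ equals the value at $Q$. Suppose $Q'$ is itself canonically graded. Its unique non-degenerate reduced QP is, up to right-equivalence, the $(Q',W')$ obtained by mutation. By \Cref{cor:G1-presentation}, right-equivalence induces a canonical bijection of dilation sets, which are singletons. So the unique dilation of $Q'$ corresponds to $\delta'$, and the invariants computed at $Q'$ agree with those transported from $Q$. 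For part (2), the determinant is fixed exactly under mutation by \eqref{eq:mutation-det} and exactly under conjugation. The subtlety is that the units $\pm t^m$ from \Cref{rmk:linear_algebra} only enter when the determinant is read off from the cokernel, so here we use the matrix identities directly rather than the cokernel.

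I expect the main obstacle to be precisely this well-definedness: ensuring that the dilation reached by mutation coincides with the canonical one on the target, independently of the mutation path. Canonical gradedness is used in exactly this place, through uniqueness of $W$ and $|\Gclass|=1$. A second point needing care is that \Cref{prop:mutation-euler} is stated for reduced graded QPs. So after each graded mutation we must pass to the reduced graded part via the graded Splitting Theorem, and note by \Cref{lem:trivial-cancel} and \Cref{lem:right-equivalence} that this does not change $\Eul$. With these two points in place, both statements follow by induction on the length of the mutation sequence.
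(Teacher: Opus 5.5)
Your route is the paper's: transport the unique dilation along the mutation sequence with \Cref{thm:G1-mutation}, then use \Cref{prop:mutation-euler} (with \Cref{thm:intrinsic-euler} and \Cref{rmk:linear_algebra}) to see that the cokernel and the determinant do not change. The gap is in your second paragraph, where you only handle targets $Q'\in[Q]$ that you assume are themselves canonically graded. You never show that an arbitrary $Q'\in[Q]$ has a unique non-degenerate potential up to right-equivalence. Your concern about ``$Q'$ admitting some other non-degenerate potential'' is therefore set aside by hypothesis rather than resolved, and on your reading the quantity could be undefined on part of $[Q]$. The paper's proof makes exactly this propagation its first step. It is also needed later: \Cref{cor:unique_dilation_mutation_equivalent_plabic_fence}$(i)$ and the proof of the Main Theorem apply the corollary to $Q(D)$, which is only known to be mutation equivalent to $Q(\bG(D))$, not to be canonically graded.

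The missing step is short.
\begin{itemize}
\item Let $V'$ be any non-degenerate potential on $Q'$, and apply the reversed sequence $\mu_{k_1}\cdots\mu_{k_s}$ to $(Q',V')$.
\item By non-degeneracy, each intermediate reduced quiver is $2$-acyclic and hence equals the ordinary quiver mutation. So you arrive at a non-degenerate QP $(Q,V)$, which by hypothesis is right-equivalent to $(Q,W)$.
\item QP-mutation respects right-equivalence and is involutive up to right-equivalence \cite[Section~5]{DWZ}. Mutating forward therefore gives $(Q',V')\simeq\mu_{k_s}\cdots\mu_{k_1}(Q,W)=(Q',W')$.
\end{itemize}
Hence every $Q'\in[Q]$ has a unique non-degenerate potential and, by your first paragraph, a unique dilation. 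That is, every member of $[Q]$ is canonically graded, and the same argument settles the path-independence you worried about. With this step inserted, your argument is complete. Your extra care about passing to reduced parts after each mutation is unnecessary, since $\mu_k^L$ is by definition already the reduced graded part, but it does no harm.
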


\begin{proof}
Let $Q'\in[Q]$ be a quiver mutation equivalent to $Q$. Since QP mutation is invertible and preserves non-degeneracy, $Q$ having a unique non-degenerate potential, up to right equivalence, implies that $Q'$ has a unique non-degenerate potential $W'$, up to right-equivalence. By \Cref{thm:G1-mutation} we obtain that $|\Gclass[Q',W']|=|\Gclass[Q,W]|=1$, and so $\Gclass[Q',W']$ has a unique element $\delta'$. Thus we can name $\det\Eul_{(Q',W',\delta')}(t)$ from $Q'$ intrinsically. \Cref{prop:mutation-euler} then implies the required statements.
\end{proof}

\subsection{The example of tree quivers}\label{ssec:tree_quivers} Let $(Q,W)$ be a reduced QP and $\Gclass[Q,W]$ its set of dilation classes. By the results of \Cref{sec:euler-dilation} and \Cref{sec:euler-mutation}, the set of polynomials
$$\Pclass(Q,W):=\{\det\Eul_\delta(t)\in\Z[t,t^{-1}]:\mbox{ }\delta\in\Gclass[Q,W]\}$$
is an invariant of the QP-mutation class of $(Q,W)$. In particular:\\

\begin{enumerate}
    \item If $|\Gclass[Q,W]|=1$, i.e.~if there exists a unique dilation class, then $\Pclass(Q,W)$ is a unique Laurent polynomial. Thus we obtain in this case a Laurent polynomial as an invariant of a QP-mutation class.\\

    \item If $|\Gclass[Q,W]|=1$ and $W$ is the unique non-degenerate potential for $Q$, up to right equivalence, as in \Cref{def:canonically_graded}, then we obtain in this case a Laurent polynomial as an invariant of a quiver mutation class, cf.~\Cref{cor:canonically_graded_invariant}. Namely, for each quiver in the mutation class we can select the right-equivalence class of the unique non-degenerate potential and consider its unique dilation. From there $\Pclass(Q,W)$ provides the Laurent polynomial invariant of the quiver mutation class.\\
\end{enumerate}

\noindent There is a first example where $(2)$ above holds: for any quiver $Q$ whose underlying graph is a tree, cf.~\cite[Def.~1.6]{FominNeville}. That is, for any quiver obtained by considering a tree and simply orienting its edges. Though not logically needed for our main result, we briefly study this case of tree quivers for completeness. For a tree quiver, since there are no cycles, any potential is automatically zero, and the arrow-gradings are unique up to vertex gauge. More precisely:

\begin{lemma}[$\exists!$ reduced graded QP for tree quivers]\label{lem:unique-tree-qp}
Let $Q$ be a tree quiver. Then:
\begin{enumerate}[label=\textup{(\roman*)}]
\item There exists a unique potential $W=0$ on $Q$ and the corresponding QP is rigid, and hence non-degenerate.\\

\item Every integral arrow grading $d\colon Q_1\to\mathbb Z$ is compatible with the requirement that $W$ is homogeneous of degree $1$, and all such gradings are equivalent under vertex gauge. Thus the unique gauge class is represented by the zero grading $d(a)=0$ for all arrows $a\in Q_1$. 
\end{enumerate}
\end{lemma}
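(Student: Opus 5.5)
The plan is to argue directly from the combinatorics of a tree, using that the underlying graph of $Q$ has no cycles, oriented or not.

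For Part (i), note that a potential is an element of $\m^2_{\cyc}$, i.e.\ a (possibly infinite) linear combination of cyclic paths of length at least two. A cyclic path in $Q$ is a closed walk following arrow orientations. Any such walk of positive length would yield a cycle in the underlying graph, or would require traversing an arrow and then returning along its reverse, which is impossible since only the forward direction is available. Hence $\m^2_{\cyc}=0$ and $W=0$ is the only potential, up to cyclic equivalence. With $W=0$, the QP is reduced and the quiver has no oriented $2$-cycles. For rigidity, I would invoke the DWZ criterion that a QP is rigid when every cycle is cyclically equivalent to an element of the Jacobian ideal; this holds vacuously because there are no cycles. Rigid reduced QPs are non-degenerate by \cite[Corollary 8.2]{DWZ} (rigidity is preserved under mutation and rigid QPs have $2$-acyclic reduced parts). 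I expect this citation step to be the only non-formal input.

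For Part (ii), the degree-one condition on $W$ is vacuous since $W=0$ has no homogeneous components, so every $d:Q_1\to\Z$ is admissible. To show that all such gradings are gauge equivalent, I would construct $h:Q_0\to\Z$ with $d^h\equiv 0$, that is, $h(t(a))-h(s(a))=-d(a)$ for every arrow $a$. Fix a root vertex $v_0$ and set $h(v_0)=0$. Each vertex $v$ has a unique reduced path in the underlying tree from $v_0$; define $h(v)$ by summing $-d(a)$ over the arrows traversed forward and $+d(a)$ over those traversed backward. Uniqueness of tree paths makes $h$ well defined, and each arrow's constraint holds because its endpoints' paths differ exactly by that arrow.

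If $Q$ is a forest rather than a tree, I would repeat the construction componentwise. This shows every grading is gauge equivalent to the zero grading, which gives the claimed unique gauge class. Equivalently, the coboundary map $\Z^{Q_0}\to\Z^{Q_1}$ is surjective because $H^1$ of a tree vanishes.
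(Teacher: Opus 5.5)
Your proposal is correct and follows the paper's proof. In part (i), the absence of cycles forces $W=0$, rigidity then holds vacuously, and non-degeneracy follows from \cite[Corollary~8.2]{DWZ}; in part (ii), your explicit tree-path construction of $h$ is just a concrete form of the paper's argument that gradings modulo vertex gauge are classified by $H^1(|Q|;\Z)$, which vanishes for a tree.
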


\begin{proof}
For $(i)$, since the underlying unoriented graph is a tree, $Q$ has no unoriented cycles and thus $W=0$ is the only possible potential. As there are therefore no nonzero cyclic deformations, the QP $(Q,0)$ is rigid and, by \cite[Corollary~8.2]{DWZ}, non-degenerate.\\

\noindent For $(ii)$, since the zero potential is homogeneous of every degree, every arrow-grading $d\colon Q_1\to\mathbb Z$ is compatible. Recall that a vertex gauge is a function $h\colon Q_0\to\mathbb Z$ acting on an arrow-grading $d$ by
\[
 d^{h}(a)=d(a)+h(t(a))-h(s(a)).
\]
Thus the quotient of the $\Z$-span of the arrows by vertex gauging is canonically $H^1(|Q|;\mathbb Z)$. Since $|Q|$ is a tree, $H^1(|Q|;\mathbb Z)=0$.  Thus all arrow gradings are gauge-equivalent, and the zero grading is a representative.  Homogeneous trivial QP summands disappear under graded QP-reduction, and so they do not alter the reduced stable class.
\end{proof}

\begin{remark}
The conclusion in \Cref{lem:unique-tree-qp}.(ii) uses that $Q$ is a tree quiver, not just an acyclic quiver. For instance, the quiver $Q$ with three vertices $\{1,2,3\}$ and arrows $1\to2$, $2\to3$ and $1\to3$ is acyclic but $H^1(|Q|;\Z)\cong\Z$ does not vanish. Thus the argument fails and, in fact, there is not a unique dilation class in this case. Similarly, \Cref{lem:matching} below only holds for tree quivers, but not for general acyclic quivers.\hfill$\Box$
\end{remark}

\noindent By \Cref{prop:euler-formula}, and using the zero potential and grading as in \Cref{lem:unique-tree-qp}, the graded Euler pairing $\cE_Q(t):= \cE_{(Q,0,0)}(t)$ in terms of the simple modules of the differential bigraded Ginzburg algebra reads combinatorially as:

\begin{equation}\label{eq:graded-euler-entry}
 \boxed{
 \cE_Q(t)_{ij}
 =(1-t)\delta_{ij}
 -\#\{a:j\to i\}
 +t\,\#\{a:i\to j\}.}
\end{equation}
Equivalently, every arrow $i\to j$ contributes $t$ in position $(i,j)$ and
$-1$ in position $(j,i)$, while every diagonal entry is $1-t$. From \eqref{eq:graded-euler-entry} there is a matching expansion for the determinant in certain cases, as follows:

\begin{lemma}[Matching expansion]\label{lem:matching}
Let $T$ be a tree quiver with $N$ vertices and no multiple arrows, and let $m_k(T)$ be the number of matchings of $T$ with exactly $k$ edges.  Then
\begin{equation}\label{eq:matching-formula}
 \boxed{
 \det\cE_T(t)=\sum_{k\ge 0}m_k(T)t^k(1-t)^{N-2k}.
 }
\end{equation}
In particular, the determinant is independent of the orientation of the tree.
\end{lemma}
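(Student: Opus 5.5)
Write $\cE_T(t)=(1-t)I+A$, where $A$ is the matrix with $A_{ij}=t$ and $A_{ji}=-1$ for each arrow $i\to j$ of $T$; all other entries of $A$ vanish, including its diagonal. This is \eqref{eq:graded-euler-entry}, using that $T$ has no multiple arrows. We expand the determinant via the Leibniz formula
\[
\det\cE_T(t)=\sum_{\sigma\in S_N}\operatorname{sgn}(\sigma)\prod_{i}\cE_T(t)_{i\sigma(i)}.
\]
A permutation contributes only if every non-fixed point $i$ has $\sigma(i)$ adjacent to $i$ in the underlying tree. Decompose such a $\sigma$ into cycles. Any cycle of length $\ge3$ would be a closed walk $i\to\sigma(i)\to\sigma^2(i)\to\cdots$ through distinct vertices along tree edges, i.e.~a cycle in the underlying graph, which is impossible. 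So only fixed points and transpositions $(i\,j)$ along edges occur, and the transpositions of $\sigma$ form a matching of $T$.

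Next we compute the weight of each such $\sigma$. Suppose $\sigma$ corresponds to a matching $M$ with $k$ edges. Each fixed point contributes a diagonal factor $1-t$, giving $(1-t)^{N-2k}$. Each transposition contributes the sign $-1$ times $\cE_{ij}\cE_{ji}$. For an edge oriented $i\to j$ this product is $t\cdot(-1)=-t$, independent of orientation, so each transposition contributes $(-1)(-t)=t$. Summing over matchings gives $\sum_k m_k(T)t^k(1-t)^{N-2k}$. The formula visibly depends only on the underlying tree, which gives orientation independence.

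The only point needing care is excluding longer cycles, and this is exactly where the tree hypothesis enters. This is consistent with the preceding remark that the formula fails for acyclic non-trees, where longer cycles of $\sigma$ along graph cycles contribute.
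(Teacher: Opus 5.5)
Your proposal is correct and follows the paper's proof essentially verbatim: a Leibniz expansion in which the tree hypothesis rules out permutation cycles of length at least three, so contributing permutations correspond to matchings. Each transposed edge then contributes $(-1)\cdot t\cdot(-1)=t$ and each fixed point contributes $1-t$.
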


\begin{proof}
First, expand the determinant as a sum over permutations. If a nonzero permutation term contained a cycle of length at least three, the corresponding nonzero off-diagonal matrix entries would exhibit an unoriented cycle in $T$. This is impossible because $T$ is a tree and thus every contributing permutation is a product of disjoint transpositions along edges and fixed points. In consequence, its transposed edges form a matching. For each transposed edge $\{i,j\}$, the permutation sign contributes $-1$, whereas the two matrix entries are $t$ and $-1$ (in one order or the other), and hence $\cE_{ij}(t)\cE_{ji}(t)=t(-1)=-t$.  Their combined contribution is therefore $t$. Since each fixed point contributes $1-t$, a matching of size $k$ thus contributes $t^k(1-t)^{N-2k}$. By summing over all matchings we obtain \eqref{eq:matching-formula}.
\end{proof}

\subsubsection{Relation to \cite{FominNeville}} In the article \cite{FominNeville}, the authors use cyclically ordered quivers to construct polynomials that are invariant under proper mutation of a cyclically ordered quiver. The case of a tree quiver $Q$ is discussed in detail in \cite[Section 10]{FominNeville}: by \cite[Prop.~2.6\&Cor.~4.11]{FominNeville} their polynomial $\FNDelta_Q(t)$ is independent of the choice of cyclic ordering. Let us now compare $\det\cE_Q(t)$, as introduced in \Cref{def:euler}, to $\FNDelta_Q(t)$, from \cite{FominNeville}, as follows.\\

\noindent Given a tree quiver $Q$, let us consider the matrix $B_Q$ with entries
\[
 b_{ij}:=\#\{i\to j\}-\#\{j\to i\},
\]
and let $U_Q$ be the unipotent companion, cf.~\cite[Def.~4.2]{FominNeville}, given by the unique upper-triangular matrix with diagonal entries $1$ satisfying
\begin{equation}\label{eq:FN-companion}
 -B_Q=U_Q-U_Q^{\mathsf T}.
\end{equation}
By construction, see \cite[Def.~8.1]{FominNeville}, we have $\FNDelta_Q(t)=\det(tU_Q-U_Q^{\mathsf T})$. The comparison then reads:

\begin{lemma}[Comparison to \cite{FominNeville}]\label{lem:FN-comparison}
Let $Q$ be a tree quiver on $N$ vertices, and choose an ordering of its vertices so that every arrow points from a smaller vertex to a larger one. Then
\begin{equation}\label{eq:E-U-trees}
 \cE_{(Q,0,0)}(t)=U_Q^{\mathsf T}-tU_Q.
\end{equation}
In consequence, we obtain the equality of polynomials
\begin{equation}\label{eq:sign-comparison}
 \boxed{\det\cE_{(Q,0,0)}(t)=(-1)^N\FNDelta_Q(t).}
\end{equation}
\end{lemma}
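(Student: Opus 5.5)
The plan is a direct entry-by-entry comparison, using the combinatorial formula \eqref{eq:graded-euler-entry} on one side and the definition \eqref{eq:FN-companion} of the unipotent companion on the other. I first pin down $U_Q$ explicitly. With the chosen ordering every arrow goes from a smaller to a larger vertex, so for $i<j$ we have $b_{ij}=\#\{i\to j\}\ge 0$ and $b_{ji}=-b_{ij}$. An upper-triangular unipotent matrix $U$ is determined by $U-U^{\mathsf T}$: the diagonal is forced to be $1$, and for $i<j$ the $(i,j)$-entry of $U-U^{\mathsf T}$ is $U_{ij}$. Hence \eqref{eq:FN-companion} gives
\[
(U_Q)_{ij}=\delta_{ij}-b_{ij}=\delta_{ij}-\#\{i\to j\}\quad (i\le j),\qquad (U_Q)_{ij}=0\quad (i>j).
\]
Uniqueness of $U_Q$ as stated in \cite[Def.~4.2]{FominNeville} guarantees that this is the companion.

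Next I compute $U_Q^{\mathsf T}-tU_Q$ entrywise and match it with \eqref{eq:graded-euler-entry}. On the diagonal both sides equal $1-t$. For $i<j$ there is no arrow $j\to i$, so the Euler matrix has entry $t\,\#\{i\to j\}$. The $(i,j)$-entry of $U_Q^{\mathsf T}$ is zero, and the $(i,j)$-entry of $-tU_Q$ is $t\,\#\{i\to j\}$, so the two agree. For $i>j$ there is no arrow $i\to j$, so the Euler entry is $-\#\{j\to i\}$. On the other side, $-tU_Q$ vanishes there and $(U_Q^{\mathsf T})_{ij}=(U_Q)_{ji}=-\#\{j\to i\}$, which again agrees. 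This proves \eqref{eq:E-U-trees}.

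For \eqref{eq:sign-comparison}, I write
\[
\det\cE=\det(U_Q^{\mathsf T}-tU_Q)=(-1)^N\det(tU_Q-U_Q^{\mathsf T})=(-1)^N\FNDelta_Q(t),
\]
using the definition $\FNDelta_Q(t)=\det(tU_Q-U_Q^{\mathsf T})$ from \cite[Def.~8.1]{FominNeville}.

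I expect the main obstacle to be purely conventional. One has to check that the sign and orientation conventions for $B_Q$ and $U_Q$ in \cite{FominNeville} match those fixed in \eqref{eq:FN-companion}, and that the ordering hypothesis (all arrows increasing) is exactly what makes $U_Q$ upper triangular with these entries. If \cite{FominNeville} uses the transposed convention, I would redo the matching with $U_Q^{\mathsf T}$ in place of $U_Q$. The determinant identity survives this change, since $\det(tU^{\mathsf T}-U)=\det(tU-U^{\mathsf T})$ by transposition.
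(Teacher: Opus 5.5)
Your proposal is correct and follows the paper's own proof. Both arguments read off $(U_Q)_{ij}=-\#\{i\to j\}$ for $i<j$ from \eqref{eq:FN-companion}, then match the entries of $U_Q^{\mathsf T}-tU_Q$ with \eqref{eq:graded-euler-entry} on the diagonal, above it and below it. Both then obtain \eqref{eq:sign-comparison} by writing $\FNDelta_Q(t)=\det(-\cE_{(Q,0,0)}(t))=(-1)^N\det\cE_{(Q,0,0)}(t)$.
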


\begin{proof}
For two vertices $i,j\in Q_0$ with $i<j$, the ordering implies that there are no arrows $j\to i$.  By \eqref{eq:FN-companion},
\[
 (U_Q)_{ij}=-b_{ij}=-\#\{i\to j\},
 \qquad (U_Q)_{ji}=0.
\]
Thus the diagonal entries of $U_Q^{\mathsf T}-tU_Q$ are $1-t$.  For every arrow $i\to j$, the $(i,j)$-entry is $-t(U_Q)_{ij}=t$ whereas the $(j,i)$-entry is
$(U_Q^{\mathsf T})_{ji}=(U_Q)_{ij}=-1$. This yields precisely \eqref{eq:graded-euler-entry} and it implies \eqref{eq:E-U-trees}. For the determinants, note that \eqref{eq:E-U-trees} gives
\[
 \FNDelta_Q(t)
 =\det(tU_Q-U_Q^{\mathsf T})
 =\det(-\cE_{(Q,0,0)}(t))
 =(-1)^N\det\cE_{(Q,0,0)}(t),
\]
which is equivalent to \eqref{eq:sign-comparison}.
\end{proof}

\begin{remark}
Note that all orientations of a fixed tree quiver are mutation-equivalent. In addition, \cite[Theorem 1.1]{Neville} states that mutation-acyclic quivers are totally proper, thus promoting the polynomial $\FNDelta_Q(t)$ to an invariant of any quiver mutation class containing a tree. From this viewpoint, Lemma~\ref{lem:FN-comparison} identifies that invariant with a normalization of the determinant of the graded Euler pairing in the category of finite-dimensional continuous differential bigraded modules over the differential bigraded Ginzburg algebra of $(Q,0)$.\hfill$\Box$
\end{remark}

\noindent We conclude this subsection with a few computations to illustrate the results above.


\subsubsection{The linear tree \texorpdfstring{$A_{2g}$}{A2g}}

Let $g\in\N$ and consider the linearly oriented tree on $2g$ vertices:
\[
\begin{tikzpicture}[baseline=(current bounding box.center),node distance=12mm]
  \node[qvertex] (v1) {$1$};
  \node[qvertex,right=of v1] (v2) {$2$};
  \node[qvertex,right=of v2] (v3) {$3$};
  \node[right=10mm of v3] (dots) {$\cdots$};
  \node[qvertex,right=10mm of dots] (vn1) {$2g-1$};
  \node[qvertex,right=of vn1] (vn) {$2g$};
  \draw[qarrow] (v1)--(v2);
  \draw[qarrow] (v2)--(v3);
  \draw[qarrow] (v3)--(dots);
  \draw[qarrow] (dots)--(vn1);
  \draw[qarrow] (vn1)--(vn);
\end{tikzpicture}
\]
By \eqref{eq:euler-formula} or \eqref{eq:graded-euler-entry}, the graded Euler matrix from \Cref{def:euler} is the tridiagonal matrix
\begin{equation}\label{eq:A-matrix}
 \cE_{A_{2g}}(t)=
 \begin{pmatrix}
 1-t&t&0&\cdots&0\\
 -1&1-t&t&\ddots&\vdots\\
 0&-1&1-t&\ddots&0\\
 \vdots&\ddots&\ddots&\ddots&t\\
 0&\cdots&0&-1&1-t
 \end{pmatrix}.
\end{equation}
Let $D_m(t)$ denote the determinant of the analogous $m\times m$ matrix.  The product of the two off-diagonal entries adjacent to the last diagonal entry is $t(-1)=-t$, so expansion along the last row or column gives
\[
 D_m(t)=(1-t)D_{m-1}(t)+tD_{m-2}(t),
 \qquad D_0(t)=1,\quad D_1(t)=1-t.
\]
Resolving this recursion, e.g.~via induction, yields
\[
 D_m(t)=\sum_{j=0}^{m}(-t)^j.
\]
and thus for $m=2g$ we obtain
\begin{equation}\label{eq:A-det}
 \boxed{
 \det\cE_{A_{2g}}(t)
 =1-t+t^2-\cdots+t^{2g}
 =\frac{1+t^{2g+1}}{1+t}.
 }
\end{equation}

Note that the irreducible plane curve singularity $x^2+y^{2g+1}=0$ has the $(2,2g+1)$-torus knot as its knot, obtained by intersecting with the 3-sphere boundary of a small enough Milnor ball. The Alexander polynomial $\Delta_{T(2,2g+1)}(t)$ of such a torus knot is
\[
 \Delta_{T(2,2g+1)}(t)
 =\frac{(1-t)(1-t^{4g+2})}{(1-t^2)(1-t^{2g+1})}
 =\frac{1+t^{2g+1}}{1+t}.
\]
Thus
\[
 \boxed{
 \det\cE_{A_{2g}}(t)=\Delta_{T(2,2g+1)}(t).
 }
\]
\noindent In subsequent sections, we will be generalizing such equality for any irreducible plane curve singularity that admits a real Morsification with a malleable divide, where the quiver is no longer necessarily a tree quiver.

\subsubsection{\texorpdfstring{$E_6$}{E6} tree}

Choose the following orientation and labeling:
\[
\begin{tikzpicture}[baseline=(current bounding box.center),node distance=11mm]
  \node[qvertex] (v1) {$1$};
  \node[qvertex,right=of v1] (v2) {$2$};
  \node[qvertex,right=of v2] (v3) {$3$};
  \node[qvertex,right=of v3] (v4) {$4$};
  \node[qvertex,right=of v4] (v5) {$5$};
  \node[qvertex,below=10mm of v3] (v6) {$6$};
  \draw[qarrow] (v1)--(v2);
  \draw[qarrow] (v2)--(v3);
  \draw[qarrow] (v3)--(v4);
  \draw[qarrow] (v4)--(v5);
  \draw[qarrow] (v3)--(v6);
\end{tikzpicture}
\]
By \eqref{eq:graded-euler-entry} the graded Euler matrix is
\begin{equation}\label{eq:E6-matrix}
 \cE_{E_6}(t)=
 \begin{pmatrix}
 1-t&t&0&0&0&0\\
 -1&1-t&t&0&0&0\\
 0&-1&1-t&t&0&t\\
 0&0&-1&1-t&t&0\\
 0&0&0&-1&1-t&0\\
 0&0&-1&0&0&1-t
 \end{pmatrix}.
\end{equation}
\noindent For the purpose of illustration, let us compute its determinant via matchings as in \Cref{lem:matching}. In this case the non-zero matching numbers are $(m_0,m_1,m_2,m_3)=(1,5,5,1)$ and \Cref{lem:matching} gives
\begin{align*}
 \det\cE_{E_6}(t)
 &=(1-t)^6+5t(1-t)^4+5t^2(1-t)^2+t^3\\
 &=t^6-t^5+t^3-t+1.
\end{align*}
Therefore
\begin{equation}\label{eq:E6-det}
 \boxed{
 \det\cE_{E_6}(t)=t^6-t^5+t^3-t+1
 =(t^2-t+1)(t^4-t^2+1).
 }
\end{equation}
As in the $A_{2g}$ case above, the irreducible plane curve singularity $x^3+y^{4}=0$ has the $(3,4)$-torus knot as the algebraic knot of the singularity. Its Alexander polynomial $\Delta_{T(3,4)}(t)$ is
\[
 \Delta_{T(3,4)}(t)
 =\frac{(1-t)(1-t^{12})}{(1-t^3)(1-t^4)}
 =t^6-t^5+t^3-t+1.
\]
Hence
\[
 \boxed{\det\cE_{E_6}(t)=\Delta_{T(3,4)}(t).}
\]

\subsubsection{A few more instances} Consider the tree quivers in Figures \ref{fig:E8}, \ref{fig:tree1} and \ref{fig:tree2}. The determinants $\det\cE_T(t)$ of their graded Euler pairings $\cE_T(t)$ are computed in \Cref{table:tree_quivers}.

\begin{center}
\small
\renewcommand{\arraystretch}{1.35}
\begin{table}[htbp]
    \centering
\caption{The determinants $\det\cE_T(t)$ of the graded Euler pairings for a few tree quivers.}
\label{table:tree_quivers}
\begin{tabularx}{\textwidth}{>{\raggedright\arraybackslash}p{1.55cm}
>{\raggedright\arraybackslash}X
>{\raggedright\arraybackslash}p{3.65cm}
>{\raggedright\arraybackslash}p{3.05cm}}
\toprule
Tree & $\det\cE_T(t)$ & Alexander poly.~ of & Algebraic?\\
\midrule
$A_{2g}$
& $1-t+t^2-\cdots+t^{2g}$
& $T(2,2g+1)$
& Yes: $x^2=y^{2g+1}$\\
$E_6$
& $t^6-t^5+t^3-t+1$
& $T(3,4)$
& Yes: $x^3=y^4$\\
$E_8$
& $t^8-t^7+t^5-t^4+t^3-t+1$
& $T(3,5)$
& Yes: $x^3=y^5$\\
$T_{(2,2,3)}$
& $t^8-t^7+2t^5-3t^4+2t^3-t+1$
& Slalom knot $K_{(2,2,3)}$
& No\\
$T_{(4,4,1)}$
& $t^{10}-t^9+t^7-2t^6+3t^5-2t^4+t^3-t+1$
& Slalom knot $K_{(4,4,1)}$
& No\\
\bottomrule
\end{tabularx}
\end{table}
\end{center}

\begin{figure}
\centering
\[
\begin{tikzpicture}[baseline=(current bounding box.center),node distance=9.5mm]
  \node[qvertex] (v1) {$1$};
  \node[qvertex,right=of v1] (v2) {$2$};
  \node[qvertex,right=of v2] (v3) {$3$};
  \node[qvertex,right=of v3] (v4) {$4$};
  \node[qvertex,right=of v4] (v5) {$5$};
  \node[qvertex,right=of v5] (v6) {$6$};
  \node[qvertex,right=of v6] (v7) {$7$};
  \node[qvertex,below=10mm of v3] (v8) {$8$};
  \draw[qarrow] (v1)--(v2);
  \draw[qarrow] (v2)--(v3);
  \draw[qarrow] (v3)--(v4);
  \draw[qarrow] (v4)--(v5);
  \draw[qarrow] (v5)--(v6);
  \draw[qarrow] (v6)--(v7);
  \draw[qarrow] (v3)--(v8);
\end{tikzpicture}
\]
\caption{The $E_8$ tree with a choice of orientation. The associated $\det\Eul_Q(t)$ gives the Alexander polynomial of the $(3,5)$-torus knot, associated to the plane curve singularity $x^3+y^5=0$.}
\label{fig:E8}
\end{figure}

\begin{figure}
\centering
\[
\begin{tikzpicture}[baseline=(current bounding box.center),node distance=10.5mm]
  \node[qvertex] (v1) {$1$};
  \node[qvertex,right=of v1] (v2) {$2$};
  \node[qvertex,right=of v2] (v3) {$3$};
  \node[qvertex,right=of v3] (v4) {$4$};
  \node[qvertex,right=of v4] (v5) {$5$};
  \node[qvertex,below=9mm of v3] (v6) {$6$};
  \node[qvertex,below=9mm of v6] (v7) {$7$};
  \node[qvertex,below=9mm of v7] (v8) {$8$};
  \draw[qarrow] (v1)--(v2);
  \draw[qarrow] (v2)--(v3);
  \draw[qarrow] (v3)--(v4);
  \draw[qarrow] (v4)--(v5);
  \draw[qarrow] (v3)--(v6);
  \draw[qarrow] (v6)--(v7);
  \draw[qarrow] (v7)--(v8);
\end{tikzpicture}
\]
\caption{A tree quiver $T_{(2,2,3)}$, with a central trivalent vertex and three arms of length $(2,2,3)$. In this case $\det\cE_{T_{(2,2,3)}}(t)$ is not the Alexander polynomial of any algebraic knot, by the semi-group gap obstruction, cf.~\cite[Section~3.1]{BorodzikHom} or \cite{CDGZ}. It is the Alexander polynomial of the slalom knot $K_{(2,2,3)}$, cf.~\cite{ACampo}, which is a fibered hyperbolic knot.}
\label{fig:tree1}
\end{figure}

\begin{figure}
\centering
\[
\begin{tikzpicture}[baseline=(current bounding box.center),node distance=8.2mm]
  \node[qvertex] (v1) {$1$};
  \node[qvertex,right=of v1] (v2) {$2$};
  \node[qvertex,right=of v2] (v3) {$3$};
  \node[qvertex,right=of v3] (v4) {$4$};
  \node[qvertex,right=of v4] (v5) {$5$};
  \node[qvertex,right=of v5] (v6) {$6$};
  \node[qvertex,right=of v6] (v7) {$7$};
  \node[qvertex,right=of v7] (v8) {$8$};
  \node[qvertex,right=of v8] (v9) {$9$};
  \node[qvertex,below=10mm of v5] (v10) {$10$};
  \draw[qarrow] (v1)--(v2);
  \draw[qarrow] (v2)--(v3);
  \draw[qarrow] (v3)--(v4);
  \draw[qarrow] (v4)--(v5);
  \draw[qarrow] (v5)--(v6);
  \draw[qarrow] (v6)--(v7);
  \draw[qarrow] (v7)--(v8);
  \draw[qarrow] (v8)--(v9);
  \draw[qarrow] (v5)--(v10);
\end{tikzpicture}
\]
\caption{A tree quiver $T_{(4,4,1)}$, with a central trivalent vertex and three arms of length $(4,4,1)$. In this case $\det\cE_{T_{(4,4,1)}}(t)$ is also not the Alexander polynomial of any algebraic knot, by the same semi-group obstruction. It is the Alexander polynomial of the slalom knot $K_{(4,4,1)}$, a genus-five fibered hyperbolic knot.}
\label{fig:tree2}
\end{figure}

\section{Existence and uniqueness of the dilation of a plabic fence}\label{sec:dilation_plabic_fence}

Let us now focus on a particular class of quivers with potential $(Q,W)$. Namely, those quivers with potential $(Q(\bG),W(\bG))$ associated to a plabic fence $\bG$. We refer to \cite[Section 12]{FPST}, \cite[Def.~3.1 \& Sect.~3.2]{CasalsGao}, and \Cref{ssec:QP_plabicfence} below for the definitions and properties of such QPs $(Q(\bG),W(\bG))$ associated to a plabic fence $\bG$. An important aspect is that $(Q(\bG),W(\bG))$ is a non-degenerate QP, as proven in \cite[Prop.~3.7\&3.11]{CasalsGao}. The main result of this section reads as follows:

\begin{theorem}\label{thm:unique-fence-G1} Let $(Q(\bG),W(\bG))$ be the QP associated to a plabic fence $\bG$. Then
$$|\Gclass[Q(\bG),W(\bG)]|=1,$$
i.e.~there exists a unique dilation class for $(Q(\bG),W(\bG))$.
\end{theorem}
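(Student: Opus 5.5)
The plan is to reduce both existence and uniqueness to cohomology of a planar $2$-complex built from $(Q(\bG),W(\bG))$. Write $Q:=Q(\bG)$ and $W:=W(\bG)$. By \cite[Sect.~3.2]{CasalsGao}, $W$ is a signed sum of the chordless cycles bounding the faces of the planar embedding of $Q$ induced by $\bG$.

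Let $X_\bG$ be the $2$-dimensional CW complex obtained from the underlying graph $|Q|$ by attaching one $2$-cell along each cycle occurring as a monomial of $W$. The first step is to check, from the explicit structure of a plabic fence (horizontal levels, vertical bridges, quiver drawn in the plane dual to the faces), that $X_\bG$ is homeomorphic to a closed disk. The reason is that the cycles of $W$ are exactly the bounded faces of a connected planar embedding, and these tile a disk. Consequently $H^1(X_\bG;\Z)=H^2(X_\bG;\Z)=0$.

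Write an arrow grading as a $1$-cochain $d\in C^1(|Q|;\Z)$. The condition that $W$ be homogeneous of degree one says $\delta d=\mathbf 1$, the $2$-cochain equal to $1$ on every face. Vertex gauge changes $d$ by $\delta h$, $h\in C^0$.

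\emph{Existence.} Since $H^2(X_\bG;\Z)=0$, the cocycle $\mathbf 1$ is a coboundary, so a degree-one grading $d_0$ exists. For safety I would also write $d_0$ explicitly in the style of a cut: degree $1$ on one distinguished arrow in each face cycle, chosen consistently level by level along the fence, and degree $0$ elsewhere. This gives $[(Q,W,d_0)]\in\Gclass[Q,W]$.

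\emph{Uniqueness.} Let $(Q',W',d')$ represent an arbitrary dilation class. By the Graded Splitting Theorem \cite[Thm.~6.6]{AO} and homogeneous stabilization we may take it reduced. Since $[Q',W']=[Q,W]$, there is an ungraded right-equivalence from $(Q',W')$ to $(Q,W)$. Applying \Cref{lem:rectification} then replaces the representative by a graded QP $(Q,V,d)$, where $V$ is right-equivalent to $W$ and homogeneous of degree one for $d$.

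\emph{Key step (main obstacle).} I need to show that every face monomial $c$ of $W$ appears in $V$ with nonzero coefficient, after composing with a graded right-equivalence if needed. Once that holds, homogeneity forces $d(c)=1$ for every face, i.e.\ $\delta d=\mathbf 1=\delta d_0$. Then $d-d_0$ is a $1$-cocycle on $X_\bG$, and $H^1(X_\bG;\Z)=0$ gives $d=d_0^h$ for some vertex gauge $h$, so $[(Q,V,d)]=[(Q,W,d_0)]$.

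To prove the key step I would write the right-equivalence as $\varphi=L+(\text{higher order})$. Here $L$ is the linear part, an automorphism of the arrow bimodule; plabic fence quivers have no multiple arrows, so $L$ rescales each arrow. I would then argue by path-length filtration. The image of $c$ under $\varphi$ contributes $L(c)\neq0$. Any other monomial $c'$ of $W$ could cancel it only if some higher-order term of $\varphi(c')$ equals $c$ cyclically. That requires $c'$ to be strictly shorter, with $c$ obtained from $c'$ by substituting paths for arrows. Such a substitution would force $c$ to contain the endpoints of an arrow of $c'$ joined by a longer path, i.e.\ $c$ together with that arrow would have a chord. This contradicts chordlessness of faces in a planar embedding.

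I expect this filtration argument to be the delicate point. If it resists, my fallback is more categorical. Use that $d$ induces a $\Gm$-action on $\widehat{\kk Q}$ scaling $V$ by $z$, and compare with the Jacobian algebra. The face cycles are nonzero in $H^0$ of the Ginzburg algebra and are detected by the cyclic derivatives, and this would force each face cycle to have weight one.
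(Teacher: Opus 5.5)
Your argument breaks at the last sentence of the key step. From $d=d_0^h$ you only get $[(Q,V,d)]=[(Q,V,d_0)]$. The further identification $[(Q,V,d_0)]=[(Q,W,d_0)]$ does not follow, because $V$ and $W$ are only known to be \emph{ungraded} right-equivalent, and an ungraded right-equivalence is not among the relations (a)--(c) generating $\Gclass$. The right-equivalence $\varphi$ relating them may have non-homogeneous higher-order terms, and you cannot simply discard those. Components with opposite degree shifts can combine to contribute to the degree-one part of $\varphi(W)$. So $(Q,V,d_0)$ and $(Q,W,d_0)$ are a priori different dilations.

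What is missing is a graded straightening result: two reduced potentials on $Q(\bG)$ that are homogeneous of degree one for the same grading and right-equivalent must be graded right-equivalent. This is \Cref{prop:straighten} in the paper. Its proof first uses that $Q(\bG)$ has no parallel arrows (\Cref{lem:no-parallel}), so the linear part $L$ of the right-equivalence is diagonal and hence degree-preserving. The unitriangular $v$ with $v(L(V))\sim_{\cyc}W$ then form a nonempty $\Gm$-stable torsor under a pro-unipotent stabilizer. The vanishing of $H^1(\Gm,M)$ for rational representations, applied order by order in the $\m$-adic filtration, yields a $\Gm$-fixed point, i.e.\ a degree-preserving right-equivalence (\Cref{lem:torsor}). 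Without this step, or an equivalent one, uniqueness is not established.

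There are two smaller points. Your filtration argument for the key step is a legitimate alternative to the paper's route, which invokes \cite[Lemma~3.13]{CasalsGao} together with non-degeneracy of $V$. However, its justification is wrong as stated: faces of a plane graph are not chordless in general, since a chord can run outside the face. What you actually need is that no arrow of $Q(\bG)$ joins two non-consecutive vertices of a pente-row cycle, and this must be checked from the rules in \Cref{def:quiver_plabicfence}. It does hold; for instance, for a black pente-row the only arrows between the upper faces and the single face below arise at the two ends of the row. Also, $X_\bG$ need not be a disk: for $\beta=\sigma_1^n$ the quiver is a path with no $2$-cells, and disconnected fences give disjoint unions. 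All you use is $H^1=H^2=0$, which is exactly \Cref{lem:face-basis}.
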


\Cref{thm:unique-fence-G1} is relevant in the study of quivers associated to plabic fences because the potential $W(\bG)$ is in fact the unique non-degenerate potential of $Q(\bG)$, up to right equivalence. Therefore, if a quiver $Q=Q(\bG)$ is associated to a plabic fence, we can canonically endow it with a unique non-degenerate potential, up to right-equivalence, and \Cref{thm:unique-fence-G1} implies that such QP has a unique dilation class. Therefore, if we denote this dilation class by $\delta(\bG)$, the polynomial $\det\Eul_{(Q(\bG),W(\bG),\delta(\bG))}(t)$ can be extracted from $Q(\bG)$, as $W(\bG)$ and $\delta(\bG)$ are determined by $Q(\bG)$, and $\Eul_{(Q(\bG),W(\bG),\delta(\bG))}(t)$ is invariant under the necessary equivalence relations, such as right-equivalences. In other words, for plabic fences $Q(\bG)$ is canonically graded, cf.~\Cref{def:canonically_graded}, and thus \Cref{cor:canonically_graded_invariant} applies. A consequence of \Cref{thm:unique-fence-G1} is thus:

\begin{corollary}\label{cor:unique_dilation_plabic_fence}
Let $Q(\bG)$ be the quiver of a plabic fence $\bG$, $W(\bG)$ its unique non-degenerate potential and $\delta(\bG)$ its unique dilation. Then the following holds:

\begin{enumerate}
\item The $\Z[t,t^{-1}]$-module
$$\coker\Eul_{(Q(\bG),W(\bG),\delta(\bG))}(t)$$
is an invariant of the quiver mutation class of $Q(\bG)$. In consequence, if $\bG$ and $\bG'$ are two plabic fences with mutation-equivalent quivers $[Q(\bG)]=[Q(\bG')]$, then
$$\coker\Eul_{(Q(\bG),W(\bG),\delta(\bG))}(t)\cong\coker\Eul_{(Q(\bG'),W(\bG'),\delta(\bG'))}(t)$$
are isomorphic $\Z[t,t^{-1}]$-modules.\\

    \item The determinant of the graded Euler pairing
$$\det\Eul_{(Q(\bG),W(\bG),\delta(\bG))}(t)\in\Z[t,t^{-1}]$$
is an invariant of the quiver mutation class of $Q(\bG)$. In particular, if $\bG$ and $\bG'$ are two plabic fences with mutation-equivalent quivers $[Q(\bG)]=[Q(\bG')]$, then
$$\det\Eul_{(Q(\bG),W(\bG),\delta(\bG))}(t)=\det\Eul_{(Q(\bG'),W(\bG'),\delta(\bG'))}(t).$$
\end{enumerate}
\end{corollary}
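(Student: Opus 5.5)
The statement is a formal consequence of \Cref{thm:unique-fence-G1} combined with \Cref{cor:canonically_graded_invariant}. The plan is to verify that $Q(\bG)$ is canonically graded in the sense of \Cref{def:canonically_graded}, and then read off both parts.

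First, condition (1) of \Cref{def:canonically_graded}. We use the fact recalled above that $W(\bG)$ is non-degenerate \cite[Prop.~3.7\&3.11]{CasalsGao} and is the unique non-degenerate reduced potential on $Q(\bG)$ up to right-equivalence. Condition (2), namely $|\Gclass[Q(\bG),W(\bG)]|=1$, is exactly \Cref{thm:unique-fence-G1}. Hence $Q(\bG)$ is canonically graded, and the dilation $\delta(\bG)$ is well-defined as the unique element of $\Gclass[Q(\bG),W(\bG)]$.

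Second, we apply \Cref{cor:canonically_graded_invariant} to $Q(\bG)$. This gives that the isomorphism type of $\coker\Eul_{(Q(\bG),W(\bG),\delta(\bG))}(t)$ and the determinant $\det\Eul_{(Q(\bG),W(\bG),\delta(\bG))}(t)$ are invariants of $[Q(\bG)]$. The underlying mechanism is as follows.
\begin{itemize}
\item Every $Q'\in[Q(\bG)]$ inherits a unique non-degenerate potential $W'$, since QP mutation is involutive and preserves non-degeneracy.
\item \Cref{thm:G1-mutation} transports the unique dilation to a unique dilation $\delta'\in\Gclass[Q',W']$.
\item \Cref{thm:intrinsic-euler} makes $\Eul_{\delta'}(t)$ well-defined up to conjugation.
\item \Cref{prop:mutation-euler} relates the Euler matrices across a mutation by a congruence with $M_k(t)\in\GL(\Z[t,t^{-1}])$.
\item \Cref{rmk:linear_algebra} then shows that conjugation and congruence preserve the cokernel module.
\end{itemize}

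For the consequence about two fences $\bG,\bG'$ with $[Q(\bG)]=[Q(\bG')]$, we note that $Q(\bG')$ lies in the mutation class of $Q(\bG)$. Its intrinsic data $(W(\bG'),\delta(\bG'))$ coincide, up to right-equivalence and dilation equivalence, with the data transported from $\bG$ along any mutation sequence. This is because both are the unique non-degenerate potential and the unique dilation on the same quiver. Thus the cokernels are isomorphic.

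The determinant needs slightly more care. Congruence by $M_k(t)$ multiplies the determinant by $\det M_k(t)^\dagger\det M_k(t)=(-t^{-m})(-t^{m})=1$, and conjugation preserves it, so we get literal equality in (2) rather than equality only up to units. Alternatively, (2) follows from (1) up to units via Fitting ideals, and the computation just given removes the unit ambiguity.

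The main obstacle is not in this deduction but in its inputs. The real content is \Cref{thm:unique-fence-G1} and the uniqueness of the non-degenerate potential on $Q(\bG)$. A secondary subtlety is the relabeling of vertices when comparing $Q(\bG)$ with $Q(\bG')$: this is harmless, since a permutation matrix acts by congruence and preserves both the cokernel and the determinant.
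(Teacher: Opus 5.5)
Your proposal is correct and follows the paper's route: $Q(\bG)$ is canonically graded by \Cref{prop:P}.(ii) together with \Cref{thm:unique-fence-G1}, so \Cref{cor:canonically_graded_invariant} gives both parts. Your explicit unit computation $\det M_k(t)^\dagger\det M_k(t)=1$ and your remark on relabeling just unpack what that corollary already uses, namely \eqref{eq:mutation-det} and vertex relabeling being harmless.
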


For completeness, a similar statement to \Cref{cor:unique_dilation_plabic_fence} is as follows:
\begin{corollary}\label{cor:unique_dilation_mutation_equivalent_plabic_fence}
Let $Q$ be a quiver. If $Q$ is mutation equivalent to the quiver of a plabic fence, then:
\begin{enumerate}
    \item[$(i)$] $Q$ admits a unique non-degenerate potential $W$, up to right equivalence.

    \item[$(ii)$] $(Q,W)$ admits a unique dilation, i.e.~$\Gclass[Q,W]$ has a unique element $\delta$.

    \item[$(iii)$] The isomorphism type of the $\Z[t,t^{-1}]$-module
    $$\coker\Eul_{(Q,W,\delta)}(t)\in\Z[t,t^{-1}]$$
    is an invariant of the quiver mutation class $[Q]$.\\

    \item[$(iv)$] The polynomial
    $$\det\Eul_{(Q,W,\delta)}(t)\in\Z[t,t^{-1}]$$
    is an invariant of the quiver mutation class $[Q]$.
\end{enumerate}
\end{corollary}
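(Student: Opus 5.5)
The plan is to reduce the statement to the plabic fence case, where everything has already been set up. Fix a plabic fence $\bG$ and a finite mutation sequence $\mu=\mu_{k_\ell}\cdots\mu_{k_1}$ with $Q=\mu(Q(\bG))$, using quiver mutation.

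For $(i)$, existence comes from applying the same sequence of QP-mutations to the non-degenerate QP $(Q(\bG),W(\bG))$, which is non-degenerate by \cite[Prop.~3.7\&3.11]{CasalsGao}. Non-degeneracy guarantees that each step produces a reduced $2$-acyclic QP whose underlying quiver is the quiver mutation, so we obtain a non-degenerate $(Q,W)$ with $W:=\mu(W(\bG))$. For uniqueness, let $W_1$ be any non-degenerate potential on $Q$. Mutating back along the reversed sequence gives a non-degenerate potential on $Q(\bG)$, because QP mutation preserves non-degeneracy. By the uniqueness of the non-degenerate potential of $Q(\bG)$ quoted after \Cref{thm:unique-fence-G1}, that potential is right-equivalent to $W(\bG)$. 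Since QP mutation is well defined on right-equivalence classes and involutive up to right-equivalence \cite[Theorem~5.7]{DWZ}, mutating forward again shows $W_1\simeq W$.

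For $(ii)$, I will iterate \Cref{thm:G1-mutation} along the sequence. This is allowed because every intermediate QP is reduced and non-degenerate. It yields a bijection $\Gclass[Q(\bG),W(\bG)]\xrightarrow{\sim}\Gclass[Q,W]$, and \Cref{thm:unique-fence-G1} says the source has exactly one element. Combined with $(i)$, this shows $Q$ is canonically graded in the sense of \Cref{def:canonically_graded}.

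Parts $(iii)$ and $(iv)$ then follow from \Cref{cor:canonically_graded_invariant} applied to $Q$. Alternatively, one can pass back to $Q(\bG)$ via \Cref{prop:mutation-euler}, composing the congruences $M_k(t)^\dagger(\cdot)M_k(t)$ along the sequence and invoking \Cref{rmk:linear_algebra}. Either route also needs the invariance statements of \Cref{thm:intrinsic-euler} to know the cokernel depends only on the dilation class.

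The main obstacle is a consistency issue in $(iii)$ and $(iv)$. The dilation on $Q$ coming from left graded mutation of $\delta(\bG)$ must be the one that is intrinsically attached to $Q$. This is exactly what uniqueness in $(ii)$ provides, so care is needed only in checking that the intermediate QPs satisfy the hypotheses (reduced, non-degenerate) of \Cref{thm:G1-mutation} at each step.
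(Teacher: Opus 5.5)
Your proposal is correct and takes essentially the same route as the paper. The paper deduces this corollary from \Cref{prop:P}.(ii) and \Cref{thm:unique-fence-G1}, which make $Q(\bG)$ canonically graded, and then from the argument in the proof of \Cref{cor:canonically_graded_invariant}: invertibility of QP mutation transports uniqueness of the non-degenerate potential, \Cref{thm:G1-mutation} transports uniqueness of the dilation, and \Cref{prop:mutation-euler} yields $(iii)$ and $(iv)$. Your version is slightly more explicit than the paper on the uniqueness step in $(i)$, where you mutate back to $Q(\bG)$, invoke \cite[Theorem~5.7]{DWZ}, and mutate forward again.
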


\noindent Subsections \ref{ssec:QP_plabicfence}, \ref{ssec:properties_plabic_QP}, \ref{ssec:dilation_plabic_fence} and \ref{ssec:proof_unique-fence-G1} will now be devoted to the proof of \Cref{thm:unique-fence-G1}, and thus \Cref{cor:unique_dilation_plabic_fence} and \Cref{cor:unique_dilation_mutation_equivalent_plabic_fence}, also introducing the necessary concepts and results. \Cref{ssec:explicit_plabic_gradedQP} explicitly describes an arrow-grading $d(\bG)$ such that $(Q(\bG),W(\bG),d(\bG))$ is a degree-one graded QP, and thus it represents the unique dilation in $\Gclass[Q(\bG),W(\bG)]$.

\subsection{Plabic fences and their non-degenerate QPs}\label{ssec:QP_plabicfence} Let us briefly introduce plabic fences and their non-degenerate QPs, following \cite[Section 3]{CasalsGao}, as they feature in \Cref{thm:unique-fence-G1} and \Cref{cor:unique_dilation_plabic_fence} and are key in our proof of the main result. First, the definition of a plabic fence:

\begin{definition}[Plabic fence]\label{def:plabic_fence}
A \emph{plabic fence} is an embedded planar bicolored graph $\bG\sse\R^2$ such that:
\begin{itemize}\setlength\itemsep{0.5em}
    \item[(i)] The vertices of $\bG\sse\R^2$ belong to the standard integral lattice $\Z^2\sse\R^2$, and these vertices are colored in either black or white.
    \item[(ii)] The edges of $\bG\sse\R^2$ belong to the standard integral grid $(\Z\times\R)\cup (\R\times\Z)\sse\R^2$. The edges that are contained in $\Z\times\R$ are said to be \emph{vertical} edges, and those contained in $\R\times\Z$ are said to be \emph{horizontal} edges. 
    \item[(iii)] A maximal connected union of horizontal edges is said to be a \emph{horizontal line}. All horizontal lines must start, on the left, at univalent white vertices with the same $x$-coordinate and must end, on the right, at univalent white vertices with the same $x$-coordinate.
    \item[(iv)] Each vertical edge must end at trivalent vertices of opposite colors, with white on top and black on bottom, and the end points of a vertical edge must be contained in the interior of a horizontal line. In addition, no two vertical edges are contained in the same (vertical) line.\hfill$\Box$
\end{itemize}
\end{definition}

\begin{center}
	\begin{figure}
		\centering
		\includegraphics[scale=1]{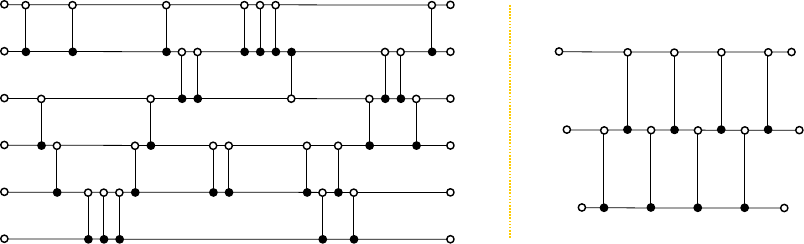}
		\caption{Two instances of plabic fences: the fence on the left encodes the 6-stranded braid word $\beta=\sigma_5\sigma_3\sigma_5\sigma_1^3\sigma_2\sigma_3\sigma_5\sigma_4^2\sigma_2^2\sigma_5^3\sigma_4(\sigma_2\sigma_1)^2\sigma_3\sigma_4^2\sigma_3\sigma_5$, whereas the plabic fence on the right encodes the 3-stranded braid word $\beta=(\sigma_1\sigma_2)^4$.}\label{fig:PlabicFence_Example}
	\end{figure}
\end{center}

\noindent Two plabic fences $\bG$ are drawn in Figure \ref{fig:PlabicFence_Example}, and described using positive braid words, cf.~the paragraph before \cite[Section 3.1]{CasalsGao} or \Cref{ssec:polynomial_plabic_fence} below.

\begin{remark}\label{rmk:double_plabic_fence} As noted in \cite{CasalsGao}, the definition of plabic fences in \cite{FPST,CasalsWeng22} is more general but, for our purposes, it is without loss of generality that we can work with Definition \ref{def:plabic_fence} for now. The more general definitions \cite[Def.~12.1]{FPST} and \cite[Section 2.5]{CasalsWeng22}  allow for vertical edges with black on top and white on bottom. For clarity, we refer to this latter, more general, notion of plabic fences as {\it double} plabic fences.\footnote{That is: we refer to the plabic fences in \cite[Def.~12.1]{FPST} and \cite[Section 2.5]{CasalsWeng22} as double plabic fences.}\hfill$\Box$
\end{remark}

There are different descriptions of the quivers $Q(\bG)$ associated to a plabic fence $\bG$, see e.g.~\cite{CasalsGao,CasalsWeng22,FPST}, but they are all equivalent. We follow \cite{CasalsGao} in the upcoming \Cref{def:quiver_plabicfence}. For that, suppose that $e\sse\bG$ is a vertical edge at level $k$, then we denote by $F_e$ the face of $\bG$ that has $e$ as its right vertical edge. (This face $F_e$ is unique or it does not exist.) By definition, a black pente-row (resp.~white pente-row) is a consecutive collection of black (resp.~white) vertices in the same horizontal line of $\bG$ such that:

\begin{itemize}
    \item[-] There must be two white (resp.~black) vertices bounding it: one white vertex at its left, and one white vertex at its right.

    \item[-] Each connected component of $\R^2\setminus\bG$ whose closure contains any segment between two vertices of the black vertices above or between a black vertex and one of the two white vertices above must be a bounded face.
\end{itemize}

The total number of black (resp.~white) vertices in a black (resp.~white) pente-row is said to be its length. See Figure \ref{fig:PlabicFence_Row1} for a length four black pente-row and a length four white pente-row, where we have marked the connected components of $\R^2\setminus\bG$ that must be faces with a blue dot. The rightmost face in a black (resp.~white) pente-row, which has a left corner at the rightmost black (resp.~white) vertex, is said to be its right face. With that, the QP $(Q(\bG),W(\bG))$ associated to a plabic fence $\bG$ is defined as follows:

\begin{center}
	\begin{figure}
		\centering
		\includegraphics[scale=0.9]{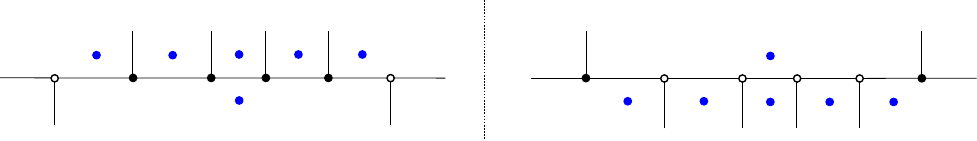}
		\caption{(Left) A black pente-row. (Right) A white pente-row.}\label{fig:PlabicFence_Row1}
	\end{figure}
\end{center}

\begin{definition}[The QP $(Q(\bG),W(\bG))$ of a plabic fence $\bG$]\label{def:quiver_plabicfence} Let $\bG$ be the plabic fence with $n$ horizontal lines, then its associated QP $(Q(\bG),W(\bG))$ is defined as follows. The quiver $Q(\bG)$ has vertex set the set of faces of $\bG$. The arrow set of $Q(\bG)$ is inductively described as follows, scanning $\bG$ left to right:

\begin{itemize}
    \item[(i)] If $\bG$ is the empty plabic fence, then the arrow set of $Q(\bG)$ is empty.\\

    \item[(ii)] Choose a vertical edge $e\sse\bG$ at level $k$ and assume that the arrow set of $Q(\bG_{<e})$ is $A_{<e}$, where $\bG_{<e}$ is the plabic subfence of $\bG$ consisting of those vertical edges to the left of $e$. If $F_e$ does not exist, the arrow set of $Q(\bG_{\leq e})$, where $\bG_{\leq e}=\bG_{<e}\cup\{e\}$, is defined to be $A_{<e}$.\\
    
    \noindent If $F_e$ exists, the arrow set of $Q(\bG_{\leq e})$ is defined to be $A_{<e}$ union the following possible arrows:\\

    \begin{itemize}
        \item[(a)] Let $d$ be the left vertical edge of $F_e$. If $F_d$ exists, then we add an arrow $[de]$ from $F_d$ to $F_e$.\\

       \item[(b)] Let $d^\uparrow$ be the first vertical edge in $\bG$ at level $(k+1)$ to the right of $d$. If $d^\uparrow$ and $F_{d^\uparrow}$ exist, then we add an arrow $[ed^\uparrow]$ from $F_e$ to $F_{d^\uparrow}$. See Figure \ref{fig:PlabicFence_Quiver} (left) for such an arrow, marked with a pink $(Z)$ pattern.\\

       \item[(c)] Let $d^\downarrow$ be the first vertical edge in $\bG$ at level $(k-1)$ to the right of $d$. If $d^\downarrow$ and $F_{d^\downarrow}$ exist, then we add an arrow $[ed^\downarrow]$ from $F_e$ to $F_{d^\downarrow}$. See Figure \ref{fig:PlabicFence_Quiver} (left) for such an arrow, marked with a red $(S)$ pattern.\\
    \end{itemize}

 \noindent If the hypotheses in these cases are not met, no arrows are added: e.g.~if $F_e$ or $F_d$ do not exist in case (a), we do not add any arrows at that stage.\\
\end{itemize}

\begin{center}
	\begin{figure}
		\centering
		\includegraphics[scale=0.9]{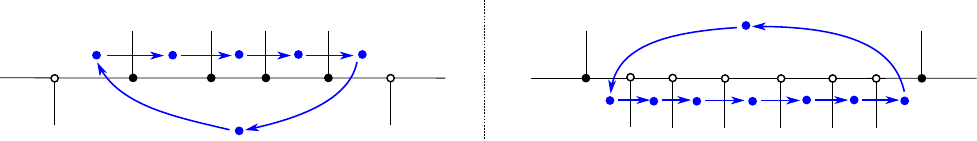}
		\caption{(Left) A black pente-row with its associated cycle in the quiver $Q(\bG)$ around it. (Right) A white pente-row with its corresponding cycle around it.}\label{fig:PlabicFence_Row1_Example}
	\end{figure}
\end{center}

\begin{center}
	\begin{figure}
		\centering
		\includegraphics[scale=0.9]{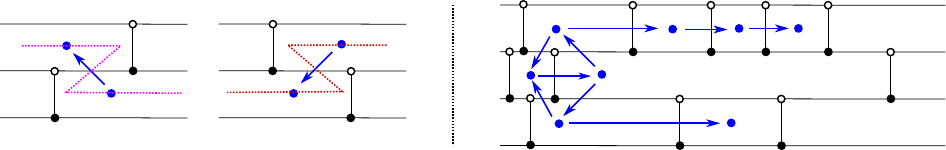}
		\caption{(Left) The two local patterns for arrows in the quiver $Q(\bG)$ that are not horizontal. (Right) An example of a quiver $Q(\bG)$ associated to a plabic fence $\bG$.}\label{fig:PlabicFence_Quiver}
	\end{figure}
\end{center}

The potential $W(\bG)$ is similarly described inductively. If $\bG$ is the empty plabic fence, then $W(\bG)=0$. For each vertical edge $e$ added to the right of $\bG_{<e}$ as above, the potential is defined as
$$W(\bG_{\leq e})=W(\bG_{<e})+P_{black}(e)-P_{white}(e),$$
where the monomials in $P_{black}(e)$ and $P_{white}(e)$ are described as follows. By definition, $P_{black}(e)$, resp.~$P_{white}(e)$, is the (cyclic) monomial in the arrows of $Q(\bG)$ encoding the planar cycle in $Q(\bG)$ associated to the unique black (resp.~white) pente-row with right face equal $F_e$, if such pente-row exists; otherwise it is zero. Note that monomials in $P_{white}(e)$ have an overall minus sign in front due to the counter-clockwise orientation of their cycles.\hfill$\Box$
\end{definition}

Recall that, following \cite[Section 8.4]{kontsevich_soibelman_2008}, class $\mathcal{P}$ is the class of quivers generated by the one vertex quiver by triangular extensions and mutations. A first useful property of $(Q(\bG),W(\bG))$ is the following:

\begin{lemma}[Uniqueness of a non-degenerate QP for plabic fences]\label{prop:P}
Let $\bG$ be a plabic fence, and $(Q(\bG),W(\bG))$ its associated QP. Then
\begin{enumerate}[label=$(\roman*)$]
    \item $Q(\bG)$ belongs to class $\mathcal P$.

    \item $(Q(\bG),W(\bG))$ is the unique non-degenerate potential for $Q(\bG)$, up to right-equivalence. That is, every non-degenerate potential on $Q(\bG)$ is right-equivalent to $W(\bG)$.
\end{enumerate}

\end{lemma}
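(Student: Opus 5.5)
Part $(i)$ follows by induction on the number of vertical edges of $\bG$, scanning left to right as in \Cref{def:quiver_plabicfence}. The key point is that $Q(\bG)$ is mutation equivalent to an acyclic quiver, or more directly is built by triangular extensions. Adding the rightmost vertical edge $e$ with an existing face $F_e$ adds one new vertex $F_e$. By items (a)--(c) of \Cref{def:quiver_plabicfence}, the arrows incident to $F_e$ are the arrow $[de]$ into $F_e$ and possibly the arrows $[ed^\uparrow],[ed^\downarrow]$ out of $F_e$. The faces $F_{d^\uparrow},F_{d^\downarrow}$ lie to the left of $e$, so they are vertices of $Q(\bG_{<e})$.

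This is not literally a triangular extension, because $F_e$ has both incoming and outgoing arrows. The plan is therefore to use the known fact (cf.~\cite{FPST}, \cite{CasalsWeng22}) that $Q(\bG)$ is mutation equivalent to the quiver of the braid word $\beta$ of $\bG$ in a ``left-inductive'' seed. In that seed, each newly added crossing vertex is a sink relative to the previously built quiver, up to a mutation sequence supported on the old vertices. Concretely, I would exhibit a mutation sequence $\mu$ on $Q(\bG_{<e})$ such that in $\mu(Q(\bG_{<e}))\cup\{F_e\}$ the new vertex receives arrows only. That exhibits a triangular extension by the one-vertex quiver, and class $\mathcal P$ is closed under mutation, which closes the induction.

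An alternative route, which I expect to be cleaner, is to cite that quivers of plabic fences are mutation equivalent to quivers of reduced plabic graphs for positroid or braid varieties. Such quivers are known to lie in class $\mathcal P$ via maximal green sequences and Kontsevich--Soibelman's criterion.

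For part $(ii)$, I would use the result of Kontsevich--Soibelman, refined by Ladkani and by Geiss--Labardini--Schröer, that a quiver in class $\mathcal P$ admits a unique non-degenerate potential up to right-equivalence. The argument runs as follows. A triangular extension of quivers with unique non-degenerate potentials again has a unique one, since no cycle passes through a sink. Uniqueness is also preserved by QP mutation, because mutation gives a bijection between right-equivalence classes of non-degenerate QPs on $Q$ and on $\mu_k Q$ by \cite{DWZ}. Combining these with $(i)$ shows that $Q(\bG)$ has exactly one non-degenerate potential up to right-equivalence. \cite[Prop.~3.7 \& 3.11]{CasalsGao} shows that $W(\bG)$ is non-degenerate, so it is that potential.

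The main obstacle is part $(i)$: producing the explicit mutation sequence that makes the new face vertex a sink, or tracking precisely which literature statement places $Q(\bG)$ in class $\mathcal P$. Part $(ii)$ is then a formal consequence.
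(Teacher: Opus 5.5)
Your architecture matches the paper's. You induct over the vertical edges and mutate away from $F_e$ until all arrows at $F_e$ point one way. You then recognize a triangular extension of a class-$\mathcal P$ quiver by the one-vertex quiver and use mutation invariance of $\mathcal P$. Part (ii) then follows from uniqueness of non-degenerate potentials on class-$\mathcal P$ quivers \cite[Theorem~4.6]{Ladkani}, together with non-degeneracy of $W(\bG)$ \cite[Proposition~3.11]{CasalsGao}.

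The gap is that the one non-formal input of (i), the existence of that mutation sequence, is never supplied. You defer it to an unspecified ``left-inductive seed'' statement and list it yourself as the main obstacle. The fallback through maximal green sequences rests on a ``Kontsevich--Soibelman criterion'' that you neither state nor reference precisely, so it does not close the induction either. The paper gets this step from \cite[Lemma~3.8]{CasalsGao}. That lemma provides a sequence of mutations at vertices of $Q(\bG_{\le e})$ other than $F_e$ after which $F_e$ is a \emph{source}. That it is a source rather than the sink you ask for does not matter, since triangular extensions in either direction are allowed.

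Three smaller points.
\begin{itemize}
\item The mutation sequence has to be run on the full quiver $Q(\bG_{\le e})$, not on $Q(\bG_{<e})$, because mutating at a neighbour of $F_e$ changes the arrows at $F_e$. The argument works because mutations at vertices other than $F_e$ commute with restriction to the old vertices. So the old part becomes a mutation of $Q(\bG_{<e})$, which lies in $\mathcal P$ by induction and mutation invariance.
\item Your opening remark that $Q(\bG)$ is mutation equivalent to an acyclic quiver is false in general. The fence of $(\sigma_1\sigma_2)^6$, whose closure is the $(3,6)$-torus link, has a quiver of elliptic type $E_8^{(1,1)}$, which is not mutation-acyclic. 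You never use that remark.
\item Your treatment of (ii) agrees with the paper's. Your gloss on the triangular-extension step only gives \emph{at most one} non-degenerate potential, and it relies on restrictions of non-degenerate QPs to full subquivers staying non-degenerate. That suffices here, because existence comes from \cite{CasalsGao}.
\end{itemize}
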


\begin{proof}
For Part (i), let us scan the vertical edges of $\bG$ from left to right and argue inductively.  If a scanned edge $e$ creates no new bounded face, the quiver is unchanged: $Q(\bG_{<e})=Q(\bG_{\leq e})$.  Otherwise, if adjoining the edge $e$ creates a new face $F_e$, and thus a new vertex for $Q(\bG_{\leq e})$, we apply \cite[Lemma 3.8]{CasalsGao} to $Q(\bG_{\leq e})$, which provides a sequence of mutations at vertices of $Q(\bG_{\leq e})$ different from $F_e$ after which the vertex associated to $F_e$ is a source. By induction on the number of edges, the quiver $Q(\bG_{<e})$ associated to the fence before adding the vertex $F_e$ belongs to $\mathcal P$, and so does its mutation by the prescribed sequence provided by \cite[Lemma 3.8]{CasalsGao}. After performing the same mutations on the full quiver $Q(\bG_{\leq e})$, the resulting quiver is obtained by adjoining the one-vertex quiver $\{F_e\}$ as a source, which is a triangular extension. Hence the mutated full quiver belongs to $\mathcal P$, and mutation invariance of $\mathcal P$ implies that the original full quiver $Q(\bG_{\leq e})$ also belongs to $\mathcal P$. Finally, note that the first time a face is created, the quiver is the one-vertex quiver, which is the base object of $\mathcal P$.\\

For Part (ii), first use that Part (i) implies $Q(\bG)\in\mathcal P$. Then \cite[Theorem~4.6]{Ladkani} implies that every quiver in $\mathcal P$ has a unique non-degenerate potential, up to right-equivalence.  The potential $W(\bG)$ is non-degenerate by \cite[Proposition~3.11]{CasalsGao}, and therefore it represents the unique right-equivalence class, as required.
\end{proof}

\begin{remark} \Cref{prop:P} explains use of the definite article {\it the} in ``The QP of a plabic fence'' in the text and \Cref{def:quiver_plabicfence}. We will always work with such a non-degenerate QP, thus our terminology.\hfill$\Box$
\end{remark}

\subsection{Two properties of plabic fence QPs}\label{ssec:properties_plabic_QP} Let $\bG$ be a plabic fence and $(Q(\bG),W(\bG))$ its QP, as in \Cref{def:quiver_plabicfence}. The first property we will use is that such a quiver $Q(\bG)$ does not have arrows with multiplicity greater than one, i.e.~there are no parallel arrows in $Q(\bG)$:

\begin{lemma}[No parallel arrows in $Q(\bG)$]\label{lem:no-parallel}
Let $\bG$ be a plabic fence and $Q(\bG)$ its quiver. Then $Q(\bG)$ has at most one arrow between every pair of vertices.
\end{lemma}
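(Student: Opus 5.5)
The plan is to read off the arrows of $Q(\bG)$ directly from the inductive description in \Cref{def:quiver_plabicfence} and show that no two of them can join the same pair of faces. Every arrow is created at the moment a vertical edge $e$ is scanned. It is of one of three types, (a), (b) or (c), and each arrow created at stage $e$ has $F_e$ as one of its endpoints. So for any pair of vertices $\{F,F'\}$ of $Q(\bG)$, I would order the faces by the position of their right vertical edge: $F=F_e$, $F'=F_{e'}$, with $e'$ to the left of $e$. An arrow between $F$ and $F'$ can only be created at the stage of the later edge $e$, since at the stage of $e'$ the face $F_e$ does not yet exist. The claim therefore reduces to showing that, at a single stage $e$, the three candidate arrows $[de]$, $[ed^\uparrow]$, $[ed^\downarrow]$ have pairwise distinct other endpoints $F_d$, $F_{d^\uparrow}$, $F_{d^\downarrow}$, and that none of them equals $F_e$.

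Distinctness comes from the levels. Let $e$ be at level $k$, meaning it joins horizontal lines $k$ and $k+1$. Then $d$ is at level $k$, since it is the left edge of $F_e$ and lies between the same two horizontal lines. The edge $d^\uparrow$ is at level $k+1$ and $d^\downarrow$ is at level $k-1$. A face $F_x$ has $x$ as its right vertical edge, and the face to the left of a level-$j$ edge lies between lines $j$ and $j+1$. Hence $F_d$, $F_{d^\uparrow}$, $F_{d^\downarrow}$ lie in three different horizontal strips and are pairwise distinct. Also $F_e\neq F_d$ because $d\neq e$. Faces in different strips are different vertices, and two faces in the same strip with different right edges are different.

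One point needs care, and I expect it to be the main (mild) obstacle: I must make sure that no arrow between $F_e$ and $F_{d^\uparrow}$, or between $F_e$ and $F_{d^\downarrow}$, is created again at a later stage. At a later stage $e''$, the new arrows all have $F_{e''}$ as an endpoint. If $F_{e''}$ is not one of the two faces in question, no duplicate arises. I would also check that the arrow $[ed^\uparrow]$ added at stage $e$ is not re-added at stage $d^\uparrow$ as a type (b) or (c) arrow from $F_{d^\uparrow}$ to $F_e$, nor as a type (a) arrow. Type (a) at stage $d^\uparrow$ involves a same-level face, whereas $F_e$ is at a different level. For type (c) at stage $d^\uparrow$, the target is $F_{x}$ where $x$ is the first level-$k$ edge to the right of the left edge of $F_{d^\uparrow}$. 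I would argue from the planar interleaving of edges that $x$ lies to the right of $e$ (or equals it only when the arrow would be the same one). This is the precise bookkeeping I would verify using the interleaving property of the "first edge to the right" choice in (b), (c), possibly with a figure like \Cref{fig:PlabicFence_Quiver}.

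Concluding, each unordered pair of faces receives at most one arrow, in one direction, which proves \Cref{lem:no-parallel}.
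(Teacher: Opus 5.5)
Your first two paragraphs are the paper's proof. Every arrow created at stage $e$ joins the new face $F_e$ to one of the earlier faces $F_d$, $F_{d^\uparrow}$, $F_{d^\downarrow}$. These sit at the three distinct levels $k$, $k+1$, $k-1$. So a pair of faces can receive an arrow only at the stage of the later face, and then at most one.

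Your third paragraph is unnecessary and should be dropped. For $F_{d^\uparrow}$ to be a vertex of $Q(\bG_{\le e})$, the edge $d^\uparrow$ must lie strictly between $d$ and $e$. So stage $d^\uparrow$ comes before stage $e$, when $F_e$ does not yet exist, and there is nothing to re-check. The check you sketch is also backwards. The relevant $x$ lies weakly to the left of $d$, hence $x\neq e$. Moreover, $x=e$ would produce the reversed arrow $F_{d^\uparrow}\to F_e$, which is a $2$-cycle and not the same arrow.
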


\begin{proof}
We follow \Cref{def:quiver_plabicfence}, scanning $\bG$, and thus building $Q(\bG)$, left-to-right. When a new (bounded) face $F_e$ of $\bG$ (i.e.~a vertex of $Q(\bG)$) is created at a vertical edge $e$ of level $k$, \Cref{def:quiver_plabicfence}.(ii) allows arrows only between the vertex associated to $F_e$ and the three possible previous faces $F_d,F_{d^\uparrow},F_{d^\downarrow}$ precisely to its left. These lie, respectively, at levels $k$, $k+1$, and $k-1$, and hence are distinct whenever they exist. Exactly one arrow is added in each such case. Thus the addition of these arrows does not create any parallel arrows for $Q(\bG_{\leq e})$. Any later scanning stages, to the right of $F_e$, add only arrows incident to the corresponding newly created vertex: i.e.~each new arrow being created as we scan left-to-right has exactly one end at the new rightmost vertex. Thus a second arrow between an already existing ordered pair of vertices is never created, and the result follows.
\end{proof}

The potential $W(\bG)$ can be described as
\[
W(\bG)=\sum_{\rho\in P_{\mathrm{black}}}p_\rho
-
\sum_{\rho\in P_{\mathrm{white}}}p_\rho,
\]
where $P_{\mathrm{white}}$ and $P_{\mathrm{black}}$ are the sets of white and black pente-rows, respectively, and $p_\rho$ is the monomial for the directed cycle around the corresponding pente-row $\rho$. The cycles of $Q(\bG)$ will have a particular role in studying the possible degree-one integral gradings of the QP $(Q(\bG),W(\bG))$, as follows. Consider the following lattice $C_1(\Gamma_\bG;\Z)$ and operator $\partial:C_1(\Gamma_\bG;\Z)\lr C_0(\Gamma_\bG;\Z)$, where $\Gamma_\bG$ is understood as an oriented planar cellular embedding of the topological graph underlying $Q(\bG)$, with $0$-chains being spanned by the vertices and $1$-chains by the edges:
$$C_0(\Gamma_\bG;\Z):=\bigoplus_{v\in(Q(\bG))_0}\Z v,\quad C_1(\Gamma_\bG;\Z):=\bigoplus_{a\in(Q(\bG))_1}\Z a,$$
$$\partial:C_1(\Gamma_\bG;\Z)\lr C_0(\Gamma_\bG;\Z),\quad \partial a:=t(a)-s(a),$$
where we have given every edge of $\Gamma_\bG$ the orientation of its corresponding quiver arrow to define the differential $\partial$. The integral cycle lattice is $Z_1(\Gamma_\bG;\Z)=\ker\partial$. For every pente-row $\rho$ in $\bG$, let $c_\rho\in Z_1(\Gamma_\bG;\Z)$ be the element corresponding to the monomial $p_\rho$. The following lemma is straightforward:

\begin{lemma}[A cycle basis]\label{lem:face-basis}
The collection of cycles $\{c_\rho\}_\rho$, where $\rho$ runs over all pente-rows, is a $\Z$-basis of the lattice $Z_1(\Gamma_\bG;\Z)$.
\end{lemma}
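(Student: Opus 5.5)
The plan is to exploit the planarity of $\Gamma_\bG$ and the inductive, left-to-right construction of $Q(\bG)$ in \Cref{def:quiver_plabicfence}. I will prove two things: that the $c_\rho$ are linearly independent, and that they span $Z_1(\Gamma_\bG;\Z)$ over $\Z$. A rank count then shows the result, provided the number of pente-rows equals $\operatorname{rank} Z_1=|Q_1|-|Q_0|+b_0(\Gamma_\bG)$.

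For the rank count I will induct on the number of vertical edges scanned. Suppose a new face $F_e$ is created. By \Cref{def:quiver_plabicfence}(ii), at most three arrows are added, all incident to the new vertex $F_e$. By \Cref{lem:no-parallel} their other endpoints are distinct. Hence the first added arrow either connects $F_e$ to the existing graph or, if $F_d$ is absent, starts a new component. Each further arrow raises the cycle rank by exactly one. I will then check case by case that each further arrow matches exactly one pente-row whose right face is $F_e$: a black one, a white one, or one of each.

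\begin{itemize}
\item If both $F_d$ and $F_{d^\uparrow}$ exist, the region between them and $F_e$ forms a black pente-row with right face $F_e$.
\item The analogous statement with $F_{d^\downarrow}$ gives a white pente-row.
\end{itemize}

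This matching is exactly what the potential recursion $W(\bG_{\le e})=W(\bG_{<e})+P_{black}(e)-P_{white}(e)$ encodes. The new cycle rank introduced at step $e$ therefore equals the number of new pente-rows at step $e$.

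Integral spanning also goes by induction, using a triangularity argument. Take a cycle $z\in Z_1(\Gamma_{\bG_{\le e}})$. The new pente-row cycles $c_\rho$ with right face $F_e$ each contain exactly one of the new arrows $[ed^\uparrow]$ or $[ed^\downarrow]$, and they contain it with coefficient $\pm1$. Subtracting suitable integer multiples of these $c_\rho$ kills the coefficients of $z$ on those arrows. Since $\partial z=0$ at $F_e$, the coefficient on the remaining arrow $[de]$ is then forced to vanish too. The remainder is a cycle of $\Gamma_{\bG_{<e}}$, and the inductive hypothesis expresses it in the earlier $c_\rho$. The same triangularity, with a unimodular leading coefficient on the new arrows, gives independence.

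The main obstacle is the combinatorial verification in the first step. I need to confirm that the cycle around each pente-row really is a directed cycle using exactly one of the new arrows at its right face, together with the arrows $[d e]$ and the $Z$/$S$ pattern arrows along the row. I also need to confirm that no two pente-rows share the same "new" arrow. I would check this by drawing the local pictures of \Cref{fig:PlabicFence_Row1_Example} and tracking, for the rightmost face of a row, which of the arrows of types (a), (b), (c) bound it.
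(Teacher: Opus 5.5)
Your triangular induction is a reasonable framework, and the independence half works. A pente-row cycle with right face $F_e$ contains $[de]$ and exactly one of $[ed^\uparrow],[ed^\downarrow]$, while cycles with earlier right faces avoid every arrow created at stage $e$.

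The gap is in the converse, on which both your rank count and your spanning step rest. It is false that each arrow added at stage $e$ after the first raises the cycle rank, because $\Gamma_{\bG_{<e}}$ may be disconnected. It is also false that the existence of $F_d$ and $F_{d^\uparrow}$ (or $F_{d^\downarrow}$) produces a pente-row with right face $F_e$: a pente-row also needs the face at its far left end to be bounded.

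Here is a counterexample. Take three horizontal lines and $\beta(\bG)=\sigma_1\sigma_2\sigma_2\sigma_1\sigma_2$, with vertical edges $f_0,e_1,e_2,f_1,e_3$ from left to right ($f_i$ at level $1$, $e_i$ at level $2$).
\begin{itemize}
\item The bounded faces are $G=F_{f_1}$, $F_{e_2}$ and $F_{e_3}$.
\item At the stage of $e_3$ we have $d=e_2$ and $d^\downarrow=f_1$, so both $F_{e_2}\to F_{e_3}$ and $F_{e_3}\to G$ are added.
\item Nevertheless $Q(\bG)$ is the path $F_{e_2}\to F_{e_3}\to G$, so $Z_1=0$ and there is no pente-row.
\item Directly: the black vertices of $e_1,e_2$ on the middle line sit between the white vertices of $f_0,f_1$. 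The unbounded region between the middle and top lines to the left of $e_1$ touches the segment from $f_0$ to $e_1$, so this run is not a pente-row.
\end{itemize}
The second arrow merges two components instead of closing a cycle. Separately, black vertices are bottoms of vertical edges. Hence a black pente-row with right face $F_e$ closes through $[ed^\downarrow]$ and a white one through $[ed^\uparrow]$; your bullets have these swapped.

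The same configuration defeats your spanning step. There is no $c_\rho$ to subtract, and $\partial z=0$ at $F_e$ only equates the coefficients of $z$ on $[de]$ and $[ed^\downarrow]$. What is missing is a cut argument, which goes as follows.
\begin{itemize}
\item Let $e$ be at level $k$ with $[ed^\downarrow]$ added. Write $f_0<d<f_1=d^\downarrow<e$, where $f_0,f_1$ are consecutive level-$(k-1)$ edges.
\item The black pente-row with right face $F_e$ exists if and only if some level-$k$ edge lies to the left of $f_0$.
\item If none does, every level-$k$ edge of $\bG_{<e}$ lies in $(f_0,f_1)$, an interval containing no level-$(k-1)$ edge.
\item Rules (b) and (c) join faces at adjacent levels exactly when their horizontal intervals interlace. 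Therefore $\Gamma_{\bG_{<e}}$ has no arrow between levels $\geq k$ and levels $\leq k-1$.
\item So $[ed^\downarrow]$ is the only arrow of $\Gamma_{\bG_{\leq e}}$ crossing this cut. It is a bridge and has coefficient zero in every cycle. The case of $[ed^\uparrow]$ is symmetric.
\end{itemize}
With this added, your triangular argument gives spanning and independence directly, and the separate rank count is no longer needed.
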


\subsection{The dilation of a plabic fence}\label{ssec:dilation_plabic_fence} Let $\bG$ be a plabic fence and $(Q(\bG),W(\bG))$ its quiver with potential, as in \Cref{def:quiver_plabicfence}. Let us first show that a degree-one integral grading for $(Q(\bG),W(\bG))$ exists, and thus $\Gclass[Q(\bG),W(\bG)]$ is always non-empty.

\begin{lemma}[Existence of a graded QP]\label{prop:existence}
Let $\bG$ be a plabic fence. Then there exists an integral arrow grading $d:(Q(\bG))_1\to\Z$ such that the potential $W(\bG)$ is homogeneous of degree one.
\end{lemma}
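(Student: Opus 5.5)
We need an integral arrow grading $d:(Q(\bG))_1\to\Z$ such that every monomial $p_\rho$ of $W(\bG)$ has total degree one. Equivalently, $d$ is a homomorphism $C_1(\Gamma_\bG;\Z)\to\Z$ with $d(c_\rho)=1$ for every pente-row $\rho$, where we identify $d$ with its linear extension to $1$-chains. The plan is to construct such a functional in two steps: first on the cycle lattice $Z_1(\Gamma_\bG;\Z)$, then extend it to all of $C_1(\Gamma_\bG;\Z)$.

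By \Cref{lem:face-basis}, the cycles $\{c_\rho\}_\rho$ form a $\Z$-basis of $Z_1(\Gamma_\bG;\Z)$. Hence there is a unique homomorphism $\phi:Z_1(\Gamma_\bG;\Z)\to\Z$ with $\phi(c_\rho)=1$ for every $\rho$. Note that each $p_\rho$ is the monomial of a directed cycle in which every arrow appears at most once; this uses \Cref{lem:no-parallel} and the planar structure of the pente-row cycle. So the degree of the monomial $p_\rho$ equals the linear functional evaluated on $c_\rho$.

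To extend $\phi$ to $C_1$, we use that $Z_1=\ker\partial$ is a direct summand of $C_1$. Indeed, $C_1/Z_1\cong \im\partial\subseteq C_0(\Gamma_\bG;\Z)$ is a subgroup of a free abelian group, hence free. The short exact sequence $0\to Z_1\to C_1\to\im\partial\to 0$ therefore splits, so $\phi$ extends to some $d:C_1\to\Z$. Concretely, one can pick a spanning forest $T$ of $\Gamma_\bG$, set $d(a)=0$ for $a\in T$, and for each non-tree arrow $a$ set $d(a)=\phi(z_a)$, where $z_a$ is the fundamental cycle of $a$ oriented so that $a$ appears with coefficient $+1$. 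Then $d$ restricts to $\phi$ on $Z_1$, so each pente-row monomial has degree $1$ and $W(\bG)$ is homogeneous of degree one.

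The only substantive input is \Cref{lem:face-basis}, specifically that the $c_\rho$ are linearly independent and span $Z_1$ over $\Z$ rather than merely over $\mathbb{Q}$. If only rational spanning were available, $\phi$ could fail to be integral. I expect this to be the main point to double-check. The argument I would give: $\Gamma_\bG$ is planar, and its bounded complementary regions correspond bijectively to pente-rows. Each $c_\rho$ is the oriented boundary of one such region, up to the sign convention built into its cyclic orientation. Boundaries of the bounded faces of a connected planar embedding form a $\Z$-basis of $H_1$, and a sign change in a basis element does not affect integrality. Beyond that, the construction is routine. As a byproduct, the freedom in choosing $d$ is exactly $\Hom(C_1/Z_1,\Z)$, i.e.\ vertex gauge, which anticipates the uniqueness part of \Cref{thm:unique-fence-G1}.
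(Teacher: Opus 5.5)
Your proposal is correct and follows the paper's proof essentially verbatim. Both define the functional equal to $1$ on the $\Z$-basis $\{c_\rho\}$ of $Z_1(\Gamma_\bG;\Z)$ supplied by \Cref{lem:face-basis}, and extend it to $C_1(\Gamma_\bG;\Z)$ because $0\to Z_1\to C_1\to\im\partial\to 0$ splits, $\im\partial\subseteq C_0(\Gamma_\bG;\Z)$ being free. Your spanning-forest formula simply makes one such splitting explicit.
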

\begin{proof}
By \Cref{lem:face-basis}, the quiver cycles $c_\rho$ associated to pente-rows form a basis of the lattice $Z_1(\Gamma_\bG;\Z)$. Consider the morphism
\begin{equation}\label{eq:grading_one_penterows}
\tilde{d}:Z_1(\Gamma_\bG;\Z)\longrightarrow\Z,
\qquad
\tilde{d}(c_\rho)=1
\end{equation}
for every pente-row $\rho$. The short exact sequence
\[
0\longrightarrow Z_1(\Gamma_\bG;\Z)
\longrightarrow C_1(\Gamma_\bG;\Z)
\xrightarrow{\partial}\im\partial
\longrightarrow0
\]
splits because $\im\partial$ is a subgroup of the free abelian group $C_0(\Gamma_\bG;\Z)$ and is therefore free.  Hence $\tilde{d}$ extends to a morphism
\[
d:C_1(\Gamma_\bG;\Z)\longrightarrow\Z.
\]
Regarding the restriction of $d$ to arrows as an arrow grading, \eqref{eq:grading_one_penterows} implies that $d$ grades every monomial $p_\rho$ in $W(\bG)$ with degree one, as required.
\end{proof}

\noindent If \Cref{prop:existence} addresses existence, the next question is whether there exists more than one degree-one grading for $(Q(\bG),W(\bG))$, up to the appropriate equivalence relations. More precisely, knowing that $\Gclass[Q(\bG),W(\bG)]$ is non-empty, we ask whether it admits more than one element.

\begin{proposition}[Uniqueness of gradings up to vertex gauge]\label{prop:gauge-unique}
Let $\bG$ be a plabic fence and $V$ a non-degenerate potential on $Q(\bG)$. If $d$ and $d'$ are integral arrow gradings for which $V$ is homogeneous of degree one, then $d$ and $d'$ differ by a vertex gauge.
\end{proposition}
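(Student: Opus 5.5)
The plan is to reduce the statement to a lattice computation on the cycle space of the quiver. Set $e:=d'-d:(Q(\bG))_1\to\Z$ and extend it additively to $C_1(\Gamma_\bG;\Z)$. The grading $e$ is a vertex gauge exactly when $e=h\circ\partial$ for some $h:C_0(\Gamma_\bG;\Z)\to\Z$, i.e.\ $e(a)=h(t(a))-h(s(a))$. The argument has three steps.

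\textbf{Step 1: reduce to vanishing on cycles.} We claim that $e$ is a vertex gauge if and only if $e$ vanishes on $Z_1(\Gamma_\bG;\Z)=\ker\partial$.
\begin{itemize}
\item One direction is the telescoping computation already used after \Cref{def:vertex-gauge}.
\item For the converse, $e$ descends to a morphism $\im\partial\to\Z$. Since $\im\partial\subseteq C_0$, the quotient $C_0/\im\partial$ is free: it is $H_0$ of a graph, namely $\Z^{\pi_0}$. So $\im\partial$ is a direct summand of $C_0$, the morphism extends to some $h$ on $C_0$, and this $h$ is the required gauge.
\end{itemize}
By \Cref{lem:face-basis}, it therefore suffices to show that $e(c_\rho)=0$ for every pente-row $\rho$, i.e.\ that $d(c_\rho)=d'(c_\rho)$.

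\textbf{Step 2: show every pente-row cycle has degree one.} It suffices to prove $d(c_\rho)=1$ for every $\rho$ and every grading $d$ making $V$ homogeneous of degree one. This holds if each cycle monomial $p_\rho$ appears with nonzero coefficient in $V$, up to cyclic equivalence, since then homogeneity forces $d(p_\rho)=1$.

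\textbf{Step 3: show each $p_\rho$ occurs in $V$.} This is the main obstacle, because $V$ is only some non-degenerate potential, not literally $W(\bG)$. The plan is as follows.
\begin{itemize}
\item First, by \Cref{lem:no-parallel} together with planarity of $\Gamma_\bG$, I expect the arrows of each pente-row cycle to lie on the boundary of a single face of the planar embedding. The cycles $c_\rho$ are then the minimal oriented cycles through those arrows.
\item Second, I would argue directly from non-degeneracy. Suppose $p_\rho$ had coefficient zero in $V$. Use the inductive left-to-right structure from the proof of \Cref{prop:P}(i): scanning to the face $F_e$ that is the right face of $\rho$, mutate at vertices other than $F_e$ via \cite[Lemma 3.8]{CasalsGao} until $F_e$ becomes a source. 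Tracking the potential along this mutation sequence, the missing term should produce a $2$-cycle whose quadratic term vanishes, so it is not cancelled by reduction. This contradicts non-degeneracy. This is essentially the standard argument that every chordless cycle of a non-degenerate potential on such quivers appears with nonzero coefficient, cf.\ \cite{Ladkani}.
\end{itemize}

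\textbf{Fallback for Step 3.} The same conclusion should follow from \Cref{prop:P}(ii), which gives a right-equivalence $\varphi:(Q(\bG),W(\bG))\to(Q(\bG),V)$.
\begin{itemize}
\item The linear part of $\varphi$ on $\m/\m^2$ is a bimodule automorphism. Since there are no parallel arrows, it acts on each arrow by a nonzero scalar.
\item Because $V$ is $d$-homogeneous, I would argue by induction on path length that the lowest-order term of $\varphi(p_\rho)$ survives in $V$. This lowest-order term is a nonzero scalar multiple of $p_\rho$. It cannot be cancelled by images of the other cycles or by higher-order corrections of the same length, because those are distinct cyclic words: distinct pente-row cycles consist of distinct arrow multisets.
\end{itemize}

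Given Step 3, every $c_\rho$ has $d$-degree one for every admissible grading $d$, so $e$ vanishes on the basis $\{c_\rho\}$ of $Z_1(\Gamma_\bG;\Z)$. Step 1 then shows that $d$ and $d'$ differ by a vertex gauge.
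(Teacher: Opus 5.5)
Your Steps 1 and 2 coincide with the paper's proof. The paper also shows $d(c_\rho)=d'(c_\rho)=1$ for every pente-row, invokes \Cref{lem:face-basis}, and extends $d-d'$ from $\im\partial$ to a vertex function using that $C_0/\im\partial\cong H_0(\Gamma_\bG;\Z)$ is free. The gap is Step 3. The paper obtains ``every $p_\rho$ occurs in $V$'' from \cite[Lemma~3.13]{CasalsGao} together with non-degeneracy of $V$, and neither of your two arguments establishes it. The first is only a heuristic. To get a contradiction from the mutation sequence of \cite[Lemma~3.8]{CasalsGao}, you would have to follow the coefficient of $p_\rho$ through every pre-mutation and reduction. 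You would then have to show it becomes the vanishing coefficient of a $2$-cycle to which no other term of $V$ contributes. That bookkeeping is the whole content of the claim, and it is not carried out.

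The fallback fails as justified. Distinct arrow multisets only separate the \emph{leading} terms. A higher-order term of $\varphi(p_{\rho'})$, for a shorter cycle $p_{\rho'}$, can be exactly the cyclic word $p_\rho$ and cancel it. For instance, take arrows $a\colon 1\to 2$, $b\colon 2\to 3$, $c\colon 3\to 1$, $x\colon 1\to 4$, $y\colon 4\to 2$ (no parallel arrows), $W=cba+cbyx$ and $\varphi(a)=a-yx$. Then $\varphi(W)=cba$, so the $4$-cycle disappears, and gradings making $cba$ of degree one are not unique up to gauge. Homogeneity of $V$ cannot help, since $d$ is the unknown and has no relation to $\varphi$.

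What excludes this for plabic fences is structural. Each pente-row cycle is a simple cycle whose vertex set spans no arrows other than those of the cycle; one checks this from \Cref{def:quiver_plabicfence}. Moreover, $Q(\bG)$ has no $2$-cycles by \Cref{lem:no-parallel}. Suppose a term of $\varphi(p_{\rho'})$ with $\rho'\neq\rho$ were a rotation of $p_\rho$. Then some arrow $a_k\colon v_{k-1}\to v_k$ of $p_{\rho'}$ would be replaced by a path of length at least two along $p_\rho$. Simplicity of $p_\rho$ and the absence of $2$-cycles force $v_{k-1},v_k$ to be non-consecutive on $p_\rho$, so $a_k$ would be a chord, a contradiction. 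With this added, the coefficient of $p_\rho$ in $V$ is $\pm\prod_{a\in p_\rho}\lambda_a\neq 0$. Your fallback then becomes a complete and more elementary substitute for \cite[Lemma~3.13]{CasalsGao}. Otherwise, simply cite that lemma, as the paper does.
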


\begin{proof}
By \cite[Lemma~3.13]{CasalsGao} and non-degeneracy of the potential $V$, each monomial $p_\rho$ associated to a pente-row $\rho$ must necessarily occur in the potential $V$ with non-zero coefficient. Therefore $d(c_\rho)=1=d'(c_\rho)$ for every pente-row $\rho$. Consider $\eta=d-d'\in\Hom(C_1(\Gamma_\bG;\Z),\Z)$, then \Cref{lem:face-basis} gives
\[
\eta(z)=0,
\qquad
\text{for every }z\in Z_1(\Gamma_\bG;\Z).
\]
Thus $\eta$ factors through $C_1/Z_1\cong\im\partial$.  Since
\[
C_0(\Gamma_\bG;\Z)/\im\partial\cong H_0(\Gamma_\bG;\Z)
\]
is free, the resulting homomorphism $\im\partial\to\Z$ extends to a homomorphism $h:C_0(\Gamma_\bG;\Z)\to\Z$. By construction, such homomorphism $h$ satisfies that for every arrow $a:i\to j$,
\[
\eta(a)=h(j)-h(i).
\]
Hence
\[
d(a)=d'(a)+h(t(a))-h(s(a)),
\]
and thus $d$ and $d'$ are related by a vertex gauge.
\end{proof}

For a fixed non-degenerate potential on $Q(\bG)$, \Cref{prop:gauge-unique} establishes uniqueness of a degree-one arrow grading for $(Q(\bG),V)$, up to vertex gauge. \Cref{prop:P} establishes the uniqueness of a non-degenerate potential for any quiver $Q(\bG)$ associated to a plabic fence, but only up to right-equivalence. A priori, there could be two (necessarily right-equivalent) non-degenerate QPs $(Q(\bG),V)$ and $(Q(\bG),W)$ such that $(Q(\bG),V,d_V)$ and $(Q(\bG),W,d_W)$ give different dilations, where $d_V,d_W$ are the unique (up to vertex gauge) arrow gradings for $(Q(\bG),V)$ and $(Q(\bG),W)$ respectively. In order to conclude \Cref{thm:unique-fence-G1}, we must ensure that $(Q(\bG),V,d_V)$ and $(Q(\bG),W,d_W)$ are necessarily graded right-equivalent. This is implied by the following result:

\begin{proposition}[Graded straightening]\label{prop:straighten}
Let $Q$ be a finite quiver with no parallel arrows, and let $d:Q_1\to\Z$ be an arrow grading. Suppose $V$ and $W$ are reduced potentials on $Q$, both homogeneous of degree one for the grading $d$. Suppose that $(Q,V)$ and $(Q,W)$ are right-equivalent. Then $(Q,V,d)$ and $(Q,W,d)$ are graded right-equivalent.
\end{proposition}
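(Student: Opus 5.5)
The plan is to realize the internal grading $d$ as a $\Gm$-action and obtain the graded right-equivalence as a $\Gm$-fixed point on the set of all right-equivalences from $(Q,V)$ to $(Q,W)$. For $z\in\C^\times$, let $\rho_z$ be the continuous automorphism of $\widehat{\kk Q}$ fixing $R_Q$ and sending each arrow $a$ to $z^{d(a)}a$. Since $V$ and $W$ are homogeneous of degree one, we have $\rho_z(V)=zV$ and $\rho_z(W)=zW$ in the cyclic quotient. Let $\varphi$ be a right-equivalence from $(Q,V)$ to $(Q,W)$, and set $\varphi_z:=\rho_z^{-1}\circ\varphi\circ\rho_z$. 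Then
\[
\varphi_z(V)=\rho_z^{-1}\varphi(zV)\sim z\,\rho_z^{-1}(W)=W,
\]
so $\Gm$ acts by conjugation on the set $X$ of right-equivalences from $(Q,V)$ to $(Q,W)$. An automorphism is graded for $d$ exactly when it commutes with every $\rho_z$. Hence the goal becomes: show that $X^{\Gm}\neq\emptyset$.

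The first step uses the absence of parallel arrows. For each arrow $a:i\to j$, the space $e_jA_Qe_i$ is one-dimensional, so the linear part of any $\psi\in X$ sends $a$ to $c_a a$ with $c_a\neq0$. This linear part is automatically graded. Consequently $X$ is a torsor for the group $G=\operatorname{Stab}(V)$ of automorphisms $\psi$ with $\psi(V)\sim V$, and $G\cong T\ltimes U$. Here $T$ is a subgroup of the diagonal torus $(\C^\times)^{Q_1}$, which commutes with $\rho$. The group $U$ consists of automorphisms with identity linear part, and it is pro-unipotent. The $\Gm$-action on $X$ is compatible with the conjugation action $\rho$ on $G$.

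The second step is to truncate. Replace $\widehat{\kk Q}$ by $\widehat{\kk Q}/\m_Q^{N}$, and replace cyclic equivalence by cyclic equivalence modulo $\m^{N}$. Then $X_N$, the set of $N$-jets of automorphisms $\psi$ with $\psi(V)\sim W$ mod $\m^N$, is a nonempty affine variety. It is a torsor for the finite-dimensional algebraic group $G_N=T_N\ltimes U_N$ with $U_N$ unipotent, and $\Gm$ acts compatibly on both. A $\Gm$-equivariant torsor under such a group has a $\Gm$-fixed point, for the following reason. The obstruction lies in $H^1(\Gm,G_N)$. The unipotent part contributes nothing because a torus is linearly reductive, so $H^1$ with coefficients in a unipotent group filtered by $\Gm$-stable vector groups vanishes. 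The torus part is handled because the linear part of $\varphi$ is already $\Gm$-fixed: it lies in the diagonal torus, which commutes with $\rho$. Concretely, one corrects $\varphi$ first in the linear part and then inductively on path length. At length $\ell$, the defect of $\varphi$ from being graded is a $1$-cocycle valued in a $\Gm$-representation, hence a coboundary, and the correction is a unipotent automorphism of length $\ell$ that still preserves $V$ up to cyclic equivalence modulo $\m^{\ell+1}$.

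The final step passes to the limit. The fixed loci $X_N^{\Gm}$ are nonempty, and for $N'\ge N$ they form an inverse system of $\Gm$-fixed subvarieties, acted on transitively by the fixed groups $G_N^{\Gm}$. By a Mittag-Leffler argument for the images, which are constructible and nonempty, $\varprojlim X_N^{\Gm}\neq\emptyset$. Any element of this limit is a graded right-equivalence from $(Q,V,d)$ to $(Q,W,d)$.

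I expect the main obstacle to be the inductive unipotent correction in the second step, specifically making the cocycle argument precise while cyclic equivalence is only an equivalence rather than an equality. The correction at length $\ell$ must be chosen inside $\operatorname{Stab}(V)$ modulo $\m^{\ell+1}$, not merely inside $\Aut$. My first attempt will be the following. Decompose the length-$\ell$ correction into $d$-homogeneous components. Use $\Gm$-averaging, i.e.\ projection to the degree-$d(a)$ component, which is $\Gm$-equivariant and linear at each fixed length. Then check that the equation ``$\psi(V)\sim W$ mod $\m^{\ell+1}$'' is linear in the length-$\ell$ correction, so that projecting onto its homogeneous part preserves solutions because $V$ and $W$ are homogeneous.
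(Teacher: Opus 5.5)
Your proposal is correct and follows essentially the same route as the paper. Both arguments encode $d$ as the $\Gm$-action $\rho_z$, use the absence of parallel arrows to see that the diagonal linear part of any right-equivalence is already graded, and then produce a $\Gm$-fixed point in a pro-unipotent torsor, order by order, from the vanishing of rational $H^1(\Gm,-)$ on the vector-group layers of the $\m$-adic filtration, before passing to the limit.

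The only difference is cosmetic. The paper first factors $\varphi=u\circ L$ and works with the torsor $\{v\in U: v(L(V))\sim_{\cyc}W\}$ under the unitriangular stabilizer of $L(V)$. You instead keep the full stabilizer and use that the cocycle $z\mapsto\varphi^{-1}\varphi_z$ has trivial linear part, which is all that is needed. The literal splitting $G\cong T\ltimes U$ with $T$ diagonal that you assert can fail, but it plays no role. For example, take $V=cba+cbqr$ with $qr$ a path parallel to $a$: the automorphism $a\mapsto a+(1-s)qr$, $q\mapsto sq$ lies in $\operatorname{Stab}(V)$, but its linear part does not.
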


\begin{proof}
To ease notation, we drop the subscripts $Q$ from the algebras, ideals and modules associated to $Q$ in \Cref{ssec:prelim_pathalgebras_and_QPs}. Choose an (ungraded) right-equivalence $\varphi$ from $(Q,V)$ to $(Q,W)$, which exists by hypothesis. Its linear part $L$ of $\varphi$ is an invertible $R$-bimodule map on the arrow span $A\cong \m/\m^2$.  Since $Q$ has no parallel arrows, each $e_jAe_i$ is at most one-dimensional and hence the linear part is diagonal on arrows. That is, for any arrow $a\in Q_1$, there exists $\lambda_a\in\kk^\times$ so that
\[
L(a)=\lambda_a a,
\]
In particular, this linear part is degree preserving. Factoring the original right-equivalence $\varphi$ as
\[
\varphi=u\circ L,
\qquad\mbox{ where } u\equiv\id\pmod{\m^2},
\]
we have that $u(L(V))\sim_{\cyc}W$ are cyclically equivalent by construction. The issue is that $u$ might not be degree preserving, and we must modify $u$ accordingly. For that, let $U\sse \Aut_{R}(\widehat{\kk Q})$ be the subgroup of unitriangular automorphisms and define
\[
\mathbb T
:=
\{v\in U:v(L(V))\sim_{\cyc}W\}.
\]
It is a non-empty right torsor for the unitriangular stabilizer $H=\{g\in U:g(L(V))\sim_{\cyc}L(V)\}$, which is a closed (pro)unipotent subgroup of $U$. Note that an integral arrow grading $d$ on $Q$ defines the rational $\Gm$-action
\[
\rho:\Gm\lr\Aut_R(\widehat{\kk Q}),\quad \rho_z(a):=z^{d(a)}a.
\]
Since $L(V)$ and $W$ have degree one, $\mathbb T$ and $H$ are stable under conjugation by the grading action $\rho_z$. Consider the $\Gm$-action on $\mathbb T$ obtained by restriction. By \Cref{lem:torsor} below, $\mathbb T$ contains a $\Gm$-fixed point $u_{\mathrm{gr}}$ under such restricted action. Since $u_{\mathrm{gr}}$ is a fixed point, we have
\[
\rho_z u_{\mathrm{gr}}\rho_z^{-1}=u_{\mathrm{gr}},
\]
which is exactly that such fixed point $u_{\mathrm{gr}}$ is degree preserving.  Since $L$ is also degree preserving, the composite $u_{\mathrm{gr}}\circ L$ is now a graded right-equivalence from $(Q,V,d)$ to $(Q,W,d)$.
\end{proof}

The proof of \Cref{prop:straighten} used the existence of a fixed point of a certain $\Gm$-action. This is proven in \Cref{lem:torsor} below. In order to state the lemma, let
\[
U^{(r)}=\{u\in\Aut_R(\widehat{\kk Q}):u\equiv\id\pmod{\m^r}\},
\qquad r\ge2,
\]
where recall $\m=\m_Q$ is the arrow ideal of a quiver $Q$. Note that $U^{(2)}$ is (pro)unipotent and the quotients $U^{(r)}/U^{(r+1)}$ are additive vector groups. Given an integral arrow grading $d:Q_1\lr\Z$, note that the rational $\Gm$-action $\rho:\Gm\lr\Aut_R(\widehat{\kk Q})$ determined by $\rho_z(a)=z^{d(a)}a$ is such that conjugation by $\rho_z$ preserves every $U^{(r)}$. Here is the statement that provides the fixed point used in the proof of \Cref{prop:straighten}.

\begin{lemma}\label{lem:torsor} In the notation above, let $U$ be a closed $\Gm$-stable (pro)unipotent subgroup of $U^{(2)}$, complete for the filtration $U\cap U^{(r)}$. Then every nonempty $\Gm$-stable right $U$-torsor compatible with the finite $\m$-adic quotients has a $\Gm$-fixed point.
\end{lemma}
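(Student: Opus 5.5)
The plan is to build the fixed point by successive approximation along the filtration $U\cap U^{(r)}$, solving one linear problem at each step. Let $\mathbb T$ be the nonempty $\Gm$-stable right $U$-torsor. Write $\mathbb T_r$ for its image in the finite $\m$-adic quotient, i.e.\ the induced torsor under $U/(U\cap U^{(r)})$ acting on automorphisms of $\widehat{\kk Q}/\m^r$. Compatibility with the finite quotients means that $\mathbb T$ is the inverse limit of the $\mathbb T_r$, and that each $\mathbb T_r$ is a $\Gm$-stable torsor under the finite-dimensional unipotent group $U_r:=U/(U\cap U^{(r)})$. The goal is to produce a compatible system of $\Gm$-fixed points $x_r\in\mathbb T_r$, so that $x_{r+1}\mapsto x_r$. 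Completeness of $U$ and closedness of $\mathbb T$ then give a limit $x\in\mathbb T$. This $x$ is fixed, because fixedness of a continuous endomorphism can be checked modulo every $\m^r$.

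The induction proceeds as follows. For $r=2$ the group $U_2$ is trivial, since $U\subseteq U^{(2)}$. So $\mathbb T_2$ is a single point, and that point is fixed. Now suppose $x_r\in\mathbb T_r$ is fixed. Its fibre $F\subseteq\mathbb T_{r+1}$ is nonempty, and it is a torsor under the abelian vector group
\[
V_r:=(U\cap U^{(r)})/(U\cap U^{(r+1)})\subseteq U^{(r)}/U^{(r+1)}.
\]
This vector group is a finite-dimensional $\kk$-space: it is the $\Gm$-stable linear subspace of $\Hom_{R\text{-bimod}}(A,\m^r/\m^{r+1})$ cut out by the closed subgroup $U$. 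Because $x_r$ is fixed, $F$ is $\Gm$-stable, and the action is compatible with the $V_r$-action through $\rho_z$-conjugation on $V_r$. Pick any $y\in F$ and define $c(z)\in V_r$ by $\rho_z\cdot y=y+c(z)$. Then $c$ is an algebraic $1$-cocycle of $\Gm$ with values in the rational representation $V_r$, so $H^1(\Gm,V_r)=0$ gives $c(z)=z\cdot v-v$ for some $v\in V_r$. The point $x_{r+1}:=y-v$ is then $\Gm$-fixed and lies over $x_r$.

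The step I expect to be the main obstacle is justifying this vanishing of $H^1$, and with it the rationality of the action on $F$. I would argue it concretely. $V_r$ is the finite direct sum of weight spaces $V_r^{(n)}$, coming from the weights $d$ of paths of length $r$ relative to arrows. Write $c=\sum_n c_n$. Each component $c_n\colon\Gm\to V_r^{(n)}$ is a polynomial cocycle, so $c_n(zw)=c_n(z)+z^n c_n(w)$. For $n\neq0$, symmetrizing in $z$ and $w$ gives $c_n(z)=(z^n-1)v_n$. For $n=0$, $c_0$ is an algebraic homomorphism $\Gm\to\mathbb G_a$ and hence zero. Regularity of $c$ in $z$ follows because $\rho$ acts on each $\widehat{\kk Q}/\m^{r+1}$ by a diagonal algebraic action with finitely many weights. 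The substitution and composition formulas for automorphisms truncated mod $\m^{r+1}$ are then polynomial in $z^{\pm1}$. This is exactly where the hypothesis of ``compatibility with the finite $\m$-adic quotients'' gets used.

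Finally, I would check that the chosen $x_r$ lift compatibly by construction. The limit $x=\varprojlim x_r$ lies in $\mathbb T$ because $\mathbb T$ is closed and equals the inverse limit of the $\mathbb T_r$. This yields the required $\Gm$-fixed point, i.e.\ a degree-preserving element of the torsor.
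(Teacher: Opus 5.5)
Your proposal is correct and follows essentially the same route as the paper. You lift order by order through the finite $\m$-adic quotients, treat each fibre as a $\Gm$-stable torsor under the vector group $(U\cap U^{(r)})/(U\cap U^{(r+1)})$, obtain fixed lifts from the vanishing of $H^1(\Gm,-)$ on rational representations, and pass to the $\m$-adic limit. The only differences are cosmetic. You start the induction at the trivial level $r=2$ instead of invoking triviality of the nonabelian $H^1(\Gm,U_N)$ along the central filtration. You also check $H^1(\Gm,V_r)=0$ by an explicit weight-space cocycle computation rather than citing exactness of invariants for diagonalizable groups.
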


\begin{proof} We proceed order by order by passing to the finite-dimensional quotients modulo $\m^N$, where we will be increasing $N\in\N$ one by one. The image $U_N\sse\Aut_R(\widehat{\kk Q}/\mathfrak{m}^N)$ of $U$ modulo $\m^N$ is a unipotent algebraic group with a rational $\Gm$-action. The filtration induced by $U^{(r)}$ has successive quotients that are rational representations of $\Gm$. Since commutators increase arrow order, this filtration is central in successive finite quotients. Now, for a rational representation $M$ of $\Gm$, we claim that we have $H^1(\Gm,M)=\{0\}$. Indeed, $\Gm$ is diagonalizable and every rational representation is a direct sum of weight spaces, thus the functor of invariants is exact and $H^1(\Gm,M)$ vanishes.  Induction along the central filtration gives that the pointed set $H^1(\Gm,U_N)$ contains one element. Therefore every nonempty $\Gm$-stable $U_N$-torsor has a fixed point. Now we keep applying this order by order: having chosen a fixed point modulo $\m^N$, the set of its lifts modulo $\m^{N+1}$ is either empty or a torsor under a $\Gm$-stable vector subgroup of $(U\cap U^{(N)})/(U\cap U^{(N+1)})$. Indeed, non-emptiness follows from the assumed compatible (pro)torsor, and $H^1(\Gm,M)=0$ gives a fixed lift.  The compatible sequence of fixed lifts then converges in the $\mathfrak{m}$-adic topology to a $\Gm$-fixed point, as required.
\end{proof}

\subsection{Proof of \Cref{thm:unique-fence-G1}}\label{ssec:proof_unique-fence-G1} \Cref{prop:existence} provides an element of $\Gclass[Q(\bG),W(\bG)]$, which we represent by a graded QP $(Q(\bG),W(\bG),d(\bG))$. Let $(Q',V',d')$ be a graded QP with $[Q',V']=[Q(\bG),W(\bG)]$. Homogeneous reduction applied to $(Q',V',d')$ yields a reduced graded QP such that its underlying reduced QP is right-equivalent to $(Q(\bG),W(\bG))$, as the latter is reduced and $[Q',V']=[Q(\bG),W(\bG)]$. (Here we use uniqueness of the ungraded reduced part, cf.~\cite[Theorem~4.6]{DWZ}.) Such reduced graded QP can thus be written as $(Q(\bG),V,d)$ where the underlying reduced QP $(Q(\bG),V)$ is right-equivalent to $(Q(\bG),W(\bG))$. In particular, $V$ is non-degenerate because non-degeneracy is invariant under right-equivalence. Now \Cref{prop:gauge-unique} implies that we can change the grading $d$ to $d(\bG)$, where we use that the quiver is now identically $Q(\bG)$. In particular, $(Q(\bG),V,d)$ and $(Q(\bG),V,d(\bG))$ represent the same class in $\Gclass[Q(\bG),W(\bG)]$. Since both $V$ and $W(\bG)$ are homogeneous of degree one for the same grading $d(\bG)$ and non-degenerate, \Cref{prop:P}.$(ii)$ implies that they are (ungraded) right-equivalent. \Cref{prop:straighten}, which can be applied thanks to \Cref{lem:no-parallel}, implies that they are graded right-equivalent. Therefore every object represents the same equivalence class as $(Q(\bG),W(\bG),d(\bG))$, which implies the result.\hfill$\Box$

\subsection{An explicit representative}\label{ssec:explicit_plabic_gradedQP} Let $\bG$ be a plabic fence. \Cref{prop:existence} established the existence of an arrow-grading $d(\bG)$ for $Q(\bG)$ so that $W(\bG)$ is homogeneous of degree one, i.e.~the existence of a graded QP $(Q(\bG),W(\bG),d(\bG))$. That said, it can be convenient to explicitly describe one such arrow-grading $d_\bG$. This is the object of this subsection.\\

Let $\mathcal F(\bG)$ be the set of bounded faces of $\bG$, i.e.~the (mutable) vertices of $Q(\bG)$, and denote
$m:=|\mathcal F(\bG)|$.  If $F\in\mathcal F(\bG)$, let $\ell(F)$ be the level
of $F$, numbered from bottom to top as in \cite[Section~3.1]{CasalsGao}, and
let $x(F)$ be the horizontal coordinate of the unique vertical edge which is
the right boundary edge of $F$. Consider the total order on faces given by
\begin{equation}\label{eq:fence-face-order}
 F<F' \quad\Longleftrightarrow\quad
 \bigl(\ell(F),x(F)\bigr)<_{\mathrm{lex}}
 \bigl(\ell(F'),x(F')\bigr).
\end{equation}
That is, in this total order levels are read from bottom to top, and the faces in one level are read
from left to right. Let us enumerate the faces of $\bG$ so that $F_1<\cdots<F_m$ and consider the integral arrow-grading $d_{\bG}:Q(\bG)_1\lr\Z$ which for each arrow $a:F_i\to F_j$ of $Q(\bG)$ assigns degree as follows:
\begin{equation}\label{eq:fence-descent-grading}
 d_{\bG}(a):=
 \begin{cases}
 0,&i<j,\\
 1,&i>j.
 \end{cases}
\end{equation}
The claim is that this assignment satisfies:

\begin{lemma}
Let $\bG$ be a plabic fence, $Q(\bG)$ its quiver and $d_\bG$ the grading defined in \eqref{eq:fence-descent-grading}. Then $W(\bG)$ is homogeneous of degree one for the grading $d_{\bG}$.
\end{lemma}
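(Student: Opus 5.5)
The plan is to reduce homogeneity to a descent count along each pente-row cycle. By the description of $W(\bG)$ in \Cref{ssec:properties_plabic_QP}, the potential is a signed sum of the monomials $p_\rho$, one for each pente-row $\rho$. Homogeneity of degree one therefore amounts to $d_\bG(c_\rho)=1$ for every pente-row $\rho$. By \eqref{eq:fence-descent-grading}, $d_\bG(c_\rho)$ is the number of \emph{descents} of the directed cycle $c_\rho$, meaning arrows $F_i\to F_j$ with $i>j$ in the order \eqref{eq:fence-face-order}. A directed cycle in a totally ordered set has at least one descent, so the content of the lemma is that each pente-row cycle has \emph{exactly} one descent.

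First, I would classify the arrows of $Q(\bG)$ by type, using \Cref{def:quiver_plabicfence}:
\begin{itemize}
\item Type (a) arrows $[de]:F_d\to F_e$ join two faces at the same level $k$, and $x(F_d)<x(F_e)$ since $d$ lies to the left of $e$. Such an arrow is an ascent, of degree $0$.
\item Type (b) arrows $[ed^\uparrow]:F_e\to F_{d^\uparrow}$ go from level $k$ to level $k+1$. Since the order is lexicographic with the level first, this is an ascent regardless of $x$-coordinates, so it has degree $0$.
\item Type (c) arrows $[ed^\downarrow]:F_e\to F_{d^\downarrow}$ go from level $k$ to level $k-1$. This is always a descent, of degree $1$.
\end{itemize}
Consequently $d_\bG(a)=1$ exactly when $a$ is of type (c). Thus $d_\bG(c_\rho)$ equals the number of type (c) arrows in $c_\rho$.

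Second, I would describe the pente-row cycles explicitly, as in Figure~\ref{fig:PlabicFence_Row1_Example}. A black or white pente-row $\rho$ lies on a single horizontal line, and the faces surrounding it lie on the two adjacent levels, one just above the line and one just below. The cycle $c_\rho$ runs left to right along one of these levels through consecutive type (a) arrows. It then crosses to the other level at the right end of the row, runs along that level, and crosses back at the left end. Since the faces involved occupy exactly two adjacent levels, say $k$ and $k+1$, every arrow of $c_\rho$ is of type (a) or is an inter-level arrow between $k$ and $k+1$. A closed path must cross from level $k$ to level $k+1$ and back the same number of times. If the cycle crosses exactly twice, it contains exactly one arrow going down, necessarily of type (c), and exactly one going up, of type (b). This gives $d_\bG(c_\rho)=1$.

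The main obstacle is justifying rigorously, from \Cref{def:quiver_plabicfence} and the definition of pente-rows, that each $c_\rho$ visits only two adjacent levels and changes level exactly twice. I would argue inductively along the left-to-right scan. When the vertical edge $e$ creating the right face $F_e$ of $\rho$ is added, the new monomial $P_{\mathrm{black}}(e)$ or $P_{\mathrm{white}}(e)$ closes a cycle using the new arrows at $F_e$ together with the previously built horizontal chains on the two levels bounding the row. The pente-row condition, that all regions between consecutive vertices of the row are bounded faces with no vertical edges crossing them, ensures that these chains consist solely of type (a) arrows. Therefore the only inter-level arrows are the two incident to the leftmost and rightmost faces of the row. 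If a direct check of this proves too fiddly, I would instead verify $d_\bG(c_\rho)=1$ locally for the two pictures in Figure~\ref{fig:PlabicFence_Row1_Example} for arbitrary length, since \cite{CasalsGao} shows these pictures are the only local forms.
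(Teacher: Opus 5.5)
Your proposal is correct and follows the paper's proof: it classifies type (a) (horizontal) and type (b) (upward) arrows as increasing in the order \eqref{eq:fence-face-order}, hence of degree $0$, and type (c) (downward) arrows as decreasing, hence of degree $1$, and then notes that each pente-row cycle of Figure~\ref{fig:PlabicFence_Row1_Example} contains exactly one downward arrow. One small correction to your description: on the second level the cycle visits only a single face (the face below a black row, or above a white row, since no vertical edge on that side can meet the line strictly inside the row), so the cycle has no left-to-right run there, and its two level changes are simply the arrows into and out of that one face, which makes your ``exactly two crossings'' claim immediate.
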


\begin{proof}
By the inductive description of $Q(\bG)$ in
\Cref{def:quiver_plabicfence} we have the following three types of arrows at each step:
\begin{enumerate}
    \item a horizontal arrow goes from a face to the
face immediately to its right, which is increasing in the total face order.

\item an upward arrow goes from a lower level to a
higher level, also increasing in the total face order.

\item a downward arrow goes from a higher level to a lower level, thus decreasing in the total face order.
\end{enumerate}

Consequently, by \eqref{eq:fence-face-order} and \eqref{eq:fence-descent-grading}, horizontal and
upward arrows have degree zero, and downward arrows have degree one. Now, every
cyclic monomial occurring in $W(\bG)$ is the cycle surrounding one black or
white pente-row.  The two local cycles displayed in
\Cref{fig:PlabicFence_Row1_Example} contain exactly one downward
arrow: all remaining arrows either run horizontally from left to right, or move upward.  Hence every cyclic monomial in $W(\bG)$ has total
$d_{\bG}$-degree one. In consequence, $(Q(\bG),W(\bG),d_{\bG})$ is a degree-one graded
QP, as required.
\end{proof}

Therefore, by the uniqueness of the dilation class established in \Cref{thm:unique-fence-G1}, the unique dilation $\delta(\bG)\in\Gclass[Q(\bG),W(\bG)]$ is represented by $(Q(\bG),W(\bG),d(\bG))$ where $d(\bG)=d_\bG$ as in \eqref{eq:fence-descent-grading}.

\section{The topology of the graded Euler form for plabic fences}\label{sec:polynomial_plabic_fence}

The object of this section is to prove \Cref{prop:dil_module_is_Alexander}, which shows that the cokernel $\Z[t,t^{-1}]$-module of the graded Euler form associated to a plabic fence is isomorphic to the torsion part of the Alexander module of the link associated to the plabic fence. This is a key step in the proof of the Main Theorem.\\

Note that \Cref{prop:dil_module_is_Alexander} establishes such isomorphism of $\Z[t,t^{-1}]$-modules for any plabic fence, whether it is associated to an algebraic singularity or not. In the proof of the Main Theorem it will be specialized to plabic fences associated to algebraic malleable divides, as discussed in \Cref{sec:proof_main}.


\subsection{The Alexander module of a smooth link}\label{ssec:Alexander_modules} This subsection defines the Alexander module of  a link and recalls a way to compute it via Seifert matrices. Let $L:=L_1\cup\cdots\cup L_r\subset S^3$ be a smooth oriented link in the 3-sphere and
\[
   X_L:=S^3\setminus\operatorname{int}\nu L
\]
its complement. The oriented meridians $\mu_{1},\ldots,\mu_{r}$ of the components of $L$ give an isomorphism $H_1(X_L;\Z)\cong \Z^r$. The Hurewicz homomorphism composed with a map sending all meridians to the same generator defines a group morphism
\[
   \tau:\pi_1(X_L)\twoheadrightarrow H_1(X_L;\Z)
   \longrightarrow\Z,
   \qquad
   \tau(\mu_i)=1.
\]
Let $p:\widetilde X_L^{\,\tau}\longrightarrow X_L$ be the connected infinite cyclic cover associated to $\ker\tau$, see e.g.~~\cite[Chapter 6]{lickorish1997introduction}. Then the quotient
\[
   \pi_1(X_L)/\ker\tau\cong\Z
\]
acts on $(\ker\tau)_{\mathrm{ab}}\cong H_1(\widetilde X_L^{\,\tau};\Z)$ on the right by conjugation. Specifically, choosing a homotopy class $\mu\in\pi_1(X_L)$ with $\tau(\mu)=1$, we use the convention
\begin{equation}\label{eq:right-action}
t\cdot x:=\mu x\mu^{-1}.
\end{equation}
This is independent of the choice of $\mu$ with $\tau(\mu)=1$ after passing to the abelianization of $\ker\tau$. In particular, $(\ker\tau)_{\mathrm{ab}}\cong H_1(\widetilde X_L^{\,\tau};\Z)$ becomes a module over the group ring of $\Z$:  

\begin{definition}[Alexander module]\label{def:alex-module}
By definition, the integral Alexander module of a link $L\sse S^3$ is the $\Z[t,t^{-1}]$-module 
\[
   A_L^\tau:=H_1(\widetilde X_L^{\,\tau};\Z)
\]
resulting from the action \eqref{eq:right-action}, where $\Z[t,t^{-1}]\cong\Z[\Z]$ is the group ring of $\Z$. We often denote the Alexander module $A_L^\tau$ of $L$ by $A_L$ if $\tau$ is implicit by context.\hfill$\Box$
\end{definition}

Recall that we will often denote $\Lambda:=\Z[t,t^{-1}]$, and that the Alexander module and the Alexander polynomial are only defined up to a product with the units $\pm t^\m$ of $\Lambda$, $m\in\Z$. There is an explicit way to present the Alexander module if a Seifert surface for $L$ has been chosen, cf.~\cite[Chapter 2]{lickorish1997introduction}. Indeed, let $S\subset S^3$ be a connected oriented Seifert surface for $L$, and choose a basis $\gamma_1,\ldots,\gamma_m$ of $H_1(S;\Z)$. Let $\gamma_j^+$ be the positive normal push-off and define the standard Seifert matrix $V=(V_{ij})$ by the entries
\begin{equation}\label{eq:V-def}
   V_{ij}:=\lk(\gamma_i,\gamma_j^+).
\end{equation}

\noindent By \cite[Theorem 6.5]{lickorish1997introduction}, for instance, the Seifert matrix in \eqref{eq:V-def} encodes the integral Alexander module as follows:

\begin{proposition}\label{prop:seifert-presentation} Let $L\sse S^3$ be a link with Seifert surface $S\sse S^3$ and Seifert matrix $V$. Then the Alexander module $A_L$ of $L$ is isomorphic to
   $$A_L\cong\coker_\Lambda(V^{\mathsf T}-tV).$$
   \hfill$\Box$
\end{proposition}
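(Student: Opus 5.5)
This is the classical result that the Seifert form $V^{\mathsf T}-tV$ presents the Alexander module. I would prove it via the Seifert-surface construction of the infinite cyclic cover together with Mayer--Vietoris.

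\emph{Step 1: build the cover.} Thicken $S$ to a bicollar $S\times[-1,1]\subset X_L$ and let $Y:=X_L\setminus S\times(-1,1)$ be the cut-open complement. Since every meridian $\mu_i$ meets $S$ once positively, the homomorphism $\tau$ is exactly the intersection number with $S$. Hence $\widetilde X_L^{\,\tau}$ is obtained by gluing countably many copies $Y_n$, $n\in\Z$, of $Y$: the copy $S^+\subset Y_n$ is identified with $S^-\subset Y_{n+1}$. Connectedness of $S$ makes the cover connected.

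\emph{Step 2: Mayer--Vietoris.} Decompose $\widetilde X_L^{\,\tau}=\bigcup_n Y_n$ along $\bigsqcup_n S_n$. This gives the exact sequence of $\Lambda$-modules
\[
H_1(S)\otimes\Lambda\xrightarrow{\;\alpha\;}H_1(Y)\otimes\Lambda\longrightarrow A_L\longrightarrow H_0(S)\otimes\Lambda\xrightarrow{\;\beta\;}H_0(Y)\otimes\Lambda.
\]
Here $\alpha(x)=t\,i_+(x)-i_-(x)$ (or the other order, depending on the convention \eqref{eq:right-action}). The map $\beta$ is $t-1$ on $\Lambda$, which is injective, so $A_L\cong\coker\alpha$.

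\emph{Step 3: identify $\alpha$ with the Seifert form.} Alexander duality gives $H_1(Y)\cong H_1(S^3\setminus S)\cong\Hom(H_1(S),\Z)$ via linking numbers. Choose the basis of $H_1(Y)$ dual to $\gamma_1,\dots,\gamma_m$, so $\lk(\gamma_i,\xi_j)=\delta_{ij}$. Then $i_+(\gamma_j)=\sum_i\lk(\gamma_i,\gamma_j^+)\xi_i$, which has coefficient matrix $V$. Similarly $i_-(\gamma_j)$ has coefficients $\lk(\gamma_i,\gamma_j^-)=\lk(\gamma_j,\gamma_i^+)$, giving matrix $V^{\mathsf T}$. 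So $\alpha$ is represented by $tV-V^{\mathsf T}$, which up to sign and transpose conventions is $V^{\mathsf T}-tV$. A sign change does not affect the cokernel, by \Cref{rmk:linear_algebra}.

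\emph{Main obstacle.} The delicate point is matching conventions: the direction of the right action \eqref{eq:right-action} determines whether $t$ multiplies $i_+$ or $i_-$, and hence whether we get $V^{\mathsf T}-tV$ or its transpose or its image under $t\mapsto t^{-1}$. I would first check this on the trefoil. If a mismatch appears, I would absorb it by replacing the choice of $\mu$ by $\mu^{-1}$ or by using the push-off direction in \eqref{eq:V-def}, since the statement is only up to these identifications, as in \cite[Theorem 6.5]{lickorish1997introduction}.
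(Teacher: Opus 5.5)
Your proposal is correct and is exactly the classical cut-and-paste/Mayer--Vietoris argument behind \cite[Theorem 6.5]{lickorish1997introduction}; the paper gives no proof of its own and simply cites that theorem.

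The only loose end is your convention hedge. You do not need it, and you should not resolve it by swapping $\mu$ for $\mu^{-1}$ or $V$ for $V^{\mathsf T}$: over the non-PID $\Lambda$ either swap replaces the module by its conjugate, with $t$ acting as $t^{-1}$, which is not obviously isomorphic to the original. Instead, take $t$ to be the deck transformation obtained by lifting the positively oriented meridian, which crosses $S$ from its negative side to its positive side. Then the copy of $S$ separating $Y_n$ from $tY_n$ has its negative push-off in $Y_n$ and its positive push-off in $tY_n$. Hence $\alpha=t\,i_+-i_-$ on the nose, and its matrix is $tV-V^{\mathsf T}=-(V^{\mathsf T}-tV)$, exactly as in the statement.
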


In general, we must be careful with split links. This will not be an issue for algebraic links and it will not affect the cases relevant to the Main Theorem. That said, for completeness and clarity, we present the general argument for an arbitrary plabic fence. For split links, the following is a consequence of Mayer-Vietoris:

\begin{lemma}[Alexander module of a split link]\label{lem:split-module}
Let $L=L'\sqcup L''$ be a split union of two nonempty oriented links, and let $\tau,\tau',\tau''$ denote the corresponding homomorphisms from the fundamental group of the complement to $\Z$. Then the respective Alexander modules satisfy
\begin{equation}\label{eq:split-two}
   A_L^\tau
   \cong
   A_{L'}^{\tau'}\oplus A_{L''}^{\tau''}\oplus\Lambda.
\end{equation}
In consequence, for a general split link $L=L_1\sqcup\cdots\sqcup L_s$, and morphisms $\tau,\tau_1,\ldots,\tau_s$, we thus have
\begin{equation}\label{eq:split-s}
   A_L^\tau
   \cong
   \Lambda^{s-1}\oplus
   \bigoplus_{i=1}^s A_{L_i}^{\tau_i}.
\end{equation}
\end{lemma}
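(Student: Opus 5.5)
We prove \Cref{lem:split-module} by a Mayer--Vietoris argument on the infinite cyclic cover, followed by induction on the number of split pieces. Since $L=L'\sqcup L''$ is split, there is an embedded $2$-sphere $\Sigma\subset S^3\setminus L$ separating $L'$ from $L''$. Cutting along $\Sigma$ gives
\[
X_L=Y'\cup_\Sigma Y'',
\]
where $Y'$ (resp.~$Y''$) is $X_{L'}$ (resp.~$X_{L''}$) with an open $3$-ball removed. Removing a ball from a $3$-manifold does not change $\pi_1$ or $H_1$. The inclusions therefore identify $\tau|_{\pi_1(Y')}=\tau'$ and $\tau|_{\pi_1(Y'')}=\tau''$, because meridians map to meridians. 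Moreover, $\pi_1(X_L)\cong\pi_1(X_{L'})*\pi_1(X_{L''})$ by van Kampen, since $\Sigma$ is simply connected.

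Next we lift this decomposition to the cover $p:\widetilde X_L^{\,\tau}\to X_L$. Both $\tau'$ and $\tau''$ are surjective, since the links are nonempty. Hence $p^{-1}(Y')$ is connected and is the infinite cyclic cover $\widetilde Y'$ associated to $\tau'$, and likewise for $Y''$. Since $\Sigma$ is simply connected, $p^{-1}(\Sigma)\cong \Sigma\times\Z$, which is $\Lambda$-freely permuted copies of $S^2$. The $\Lambda$-equivariant Mayer--Vietoris sequence then reads
\[
H_1(\Sigma\times\Z)\to H_1(\widetilde Y')\oplus H_1(\widetilde Y'')\to A_L^\tau\to H_0(\Sigma\times\Z)\to H_0(\widetilde Y')\oplus H_0(\widetilde Y'')\to H_0(\widetilde X_L^{\,\tau})\to0.
\]
We identify each term as follows.
\begin{itemize}
\item The leftmost term vanishes.
\item $H_0(\Sigma\times\Z)\cong\Lambda$.
\item $H_0$ of each connected cover is $\Lambda/(t-1)\cong\Z$.
\item $H_1(\widetilde Y')\cong A_{L'}^{\tau'}$, since removing a ball from the base removes $\Z$ many balls upstairs and does not change $H_1$ of the $3$-manifold. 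The same holds for $\widetilde Y''$.
\end{itemize}
The tail $\Lambda\to\Z\oplus\Z\to\Z\to0$ sends $1\mapsto(1,-1)$, so its kernel is $(t-1)\Lambda\cong\Lambda$, a free module. This gives a short exact sequence
\[
0\to A_{L'}^{\tau'}\oplus A_{L''}^{\tau''}\to A_L^\tau\to\Lambda\to0,
\]
which splits because $\Lambda$ is free. That proves \eqref{eq:split-two}.

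For \eqref{eq:split-s} we induct on $s$. We write $L=L_1\sqcup(L_2\sqcup\cdots\sqcup L_s)$ and apply \eqref{eq:split-two} repeatedly. Each split adds one copy of $\Lambda$, giving $s-1$ copies in total. Here we use that the restriction of $\tau$ to each sub-link complement is the corresponding $\tau_i$.

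The main point requiring care is the identification of the lifted pieces, namely the connectivity of $p^{-1}(Y')$ and the compatibility of the $t$-actions with the convention \eqref{eq:right-action}. Both follow from surjectivity of $\tau',\tau''$ and naturality of the deck transformation actions. Equivariance of the Mayer--Vietoris maps is automatic because the deck group acts on the whole decomposition.
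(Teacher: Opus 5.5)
Your proposal is correct and follows essentially the same route as the paper: Mayer--Vietoris on the infinite cyclic cover, cut along the lifted splitting sphere. Both arguments use $H_1(p^{-1}\Sigma)=0$ and $H_0(p^{-1}\Sigma)\cong\Lambda$, connected lifted sides with $H_0\cong\Lambda/(t-1)$, the kernel $(t-1)\Lambda\cong\Lambda$ of $f\mapsto(\epsilon(f),-\epsilon(f))$, splitting by freeness, and then iteration for $s$ pieces; your extra details (van Kampen, why removing a $\Z$-orbit of balls upstairs leaves $H_1$ unchanged) are harmless elaborations of steps the paper states briefly.
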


\begin{proof}
It suffices to prove \eqref{eq:split-two}. For that, let us choose a splitting 2-sphere $S\subset X_L$, so that cutting along $S$ gives two pieces obtained from $X_{L'}$ and $X_{L''}$ by deleting an open $3$-ball.  The restrictions of the morphisms $\tau'$ and $\tau''$ to these pieces are surjective, because every nonempty link has a meridian mapped to $1$.  Hence the corresponding covering pieces $\widetilde Y'$ and $\widetilde Y''$ are connected.

Now, deleting the entire orbit of a $3$-ball under the deck covering group does not change first homology. In addition, the inverse image of the splitting sphere is a disjoint union of spheres indexed by the deck group, and thus we obtain:
\[
   H_1(p^{-1}S;\Z)=0,
   \qquad
   H_0(p^{-1}S;\Z)\cong\Z[\Z]\cong\Lambda.
\]
Since each lifted side is connected,
\[
   H_0(\widetilde Y';\Z)
   \cong H_0(\widetilde Y'';\Z)
   \cong \Lambda/(t-1).
\]
The relevant part of the Mayer--Vietoris sequence is therefore
\begin{align*}
  0&\longrightarrow
  A_{L'}^{\tau'}\oplus A_{L''}^{\tau''}
  \longrightarrow A_L^\tau
  \longrightarrow \Lambda
  \xrightarrow{\phi}
  \Lambda/(t-1)\oplus\Lambda/(t-1),
\end{align*}
where the map $\phi$ is given by
\[
   \phi(f)=(\epsilon(f),-\epsilon(f)),
   \qquad \mbox{ where }\epsilon(t)=1.
\]
In particular, its kernel is $\ker\phi=(t-1)\Lambda\cong\Lambda$ and exactness gives a short exact sequence
\[
  0\longrightarrow
  A_{L'}^{\tau'}\oplus A_{L''}^{\tau''}
  \longrightarrow A_L^\tau
  \longrightarrow (t-1)\Lambda
  \longrightarrow0.
\]
Since $(t-1)\Lambda$ is free of rank one over $\Lambda$, the sequence splits, which proves \eqref{eq:split-two}. The general formula \eqref{eq:split-s} follows from repeatedly applying \eqref{eq:split-two}.
\end{proof}

\subsection{The cokernel of the graded Euler form of a plabic fence}\label{ssec:polynomial_plabic_fence} Let $\bG$ be a plabic fence, $Q(\bG),W(\bG)$ its unique non-degenerate QP, up to right equivalence, and $\delta\in\Gclass[Q(\bG),W(\bG)]$ the unique dilation class, cf.~\Cref{thm:unique-fence-G1}. By \Cref{cor:unique_dilation_plabic_fence}, the cokernel of the graded Euler pairing is an invariant of the quiver mutation class of $Q(\bG)$. To ease notation, we denote the graded Euler form of a plabic fence $\bG$ by $\Eul_\bG:=\Eul_{(Q(\bG),W(\bG),\delta(\bG))}$, and its cokernel by
\begin{equation}\label{eq:dilation_module}
\coker\Eul_{\bG}(t)\in\Z[t,t^{-1}]\mbox{-mod}.
\end{equation}
The object of this subsection is to identify $\coker\Eul_{\bG}(t)$ with a more geometric quantity, as achieved in \Cref{prop:dil_module_is_Alexander}.\\

For that, let us consider the smooth link $L(\bG)\sse S^3$ obtained by taking the trivial 0-framed closure of the positive braid word $\beta=\beta(\bG)$ associated to $\bG$. Namely, from a plabic fence $\bG$ we construct a braid word $\beta(\bG)$ iteratively by scanning the plabic fence left-to-right: starting with the empty word $\beta(\bG)$, when we encounter a vertical edge between the $k$ and $(k+1)$st horizontal edges, counting from the bottom, we add the positive Artin generator $\sigma_k$ to the right of $\beta(\bG)$. Note that we obtain a positive braid word through this process, as all Artin generators used in this construction are positive. By closing up $\beta(\bG)$ via the 0-framed closure, we obtain a smooth link $L(\bG)\sse S^3$. For an irreducible plane curve singularity $L(\bG)\sse S^3$ is a knot.\\

For a $\Lambda$-module $M$, we denote by 
\[
   \Tors_{\Lambda}M
   :=\{x\in M\mid f\cdot x=0\text{ for some non-zero }f\in\Lambda\}
\]
the torsion submodule. The cokernel module $\coker\Eul_{\bG}(t)$ in \eqref{eq:dilation_module} relates to the Alexander module from \Cref{def:alex-module} as follows:

\begin{theorem}\label{prop:dil_module_is_Alexander}
Let $\bG$ be a plabic fence and $L(\bG)\sse S^3$ its associated smooth link. Then there exists an isomorphism of $\Lambda$-modules
\begin{equation}\label{eq:coker_module_is_Alexander}
   \boxed{\;\coker\Eul_{\bG}(t)\cong\Tors_\Lambda A_{L(\bG)}.\;}
\end{equation}
That is, the cokernel of the graded Euler form associated to $(Q(\bG),W(\bG),\delta(\bG))$ is isomorphic to the torsion submodule of the Alexander module of $L(\bG)$.
\end{theorem}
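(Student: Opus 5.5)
The plan is to compute $\Eul_\bG(t)$ explicitly using the representative $(Q(\bG),W(\bG),d_\bG)$ from \Cref{ssec:explicit_plabic_gradedQP}, write it in the form $V^{\mathsf T}-tV$, and recognize $V$ as a Seifert matrix of the closure $L(\bG)$ of the positive braid $\beta(\bG)$. We then conclude with \Cref{prop:seifert-presentation}, using \Cref{lem:split-module} for the split case.

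\textbf{Step 1 (Euler matrix as $U^{\mathsf T}-tU$).} Order the faces $F_1<\dots<F_m$ as in \eqref{eq:fence-face-order}. By \Cref{prop:euler-formula} and \eqref{eq:fence-descent-grading}:
\begin{itemize}
\item an increasing arrow $F_i\to F_j$ ($i<j$, degree $0$) contributes $+t$ in position $(i,j)$ and $-1$ in position $(j,i)$;
\item a downward arrow $F_j\to F_i$ ($j>i$, degree $1$) contributes $-t$ in position $(i,j)$ and $+1$ in position $(j,i)$.
\end{itemize}
By \Cref{lem:no-parallel} there is at most one arrow per pair. Hence $\Eul_\bG(t)=U^{\mathsf T}-tU$, where $U$ is upper unitriangular with
\[
U_{ij}=\begin{cases}-1 & \text{if there is an arrow } F_i\to F_j,\\ +1 & \text{if there is an arrow } F_j\to F_i,\\ 0 & \text{otherwise,}\end{cases}\qquad i<j.
\]
This is the same computation as in \Cref{lem:FN-comparison}, now with downward arrows allowed. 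By \Cref{thm:intrinsic-euler} the cokernel does not depend on the representative, so it suffices to work with this matrix.

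\textbf{Step 2 (the Seifert surface).} Take the canonical Seifert surface $S$ of the braid closure: $n$ stacked disks, one per strand, joined by one half-twisted band per crossing $\sigma_k$. Suppose first that the levels $k$ with $c_k>0$ form a consecutive range covering all $n-1$ levels. Then $S$ is connected and $\operatorname{rk}H_1(S)=c-n+1=\sum_k(c_k-1)=m$. A basis is given by the loops $\gamma_F$, one per bounded face $F$, each running through the two consecutive same-level bands bounding $F$.

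The key computation is the linking numbers $\lk(\gamma_F,\gamma_{F'}^+)$. These are:
\begin{itemize}
\item $-1$ on the diagonal;
\item $\pm1$ for horizontally adjacent faces at the same level;
\item $\pm1$ for faces on neighboring levels whose bands interleave, which are exactly the $(Z)$ and $(S)$ patterns of \Cref{def:quiver_plabicfence}.(b),(c);
\item $0$ otherwise.
\end{itemize}
With the lexicographic order and a suitable sign choice for each $\gamma_F$ (replacing $\gamma_F$ by $-\gamma_F$ as needed, which is a congruence by a diagonal $\pm1$ matrix), one aims to show that $V$ equals $U$ up to this diagonal congruence. Then \Cref{prop:seifert-presentation} gives $A_{L(\bG)}\cong\coker(U^{\mathsf T}-tU)$. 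Since $\det(U^{\mathsf T}-tU)$ has leading coefficient $\det(-U)=\pm1$, it is nonzero, so this module is torsion and equals $\Tors_\Lambda A_{L(\bG)}$.

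\textbf{Step 3 (general case).} Levels with $c_k=0$ split $L(\bG)$ into nonsplit sublinks, each a closure on a consecutive block of strands with all levels used, plus possibly trivial unknotted strands whose Alexander module is $0$. A level with $c_k=1$ contributes no faces but keeps the link connected, and is harmless. Correspondingly, $Q(\bG)$ is a disjoint union of the quivers of the pieces, since arrows only join neighboring levels. So $\Eul_\bG$ is block diagonal, and Step 2 applies to each block. \Cref{lem:split-module} then gives
\[
A_{L(\bG)}\cong\Lambda^{s-1}\oplus\bigoplus_i A_{L_i},
\]
and taking torsion recovers $\bigoplus_i\coker\Eul_{\bG_i}(t)=\coker\Eul_\bG(t)$.

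\textbf{Main obstacle.} The delicate step is the sign and orientation bookkeeping in Step 2: verifying that the interleaving linking numbers between neighboring levels occur exactly for the arrows of $Q(\bG)$, and with signs matching $U$ rather than $U^{\mathsf T}$. Since $\coker(U-tU^{\mathsf T})$ is a priori only the conjugate module under $t\mapsto t^{-1}$, getting the transpose convention right matters. I would first verify the local two-level configurations, the $(Z)$ and $(S)$ patterns of \Cref{fig:PlabicFence_Quiver}, directly; I would then fix the push-off direction and the orientation of each $\gamma_F$ so that these agree with $U$. If they come out transposed, I would instead use the negative push-off, equivalently the transposed Seifert matrix convention, which presents the same module.
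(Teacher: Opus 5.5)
Your architecture is the paper's: Step 1 is exactly the computation leading to \eqref{eq:fence-euler-companion} in the proof of \Cref{prop:fence-seifert}, and Step 3 is the split-link reduction via \Cref{lem:split-module}. The gap is Step 2. It carries all the content of the theorem, and you leave it as ``one aims to show''.

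As written, Step 2 has several problems.
\begin{enumerate}
\item[(i)] It is internally inconsistent. You record $\lk(\gamma_F,\gamma_F^+)=-1$, but congruence by a diagonal $\pm1$ matrix never changes diagonal entries. So the target must be $V=-DUD$; this is harmless, since $\coker(-M)\cong\coker M$.
\item[(ii)] Listing ``$\pm1$'' for adjacent pairs hides the real point. For each adjacent pair, exactly one of $\lk(\gamma_F,\gamma_{F'}^+)$ and $\lk(\gamma_{F'},\gamma_F^+)$ is nonzero, and it must always lie on the same side of the order \eqref{eq:fence-face-order}.
\item[(iii)] Per-vertex sign flips are only available if the sign discrepancies multiply to $+1$ around every pente-row cycle. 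This is an extra global check you do not mention.
\item[(iv)] Your contingency plan does not work as stated. Passing to negative push-offs replaces $V$ by $V^{\mathsf T}$, and then you are no longer applying \Cref{prop:seifert-presentation}. What you get is $\coker(V-tV^{\mathsf T})\cong\overline{A_L}$, the conjugate module, i.e.\ the Alexander module of the reversed link. Over $\mathbb{Q}[t^{\pm1}]$ this agrees with $A_L$ by Smith normal form. Over $\Lambda$, identifying it with $A_L$ is a further claim you give no argument for, which is exactly the caveat you raised one sentence earlier.
\end{enumerate}
So the direction of triangularity has to be decided, not circumvented.

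The paper closes Step 2 without computing any individual linking number. It combines three facts.
\begin{enumerate}
\item[(1)] With the orientations of the ribbon models of \cite{CasalsGao}, the brick cores have algebraic intersection matrix $B_\bG$; this is \eqref{eq:fence-B-intersection}.
\item[(2)] For $V_{ij}=\lk(\gamma_i^+,\gamma_j)$ one has $[\gamma_i]\cdot[\gamma_j]=V_{ij}-V_{ji}$; this is \eqref{eq:seifert-intersection-convention}.
\item[(3)] In the order \eqref{eq:fence-face-order}, the brick cores are the cores of successively deplumbed positive Hopf bands. Hence the push-off of an earlier core is unlinked from every later core, and $V$ is lower unitriangular.
\end{enumerate}
Then $V^{\mathsf T}$ is upper unitriangular with $V^{\mathsf T}-V=-B_\bG$. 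The uniqueness in \eqref{eq:fence-companion} forces $U_\bG=V^{\mathsf T}=V_\bG$. This fixes the direction of triangularity and every sign at once, with no per-vertex sign choices and no transposition ambiguity.

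If you prefer your local route, you must actually carry out the three local computations (horizontal, $(Z)$ and $(S)$ patterns) with one uniform orientation of all brick cores. You then need to check that the nonzero entries land on the side of the diagonal dictated by \eqref{eq:fence-face-order}, with the signs of $-U_\bG$ (or of $U_\bG$, under the paper's self-linking convention).
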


The rest of this section is devoted to establishing \Cref{prop:dil_module_is_Alexander}.\\

\subsection{Graded Euler form and Seifert matrices}\label{ssec:graded_Euler_Seifert} Let $\bG$ be a plabic fence, and $L(\bG)\sse S^3$ its link. The plabic fence also provides a Seifert surface $\Sigma_\bG$ and a Seifert matrix $V_\bG$, via the brick diagrams of \cite[Section 3]{BaaderDehornoy}, see also \cite[Section 2]{Ferretti}.\footnote{Equivalently, these are the standard Seifert surface and matrix associated to the positive-band
decomposition from a positive braid word, i.e.~ with one disk for each braid strand and attaching one positive half-twisted
band for each braid letter.} The relation between the graded Euler form of $\bG$ and the Seifert matrix $V_\bG$ is as follows:

\begin{proposition}\label{prop:fence-seifert}
Let $\bG$ be a plabic fence, $L(\bG)$ its associated link, and $V_\bG$ its brick-diagram Seifert matrix. Then there is an equality of matrices
\begin{equation}\label{eq:E-V}
   \boxed{\;\Eul_{\bG}(t)=V_\bG^{\mathsf T}-tV_\bG.\;}
\end{equation}
In addition, if the Seifert surface $\Sigma_\bG$ decomposes as $\Sigma_\bG=\bigsqcup_{i=1}^s\Sigma_i$, then 
\begin{equation}\label{eq:E-block}
   \Eul_{\bG}(t)=\bigoplus_{i=1}^s
   \bigl(V_i^{\mathsf T}-tV_i\bigr).
\end{equation}
after possibly reordering brick cycles componentwise.
\end{proposition}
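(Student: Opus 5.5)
Since both sides of \eqref{eq:E-V} are explicit matrices indexed by the same set, the plan is to compare them entry by entry. First I fix the identification of index sets. The bounded faces of $\bG$, i.e.\ the vertices of $Q(\bG)$, correspond to the bricks of the Baader--Dehornoy brick diagram of $\beta(\bG)$. A face at level $k$ is bounded by two consecutive vertical edges at level $k$, i.e.\ two consecutive occurrences of $\sigma_k$. The associated brick cycle $\gamma_F\in H_1(\Sigma_\bG;\Z)$ runs through those two bands and the $k$th and $(k+1)$st strand disks. These cycles form a basis of $H_1(\Sigma_\bG;\Z)$. I order them by the total order \eqref{eq:fence-face-order}, and I use the explicit representative $d_\bG$ of \eqref{eq:fence-descent-grading}. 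This is allowed because $\Eul_\bG$ is well defined only up to conjugation by \Cref{thm:intrinsic-euler}, and the statement is an equality for this representative.

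Next I compute $\Eul_\bG(t)$ from \Cref{prop:euler-formula}, using \Cref{lem:no-parallel}. The diagonal entries are $1-t$. If $F_i<F_j$ and there is an arrow $F_i\to F_j$ (horizontal or upward, degree $0$), then entry $(i,j)$ is $t$ and entry $(j,i)$ is $-1$. If there is an arrow $F_j\to F_i$ with $F_j>F_i$ (downward, degree $1$), then entry $(i,j)$ is $t^{0}$ and entry $(j,i)$ is $-t$, i.e.\ the pattern with $t$ placed in the lower triangle. In particular, $\Eul_\bG(t)=A^{\mathsf T}-tB$ for integer matrices $A$ and $B$. Comparing with $V^{\mathsf T}-tV$, the claim reduces to the following: $V_{ii}=-1$; for an arrow between $F_i<F_j$ of degree $0$ we need $V_{ij}=-1$ and $V_{ji}=0$; for a degree-$1$ arrow we need $V_{ji}=-1$ and $V_{ij}=0$. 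All other entries must vanish.

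The main step is then the Seifert linking computation. I will use the standard brick-diagram formulas (Baader--Dehornoy, Ferretti). A brick has self-linking $\lk(\gamma,\gamma^+)=-1$ for positive bands, with the orientation conventions chosen accordingly. Two bricks link nontrivially only if they are horizontally adjacent on the same level or overlap on adjacent levels. In that case exactly one of $\lk(\gamma_i,\gamma_j^+)$ and $\lk(\gamma_j,\gamma_i^+)$ equals $-1$ and the other is $0$, and which one is nonzero is determined by the relative horizontal position of the bands. I then need to check three things. First, the adjacency pattern of bricks matches exactly the arrow rules (a), (b), (c) of \Cref{def:quiver_plabicfence}: same-level consecutive faces, and the first face at level $k\pm1$ to the right of the left edge $d$. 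Second, the nonzero linking sits above the diagonal for horizontal and upward arrows. Third, it sits below the diagonal for downward arrows. This case check, in particular getting the sign and orientation conventions for the push-off to agree with the degree-$0$ versus degree-$1$ split, is where I expect the difficulty. If the signs come out globally reversed, I would replace $V$ by $-V$ or by $V^{\mathsf T}$ (which re-orients the surface) and recheck.

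Finally, for the block decomposition \eqref{eq:E-block}, note that brick cycles in different components $\Sigma_i$ are disjoint and have zero linking. Correspondingly, the quiver vertices in different components have no arrows between them, so every off-diagonal block vanishes. Reordering the vertices componentwise therefore gives the direct sum.
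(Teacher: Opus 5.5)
\textbf{There is a genuine error in your algebraic step, and it makes the target identity false.} Take $F_i<F_j$ and a downward arrow $a\colon F_j\to F_i$, so $d_{\bG}(a)=1$. Then \Cref{prop:euler-formula} gives $\Eul_{ij}(t)=-t^{d(a)}=-t$ and $\Eul_{ji}(t)=+t^{1-d(a)}=1$. You have these two entries swapped.

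The descent grading is designed exactly so that every off-diagonal $t$ lies strictly above the diagonal, for horizontal, upward and downward arrows alike. Hence $\Eul_{\bG}(t)=U^{\mathsf T}-tU$, where $U$ is upper unitriangular and $U_{ij}=-b_{ij}$ for $i<j$. Concretely, $U_{ij}=-1$ for horizontal/upward arrows, $U_{ij}=+1$ for downward ones, and all entries below the diagonal vanish.

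Your reduction list is therefore wrong in three ways:
\begin{itemize}
\item The identity needs $V_{ii}=+1$. Since $(V^{\mathsf T}-tV)_{ii}=V_{ii}(1-t)$, your value $V_{ii}=-1$ already fails on the diagonal.
\item It needs $V_{ji}=0$ for every $i<j$.
\item The degree-$0$ versus degree-$1$ distinction is a sign in the upper triangle, not a choice of triangle.
\end{itemize}
Your pattern does not even present the right module. Take $\beta=(\sigma_1\sigma_2)^3$, with faces $A<B<C<D$ and arrows $A\to B$, $B\to C$, $C\to D$, $C\to A$, $D\to B$. Your matrix has determinant of degree at most $4$ with constant term $3$, so it is not a unit multiple of $\Delta_{T(3,3)}(t)=t^4-t^3-t+1$. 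The correct matrix $U^{\mathsf T}-tU$ has determinant exactly $t^4-t^3-t+1$.

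\textbf{The geometric check would also fail at your third item, and the proposed fallbacks cannot rescue it.} Let $a$ and $b$ be bricks on adjacent levels. The strand disks are stacked, so one push-off is separated from the other brick by a horizontal plane. Which of $\lk(a^{+},b)$ and $\lk(b^{+},a)$ vanishes is therefore determined by the levels of $a$ and $b$ alone. Whether the quiver arrow between them is upward or downward only changes the sign of the nonzero entry, through the intersection number $\gamma_a\cdot\gamma_b=b_{ab}$. So the downward-arrow linkings land on the same side of the diagonal as the upward ones, not below it. Replacing $V$ by $V^{\mathsf T}$ moves every entry across the diagonal at once. Replacing $V$ by $-V$ gives a Seifert matrix of the mirror, not the brick-diagram Seifert matrix of $L(\bG)$. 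Neither can produce the mixed pattern you were aiming for.

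\textbf{How to finish once the Euler entries are corrected.} The comparison becomes structural, which is how the paper argues:
\begin{itemize}
\item The brick Seifert matrix is unitriangular in the face order \eqref{eq:fence-face-order}; the paper gets this from the Hopf-band deplumbing order.
\item Its antisymmetrization is the intersection form of the brick cycles. That form equals $B_{\bG}$ by the Casals--Gao identification of $Q(\bG)$ with the intersection quiver.
\item The unitriangular solution of $U-U^{\mathsf T}=-B_{\bG}$ is unique, so $U=V_{\bG}$.
\end{itemize}
This route also spares you verifying by hand that brick overlaps reproduce rules (a)--(c). Your argument for the block decomposition \eqref{eq:E-block} is fine.
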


\begin{proof} Let us use the notation in \Cref{ssec:explicit_plabic_gradedQP} and consider the representative $(Q(\bG),W(\bG),d(\bG))$ of the unique dilation class in $\Gclass[Q(\bG),W(\bG)]$, where $d(\bG)=d_\bG$ is given as in \eqref{eq:fence-descent-grading}. Define
\[
 b_{ij}:=\#\{F_i\to F_j\}-\#\{F_j\to F_i\},
 \qquad B_{\bG}:=(b_{ij})_{1\leq i,j\leq m},
\]
and let $U_{\bG}$ be the unique unipotent upper-triangular integral matrix satisfying
\begin{equation}\label{eq:fence-companion}
 U_{\bG}-U_{\bG}^{\mathsf T}=-B_{\bG}.
\end{equation}
In particular, $(U_{\bG})_{ij}=-b_{ij}$ for $i<j$. Now, the arrow grading \eqref{eq:fence-descent-grading} is such that, if $i<j$, every arrow
$F_i\to F_j$ has degree zero and every arrow $F_j\to F_i$ has degree one.
Therefore \Cref{prop:euler-formula} gives
\begin{align*}
 \Eul_{ij}(t)
 &=-t\,\#\{F_j\to F_i\}
   +t\,\#\{F_i\to F_j\}
   =t b_{ij},\\
 \Eul_{ji}(t)
 &=-\#\{F_i\to F_j\}
   +\#\{F_j\to F_i\}
   =-b_{ij},\\
\Eul_{ii}(t)
&=1-t
\end{align*}
By comparing entries directly, it follows that
\begin{equation}\label{eq:fence-euler-companion}
 \Eul_{(Q(\bG),W(\bG),d_{\bG})}(t)
 =U_{\bG}^{\mathsf T}-tU_{\bG}.
\end{equation}
It suffices to show that $U_{\bG}$ is the Seifert matrix $V_\bG$ for $L(\bG)$. This last step is likely known to experts but, for completeness, we provide the necessary details.\\

First, rotate the plabic fence clockwise by $\pi/2$, so that its horizontal lines become
the strands of the standard brick diagram (cf.~\cite[Section 3]{BaaderDehornoy} or \cite[Section 2]{Ferretti}) of the positive braid
$\beta=\beta(\bG)$, its vertical edges become the crossings, and its bounded
faces become precisely the bricks (the innermost rectangles) of that brick
diagram.  The clockwise rotation is the one for which scanning the fence from
left to right becomes reading the braid word from top to bottom.  Under this
rotation, the ordering \eqref{eq:fence-face-order} reads the brick columns from
left to right and, inside each column, from top to bottom.\\

Now, let $\Sigma_\beta$ be the standard Seifert surface obtained from one disk per
braid strand and one positive half-twisted band per letter of $\beta$, and argue now for each connected component. By construction, the brick diagram embeds in $\Sigma_\beta$ as a deformation
retract, and the counterclockwise core curves of the bricks form a basis of
$H_1(\Sigma_\beta;\Z)$; cf.~
\cite[Section~2]{Ferretti}.
Denote these curves, in the above order, by
$\gamma_1,\ldots,\gamma_m$. Now note that these curves just described are the same curves that feature in the QP $(Q(\bG),W(\bG))$.  Namely, after the above rotation, the three local ribbon
models defining the conjugate surface of a fence in
\cite[Section~3]{CasalsGao} become the disk-and-band
local models of $\Sigma_\beta$.  These local identifications agree on their
boundary intervals and therefore glue to an oriented ribbon-surface
identification which carries the face curve $\gamma_{F_i}$ to the brick core
$\gamma_i$.  By \cite[Def.~2.2\&Prop.~3.7]{CasalsGao}, the
skew-adjacency matrix of $Q(\bG)$ is consequently the algebraic intersection
matrix of this basis, i.e.
\begin{equation}\label{eq:fence-B-intersection}
 b_{ij}=[\gamma_i]\cdot[\gamma_j].
\end{equation}
Here the surface and curve orientations are those of the cited local ribbon
models, which also fixes the signs onwards. Let
\begin{equation}\label{eq:fence-seifert-matrix}
 V_{ij}:=\operatorname{lk}(\gamma_i^+,\gamma_j),
\end{equation}
where $\gamma_i^+$ denotes the positive normal push-off determined by the
corresponding orientation. By construction, $V=(V_{ij})$ is a Seifert matrix for
$\Sigma_\beta$ and it coincides with the Seifert matrix $V=V_\bG^{\mathsf T}$. Let us now relate $U_\bG$ and $V$. In the deplumbing order given by $\gamma_1,\ldots,\gamma_m$, $V$ is lower
unitriangular. Indeed, $\gamma_i$ is the core of a Hopf band lying above the
surface which remains after the first $i$ deplumbings.  Its positive push-off
is therefore unlinked from each later core $\gamma_j$, so $V_{ij}=0$ for
$i<j$, while the core of a positive Hopf band has self-linking $+1$. We therefore have
\begin{equation}\label{eq:V_is_Seifert}
\det(V-tV^{\mathsf T}) =\Delta_{L(\bG)}(t).
\end{equation}
With the convention \eqref{eq:fence-seifert-matrix}, the usual relation
between the Seifert form and the algebraic-intersection form is
\begin{equation}\label{eq:seifert-intersection-convention}
 [\gamma_i]\cdot[\gamma_j]=V_{ij}-V_{ji};
\end{equation}
see e.g.~ \cite[Pages 200--203]{Rolfsen}.  Combining
\eqref{eq:fence-B-intersection} and
\eqref{eq:seifert-intersection-convention} yields
\[
 B_{\bG}=V-V^{\mathsf T},
 \qquad\text{or equivalently}\qquad
 V^{\mathsf T}-V=-B_{\bG}.
\]
Since $V^{\mathsf T}$ is upper unitriangular, the uniqueness in
\eqref{eq:fence-companion} gives
\begin{equation}\label{eq:U-is-Seifert}
 U_{\bG}=V^{\mathsf T}=V_\bG.
 \end{equation}
 Therefore we obtain the required equality \eqref{eq:E-V}. The equality \eqref{eq:E-block} following from \eqref{eq:E-V} applied to each connected component of the Seifert surface $\Sigma_\bG$.
 \end{proof}

\subsection{Proof of \Cref{prop:dil_module_is_Alexander}}\label{ssec:proof_dilation_module_is_Alexander} By \Cref{prop:fence-seifert}, specifically \eqref{eq:E-block}, we have
\begin{equation}\label{eq:coker-blocks-E}
  \Eul_\bG(t)=
  \bigoplus_{i=1}^s
  \bigl(V_i^{\mathsf T}-tV_i\bigr).
\end{equation}
By applying \Cref{prop:seifert-presentation} to \eqref{eq:coker-blocks-E} we obtain
\begin{equation}\label{eq:topological-summands}
\coker_\Lambda \Eul_\bG(t)
   \cong
   \bigoplus_{i=1}^s A_{L_i}.
\end{equation}
Note that each summand on the right is $\Lambda$-torsion, as the corresponding square Seifert blocks are unitriangular. Now, for a plabic fence $\bG$ with Seifert surface $\Sigma_\bG$, a decomposition in connected components
\[
   \Sigma_\bG=\Sigma_1\sqcup\cdots\sqcup\Sigma_s.
\]
implies that the link $L(\bG)$ is a split link, expressed as the split union
\[
   L(\bG)=L_1\sqcup\cdots\sqcup L_s,
\]
where $L_i=\partial\Sigma_i$. By \Cref{lem:split-module}, specifically using \eqref{eq:split-s}, we obtain the following isomorphism of $\Lambda$-modules \begin{equation}\label{eq:proof_dilation_is_Alexander1}
   A_{L(\bG)}
   \cong
   \Lambda^{s-1}
   \oplus
   \bigoplus_{i=1}^s A_{L_i}.
\end{equation}
By combining \eqref{eq:topological-summands} and \eqref{eq:proof_dilation_is_Alexander1} together we obtain
\[
   A_{L(\bG)}
   \cong
   \Lambda^{s-1}\oplus\coker_\Lambda \Eul_\bG(t).
\]
Since $\coker_\Lambda \Eul_\bG(t)$ is $\Lambda$-torsion, it is exactly the torsion submodule of the right side, and thus the required isomorphism
\[
   \coker_\Lambda \Eul_\bG(t)
   \cong
   \Tors_\Lambda A_{L(\bG)},
\]
of $\Lambda$-modules follows, i.e.~we have proven \eqref{eq:coker_module_is_Alexander}.\hfill$\Box$

\section{Plane curve singularities and the proof of the Main Result}\label{sec:proof_main}

The object of this section is to conclude the proof of the Main Theorem. For that, we now finally specialize to the context of isolated plane curve singularities.

\subsection{Plane curve singularities and their integral monodromy modules} In general, we refer to \cite{ACampo1998,ACampo1999,FPST} for the necessary definitions in the study of real Morsifications of plane curve singularities and their divides. That said, we provide here the necessary basic ingredients for our context.\\

Let $(C,0)$ be an isolated plane curve singularity with representative $f:(\C^2,0)\lr(\C,0)$. The Milnor fiber of $f$ will be denoted by $M_f$ and the link of the singularity $(C,0)$ by
$$L(C):=\{(x,y)\in\C^2:f(x,y)=0\}\cap S^3\sse S^3,$$
where $S^3\sse\C^2$ is a 3-sphere of small enough radius, cf.~\cite[Chapter 4]{milnor1968} or \cite[Chapter 4]{Seade19_MilnorFibration}. Note that the intersection of $M_f$ with a 4-ball of that radius is a smoothly embedded surface in a 4-ball bounding $L(C)$. The Milnor fibration provides an explicit model for the geometric monodromy of $(C,0)$, via the fiber bundle
\begin{equation}\label{eq:Milnor_fibration}
\frac{f}{|f|}:S^3\setminus L(C)\lr S^1
\end{equation}
whose fiber is isomorphic to $M_f$, see \cite[Theorem 4.8]{milnor1968}. Specifically, we will focus on the lattice $H_1(M_f;\Z)$ and the action of the geometric monodromy $h\in\pi_0(\mbox{Diff}^c(M_f))$ of \eqref{eq:Milnor_fibration} on $H_1(M_f;\Z)$.

\begin{definition}
Let $(C,0)$ be an isolated plane curve singularity with representative $f:(\C^2,0)\lr(\C,0)$, Milnor fiber $M_f$ and geometric monodromy $h\in\pi_0(\mbox{Diff}^c(M_f))$. By definition, the integral monodromy module of $(C,0)$ is
$$\Z[t,t^{-1}]\circlearrowright H_1(M_f;\Z),\quad t\cdot[\gamma]=h_*([\gamma])$$
where $t$ acts on a cycle in $H_1(M_f;\Z)$ via the induced action
$$h_*:H_1(M_f;\Z)\lr H_1(M_f;\Z).$$
The characteristic polynomial of the monodromy is defined to be $\det(h_*-t\cdot\mbox{Id})$.\hfill$\Box$
\end{definition}

It is a well-known fact, see e.g.~\cite[Section 2]{Durfee1975}, that the Alexander polynomial $\Delta_C(t)$ of the link $L(C)$ of the singularity is the characteristic polynomial of its algebraic monodromy:
\begin{equation}\label{eq:alexander_characteristic_poly}
\Delta_C(t)=\det(t\cdot\mbox{Id}-h_*)
\end{equation}
More generally, the integral monodromy module $\Z[t,t^{-1}]\circlearrowright H_1(M_f;\Z)$ is isomorphic to the Alexander module from \Cref{def:alex-module}:
\begin{equation}\label{eq:alexander_module_monodromy}
A_{L(C)}\cong (H_1(M_f;\Z),h_*)
\end{equation}
as an isomorphism of $\Lambda$-modules, with the covering action on the left hand side, and the algebraic monodromy action on the right hand side. Note that the key isomorphism $H_1(\widetilde{X}_{L(C)};\Z)\cong H_1(M_f;\Z)$ is deduced directly from the fact that the fibers of \eqref{eq:Milnor_fibration} are diffeomorphic to $M_f$ and thus the infinite cover $\widetilde{X}_{L(C)}$ is diffeomorphic to $M_f\times\R$.

\begin{remark}\label{rmk:multivariable_Alexander}
In general, when $L(C)$ has $r$ components, $r\geq2$, there is also the notion of the multi-variable Alexander polynomial $\Delta_{L(C)}^{mv}(t_1,\ldots,t_r)$, see \cite{CDGZ,EisenbudNeumann1985} or \cite{Yamamoto1984} for details on the multi-variable Alexander polynomial of a link. In this manuscript we will not use the multi-variable version beyond noting that the specialization of all $t_i$ variables to be equal to each other yields the identity
$$\det(t\cdot\mbox{Id}-h_*)=(t-1)\Delta_{L(C)}^{mv}(t,\ldots,t),$$
where, by \eqref{eq:alexander_characteristic_poly}, the left hand side can be understood as the one-variable Alexander polynomial.\hfill$\Box$
\end{remark}

Finally, we note that the link of an isolated plane curve singularity is never a split link. Indeed, for two distinct branches $C_i,C_j$ we have the equality:
\begin{equation}\label{eq:linking-intersection}
   \lk(L_i,L_j)=I_0(C_i,C_j)>0,
\end{equation}
where $I_0$ is local intersection multiplicity, cf.~\cite[Chapter~V]{EisenbudNeumann1985} or \cite[Chapters~4\&5]{wall2004}. If a splitting 2-sphere existed, it would partition the components into two nonempty sets and force all linking numbers across the partition to vanish, contradicting \eqref{eq:linking-intersection}. Thus we conclude that an algebraic link is nonsplit, as claimed.\\

\noindent In fact, the entire Alexander module is torsion in the case of algebraic links, as it follows from the existence of the Milnor fibration \cite[Chapter 4]{milnor1968}. Therefore \Cref{prop:dil_module_is_Alexander} implies the following:

\begin{corollary}\label{cor:cokernel_module_is_Alexander_algebraic_links}
Let $\bG$ be a plabic fence and $L(\bG)$ its associated link. Suppose that $L(\bG)$ is an algebraic link. Then there exists an isomorphism of $\Lambda$-modules
\begin{equation}\label{eq:coker_module_is_Alexander_algebraic}
   \boxed{\;\coker\Eul_{\bG}(t)\cong A_{L(\bG)}.\;}
\end{equation}
\end{corollary}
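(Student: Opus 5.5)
The plan is to deduce this directly from \Cref{prop:dil_module_is_Alexander}, which already gives $\coker\Eul_{\bG}(t)\cong\Tors_\Lambda A_{L(\bG)}$ for every plabic fence. It then suffices to show that, when $L(\bG)$ is algebraic, the Alexander module is entirely $\Lambda$-torsion, so that $\Tors_\Lambda A_{L(\bG)}=A_{L(\bG)}$. I will use the Milnor fibration for this.

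Suppose $L(\bG)$ is isotopic to the link $L(C)$ of an isolated plane curve singularity $f$. The Alexander module is an isotopy invariant: isotopic links have homeomorphic complements compatible with meridians, hence isomorphic $\tau$-covers. Thus $A_{L(\bG)}\cong A_{L(C)}$. By \eqref{eq:alexander_module_monodromy}, $A_{L(C)}\cong (H_1(M_f;\Z),h_*)$. The Milnor fiber $M_f$ is a compact surface, so $H_1(M_f;\Z)$ is a finitely generated abelian group, indeed free of rank $\mu$.

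Every element is then annihilated by the nonzero Laurent polynomial $\det(t\cdot\mathrm{Id}-h_*)$, by Cayley--Hamilton applied to the integer matrix of $h_*$. This polynomial is monic of degree $\mu$, so it is nonzero. Hence $A_{L(C)}$ is torsion, and $\Tors_\Lambda A_{L(\bG)}=A_{L(\bG)}$. Substituting this into \eqref{eq:coker_module_is_Alexander} gives \eqref{eq:coker_module_is_Alexander_algebraic}.

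An alternative route, as a consistency check, also works. Algebraic links are nonsplit by \eqref{eq:linking-intersection}, so in the proof of \Cref{prop:dil_module_is_Alexander} one has $s=1$, and then $A_{L(\bG)}\cong\coker\Eul_\bG(t)$ directly with no free summand $\Lambda^{s-1}$. Either argument suffices. The only point needing care, and so the main potential obstacle, is the identification of the infinite cyclic cover $\tau$ (all meridians sent to $1$) with the cover induced by $f/|f|$. This holds because $f/|f|$ sends each meridian to a generator of $\pi_1(S^1)$ with winding $+1$, and the paper already takes this for granted in \eqref{eq:alexander_module_monodromy}.
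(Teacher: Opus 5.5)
Your proposal is correct and follows the paper's argument. The paper deduces the corollary from \Cref{prop:dil_module_is_Alexander} plus the remark that the Alexander module of an algebraic link is torsion because of the Milnor fibration; your Cayley--Hamilton argument on $H_1(M_f;\Z)\cong\Z^\mu$ just makes that torsion claim explicit, and your alternative via non-splitness ($s=1$) matches the paper's preceding observation that algebraic links are nonsplit.
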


\subsection{A preliminary lemma on malleable divides}

As before, we refer to \cite{ACampo1998,ACampo1999,FPST} for results in the study of real Morsifications of plane curve singularities and their divides. The only relevant fact at this stage is that \cite[Def.~4.2\&Def.~7.1]{FPST} respectively construct a quiver $Q(D)$ and a smooth link $L(D)\sse S^3$ for any divide $D\sse\R^2$. Divides are discussed in detail in \cite[Section 2]{FPST}. Thanks to the results developed in Sections \ref{sec:graded-QP}, \ref{sec:euler}, \ref{sec:dilation_plabic_fence} and \ref{sec:polynomial_plabic_fence}, the only property of $Q(D)$ and $L(D)$ that we need to prove the Main Theorem is the following:

\begin{figure}
    \begin{tikzpicture} [baseline=10,scale=1]
    \foreach \i in {0,1,2}
    {
    \draw (0,\i*0.5) -- (1.5,\i*0.5);
    }
    \vertbar[](0.5,0,0.5);
    \vertbar[](1,1,0.5);
    \end{tikzpicture}\quad $\stackrel{\sim}{\longleftrightarrow}$ \quad
    \begin{tikzpicture} [baseline=10,scale=1]
    \foreach \i in {0,1,2}
    {
    \draw (0,\i*0.5) -- (1.5,\i*0.5);
    }
    \vertbar[](1,0,0.5);
    \vertbar[](0.5,1,0.5);
    \end{tikzpicture}
    \quad \quad \& \quad \quad 
    \begin{tikzpicture} [baseline=10,scale=1]
    \foreach \i in {0,1,2}
    {
    \draw (0,\i*0.5) -- (1.5,\i*0.5);
    }
    \vertbar[](0.5,0.5,0);
    \vertbar[](1,0.5,1);
    \end{tikzpicture}\quad $\stackrel{\sim}{\longleftrightarrow}$ \quad
    \begin{tikzpicture} [baseline=10,scale=1]
    \foreach \i in {0,1,2}
    {
    \draw (0,\i*0.5) -- (1.5,\i*0.5);
    }
    \vertbar[](1,0.5,0);
    \vertbar[](0.5,0.5,1);
    \end{tikzpicture}
    \caption{Instances of sliding vertical edges: under these moves, the quivers of the corresponding double plabic fences remain identical.}
    \label{fig:sliding}
\end{figure}

\begin{lemma}\label{lem:malleable_to_plabic_fence}
Let $D\sse\R^2$ be the divide of a real Morsification of a real isolated plane curve singularity, and $Q(D)$ and $L(D)$ its associated quiver and link. Suppose that $D$ is malleable. Then:
\begin{enumerate}[label=$(\roman*)$]
    \item There exists a plabic fence $\bG(D)$ so that the associated quiver $Q(D)$ is mutation equivalent to the quiver $Q(\bG(D))$ of $\bG(D)$.\\

    \item The link $L(\bG(D))$ associated to such plabic fence is smoothly isotopic to the link $L(D)$.
\end{enumerate}
\end{lemma}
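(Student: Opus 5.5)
The plan is to go through the FPST theory of malleable divides and plabic fences, deducing the lemma from results in \cite{FPST} plus one short reduction from double plabic fences to plabic fences in the sense of \Cref{def:plabic_fence}. By \cite[Def.~14.1]{FPST} and the surrounding discussion in \cite[Sections 12--14]{FPST}, a malleable divide $D$ is one that can be transformed, through moves preserving the mutation class of $Q(D)$ and the isotopy type of $L(D)$, into a divide whose quiver is the quiver of a double plabic fence $\bG'$, cf.~\Cref{rmk:double_plabic_fence}. The first step is therefore to recall this: each elementary move on divides used in the definition of malleability either leaves $Q(D)$ unchanged or changes it by a single mutation. This is \cite[Section 9]{FPST}, where the moves are shown to realize quiver mutations. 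It is also known \cite[Section 7]{FPST} that these moves do not change the link $L(D)$ of the divide, up to smooth isotopy. This would give a double plabic fence $\bG'$ with $Q(D)\sim_{\mathrm{mut}}Q(\bG')$ and $L(D)\simeq L(\bG')$. Here $L(\bG')$ is the closure of the positive braid read off $\bG'$, and the identification of the divide link with this braid closure should be \cite[Section 12]{FPST}.

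The second step converts $\bG'$ into an honest plabic fence. Vertical edges with black on top correspond to the same Artin generator $\sigma_k$ as those with white on top, so the braid word, and hence the link, does not depend on the coloring. On the quiver side, I would use the results of \cite{CasalsWeng22} and \cite[Section 3]{CasalsGao}, which show that flipping the color pattern of a vertical edge changes $Q(\bG')$ by a sequence of mutations. Concretely, one can use the sliding moves of \Cref{fig:sliding}, which leave the quiver identical, to push all reversed edges to one side. Then square moves and the ``switch'' moves of \cite[Section 12]{FPST}, which realize mutations, turn each black-on-top edge into a white-on-top edge. The result is a plabic fence $\bG(D)$ with the same braid word up to braid relations and commutations, so $L(\bG(D))$ is isotopic to $L(\bG')\simeq L(D)$, and $Q(\bG(D))\sim_{\mathrm{mut}}Q(\bG')\sim_{\mathrm{mut}}Q(D)$. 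Together these give both $(i)$ and $(ii)$.

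I expect the main obstacle to be this second step: checking carefully that every double plabic fence is mutation-equivalent to a single-color plabic fence without changing the link. If the color-switch argument becomes delicate, I would fall back on the specific form of the fences produced by malleable divides in \cite[Section 14]{FPST}. Those fences come from scanning the divide, and I would argue directly that one can choose the scanning so that all vertical edges have white on top. The sliding moves of \Cref{fig:sliding} would absorb the remaining discrepancies, since they leave the quiver identical and the braid word unchanged up to far commutation.
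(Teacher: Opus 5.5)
Your first step is essentially the paper's. Malleability means $D$ is triangle-move equivalent to a scannable divide $D'$, and the double plabic fence $\Phi(D')$ of \cite[Def.~12.2]{FPST} has quiver $Q(D')$.

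\textbf{Gap: no move that removes black-on-top edges.} In your second step you never name a move that actually reduces the number of black-on-top edges. A square move replaces an opposite pair at one level by the same pair in the other order. Sliding moves only reposition edges. So neither changes the number of black-on-top edges. The switch operation of \cite[Def.~6.15]{FPST} also cannot be used: it changes the associated link by a positive Conway mutation (cf.~\Cref{sec:comments_and_examples}), which in general is not an isotopy, so it breaks $(ii)$.

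The missing ingredient is the reflection move of \cite[Sect.~5.2\&5.3]{CasalsWeng22}. The leftmost or rightmost vertical edge of a level may be replaced by its vertically opposite edge without changing the (mutable) quiver or the link. The paper's conversion runs as follows:
\begin{enumerate}
\item Move a black-on-top edge to the end of its level. It crosses opposite edges at other levels by sliding moves (quiver unchanged) and opposite edges at its own level by square moves (one mutation each).
\item Reflect it.
\item Repeat one edge at a time, e.g.\ rightmost first, so that everything crossed is white-on-top.
\end{enumerate}
This yields a plabic fence.

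\textbf{Your link argument also fails as written.} The sliding moves of \Cref{fig:sliding} exchange opposite edges at \emph{adjacent} levels. On your color-blind braid word this is $\sigma_k\sigma_{k+1}\leftrightarrow\sigma_{k+1}\sigma_k$, which is neither a far commutation nor a braid relation. For instance, take the edge sequence (white-on-top at level $1$, black-on-top at level $2$, white-on-top at level $1$, black-on-top at level $2$). Sliding turns it into (white-on-top, white-on-top at level $1$, then black-on-top, black-on-top at level $2$). The color-blind words are $\sigma_1\sigma_2\sigma_1\sigma_2$ and $\sigma_1^2\sigma_2^2$, whose closures are a trefoil and a three-component link.

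So either sliding changes the link, or (as the paper uses) the link of a double plabic fence is \emph{not} the closure of the color-blind word. Either way, the claim that ``the link does not depend on the coloring'' cannot be combined with sliding as you do. Part $(ii)$ should instead be obtained as in the paper. Start from the identification of $L(D')$ with the link of $\Phi(D')$. Then check that every move used (triangle, sliding, reflection and square moves) preserves the smooth isotopy type of the link. You end at a plabic fence, whose link is the closure of $\beta(\bG)$.

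\textbf{Fallback.} Your fallback of choosing a scanning with only white-on-top edges is unsupported. The colors of $\Phi(D')$ are dictated by $D'$ through \cite[Def.~12.2]{FPST}, and you give no argument that such a scanning exists.
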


\begin{proof} The argument for Part (1) will also imply Part (2). Let us start with Part (1). The hypothesis that $D$ is malleable implies, by definition, that it is triangle-move equivalent to a scannable divide $D'$, cf.~\cite[Definition 14.9]{FPST}. By \cite[Definition 12.2]{FPST}, we can associate a double plabic fence $\Phi(D')$ to the scannable divide $D'$, cf.~\Cref{rmk:double_plabic_fence}. Such double plabic fence $\Phi(D')$ has an associated quiver $Q(\Phi(D'))$, as in \cite[Def.~6.8]{FPST}, which coincides with the quiver $Q(D')$ associated via \cite[Def.~4.2]{FPST}, i.e.~$Q(\Phi(D'))=Q(D')$. Since triangle moves preserves the mutation class of the associated quivers, we have $[Q(D)]=[Q(D')]=[Q(\Phi(D'))]$.\\

Now we can use the reflection and sliding edge moves from \cite[Sect.~5.2\&5.3]{CasalsWeng22} on the plabic fence $\Phi(D')$, which are depicted in \Cref{fig:sliding} and \Cref{fig:reduction_and_sq_move}. In words, a sliding edge move allows us to move a given vertical edge $e$ of a double plabic fence left and right across edges at different levels which are vertically opposite to $e$.\footnote{Here, if $e$ has white on top and black on the bottom, a vertical edge is said to be vertically opposite to $e$ if it has black on top and white on the bottom. Similarly, if $e$ has black on top and white on the bottom, an edge is said to be vertically opposite to $e$ if it has white on top and black on the bottom.} Such sliding moves do not change the underlying quiver $Q(\Phi(D'))$ of the double plabic fence. A reflection move allows us to change a vertical edge $e$ into its vertically opposite {\it only} when $e$ is the rightmost or leftmost edge of a given level of the double plabic fence. For instance, the word {\it end} in \Cref{fig:reduction_and_sq_move} (left) is indicating that the pictured vertical edge is the leftmost vertical edge of its level. A reflection move does not change the underlying quiver $Q(\Phi(D'))$ of the double plabic fence. (Recall that we are only considering mutable parts.) In addition to sliding moves and reflection moves, we can also perform square moves as in \Cref{fig:reduction_and_sq_move} (left). A square move changes the underlying quiver $Q(\Phi(D'))$ of the double plabic fence by a mutation at the vertex associated to the corresponding square face at which we are applying the square move.\\

\noindent By using sliding moves, reflection moves, and square moves, as above, we can modify any double plabic fence into a plabic fence, as in \Cref{def:plabic_fence}. That is, we can transform a double plabic fence into a double plabic fence whose vertical edges have all white on top and black on bottom. As discussed above, the corresponding quiver undergoes a series of quiver mutations. By applying this procedure to $\Phi(D')$ we obtain a plabic fence $\overline{\Phi}(D')$ such that its quiver $Q(\overline{\Phi}(D'))$ is in the same mutation class as $Q(\Phi(D'))$, i.e.~$[Q(\overline{\Phi}(D'))]=[Q(\Phi(D'))]$. Therefore
$$[Q(D)]=[Q(D')]=[Q(\Phi(D'))]=[Q(\overline{\Phi}(D'))],$$
which concludes the required statement in Part (1) by setting $\bG(D):=\overline{\Phi}(D')$. For Part (2), it suffices to note that triangle moves, sliding and reflection moves, and square moves all preserve the smooth isotopy type of their underlying smooth links.
\end{proof}

\begin{figure}
    \begin{tikzpicture}[baseline=10,scale=0.8]
    \draw (0,0) -- (2,0);
    \draw (0,1) -- (2,1);
    \node[left] at (1.25, 0.5) {end};
    \vertbar[](1.5,1,0);
    \end{tikzpicture} \quad $\stackrel{\sim}{\longleftrightarrow}$ \quad
    \begin{tikzpicture}[baseline=10,scale=0.8]
    \draw (0,0) -- (2,0);
    \draw (0,1) -- (2,1);
    \node[left] at (1.25, 0.5) {end};
    \vertbar[](1.5,0,1);
    \end{tikzpicture} \quad\quad \& \quad \quad     \begin{tikzpicture}[baseline=10,scale=0.8]
    \draw (0,0) -- (2,0);
    \draw (0,1) -- (2,1);
    \vertbar[](0.5,0,1);
    \vertbar[](1.5,1,0);
    \end{tikzpicture} \quad $\stackrel{\mu}{\longleftrightarrow}$ \quad
    \begin{tikzpicture}[baseline=10,scale=0.8]
    \draw (0,0) -- (2,0);
    \draw (0,1) -- (2,1);
    \vertbar[](0.5,1,0);
    \vertbar[](1.5,0,1);
    \end{tikzpicture}
     \caption{(Left) An instance of a reflection move. Under a reflection move, the quivers of the corresponding double plabic fences remain identical. (Right) A square move, the quivers of the corresponding double plabic fences undergo a quiver mutation at the vertex associated to the square face of the plabic fence where the move is applied.}
    \label{fig:reduction_and_sq_move}
\end{figure}

\subsection{Proof of Main Theorem} Let $D$ be a malleable divide. By \Cref{lem:malleable_to_plabic_fence}.(i), its quiver $Q(D)$ is mutation equivalent to the quiver of a plabic fence $\bG(D)$:
\begin{equation}\label{eq:proof_mutation_classes}
[Q(D)]=[Q(\bG(D))]
\end{equation}

By \Cref{cor:unique_dilation_mutation_equivalent_plabic_fence}.$(i)\&(ii)$, $Q(D)$ admits a unique non-degenerate potential $W(D)$, up to right equivalence, and a unique dilation $\delta(D)\in\Gclass[Q(D),W(D)]$. By \Cref{cor:unique_dilation_mutation_equivalent_plabic_fence}.$(iii)$, the cokernel module
    $$\coker\Eul_{(Q(D),W(D),\delta(D))}(t)\in\Z[t,t^{-1}]\mbox{-mod}$$
    is an invariant of the quiver mutation class $[Q(D)]$. By using \Cref{prop:P}.(ii) and \Cref{thm:unique-fence-G1}, we can endow the quiver $Q(\bG(D))$ with its unique non-degenerate potential $W(\bG(D))$, up to right equivalence, and its unique dilation $\delta(\bG(D))$. Then \eqref{eq:proof_mutation_classes} implies the isomorphism
    \begin{equation}\label{eq:determinants_equal_divide_fence}
    \coker\Eul_{(Q(D),W(D),\delta(D))}(t)\cong\coker\Eul_{(Q(\bG(D)),W(\bG(D)),\delta(\bG(D)))}(t)
    \end{equation}
    of $\Lambda$-modules. By \Cref{prop:dil_module_is_Alexander}, the right hand side of \eqref{eq:determinants_equal_divide_fence} equals the Alexander module $A_{L(\bG(D))}$ of $L(\bG(D))$. By \Cref{lem:malleable_to_plabic_fence}.(ii), $L(\bG(D))$ is smoothly isotopic to $L(D)$ and thus we have the following sequence of isomorphisms of $\Lambda$-modules:
    $$\coker\Eul_{(Q(D),W(D),\delta(D))}(t)\cong\coker\Eul_{(Q(\bG(D)),W(\bG(D)),\delta(\bG(D)))}(t)\cong A_{L(\bG(D))}\cong A_{L(D)},$$
    where the key isomorphism in the middle is \eqref{eq:coker_module_is_Alexander_algebraic} in \Cref{cor:cokernel_module_is_Alexander_algebraic_links}. This shows that the Alexander module $A_{L(D)}(t)$ of the link $L(D)$ of the divide $D$ can be entirely recovered from the quiver mutation class $[Q(D)]$. By \eqref{eq:alexander_module_monodromy}, the Alexander module $A_{L(D)}(t)$ is isomorphic to the integral monodromy module of the singularity. Therefore, the mutation class $[Q(D)]$ recovers the integral monodromy module of the singularity.\hfill$\Box$
    
\subsection{An explicit non-degenerate graded QP for divides} Even if it is not logically needed to conclude the Main Theorem, given a divide $D$ it can be useful to explicitly describe a representative for a non-degenerate potential $W(D)$ and compatible arrow-grading $d(D):Q(D)_1\lr\Z$, so that $W(D)$ is homogeneous of degree one. For that, recall from \cite[Definition 4.2]{FPST} that the vertices of $Q(D)$ are partitioned into three classes
\[
 Q(D)_0=V_{\ominus}(D)\sqcup V_{\bullet}(D)\sqcup V_{\oplus}(D),
\]
where $V_\ominus(D)$ denotes vertices labeled by $\ominus$ in \cite[Definition 4.2]{FPST}, $V_{\bullet}(D)$ is the set of vertices labeled by $\bullet$, and $V_\oplus(D)$ consists of the vertices labeled by $\oplus$. Consider the arrows in $Q(D)$ coming from the $\mbox{A}\Gamma$-diagram of $D$, which are pointing according to $\bullet\to\oplus\to\ominus\to\bullet$. Now, the potential and corresponding grading can be described as follows:\\

\begin{enumerate}

    \item {\it Potential $W(D)$}. First, one can equivalently describe $W(D)$ directly from $D$ and $Q(D)$. Indeed, by construction $Q(D)\sse\R^2$ can be considered as embedded in the plane where the divide is drawn. Then, the monomials contributing to the potential $W(D)$ are exactly the monomials coming from all the (planar) cycles of the quiver, i.e.~the bounded faces of the complement of the (graph underlying the) quiver. The monomials associated to cycles whose orientation coincides with the planar orientation contribute to $W(D)$ with a plus sign, and the others with a minus.\\

 \noindent   Alternatively, one can equivalently describe $W(D)$ by considering the plabic graph corresponding to $D$, via \cite[Definition 6.11]{FPST}. Then $W(D)$ can be declared to be the standard triangle potential associated to such plabic graph. See \cite[Definition 2.4]{Pressland19} or \cite[Remark 3.6]{BKM13} for the definition of potentials for (reduced) plabic graphs.\\
    
    \item {\it Grading $d(D)$}. Given the QP $(Q(D),W(D))$, we can choose the following arrow-grading $d_D$:
    \begin{equation}\label{eq:canonical-divide-grading}
 d_D(a)=
 \begin{cases}
 0&\mbox{ if }a:V_\ominus(D)\to V_\bullet(D)
       \text{ or }a:V_\bullet(D)\to V_\oplus(D),\\
 1&\mbox{ if }a:V_\oplus(D)\to V_\ominus(D).
 \end{cases}
\end{equation}
Since each summand of $W(D)$ contains exactly one arrow of type
\(V_\oplus\to V_\ominus\), the potential $W(D)$ has degree one, as required.
\end{enumerate}

\begin{remark}
It can be verified that triangle moves in divides, and square moves in the corresponding plabic graphs, yield QP-mutations compatible with dilations, not just quiver mutations.\hfill$\Box$
\end{remark}

\color{black}
\section{Some illustrative instances of plane curve singularities}\label{sec:illustrative_examples}

This section discusses the currently known mutation invariants of the quiver associated to a plane curve singularity, provides examples to illustrate their strength and shortcomings, and concludes with a list of all plane curve singularities with $\mu\leq16$ and their Alexander polynomials. In particular, it follows that the implication $(ii)\Longrightarrow(i)$ of the Main Conjecture holds for these singularities.

\subsection{Invariants of plane curve singularities from quiver mutation classes}\label{ssec:InvariantsSingularities_MutationClass}

Let $(C,0)$ be an isolated plane curve singularity, $D\sse\R^2$ a malleable divide for $(C,0)$ and $Q(D)$ the corresponding quiver. A central question towards understanding the implication $(ii)\Longrightarrow(i)$ in the Main Conjecture is: how do we recover invariants of $(C,0)$ from the quiver mutation class $[Q(D)]$ ?\\

\noindent With the Main Theorem now established, there are at least three sources of invariants from $[Q(D)]$:

\begin{enumerate}[label=$(\roman*)$]
    \item In \cite[Proposition 4.7]{FPST}, it is shown that the Milnor number $\mu(C,0)$ corresponds to the number of vertices of any quiver in $[Q(D)]$, equivalently the size of the exchange matrix $B(D)$ of $Q(D)$. It is also proven there that the rank of $B(D)$ recovers the number of complex local branches and the $\delta$-invariant of the singularity. Since the rank of the exchange matrix of $Q(D)$ is an invariant of the quiver mutation class $[Q(D)]$, cf.~\cite[Theorem 2.8.3]{fomin2024introductionclusteralgebraschapters}, the number of complex local branches and the $\delta$-invariant can be recovered from $[Q(D)]$.\\

    \item By either \cite[Proposition 2.10]{GL} or \cite[Theorem 4.22]{casals_positroid}, the top $a$-degree part of the HOMFLY polynomial of the link of the singularity $(C,0)$ is an invariant of the quiver mutation class $[Q(D)]$. 
    For completeness, we specify our convention for the HOMFLY polynomial, which coincides with \cite[Eq.~(2.3)]{GL}:
\[
 P_{\bigcirc}(a,z)=1,
 \qquad
 aP_{L_+}(a,z)-a^{-1}P_{L_-}(a,z)=zP_{L_0}(a,z).
\]
To be clear, by the top
$a$-degree part of a Laurent polynomial we always mean the sum of all terms whose $a$-exponent is maximal.\\

    \item By the Main Theorem, the integral monodromy module $\Z[t,t^{-1}]\circlearrowright H_1(M_f,\Z)$, up to $\Z[t,t^{-1}]$-module isomorphism, is an invariant of $[Q(D)]$. In particular, the characteristic polynomial of the monodromy of $f$, and equivalently the one-variable Alexander polynomial of $f$, are invariants of the quiver mutation class $[Q(D)]$.
\end{enumerate}

\begin{remark} Beyond \cite[Proposition 2.10]{GL} or \cite[Theorem 4.22]{casals_positroid}, there is another geometric reason explaining why the top $a$-degree coefficient of the HOMFLY polynomial of the link of the singularity $(C,0)$ is a quiver mutation invariant. Namely, by \cite[Section 4]{Rutherford_HOMFLY}, it coincides with the oriented ruling polynomial of any max-tb Legendrian approximation of the link of the singularity. Since \cite[Proposition 2.2]{Casals_LagrangianPlaneCurve} shows that such a max-tb Legendrian approximation is unique, one can use the $\mathbb{L}$-compressing system built in  \cite[Theorem 1.1]{Casals_LagrangianPlaneCurve}, along with \cite{CasalsGao}, and conclude invariance under quiver mutation.\hfill$\Box$
\end{remark}

For reference, we record these invariants in the following:

\begin{corollary}[Invariants from mutation classes]\label{cor:invariants_from_mutation_classes} Let $(C,0)$ be an isolated plane curve singularity, $D\sse\R^2$ a malleable divide for $(C,0)$ and $Q(D)$ the corresponding quiver. Then the following invariants of the singularity $(C,0)$ are determined by the quiver mutation class $[Q(D)]$:

\begin{enumerate}[label=$(\roman*)$]
    \item The Milnor number $\mu(C,0)$, the number of local branches, and the $\delta$-invariant.
    \item The top $a$-degree part of the HOMFLY polynomial of the link of $(C,0)$.
    \item The integral monodromy module $\Z[t,t^{-1}]\circlearrowright H_1(M_f,\Z)$, up to $\Z[t,t^{-1}]$-module isomorphism.
\end{enumerate}
In particular, $(iii)$ implies that the one-variable Alexander polynomial of $f$ can be recovered from $[Q(D)]$, up to product with units in $\Z[t,t^{-1}]$.\hfill$\Box$
\end{corollary}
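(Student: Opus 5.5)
The plan is to collect the three items from results already established in the paper and its references. Given a malleable divide $D$ of $(C,0)$, I will first use \Cref{lem:malleable_to_plabic_fence} to replace $[Q(D)]$ by the mutation class of the quiver of a plabic fence $\bG(D)$, with $L(\bG(D))$ isotopic to $L(D)$. I will then use the fact from \cite{ACampo1998,ACampo1999} that $L(D)$ is the link $L(C)$ of the singularity. In this way each invariant of $(C,0)$ listed in the statement becomes a question about quantities attached to $[Q(\bG(D))]$ or to $L(\bG(D))$.

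For $(i)$, the Milnor number is $|Q_0|$ by \cite[Proposition 4.7]{FPST}, and the vertex count is unchanged by mutation. The same proposition expresses the number of branches $r$ and the $\delta$-invariant through $\operatorname{rank}B(D)$. This rank is a mutation invariant by \cite[Theorem 2.8.3]{fomin2024introductionclusteralgebraschapters}, since mutation acts on the exchange matrix by an integral congruence $B\mapsto J B J^{\mathsf T}$, possibly up to sign conventions. Milnor's formula $\mu=2\delta-r+1$ then lets us separate $r$ and $\delta$ once $\mu$ and the corank are known. Concretely, the corank of $B(D)$ equals $r-1$, because the intersection form on $H_1(M_f)$ has radical of rank $r-1$, coming from the boundary components.

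For $(ii)$, I will cite \cite[Proposition 2.10]{GL} or \cite[Theorem 4.22]{casals_positroid}, applied to the positive braid $\beta(\bG(D))$. Under the identifications above, the top $a$-degree part of the HOMFLY polynomial of its closure depends only on the mutation class of the associated quiver. The only care needed is that these references index braid varieties or positroid quivers, so I must check that $Q(\bG)$ agrees with their quiver up to mutation. I would take this from \cite{CasalsGao} and \cite{CasalsWeng22}.

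Item $(iii)$ is exactly the Main Theorem, proved above through \Cref{cor:unique_dilation_mutation_equivalent_plabic_fence}, \Cref{cor:cokernel_module_is_Alexander_algebraic_links} and \eqref{eq:alexander_module_monodromy}. For the final sentence, I recover the determinant as the generator of the $0$th Fitting ideal of the cokernel, following \Cref{rmk:linear_algebra}, and identify it with $\det(t\cdot\mathrm{Id}-h_*)$ via \eqref{eq:alexander_characteristic_poly}. This determines it up to a unit $\pm t^m$. I expect the main obstacle to be the bookkeeping in $(ii)$, namely matching the quiver conventions across the cited references. The rest is a direct assembly of earlier results.
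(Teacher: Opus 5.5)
Your proposal is correct and follows essentially the same route as the paper. Item $(i)$ comes from \cite[Proposition 4.7]{FPST} together with mutation invariance of the vertex count and of $\operatorname{rank}B(D)$; item $(ii)$ cites \cite[Proposition 2.10]{GL} or \cite[Theorem 4.22]{casals_positroid}; item $(iii)$ is the Main Theorem; and the Alexander polynomial is recovered as the generator of the $0$th Fitting ideal, as in \Cref{rmk:linear_algebra}. Your explicit use of $\mu=2\delta-r+1$ with $\operatorname{corank}B(D)=r-1$ in $(i)$, and your passage through the plabic fence $\bG(D)$ in $(ii)$, are only extra bookkeeping around the same citations.
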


The following subsections explore how \Cref{cor:invariants_from_mutation_classes} can be used to deduce that the implication $(ii)\Longrightarrow(i)$ in the Main Conjecture holds for malleable divides of certain classes of singularities. For instance, the integral monodromy module $\Z[t,t^{-1}]\circlearrowright H_1(M_f,\Z)$ distinguishes all 74 isolated plane curve singularities with Milnor number $\mu\leq 16$ except for one pair, cf.~\Cref{tab:main}. The singularities in that pair, $D_{12}$ and $Z_{12}$, are distinguished by the top $a$-degree coefficient of the HOMFLY polynomial, cf.~\Cref{ex:singularity_pair_3}. Thus the invariants from \Cref{cor:invariants_from_mutation_classes} uniquely determine the topological type of any isolated plane curve singularity with Milnor number $\mu$ less equal than $\mu\leq 16$. More examples and classes where that is the case are provided below.

\subsection{Examples to illustrate the invariants}\label{ssec:AlexanderPoly_Singularities}

In order to better understand how the invariants \Cref{cor:invariants_from_mutation_classes} can be used and relate to each other, the following three examples are illustrative:

\begin{example}\label{ex:singularity_pair_1} In \cite[Chapitre 3]{grima1976monodromie}, M.-C.~Grima considers the following pair of singularities
$$(C_1,0)=\{(x^{11}-y^{14})(x^{44}-y^{21})=0\},\qquad (C_2,0)=\{(x^{22}-y^{7})(x^{33}-y^{28})=0\}.$$
See also \cite[Subsection 1.3.(2)]{MichelWeber1986}. It is shown in \cite{grima1976monodromie} that their one-variable Alexander polynomials coincide, as do their rational monodromy modules. Nevertheless, their integral monodromy modules are not isomorphic, and thus their topological types differ. This is thus an example illustrating that the integral monodromy module is a stronger invariant than the one-variable Alexander polynomial, or the rational monodromy module.\hfill$\Box$
\end{example}

\begin{example}\label{ex:singularity_pair_2} Consider the following pair of singularities:
\[
(C_1,0)=\{x(y^2-x^9)=0\}
 \qquad
(C_2,0)=\{x(y^3-x^4)=0\}
\]
\noindent Each of them has two branches and they both have Milnor number $\mu(C_1,0)=\mu(C_2,0)=11$. In fact, the former is the simple singularity $D_{11}$ and the latter is the exceptional unimodal singularity $Z_{11}$. Their one-variable Alexander polynomials are:
\[
 \Delta_{C_1}(t)=\Phi_1(t)\Phi_4(t)\Phi_{20}(t)
 \quad\text{and}\quad
 \Delta_{C_2}(t)=\Phi_1(t)\Phi_3(t)\Phi_{15}(t).
\]
Thus their one-variable Alexander polynomials are distinct, as are their integral monodromy modules. In contrast, the top $a$-degree parts of the HOMFLY polynomials of their links coincide. Indeed, their links can be respectively constructed as the closures of the following two positive braids:
\[
 \beta_1=\sigma_1^9\sigma_2\sigma_1^2\sigma_2\in B_3,
 \qquad
 \beta_2=(\sigma_1\sigma_2)^4
 \sigma_3\sigma_2\sigma_1^2\sigma_2\sigma_3\in B_4. 
\]
\noindent The HOMFLY polynomials can be computed directly from these positive braid words by calculating in the Hecke algebra. For the link $L_1:=\widehat{\beta_1}$ of the singularity $(C_1,0)$ we obtain:
\[
 \begin{aligned}
 P_{L_1}(a,z)={}&a^{-11}C(z)\\
 &-a^{-13}\bigl(z^9+10z^7+36z^5+57z^3+39z+9z^{-1}\bigr)\\
 &+a^{-15}\bigl(z^5+6z^3+10z+4z^{-1}\bigr).
 \end{aligned}                                         
\]
Alternatively, one can use \cite[Theorem~3.11]{Schwartz} to obtain $P_{L_1}(a,z)$, as a $D_{11}$-quiver is a tree quiver. For the second link $L_2:=\widehat{\beta_2}$ of $(C_2,0)$ we obtain the following polynomial:
\[
 \begin{aligned}
 P_{L_2}(a,z)={}&a^{-11}C(z)\\
 &-a^{-13}\bigl(z^9+10z^7+36z^5+57z^3+39z+10z^{-1}\bigr)\\
 &+a^{-15}\bigl(z^5+6z^3+11z+6z^{-1}\bigr)-a^{-17}z^{-1}.
 \end{aligned}                                        
\]
The top $a$-exponent is of degree $-11$ in both cases and we conclude
\[
 \boxed{
 \operatorname{Top}_a P_{L_1}(a,z)
 =\operatorname{Top}_a P_{L_2}(a,z)
 =a^{-11}\bigl(z^{11}+11z^9+45z^7+85z^5+76z^3+31z+5z^{-1}\bigr).}
\]

\noindent This illustrates that it is possible for two singularities to be such that the top $a$-degree parts of the HOMFLY polynomials coincide whereas their integral monodromy modules are not isomorphic.\hfill$\Box$
\end{example}

\begin{example}\label{ex:singularity_pair_3} Consider the following pair of singularities:
$$(C_1,0)=\{x(y^2-x^{10})=0\},\quad\mbox{ and }\quad (C_2,0)=\{xy(x^2+y^3)=0\}.$$
\noindent They each have three branches and Milnor number $\mu(C_1,0)=\mu(C_2,0)=12$. The former is the simple singularity $D_{12}$ and the latter is $Z_{12}$. Their one-variable Alexander polynomials coincide, as they are both equal to
$$\Delta_{C_1}(t)=\Phi_{1}^{2}\Phi_{11}=\Delta_{C_2}(t).$$
This is in fact the pair of singularities with smallest Milnor number for which the one-variable Alexander polynomials coincide. For the record, their integral monodromy modules are both isomorphic to $\Z[t,t^{-1}]/(t-1)\oplus \Z[t,t^{-1}]/(t^{11}-1)$ as well. For the computation of the HOMFLY polynomial for $D_{12}$, we can either use \cite[Theorem~3.11]{Schwartz} or directly compute in the Hecke algebra. Since we have to use\footnote{Alternatively, one can also compute these by using SnapPy to first identify the link from a link diagram and then find the HOMFLY polynomial in a table.} the latter for $Z_{12}$ either way, here are two positive braid representatives of the corresponding links:
$$\beta_1:=\Delta_3^2\sigma_1^8=(\sigma_1\sigma_2)^3\sigma_1^8,\quad\mbox{ and }\quad \beta_2:=\Delta_4^2\sigma_1\sigma_2^2
       =(\sigma_1\sigma_2\sigma_3)^4\sigma_1\sigma_2^2$$
From here, the resulting HOMFLY polynomials read
\begin{align}\label{eq:fullD}
P(L_{D_{12}};a,z)
={}&
 a^{-12}\Bigl(
 z^{12}+12z^{10}+55z^8+121z^6+133z^4
 +71z^2+17+z^{-2}
 \Bigr)
\notag\\
&-a^{-14}\Bigl(
 z^{10}+11z^8+45z^6+85z^4
 +75z^2+27+2z^{-2}
 \Bigr)
\notag\\
&+a^{-16}\Bigl(
 z^6+7z^4+15z^2+10+z^{-2}
 \Bigr).
\end{align}
and\\
\begin{align}\label{eq:fullZ}
P(L_{Z_{12}};a,z)
={}&
 a^{-12}\Bigl(
 z^{12}+12z^{10}+55z^8+121z^6+133z^4
 +71z^2+18+2z^{-2}
 \Bigr)
\notag\\
&-a^{-14}\Bigl(
 z^{10}+11z^8+45z^6+85z^4
 +76z^2+31+5z^{-2}
 \Bigr)
\notag\\
&+a^{-16}\Bigl(
 z^6+7z^4+16z^2+14+4z^{-2}
 \Bigr)
\notag\\
&-a^{-18}(1+z^{-2}).
\end{align}
In particular, the top $a$-exponent has $a$-degree $-12$. Those coefficients differ as
\[
\boxed{
\operatorname{Top}_aP(L_{Z_{12}})-\operatorname{Top}_aP(L_{D_{12}})
=a^{-12}(1+z^{-2})\neq0.
}
\]
\noindent In conclusion, this example illustrates that it is possible for two singularities to be such that their integral monodromy modules are isomorphic whereas the top $a$-degree parts of the HOMFLY polynomials differ.\hfill$\Box$
\end{example}

\begin{remark}
For other interesting pairs of singularities, see also the examples and results in \cite{MichelWeber1986}, \cite[Section 4]{Yamamoto1984}, \cite[Example 3.2]{yoshinaga1979topological} and \cite{dubois2003seifert}.\hfill$\Box$
\end{remark}

\subsection{Cases in which the one-variable Alexander polynomial suffices}\label{ssec:one_variable_Alexander_suffices}

The one-variable Alexander polynomial of an irreducible plane curve singularity determines the complex topological type of the singularity. This is a well-known fact, see e.g.~\cite[Section 5]{Burau1932}, \cite[Lemma 2.5.1]{Le1972}, \cite{Yamamoto1984} or \cite[Section 1]{CDGZ}. In particular, as stated in the introduction, the Main Theorem shows that the quiver mutation class of any irreducible malleable divide determines the complex topological type of the singularity.

\begin{remark} This is a rigid property of algebraic knots, not shared by more general smooth knots, not even positive ones. For instance, as pointed out in \cite[Section 7]{Kalman2006}, there exist pairs of plabic fences whose associated knots are not smoothly isotopic and yet they share the same Alexander polynomial. For instance, the knots $13n_{981}$ and $13n_{1104}$ are respectively associated to the plabic fences for the positive braid words
$$\beta_1=\sigma_1^3\sigma_2^3\sigma_1^2\sigma_3\sigma_2^2\sigma_3,\qquad \beta_2=\sigma_1^3\sigma_2^2\sigma_1^2\sigma_3\sigma_2^3\sigma_3^2.$$
These are indeed distinct knots, as the former is invertible and the latter is not invertible. They are nevertheless mutant to each other, which explains geometrically why their Alexander polynomials coincide. Neither of them is an algebraic knot, and so there is no contradiction with the fact that algebraic knots are determined by their Alexander polynomial. \hfill$\Box$
\end{remark}

For reducible plane curve singularities, the one-variable Alexander polynomial does not suffice, as illustrated in \cite[Section 4.A]{Yamamoto1984} or \Cref{ex:singularity_pair_3} above. That said, there are many interesting reducible singularities for which the one-variable Alexander polynomial of the link of the singularity uniquely determines the complex topological type of the singularity. The Main Theorem thus implies that the implication $(ii)\Longrightarrow(i)$ of the Main Conjecture holds in these cases. For instance, for arbitrary Milnor number $\mu$, there is the general result \cite[Theorem 3.1]{CDGZ2} for singularities with certain symmetries. Specifically, we can apply with $r=1$ in the notation of \cite[Theorem 3.1]{CDGZ2} in order to obtain the following statement:

\begin{theorem}[\cite{CDGZ2}]\label{cor:onevariable_Alexander_recovers1}
Let $(C_1,0)$ and $(C_2,0)$ be two reduced isolated complex plane curve
singularities. Suppose that there exist two finite groups $G_1$ and $G_2$ of the same order such that $G_i$ acts on the minimal resolution graphs of $(C_i,0)$ preserving the ages of the vertices, and acts transitively on the components of each $(C_i,0)$, for each $i\in\{1,2\}$. Then
\begin{equation*}
(C_1,0)\mbox{ and } (C_2,0)\mbox{ have the same topological type }\Longleftrightarrow \Delta_{C_1}(t)=\Delta_{C_2}(t).
\end{equation*}
\end{theorem}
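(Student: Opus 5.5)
The statement is cited from \cite[Theorem 3.1]{CDGZ2}, specialised to $r=1$, so the plan is to check that this specialisation really gives the displayed equivalence. It is not to reprove the classification from scratch.

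The forward implication is routine. If $(C_1,0)$ and $(C_2,0)$ have the same topological type, their links are isotopic in $S^3$, and the one-variable Alexander polynomial is a link invariant. Hence $\Delta_{C_1}(t)=\Delta_{C_2}(t)$, up to units of $\Lambda$, which is how the Alexander polynomial is normalised throughout.

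For the reverse implication I would use the Eisenbud--Neumann splice-diagram description of algebraic links \cite{EisenbudNeumann1985}. I would work with the multivariable Alexander polynomial $\Delta^{mv}_{L(C)}$, which for plane curves is a product over the nodes of the minimal resolution graph (or splice diagram) of factors $(t_1^{m_{v,1}}\cdots t_r^{m_{v,r}}-1)^{\delta_v-2}$. By \cite{Yamamoto1984} it determines the topological type of $(C,0)$.

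The key steps, in order, are as follows.
\begin{itemize}
\item[(1)] Use the transitive action of $G_i$ on the branches of $(C_i,0)$, which preserves ages, to show that all branches are equisingular and that their pairwise intersection data form an orbit structure governed by $G_i$. Then $\Delta^{mv}$ is invariant under a transitive group of permutations of the variables.
\item[(2)] Show that, because of this symmetry, $\Delta^{mv}$ can be recovered from its diagonal specialisation $\Delta^{mv}(t,\ldots,t)$ together with $|G_i|$ and the number of branches. The number of branches is itself read off from $\Delta_C(t)$, for instance as the multiplicity of the factor $(t-1)$, via \Cref{rmk:multivariable_Alexander}.
\item[(3)] Using \Cref{rmk:multivariable_Alexander}, conclude that $\Delta_{C_1}=\Delta_{C_2}$ forces $\Delta^{mv}_{C_1}=\Delta^{mv}_{C_2}$ up to relabelling of the variables. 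Yamamoto's theorem then gives equal topological types.
\end{itemize}

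The main obstacle is step (2). In general the diagonal specialisation loses information; Example \ref{ex:singularity_pair_3} shows exactly this failure. Recovering the multivariable data requires showing that, under a transitive symmetric configuration with $|G_1|=|G_2|$, the multiset of exponents $\sum_j m_{v,j}$ together with the multiplicities $\delta_v-2$ determines the splice diagram. Here I would simply invoke the combinatorial analysis of \cite[Theorem 3.1]{CDGZ2} with $r=1$, after checking that the hypotheses listed in the statement are precisely those required there.
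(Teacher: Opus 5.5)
Your proposal matches the paper, which gives no independent argument and simply specialises \cite[Theorem 3.1]{CDGZ2} to $r=1$; the forward implication is just topological invariance of $\Delta_C(t)$. Your detour through $\Delta^{mv}_{L(C)}$ and \cite{Yamamoto1984} is superfluous, since the cited theorem already gives the topological type directly, and symmetry of $\Delta^{mv}$ alone would not justify your step (2). There is also a small slip: the number of branches is one more than the multiplicity of $t-1$ in $\Delta_C(t)$, since that factor occurs with exponent $r-1$.
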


Note that the two symmetry groups $G_1,G_2$ in \Cref{cor:onevariable_Alexander_recovers1} do not need to be isomorphic, just have the same order. For instance, \Cref{cor:onevariable_Alexander_recovers1} can be used to detect the singularity type $\bigl(\{x^a-y^b=0\},0\bigr)$, which has $\gcd(a,b)$ branches with a transitive $\Z/\gcd(a,b))$ symmetry. Indeed, we have:

\begin{corollary}\label{cor:onevariable_Alexander_recovers2}
Consider the isolated singularity $(C_{a,b},0)=\bigl(\{x^a-y^b=0\},0\bigr)\subset (\mathbb{C}^2,0)$, $a,b\in\N$ with $d:=\gcd(a,b)$. Suppose that $(C',0)$ is a reducible isolated complex plane curve
singularity such that the set of irreducible branches of $(C',0)$ admits a transitive action of a finite group of order $d$, preserving the ages of the vertices. Then
\begin{equation*}
(C',0)\mbox{ and } (C_{a,b},0)\mbox{ have the same topological type }\Longleftrightarrow \Delta_{C'}(t)=\Delta_{C_{a,b}}(t).
\end{equation*}
\end{corollary}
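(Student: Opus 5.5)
The plan is to specialize \Cref{cor:onevariable_Alexander_recovers1} to the case where one of the two curves is $(C_{a,b},0)$. The forward implication is automatic, since topological type determines the link and hence its Alexander polynomial. So the work is in the converse, and it comes down to exhibiting a suitable group action on the resolution graph of $(C_{a,b},0)$ whose order matches the group given for $(C',0)$.

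First, we identify the branches of $C_{a,b}$. Write $a=da'$ and $b=db'$ with $\gcd(a',b')=1$. Then
\[
x^a-y^b=\prod_{\zeta^d=1}\bigl(x^{a'}-\zeta y^{b'}\bigr),
\]
so $C_{a,b}$ has exactly $d$ branches, each topologically equivalent to the irreducible singularity $x^{a'}=y^{b'}$. (When $d=1$ the curve is irreducible; this is the trivial-group case, where the stated hypothesis on $(C',0)$ is vacuous in its transitivity part.)

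Second, we build the symmetry. The map $(x,y)\mapsto(x,\omega y)$ with $\omega=e^{2\pi i/b}$ preserves $C_{a,b}$ and sends the branch labelled $\zeta$ to the branch labelled $\zeta\omega^{b'}$. Since $\omega^{b'}$ is a primitive $d$-th root of unity, this generates a cyclic group $\Z/d$ that acts transitively on the branches. Because the automorphism is holomorphic, it lifts to the minimal embedded resolution. There it permutes exceptional divisors and strict transforms, preserving the graph structure and all numerical data, in particular multiplicities and ages. This is the step I expect to be the main obstacle. The lifted action on the graph may fail to be faithful, and one has to verify that it really gives a group of order $d$ "acting" in the sense required by \cite[Theorem 3.1]{CDGZ2}. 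The cleanest route is to avoid the geometric lift altogether and work with the resolution graph directly. For $x^a=y^b$ this graph is a single bamboo, with $d$ arrowhead branches all attached to the same rupture vertex, the one coming from the $(a',b')$ Puiseux pair. The group $\Z/d$ can then be taken to act trivially on the bamboo and by cyclic permutation on the $d$ arrows. This action visibly preserves ages and is transitive on the components, and it can be checked directly against the definition used in \cite{CDGZ2}.

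Finally, we apply \Cref{cor:onevariable_Alexander_recovers1} with $(C_1,0)=(C',0)$ and $G_1$ the given group of order $d$, and with $(C_2,0)=(C_{a,b},0)$ and $G_2=\Z/d$. Since $|G_1|=|G_2|=d$ and the hypotheses of that theorem are met, the equality $\Delta_{C'}(t)=\Delta_{C_{a,b}}(t)$ implies that the two singularities have the same topological type. This is the converse implication, which completes the proof.
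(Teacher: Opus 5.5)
Your proposal is correct and follows the paper's route. The paper simply notes that $C_{a,b}$ has $d=\gcd(a,b)$ branches permuted transitively by a $\Z/d$ symmetry and applies \Cref{cor:onevariable_Alexander_recovers1} with $(C_2,0)=(C_{a,b},0)$ and $G_2=\Z/d$, which is exactly what you do; your combinatorial action, trivial on the exceptional vertices and cyclic on the $d$ arrows at the separating vertex, is a clean way to make that symmetry precise. One small correction: for $d=1$ the hypotheses on $(C',0)$ are not merely vacuous in their transitivity part but contradict reducibility, so that case holds vacuously.
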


As implied by \cite[Lemma 15]{Leviant2017}, the results from \cite{CDGZ2} also lead to the following sufficient criterion for reducible singularities with two branches:

\begin{corollary}[\cite{Leviant2017}]\label{cor:onevariable_Alexander_recovers3} Let $(C_1,0)$ and $(C_2,0)$ be two isolated plane curve singularities with exactly two branches each. Suppose that the two branches of $(C_1,0)$ are topologically equivalent, and same for the two branches of $(C_2,0)$. Then
\begin{equation*}
(C_1,0)\mbox{ and } (C_2,0)\mbox{ have the same topological type }\Longleftrightarrow \Delta_{C_1}(t)=\Delta_{C_2}(t).
\end{equation*}
\end{corollary}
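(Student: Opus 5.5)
The plan is to deduce the statement from \Cref{cor:onevariable_Alexander_recovers1}, taking $G_1=G_2=\Z/2$ in both cases. The implication ``$\Longrightarrow$'' needs no work: the one-variable Alexander polynomial $\Delta_C(t)$ is an invariant of the topological type of the link, by \eqref{eq:alexander_characteristic_poly}. For ``$\Longleftarrow$'' it then suffices to show the following. For each $i\in\{1,2\}$, the minimal embedded resolution graph $\Gamma_i$ of $(C_i,0)$, decorated with multiplicities (ages) and the two arrowheads of the branches, admits an involution that preserves ages and exchanges the two arrowheads. Such an involution gives an action of $\Z/2$ that is transitive on the two components, and both groups have order $2$. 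Hence \Cref{cor:onevariable_Alexander_recovers1} applies and yields $\Delta_{C_1}(t)=\Delta_{C_2}(t)\Rightarrow$ same topological type.

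To build the involution I would use the classical description of the embedded topological type of a two-branch curve $C=B\cup B'$, following Zariski and Eisenbud--Neumann. It is determined by three pieces of data: the characteristic (Puiseux) sequence of $B$, the characteristic sequence of $B'$, and their intersection multiplicity $I_0(B,B')$, or equivalently their contact exponent. The minimal resolution graph is built functorially from this data. Concretely, the graph consists of the resolution chain of $B$ and the resolution chain of $B'$, glued along the common initial portion determined by the contact exponent, and it bifurcates at the vertex where the two branches separate. In our case $B$ and $B'$ are topologically equivalent, so their characteristic sequences coincide and the two chains are isomorphic as decorated graphs. Moreover, the bifurcation data is symmetric, because it is expressed through $I_0(B,B')=I_0(B',B)$. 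The reflection fixes the shared trunk and swaps the two tails beyond the separation vertex. It is an automorphism of the decorated graph, and in particular it preserves ages, since the multiplicities of the exceptional divisors along each tail are computed from the same characteristic data. (This is essentially the content of \cite[Lemma 15]{Leviant2017}.)

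The main obstacle is the separation vertex, together with the case where the branches separate inside a Puiseux block rather than at a rupture vertex. I must check that the gluing is genuinely symmetric there. The risk is that the separation could a priori sit at a vertex where the two tails leave along non-isomorphic sub-chains, for instance one tail continuing the current Euler-algorithm chain while the other jumps to a new one. I would handle this by passing to the Eggers tree, or equivalently the splice diagram. There the branches are two leaves attached at the node with label equal to the contact exponent. Because the characteristic exponents coincide, the two subtrees hanging below that node carry identical weights, so swapping them is a splice-diagram automorphism. Since the splice diagram determines the minimal resolution graph with its ages, this induces the required involution of $\Gamma_i$.
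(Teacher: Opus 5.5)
Your proposal is correct and follows the same route as the paper. The paper gives no independent argument: it obtains the statement from \cite{CDGZ2}, i.e.\ \Cref{cor:onevariable_Alexander_recovers1} with $G_1=G_2=\Z/2$, via \cite[Lemma 15]{Leviant2017}, which is exactly your reduction to an age-preserving involution of the minimal resolution graph that swaps the two topologically equivalent branches, and your justification of that involution from the symmetry of the Zariski data (equivalently, the Eggers tree or splice diagram) is sound.
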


\noindent Depending on the precise situation, there can be more criteria in the spirit of \Cref{cor:onevariable_Alexander_recovers1}, \Cref{cor:onevariable_Alexander_recovers2} and \Cref{cor:onevariable_Alexander_recovers3}, showing that the one-variable Alexander polynomial can sometimes determine the topological type of a reducible isolated singularity. We hope these three instances above and \Cref{rmk:ordinary_fold} are illustrative instances of such situations.

\begin{remark}\label{rmk:ordinary_fold}
Let $(C,0)$ be an isolated plane curve singularity and let $\mbox{ord}_{t=1}\Delta_C(t)$ be the order of vanishing of $\Delta_C(t)$ at $t=1$. It follows from \cite[Proposition 3.(i)]{Durfee1975} and \cite[Theorems 7.2 \& 10.5]{milnor1968} that the equality
$$\mbox{deg }\Delta_C(t)=(\mbox{ord}_{t=1}\Delta_C(t))^2$$
implies that $(C,0)$ is an ordinary multiple point, with $\mbox{ord}_{t=1}\Delta_C(t)+1$ smooth branches at $0$ and distinct tangent lines. Thus this is another instance where the one-variable Alexander polynomial can be used to determine the topological type of a reducible singularity.\hfill$\Box$
\end{remark}

\subsection{Plane curve singularities with $\mu\leq 16$ and their Alexander polynomials} For convenience, we have prepared \Cref{tab:main}, which is a table of all isolated plane curve singularities with Milnor number $\mu\leq16$ along with their one-variable Alexander polynomials. The list essentially follows from the tables and results from \cite{Arnold1976}.\\

\noindent The table can be used along with the Main Result for a given singularity with Milnor number $\mu\leq 16$: if the reader has a quiver mutation class from an algebraic divide with $\mu\leq 16$, the Main Result can be used to compute the Alexander polynomial -- e.g.~via \Cref{eq:euler-formula-intro} -- and then the table will uniquely determine the topological type of the singularity, unless it happens to be $D_{12}$ or $Z_{12}$. In that exceptional case, one can compute the top $a$-degree piece of HOMFLY or, if known, apply simplicity or non-simplicity and \cite[Theorem 18.3]{FPST} to deduce whether the singularity is $D_{12}$ or $Z_{12}$.\\

Each row in \Cref{tab:main} contains four pieces of data:

\begin{enumerate}
    \item {\bf Singularity}. Indicates the name of the singularity, according to \cite{Arnold1976}. Those names with an $\ast$ added are {\it simple} singularities.\\

    \item {\bf Representative}. Presents a formula for the germ $(C,0)$ representing the embedded complex topological type. Note that we have not indicated analytic moduli, as many such singularities have infinitely many analytic right-equivalence classes (with the same topological type), and instead chosen a particular representative. For instance, instead of writing $X_9$ as $x^4+y^4+ax^2y^2$ for some $a\in\C$ with $a^2\neq 4$, we simply choose a value for such parameter $a\in\C$, in this case $a=0$, and write $X_9$ as $x^4+y^4$.\\

    \item {\bf Number of branches}. Records the number of local branches of the singularity. For instance, $1$ indicates an irreducible plane curve singularity.\\

    \item {\bf Alexander polynomial}. Records the one-variable Alexander polynomial $\Delta_{C}(t)$ of the link of the singularity $(C,0)$. Recall that we define $\Delta_{C}(t)$ to be the characteristic polynomial of the monodromy. Specifically, let $M_f$ be the Milnor fiber and let
\[
h_*\colon H_1(M_f;\mathbb Z)\longrightarrow H_1(M_f;\mathbb Z)
\]
be the algebraic monodromy.  We use the convention
\begin{equation}\label{eq:DeltaConvention}
 \Delta_C(t):=\det\!\bigl(t\cdot\mbox{Id}-h_*\bigr).
\end{equation}
Equivalently this is the order of the total-linking Alexander module, with every positively oriented meridian sent to $1\in\mathbb Z$. In particular, the roots of $\Delta_C(t)$ are all roots of unity and thus $\Delta_{C}(t)$ is always a product of cyclotomic polynomials. We denote by $\Phi_n(t)$ the $n$th cyclotomic polynomial, with $\Phi_1(t)=t-1$. Note also that $\deg \Delta_C(t)=\mu(C,0)$, and if $C$ has $r$ branches then the eigenvalue $1$ has multiplicity $r-1$, so the factor $\Phi_1(t)=t-1$ occurs to exponent $r-1$.\\
\end{enumerate}

\noindent The rows of the table are organized based on Milnor number $\mu$, increasing as the table is read top-to-bottom. The only new aspect beyond applying the results from \cite{Arnold1976} is the computation of the Alexander polynomials. The following methods to compute Alexander polynomials suffice:

\begin{enumerate}[label=$(\alph*)$]
    \item For a Brieskorn plane curve singularity, 
\[
f(x,y)=x^p+y^q,\qquad\mbox{ with }\quad d:=\gcd(p,q),\quad L:=\operatorname{lcm}(p,q),
\]
it follows from \cite[Section 3]{MilnorOrlik1970} or \cite[Theorem~6.4(a)]{HertlingMase2022} that the Alexander polynomial of an algebraic $(p,q)$-torus link is
\begin{equation}\label{eq:Brieskorn}
\Delta_{p,q}(t)
=
\frac{(t^L-1)^d(t-1)}
{(t^p-1)(t^q-1)}.
\end{equation}
This concludes all the $A_k$-singularities, as well as $E_6,E_8,E_{12},E_{14},W_{12},J_{3,0},W_{1,0}$ and the chosen $N_{16}$ representative. For example,
\[
\Delta_{N_{16}}(t)
=
\frac{(t^5-1)^5(t-1)}{(t^5-1)^2}
=(t-1)(t^5-1)^3
=\Phi_1^4\Phi_5^3.
\]

  \item A second method is to use the formula for the monodromy zeta function in terms of Newton diagrams as in \cite[Theorem 4.1]{Varchenko1976}, or A'Campo's resolution formula in \cite{ACampo1975}. For instance, for the $D_k$-series, we can write $g:=\gcd(2,k-2)$ and then the Newton edge joining $(2,1)$ to $(0,k-1)$ has lattice length $g$ and Newton multiplicity $2(k-1)/g$. Therefore
  \begin{equation}\label{eq:D}
\Delta_{D_k}(t)
=(t-1)\frac{(t^{2(k-1)/g}-1)^g}{t^{k-1}-1}
=
\begin{cases}
(t-1)(t^{k-1}-1),& k\ \text{even},\\[1mm]
(t-1)(t^{k-1}+1),& k\ \text{odd}.
\end{cases}
\end{equation}
which can each be factorized in terms of cyclotomic polynomials as in the table. For the hyperbolic $T_{2,q,r}$ representatives, which covers $J_{2,p}$, $X_{1,p}$ and $Y^1_{r,s}$, this is literally the first example in \cite{Varchenko1976}. Specifically, if we write $g_q:=\gcd(q-2,2)$ and $g_r:=\gcd(r-2,2)$, then the two compact Newton edges have lattice lengths $g_q,g_r$ and Newton multiplicities $2q/g_q,2r/g_r$. Then \cite[Theorem 4.1]{Varchenko1976} gives
\begin{equation}\label{eq:T}
\Delta_{T_{2,q,r}}(t)
=
(t-1)
\frac{
(t^{2q/g_q}-1)^{g_q}
(t^{2r/g_r}-1)^{g_r}
}
{(t^q-1)(t^r-1)}.
\end{equation}
Since the number of branches is $r(C)=g_q+g_r$, by factoring $t^m-1=\prod_{d\mid m}\Phi_d(t)$ we obtained the Alexander polynomials displayed for $J_{2,p}$, $X_{1,p}$ and $Y^1_{r,s}$ in \Cref{tab:main}.\\

All the remaining singularities $f:\C^2\lr\C$, except for $W_{1,1}^*$ which has a degenerate Newton polygon, can be computed via \cite{Varchenko1976}. Specifically, for a planar representative whose Newton boundary is nondegenerate, \cite{Varchenko1976} gives the zeta function (and thus the Alexander polynomial) directly from the compact Newton edges. In dimension two, if an edge $e$ has primitive positive inward normal $(a_e,b_e)$, with $a_e i+b_ej=N_e$, and lattice length $\ell_e$, the specialization gives
\begin{equation}\label{eq:Newton2D}
\Delta_f(t)
=(1-t)\,
\frac{\prod_e(1-t^{N_e})^{\ell_e}}
{\prod_{\text{vertices }(m,0)}(1-t^m)
 \prod_{\text{vertices }(0,n)}(1-t^n)}.
\end{equation}
This gives the Alexander polynomials for $E_7,E_{13},Z_{11},Z_{12},Z_{13},W_{13},Z_{1,0},Z_{1,1}$ and $W_{1,1}$. For instance, consider
\[
Z_{1,1}\quad \mbox{ with }\quad  f(x,y)=x^3y+x^2y^3+y^8.
\]
Then the compact edges are $(3,1)\to(2,3)$ and $(2,3)\to(0,8)$, with Newton multiplicities $7$ and $16$ and lattice lengths $1,1$.  Thus
\[
\Delta_{Z_{1,1}}(t)
=(1-t)\frac{(1-t^7)(1-t^{16})}{1-t^8}
=\Phi_1^2\Phi_7\Phi_{16}.
\]

\item For the remaining singularity $W^*_{1,1}$ the principal Newton face is degenerate. We can instead use a direct Newton--Puiseux computation that gives a single branch with characteristic exponents $(4;6,7)$. Indeed, we can directly set
\[
y=t^4,\qquad x=\alpha t^6+\beta t^7+\cdots,
\]
and then the equations at orders $12$ and $26$ give $\alpha^2+1=0$ and $-4\beta^2+\alpha=0$, hence $\beta\ne0$.  The value semigroup $S$ is therefore $S=\langle4,6,13\rangle$ and \cite[Theorem 1]{CDGZ} thus gives
\[
\sum_{s\in S}t^s
=
\frac{(1-t^{12})(1-t^{26})}
{(1-t^4)(1-t^6)(1-t^{13})},
\]
and hence
\[
\Delta_{W^*_{1,1}}(t)
=(1-t)\sum_{s\in S}t^s
=\Phi_{12}\Phi_{26}.
\]
\end{enumerate}

\noindent Note that these computations lead to the fact that any isolated plane curve singularity with $\mu\leq16$ is uniquely determined by its one-variable Alexander polynomial, except in the case of $D_{12}$ and $Z_{12}$. As discussed above, the other invariants already tell $D_{12}$ and $Z_{12}$ apart, so the invariants of \Cref{cor:invariants_from_mutation_classes} uniquely determine the topological type of the singularity if it has $\mu\leq16$.

\small
\setlength{\tabcolsep}{5pt}
\begin{longtable}{
  >{\raggedright\arraybackslash}p{0.22\linewidth}
  >{\raggedright\arraybackslash}p{0.25\linewidth}
  >{\raggedright\arraybackslash}p{0.2\linewidth}
  >{\raggedright\arraybackslash}p{0.10\linewidth}}
\caption{All isolated complex plane-curve singularity types with $\mu\le16$.  An asterisk denotes a simple singularity.}
\label{tab:main}\\
\toprule
\textbf{Singularity} & \textbf{Representative} & \textbf{No.~Branches\qquad} & \textbf{$\Delta_C(t)$}\\
\midrule
\endfirsthead
\multicolumn{4}{c}{\textit{Table \ref{tab:main} continued}}\\
\toprule
\textbf{Singularity} & \textbf{Representative} & \textbf{No.~Branches\qquad} & \textbf{$\Delta_C(t)$}\\
\midrule
\endhead
\midrule
\multicolumn{4}{r}{\textit{Continued on next page}}\\
\endfoot
\bottomrule
\endlastfoot
\multicolumn{4}{l}{\textbf{$\mu=1$}}\\
\midrule
$A_{1}^{\ast}$ & $x^2+y^{2}=0$ & 2 & $\Phi_{1}$ \\
\addlinespace[1.5pt]
\midrule
\multicolumn{4}{l}{\textbf{$\mu=2$}}\\
\midrule
$A_{2}^{\ast}$ & $x^2+y^{3}=0$ & 1 & $\Phi_{6}$ \\
\addlinespace[1.5pt]
\midrule
\multicolumn{4}{l}{\textbf{$\mu=3$}}\\
\midrule
$A_{3}^{\ast}$ & $x^2+y^{4}=0$ & 2 & $\Phi_{1}\Phi_{4}$ \\
\addlinespace[1.5pt]
\midrule
\multicolumn{4}{l}{\textbf{$\mu=4$}}\\
\midrule
$A_{4}^{\ast}$ & $x^2+y^{5}=0$ & 1 & $\Phi_{10}$ \\
$D_{4}^{\ast}$ & $x^2y+y^{3}=0$ & 3 & $\Phi_{1}^{2}\Phi_{3}$ \\
\addlinespace[1.5pt]
\midrule
\multicolumn{4}{l}{\textbf{$\mu=5$}}\\
\midrule
$A_{5}^{\ast}$ & $x^2+y^{6}=0$ & 2 & $\Phi_{1}\Phi_{3}\Phi_{6}$ \\
$D_{5}^{\ast}$ & $x^2y+y^{4}=0$ & 2 & $\Phi_{1}\Phi_{8}$ \\
\addlinespace[1.5pt]
\midrule
\multicolumn{4}{l}{\textbf{$\mu=6$}}\\
\midrule
$A_{6}^{\ast}$ & $x^2+y^{7}=0$ & 1 & $\Phi_{14}$ \\
$D_{6}^{\ast}$ & $x^2y+y^{5}=0$ & 3 & $\Phi_{1}^{2}\Phi_{5}$ \\
$E_6^{\ast}$ & $x^3+y^4=0$ & 1 & $\Phi_{6}\Phi_{12}$ \\
\addlinespace[1.5pt]
\midrule
\multicolumn{4}{l}{\textbf{$\mu=7$}}\\
\midrule
$A_{7}^{\ast}$ & $x^2+y^{8}=0$ & 2 & $\Phi_{1}\Phi_{4}\Phi_{8}$ \\
$D_{7}^{\ast}$ & $x^2y+y^{6}=0$ & 2 & $\Phi_{1}\Phi_{4}\Phi_{12}$ \\
$E_7^{\ast}$ & $x^3+xy^3=0$ & 2 & $\Phi_{1}\Phi_{9}$ \\
\addlinespace[1.5pt]
\midrule
\multicolumn{4}{l}{\textbf{$\mu=8$}}\\
\midrule
$A_{8}^{\ast}$ & $x^2+y^{9}=0$ & 1 & $\Phi_{6}\Phi_{18}$ \\
$D_{8}^{\ast}$ & $x^2y+y^{7}=0$ & 3 & $\Phi_{1}^{2}\Phi_{7}$ \\
$E_8^{\ast}$ & $x^3+y^5=0$ & 1 & $\Phi_{15}$ \\
\addlinespace[1.5pt]
\midrule
\multicolumn{4}{l}{\textbf{$\mu=9$}}\\
\midrule
$A_{9}^{\ast}$ & $x^2+y^{10}=0$ & 2 & $\Phi_{1}\Phi_{5}\Phi_{10}$ \\
$D_{9}^{\ast}$ & $x^2y+y^{8}=0$ & 2 & $\Phi_{1}\Phi_{16}$ \\
$X_9=X_{1,0}$ & $x^4+y^4=0$ & 4 & $\Phi_{1}^{3}\Phi_{2}^{2}\Phi_{4}^{2}$ \\
\addlinespace[1.5pt]
\midrule
\multicolumn{4}{l}{\textbf{$\mu=10$}}\\
\midrule
$A_{10}^{\ast}$ & $x^2+y^{11}=0$ & 1 & $\Phi_{22}$ \\
$D_{10}^{\ast}$ & $x^2y+y^{9}=0$ & 3 & $\Phi_{1}^{2}\Phi_{3}\Phi_{9}$ \\
$J_{10}=J_{2,0}$ & $x^3+x^2y^2+y^6=0$ & 3 & $\Phi_{1}^{2}\Phi_{2}^{2}\Phi_{3}\Phi_{6}^{2}$ \\
$X_{1,1}=T_{2,4,5}$ & $x^4+x^2y^2+y^{5}=0$ & 3 & $\Phi_{1}^{2}\Phi_{2}^{2}\Phi_{4}\Phi_{10}$ \\
\addlinespace[1.5pt]
\midrule
\multicolumn{4}{l}{\textbf{$\mu=11$}}\\
\midrule
$A_{11}^{\ast}$ & $x^2+y^{12}=0$ & 2 & $\Phi_{1}\Phi_{3}\Phi_{4}\Phi_{6}\Phi_{12}$ \\
$D_{11}^{\ast}$ & $x^2y+y^{10}=0$ & 2 & $\Phi_{1}\Phi_{4}\Phi_{20}$ \\
$J_{2,1}=T_{2,3,7}$ & $x^3+x^2y^2+y^{7}=0$ & 2 & $\Phi_{1}\Phi_{2}^{2}\Phi_{6}\Phi_{14}$ \\
$X_{1,2}=T_{2,4,6}$ & $x^4+x^2y^2+y^{6}=0$ & 4 & $\Phi_{1}^{3}\Phi_{2}^{2}\Phi_{3}\Phi_{4}\Phi_{6}$ \\
$Y^1_{1,1}=T_{2,5,5}$ & $x^{5}+x^2y^2+y^{5}=0$ & 2 & $\Phi_{1}\Phi_{2}^{2}\Phi_{10}^{2}$ \\
$Z_{11}$ & $x^3y+y^5=0$ & 2 & $\Phi_{1}\Phi_{3}\Phi_{15}$ \\
\addlinespace[1.5pt]
\midrule
\multicolumn{4}{l}{\textbf{$\mu=12$}}\\
\midrule
$A_{12}^{\ast}$ & $x^2+y^{13}=0$ & 1 & $\Phi_{26}$ \\
$D_{12}^{\ast}$ & $x^2y+y^{11}=0$ & 3 & $\Phi_{1}^{2}\Phi_{11}$ \\
$E_{12}$ & $x^3+y^7=0$ & 1 & $\Phi_{21}$ \\
$J_{2,2}=T_{2,3,8}$ & $x^3+x^2y^2+y^{8}=0$ & 3 & $\Phi_{1}^{2}\Phi_{2}^{2}\Phi_{4}\Phi_{6}\Phi_{8}$ \\
$X_{1,3}=T_{2,4,7}$ & $x^4+x^2y^2+y^{7}=0$ & 3 & $\Phi_{1}^{2}\Phi_{2}^{2}\Phi_{4}\Phi_{14}$ \\
$Y^1_{2,1}=T_{2,6,5}$ & $x^{6}+x^2y^2+y^{5}=0$ & 3 & $\Phi_{1}^{2}\Phi_{2}^{2}\Phi_{3}\Phi_{6}\Phi_{10}$ \\
$Z_{12}$ & $x^3y+xy^4=0$ & 3 & $\Phi_{1}^{2}\Phi_{11}$ \\
$W_{12}$ & $x^4+y^5=0$ & 1 & $\Phi_{10}\Phi_{20}$ \\
\addlinespace[1.5pt]
\midrule
\multicolumn{4}{l}{\textbf{$\mu=13$}}\\
\midrule
$A_{13}^{\ast}$ & $x^2+y^{14}=0$ & 2 & $\Phi_{1}\Phi_{7}\Phi_{14}$ \\
$D_{13}^{\ast}$ & $x^2y+y^{12}=0$ & 2 & $\Phi_{1}\Phi_{8}\Phi_{24}$ \\
$E_{13}$ & $x^3+xy^5=0$ & 2 & $\Phi_{1}\Phi_{5}\Phi_{15}$ \\
$J_{2,3}=T_{2,3,9}$ & $x^3+x^2y^2+y^{9}=0$ & 2 & $\Phi_{1}\Phi_{2}^{2}\Phi_{6}^{2}\Phi_{18}$ \\
$X_{1,4}=T_{2,4,8}$ & $x^4+x^2y^2+y^{8}=0$ & 4 & $\Phi_{1}^{3}\Phi_{2}^{2}\Phi_{4}^{2}\Phi_{8}$ \\
$Y^1_{3,1}=T_{2,7,5}$ & $x^{7}+x^2y^2+y^{5}=0$ & 2 & $\Phi_{1}\Phi_{2}^{2}\Phi_{10}\Phi_{14}$ \\
$Y^1_{2,2}=T_{2,6,6}$ & $x^{6}+x^2y^2+y^{6}=0$ & 4 & $\Phi_{1}^{3}\Phi_{2}^{2}\Phi_{3}^{2}\Phi_{6}^{2}$ \\
$Z_{13}$ & $x^3y+y^6=0$ & 2 & $\Phi_{1}\Phi_{9}\Phi_{18}$ \\
$W_{13}$ & $x^4+xy^4=0$ & 2 & $\Phi_{1}\Phi_{8}\Phi_{16}$ \\
\addlinespace[1.5pt]
\midrule
\multicolumn{4}{l}{\textbf{$\mu=14$}}\\
\midrule
$A_{14}^{\ast}$ & $x^2+y^{15}=0$ & 1 & $\Phi_{6}\Phi_{10}\Phi_{30}$ \\
$D_{14}^{\ast}$ & $x^2y+y^{13}=0$ & 3 & $\Phi_{1}^{2}\Phi_{13}$ \\
$E_{14}$ & $x^3+y^8=0$ & 1 & $\Phi_{6}\Phi_{12}\Phi_{24}$ \\
$J_{2,4}=T_{2,3,10}$ & $x^3+x^2y^2+y^{10}=0$ & 3 & $\Phi_{1}^{2}\Phi_{2}^{2}\Phi_{5}\Phi_{6}\Phi_{10}$ \\
$X_{1,5}=T_{2,4,9}$ & $x^4+x^2y^2+y^{9}=0$ & 3 & $\Phi_{1}^{2}\Phi_{2}^{2}\Phi_{4}\Phi_{6}\Phi_{18}$ \\
$Y^1_{4,1}=T_{2,8,5}$ & $x^{8}+x^2y^2+y^{5}=0$ & 3 & $\Phi_{1}^{2}\Phi_{2}^{2}\Phi_{4}\Phi_{8}\Phi_{10}$ \\
$Y^1_{3,2}=T_{2,7,6}$ & $x^{7}+x^2y^2+y^{6}=0$ & 3 & $\Phi_{1}^{2}\Phi_{2}^{2}\Phi_{3}\Phi_{6}\Phi_{14}$ \\
\addlinespace[1.5pt]
\midrule
\multicolumn{4}{l}{\textbf{$\mu=15$}}\\
\midrule
$A_{15}^{\ast}$ & $x^2+y^{16}=0$ & 2 & $\Phi_{1}\Phi_{4}\Phi_{8}\Phi_{16}$ \\
$D_{15}^{\ast}$ & $x^2y+y^{14}=0$ & 2 & $\Phi_{1}\Phi_{4}\Phi_{28}$ \\
$J_{2,5}=T_{2,3,11}$ & $x^3+x^2y^2+y^{11}=0$ & 2 & $\Phi_{1}\Phi_{2}^{2}\Phi_{6}\Phi_{22}$ \\
$X_{1,6}=T_{2,4,10}$ & $x^4+x^2y^2+y^{10}=0$ & 4 & $\Phi_{1}^{3}\Phi_{2}^{2}\Phi_{4}\Phi_{5}\Phi_{10}$ \\
$Y^1_{5,1}=T_{2,9,5}$ & $x^{9}+x^2y^2+y^{5}=0$ & 2 & $\Phi_{1}\Phi_{2}^{2}\Phi_{6}\Phi_{10}\Phi_{18}$ \\
$Y^1_{4,2}=T_{2,8,6}$ & $x^{8}+x^2y^2+y^{6}=0$ & 4 & $\Phi_{1}^{3}\Phi_{2}^{2}\Phi_{3}\Phi_{4}\Phi_{6}\Phi_{8}$ \\
$Y^1_{3,3}=T_{2,7,7}$ & $x^{7}+x^2y^2+y^{7}=0$ & 2 & $\Phi_{1}\Phi_{2}^{2}\Phi_{14}^{2}$ \\
$Z_{1,0}$ & $x^3y+y^7=0$ & 4 & $\Phi_{1}^{3}\Phi_{7}^{2}$ \\
$W_{1,0}$ & $x^4+y^6=0$ & 2 & $\Phi_{1}\Phi_{3}\Phi_{4}\Phi_{6}\Phi_{12}^{2}$ \\
\addlinespace[1.5pt]
\midrule
\multicolumn{4}{l}{\textbf{$\mu=16$}}\\
\midrule
$A_{16}^{\ast}$ & $x^2+y^{17}=0$ & 1 & $\Phi_{34}$ \\
$D_{16}^{\ast}$ & $x^2y+y^{15}=0$ & 3 & $\Phi_{1}^{2}\Phi_{3}\Phi_{5}\Phi_{15}$ \\
$J_{2,6}=T_{2,3,12}$ & $x^3+x^2y^2+y^{12}=0$ & 3 & $\Phi_{1}^{2}\Phi_{2}^{2}\Phi_{3}\Phi_{4}\Phi_{6}^{2}\Phi_{12}$ \\
$J_{3,0}$ & $x^3+y^9=0$ & 3 & $\Phi_{1}^{2}\Phi_{3}\Phi_{9}^{2}$ \\
$X_{1,7}=T_{2,4,11}$ & $x^4+x^2y^2+y^{11}=0$ & 3 & $\Phi_{1}^{2}\Phi_{2}^{2}\Phi_{4}\Phi_{22}$ \\
$Y^1_{6,1}=T_{2,10,5}$ & $x^{10}+x^2y^2+y^{5}=0$ & 3 & $\Phi_{1}^{2}\Phi_{2}^{2}\Phi_{5}\Phi_{10}^{2}$ \\
$Y^1_{5,2}=T_{2,9,6}$ & $x^{9}+x^2y^2+y^{6}=0$ & 3 & $\Phi_{1}^{2}\Phi_{2}^{2}\Phi_{3}\Phi_{6}^{2}\Phi_{18}$ \\
$Y^1_{4,3}=T_{2,8,7}$ & $x^{8}+x^2y^2+y^{7}=0$ & 3 & $\Phi_{1}^{2}\Phi_{2}^{2}\Phi_{4}\Phi_{8}\Phi_{14}$ \\
$Z_{1,1}$ & $y(x^3+x^2y^2+y^7)=0$ & 3 & $\Phi_{1}^{2}\Phi_{7}\Phi_{16}$ \\
$W_{1,1}$ & $x^4+x^2y^3+y^7=0$ & 3 & $\Phi_{1}^{2}\Phi_{3}\Phi_{6}\Phi_{7}\Phi_{12}$ \\
$W^*_{1,1}$ & $(x^2+y^3)^2+xy^5=0$ & 1 & $\Phi_{12}\Phi_{26}$ \\
$N_{16}$ & $x^5+y^5=0$ & 5 & $\Phi_{1}^{4}\Phi_{5}^{3}$ \\
\end{longtable}

\section{Further comments and examples}\label{sec:comments_and_examples}

In this last section we compile a few comments related to \cite{FPST} and provide an example further illustrating the techniques developed in Sections \ref{sec:graded-QP}, \ref{sec:euler}, \ref{sec:dilation_plabic_fence} and \ref{sec:polynomial_plabic_fence}.

\subsection{A counterexample to \cite[Conjecture 6.17]{FPST}} The article \cite{FPST} contains other interesting conjectures. For instance, it is proven in \cite[Proposition 6.16]{FPST} that if two plabic graphs are move-and-switch equivalent to each other, then their associated quivers are mutation equivalent. It is then stated in \cite[Conjecture 6.17]{FPST} that it is conjectured to be an equivalence, i.e.~that two plabic graphs are move-and-switch equivalent if and only if their associated quivers are mutation equivalent.\\

\begin{figure}
\centering
\begin{tikzpicture}[scale=.92,
  bvertex/.style={circle,fill=black,inner sep=2.25pt},
  wvertex/.style={circle,fill=white,draw=black,line width=.6pt,inner sep=2.25pt},
  every path/.style={line width=.8pt}]
\coordinate (v0) at (0,3.2);
\coordinate (v1) at (0,2.05);
\coordinate (v2) at (0,.9);
\coordinate (v3) at (0,-.9);
\coordinate (v4) at (0,-2.05);
\coordinate (v5) at (0,-3.2);
\draw (v0) .. controls (-2.6,1.9) and (-2.6,-1.9) .. (v5);
\draw (v0) .. controls (2.6,1.9) and (2.6,-1.9) .. (v5);
\draw (v0)--(v1);
\draw (v1) .. controls (-.85,1.75) and (-.85,1.2) .. (v2);
\draw (v1) .. controls (.85,1.75) and (.85,1.2) .. (v2);
\draw (v2)--(v3);
\draw (v3) .. controls (-.85,-1.2) and (-.85,-1.75) .. (v4);
\draw (v3) .. controls (.85,-1.2) and (.85,-1.75) .. (v4);
\draw (v4)--(v5);
\node[bvertex] at (v0) {};
\node[wvertex] at (v1) {};
\node[bvertex] at (v2) {};
\node[bvertex] at (v3) {};
\node[wvertex] at (v4) {};
\node[bvertex] at (v5) {};
\node at (-1.35,0) {$F_L$};
\node at (1.35,0) {$F_R$};
\node at (0,1.47) {$F_T$};
\node at (0,-1.47) {$F_B$};
\end{tikzpicture}\qquad\qquad
\begin{tikzpicture}[scale=1.03,
  bvertex/.style={circle,fill=black,inner sep=2.25pt},
  wvertex/.style={circle,fill=white,draw=black,line width=.6pt,inner sep=2.25pt},
  every path/.style={line width=.8pt}]
\coordinate (u1) at (0,2);
\coordinate (u2) at (1.73,1);
\coordinate (u3) at (1.73,-1);
\coordinate (u4) at (0,-2);
\coordinate (u5) at (-1.73,-1);
\coordinate (u6) at (-1.73,1);
\draw (u1)--(u2)--(u3)--(u4)--(u5)--(u6)--cycle;
\draw (u1) .. controls (1.0,2.35) and (2.15,2.65) .. (u2);
\draw (u3) .. controls (1.95,-1.85) and (.8,-3.45) .. (u4);
\draw (u5) .. controls (-2.85,-.15) and (-2.75,.15) .. (u6);
\node[bvertex] at (u1) {};
\node[wvertex] at (u2) {};
\node[bvertex] at (u3) {};
\node[wvertex] at (u4) {};
\node[bvertex] at (u5) {};
\node[wvertex] at (u6) {};
\node at (0,0) {$F_0$};
\node at (1.18,1.72) {$F_1$};
\node at (.98,-1.85) {$F_2$};
\node at (-2.1,0) {$F_3$};
\end{tikzpicture}
\caption{Two plabic graphs, $G_0$ on the left, and $G_1$ on the right.}
\label{fig:plabic_graphs_Shapiro}
\end{figure}

Consider the two plabic graphs in \Cref{fig:plabic_graphs_Shapiro}. The plabic graph $G_0$ in \Cref{fig:plabic_graphs_Shapiro} (left) is the one drawn in \cite[Figure 8.(b)]{GL}. The corresponding quivers $Q(G_0)$ and $Q(G_1)$ are

\[
Q(G_0)\;=\;
\begin{tikzcd}[row sep=3.2em,column sep=3.8em]
F_T \arrow[r] & F_R \arrow[d] \\
F_L \arrow[u] & F_B \arrow[l]
\end{tikzcd}
\quad\quad\&\quad\quad
Q(G_1)\;=\;
\begin{tikzcd}[row sep=3.2em,column sep=3.8em]
F_1 \arrow[dr] & & F_2 \arrow[dl] \\
& F_0 & \\
& F_3 \arrow[u] &
\end{tikzcd}
\]
Note that $Q(G_1)$ is an orientation of the $D_4$-Dynkin diagram. By mutating $Q(G_0)$ first at $F_T$ and then at $F_R$ and $F_L$, say, it follows that $Q(G_0)$ is mutation equivalent to $Q(G_1)$.\\

Now, for a plabic graph $\mathbb{P}$, the local moves in \cite[Definition 6.2]{FPST} preserve the smooth type of the associated link $L(\mathbb{P})$.\footnote{See \cite[Section 9]{FPST} for the definition of $L(\mathbb{P})$. In brief, it is the smooth link obtained by satelliting along the unknot the Legendrian link in the unit cotangent bundle of the plane whose front projection is the alternating strand diagram of the plabic graph.} In contrast, the switch operation on a plabic graph $\mathbb{P}$, as defined in \cite[Definition 6.15]{FPST}, has the effect of changing the associated link $L(\mathbb{P})$ by a positive Conway mutation. See \cite{Conway1970} and \cite[Section~1]{Morton2015} for the notion of a tangle mutation. Now, by \cite[Statement~11]{Hoste1986}, the HOMFLY polynomial of a link is invariant under positive Conway
mutation, see also \cite[Proposition~11]{LickorishMillett1987} and ~\cite{FreydEtAl1985}. In particular, \cite[Conjecture 9.6]{GL} is true. Therefore, if $G_0$ and $G_1$ were move-and-switch equivalent, their links $L(G_0)$ and $L(G_1)$ would have identical HOMFLY polynomials.\\

For $G_0$, the associated HOMFLY polynomial of the plabic link is computed explicitly \cite[Section~9.3]{GL}:
\begin{equation}\label{eq:P0}
\begin{aligned}
\HOMFLY(L(G_0);a,z)
={}&a^{-4}(z^4+4z^2+2)+a^{-8}z^{-2}-a^{-10}(z^2+3+2z^{-2})
+a^{-12}(1+z^{-2}).
\end{aligned}
\end{equation}

\noindent Here $\HOMFLY(L;a,z)$ denotes the oriented HOMFLY polynomial normalized by
\begin{equation}\label{eq:homfly-skein}
a\HOMFLY(L_+;a,z)-a^{-1}\HOMFLY(L_-;a,z)=z\HOMFLY(L_0;a,z),
\qquad
\HOMFLY(\bigcirc;a,z)=1.
\end{equation}

\noindent To compute the HOMFLY polynomial associated to $G_1$ we use \cite{Schwartz}, as $G_1$ is simple. Specifically, \cite[Theorem~3.11]{Schwartz} shows that, for a connected plabic graph whose face quiver is a forest, the HOMFLY polynomial of the plabic link is the forest-quiver polynomial. Note that this polynomial depends only on the underlying unoriented forest, which in our case is a $D_4$-Dynkin diagram. Therefore \cite[Example~4.8]{Schwartz} yields
\begin{equation}\label{eq:P1}
\begin{aligned}
\HOMFLY(L(G_1);a,z)
={}&a^{-4}(z^4+4z^2+3+z^{-2})-a^{-6}(z^2+3+2z^{-2})
+a^{-8}z^{-2}.
\end{aligned}
\end{equation}

\noindent Since the coefficient of $a^{-6}$ in \eqref{eq:P1} is non-zero, and it is zero in \eqref{eq:P0}, these two HOMFLY polynomials are distinct. Thus $G_0$ and $G_1$ are not move-and-switch equivalent. We conclude that it is possible for two plabic graphs to have mutation equivalent quivers while not being move-and-switch equivalent, contradicting \cite[Conjecture 6.17]{FPST}.

\subsection{A detailed example}\label{ssec:examples}

Let us consider the irreducible isolated plane curve singularity
\[
 C:=\{(x,y)\in\C^2:x^3-y^4=0\}.
\]
The singularity is at the origin $(x,y)=(0,0)$: it is the $E_6$ simple singularity and its link is the $(3,4)$-torus knot. See also \cite[Figure 4]{FPST} for two divides associated to two real Morsifications of this singularity. Let us consider the top divide for $E_6$ in \cite[Figure 4]{FPST}. Its associated quiver $Q$ is

\[
\begin{tikzcd}[column sep=large, row sep=large]
1
  \arrow[r, "a"]
  \arrow[d, "e"']
&
2
  \arrow[r, "b"]
  \arrow[d, "f"']
&
3
  \arrow[d, "g"]
\\
4
  \arrow[r, "c"']
&
5
  \arrow[r, "d"']
  \arrow[ul, "p"]
&
6
  \arrow[ul, "q"]
\end{tikzcd}
\]
\noindent The associated non-degenerate potential is
\begin{equation}\label{eq:E6-W}
 W=pfa-pce+qgb-qdf.
\end{equation}
An arrow-grading $d:Q_1\lr\Z$ for which $W$ in \eqref{eq:E6-W} is homogeneous of degree 1 is
\begin{equation}\label{eq:E6-d}
 d(p)=d(q)=1,
 \qquad
 d(a)=d(b)=d(c)=d(d)=d(e)=d(f)=d(g)=0.
\end{equation}

Consider the bigraded Ginzburg dg algebra $\Gamma(Q,W,d)$ as in \Cref{def:bigradedGinzburg} and the derived category $\D^{\gr}_{\fd}(\Gamma)$ as in \Cref{def:categories_over_bigraded}. The graded Euler pairing expressed in the simple basis reads
\begin{equation}\label{eq:E6-E}
 \Eul_{Q,W,d}(t)=
 \begin{pmatrix}
 1-t&-1&0&-1&1&0\\
 t&1-t&-1&0&-1&1\\
 0&t&1-t&0&0&-1\\
 t&0&0&1-t&-1&0\\
 -t&t&0&t&1-t&-1\\
 0&-t&t&0&t&1-t
 \end{pmatrix},
\end{equation}
which can be computed directly or via \Cref{prop:euler-formula}. In particular, its determinant is
\begin{equation}\label{eq:E6-det}
 \det\Eul_{Q,W,d}(t)
 =(t^2-t+1)(t^4-t^2+1)
 =t^6-t^5+t^3-t+1.
\end{equation}
Note that \eqref{eq:E6-det} coincides with the Alexander polynomial $\Delta_{T(3,4)}(t)$ of the $(3,4)$-torus knot, in accordance with \Cref{cor:cokernel_module_is_Alexander_algebraic_links}.\\

Let us now mutate the quiver $Q$ at the vertex $1$. The only incoming arrow is \(p:5\to1\), and there are two outgoing arrows \(a:1\to2\) and \(e:1\to4\). In order to perform the QP mutation of $(Q,W)$, introduce the composite arrows
\[
 x=[ap]:5\to2,\qquad y=[ep]:5\to4,
\]
and reversed arrows
\[
 \alpha=a^*:2\to1,\qquad
 \varepsilon=e^*:4\to1,\qquad
 \pi=p^*:1\to5.
\]
Then the premutated potential is cyclically equivalent to
\begin{equation}\label{eq:E6-premut}
 \widetilde W=xf-yc+qgb-qdf+\pi\alpha x+\pi\varepsilon y.
\end{equation}
Now we need to extract the reduced part. For that let us set
\[
 x_0:=x-qd,\qquad c_0:=c-\pi\varepsilon,
 \qquad f_0:=f+\pi\alpha.
\]
Then we obtain the cyclic equivalence
\[
 \widetilde W\sim_{\cyc}
 x_0f_0-yc_0+qgb+\pi\alpha qd,
\]
so that $x_0f_0$ and $yc_0$ are trivial pairs. After deleting these two trivial pairs, the quiver $Q'$ for the reduced QP $(Q',W')$ reads
\[
\begin{tikzcd}[column sep=large, row sep=large]
1
  \arrow[dr, "\pi"]
&
2
  \arrow[l, "\alpha"']
  \arrow[r, "b"]
&
3
  \arrow[d, "g"]
\\
4
  \arrow[u, "\varepsilon"]
&
5
  \arrow[r, "d"']
&
6
  \arrow[ul, "q"']
\end{tikzcd}
\]
and the corresponding potential $W'$ reads
\begin{equation}\label{eq:E6-mut-W}
 W'=qgb+\pi\alpha qd.
\end{equation}
The corresponding mutated grading is
\begin{equation}\label{eq:E6-mut-d}
 d'(q)=1,\qquad d'(u)=0\quad(u\ne q),
\end{equation}
which indeed makes $W'$ homogeneous of degree one. Note that $Q'$ is {\it not} the quiver associated to any divide, as the 4-cycle $\pi\alpha qd$ would violate the rule $\bullet\to\oplus\to\ominus\to\bullet$ in \cite[Definition 4.2]{FPST}. This illustrates that quiver mutation typically forces one out of the class of quivers with divides. This is what makes the Main Conjecture challenging. Finally, the graded Euler pairing for $(Q',W',d')$ in the simple basis reads
\begin{equation}\label{eq:E6-mut-E}
 \begin{pmatrix}
 1-t&t&0&t&-1&0\\
 -1&1-t&-1&0&0&1\\
 0&t&1-t&0&0&-1\\
 -1&0&0&1-t&0&0\\
 t&0&0&0&1-t&-1\\
 0&-t&t&0&t&1-t
 \end{pmatrix}.
\end{equation}
Its determinant is indeed still \(t^6-t^5+t^3-t+1\), as in \Cref{eq:E6-det}.


\subsection{A related viewpoint}\label{ssec:geometric} Let $(C,0)$ be an isolated plane curve singularity and $f:(\C^2_{x,y},0)\lr(\C,0)$ a polynomial representing it. Another viewpoint on the Main Theorem is as follows. Tautologically, the polynomial $f(x,y)$ contains the data of the topological type of $(C,0)$, by restricting to a Milnor ball near the origin $(0,0)\in\C^2$. A Morsification $\tilde{f}:\C^2\lr\C$, seen as a Lefschetz fibration, encodes the data of the monodromy of $(C,0)$. Appropriately understood, the proof of the Main Theorem explains how to recover the monodromy of $(C,0)$ by studying the complex 3-fold
$$X_{\tilde{f}}:=\{(x,y,z,w)\in\C^4: \tilde{f}(x,y)+zw=0\}.$$
The challenge is that, a priori, we are only provided the exact symplectomorphism type of $X_{\tilde{f}}$, or at most a certain Lagrangian skeleton, but not $\tilde{f}$ or any particular equation for it. A new idea used in the Main Theorem is that appropriately using Floer theory on $X_{\tilde{f}}$ one should be able to intrinsically recover the monodromy. Note that the Serre functor of the Fukaya-Seidel category of the Lefschetz fibration $\tilde{f}:\C^2\lr\C$ recovers the algebraic monodromy by acting on the Grothendieck group. Nevertheless, the double-suspension $X_{\tilde{f}}$ of the Milnor fiber is 3-Calabi-Yau and thus the Serre functor of its wrapped Fukaya category does not provide particularly enlightening information about the monodromy of $\tilde{f}:\C^2\lr\C$. Intuitively, via a form of homological mirror symmetry, we can relate such wrapped Fukaya category to the Ginzburg dg algebra $\Gamma(Q(D),W(D))$, the latter seen as a non-commutative crepant resolution. In the $B$-side, the question is then about recovering information about the monodromy of $\tilde{f}:\C^2\lr\C$ from a certain category of modules over $\Gamma(Q(D),W(D))$. The content of Sections \ref{sec:graded-QP}, \ref{sec:euler}, \ref{sec:dilation_plabic_fence} and \ref{sec:polynomial_plabic_fence} provide a solution to this reconstruction problem.

\bibliographystyle{plain}
\bibliography{main}

@article {Seade19_MilnorFibration,
    AUTHOR = {Seade, Jos\'e},
     TITLE = {On {M}ilnor's fibration theorem and its offspring after 50
              years},
   JOURNAL = {Bull. Amer. Math. Soc. (N.S.)},
  FJOURNAL = {American Mathematical Society. Bulletin. New Series},
    VOLUME = {56},
      YEAR = {2019},
    NUMBER = {2},
     PAGES = {281--348},
      ISSN = {0273-0979,1088-9485},
   MRCLASS = {32Sxx (14B05 55S35 57M27 57M50 57R77)},
  MRNUMBER = {3923346},
MRREVIEWER = {Maria\ Aparecida Soares Ruas},
       DOI = {10.1090/bull/1654},
       URL = {https://doi.org/10.1090/bull/1654},
}

@book{lickorish1997introduction,
  title={An Introduction to Knot Theory},
  author={Lickorish, W. B. Raymond},
  series={Graduate Texts in Mathematics},
  volume={175},
  year={1997},
  publisher={Springer-Verlag},
  address={New York}
}

@incollection {Casals_LagrangianPlaneCurve,
    AUTHOR = {Casals, Roger},
     TITLE = {Lagrangian skeleta and plane curve singularities},
 BOOKTITLE = {Symplectic geometry---a {F}estschrift in honour of {C}laude
              {V}iterbo's 60th birthday},
     PAGES = {181--223},
      NOTE = {Reprint of [4405603]},
 PUBLISHER = {Birkh\"auser/Springer, Cham},
      YEAR = {[2022] \copyright 2022},
      ISBN = {978-3-031-19110-7},
   MRCLASS = {53D12 (14H20 57K33)},
  MRNUMBER = {4696540},
       DOI = {10.1007/978-3-031-19111-4\_9},
       URL = {https://doi.org/10.1007/978-3-031-19111-4_9},
}

@article {Rutherford_HOMFLY,
    AUTHOR = {Rutherford, Dan},
     TITLE = {Thurston-{B}ennequin number, {K}auffman polynomial, and ruling
              invariants of a {L}egendrian link: the {F}uchs conjecture and
              beyond},
   JOURNAL = {Int. Math. Res. Not.},
  FJOURNAL = {International Mathematics Research Notices},
      YEAR = {2006},
     PAGES = {Art. ID 78591, 15},
      ISSN = {1073-7928,1687-0247},
   MRCLASS = {57M27 (57R17)},
  MRNUMBER = {2219227},
MRREVIEWER = {Justin\ Sawon},
       DOI = {10.1155/IMRN/2006/78591},
       URL = {https://doi.org/10.1155/IMRN/2006/78591},
}

@article{dubois2003seifert,
  title={Sur la forme de {S}eifert enti{\`e}re des germes de courbe plane {\`a} deux branches},
  author={Du Bois, Philippe},
  journal={Comptes Rendus Math{\'e}matique},
  volume={336},
  number={9},
  pages={757--762},
  year={2003},
  publisher={Elsevier},
  doi={10.1016/S1631-073X(03)00165-1}
}

@article {casals_positroid,
    AUTHOR = {Casals, Roger and Gorsky, Eugene and Gorsky, Mikhail and
              Simental, Jos\'e},
     TITLE = {Positroid links and braid varieties},
   JOURNAL = {J. Reine Angew. Math.},
  FJOURNAL = {Journal f\"ur die Reine und Angewandte Mathematik. [Crelle's
              Journal]},
      YEAR = {2026},
    NUMBER = {837},
     PAGES = {1--54},
      ISSN = {0075-4102,1435-5345},
   MRCLASS = {99-06},
  MRNUMBER = {5110495},
       DOI = {10.1515/crelle-2026-0007},
       URL = {https://doi.org/10.1515/crelle-2026-0007},
}

@misc{fomin2024introductionclusteralgebraschapters,
      title={Introduction to Cluster Algebras. Chapters 1-3}, 
      author={Sergey Fomin and Lauren Williams and Andrei Zelevinsky},
      year={2024},
      eprint={1608.05735},
      archivePrefix={arXiv},
      primaryClass={math.CO},
      url={https://arxiv.org/abs/1608.05735}, 
}

@article{ACampo1975,
  author  = {A'Campo, N.},
  title   = {La fonction z\^eta d'une monodromie},
  journal = {Comment. Math. Helv.},
  volume  = {50},
  year    = {1975},
  pages   = {233--248},
  note    = {The resolution formula for the monodromy zeta function is developed in this paper.}
}

@article{Arnold1976,
  author  = {Arnold, V. I.},
  title   = {Local normal forms of functions},
  journal = {Invent. Math.},
  volume  = {35},
  year    = {1976},
  pages   = {87--109},
  note    = {See \S1, especially pp.~92--97, for the normal-form tables and Milnor-number formulas; see \S2, pp.~101--103, especially statements 2--49 and statements 47--49, for the exhaustive two-variable determinator and the $N_{16}$ endpoint.}
}

@article{HertlingMase2022,
  author  = {Hertling, C. and Mase, M.},
  title   = {The combinatorics of weight systems and characteristic polynomials of isolated quasihomogeneous singularities},
  journal = {J. Algebraic Combin.},
  volume  = {56},
  year    = {2022},
  pages   = {929--954},
  note    = {See Theorem~6.4(a) for a precise modern statement of the Milnor--Orlik characteristic-polynomial formula.}
}

@article{MichelWeber1986,
  author  = {Michel, F. and Weber, C.},
  title   = {Sur le r\^ole de la monodromie enti\`ere dans la topologie des singularit\'es},
  journal = {Ann. Inst. Fourier (Grenoble)},
  volume  = {36},
  number  = {1},
  year    = {1986},
  pages   = {183--218},
  note    = {See \S1.3(3), p.~185; Proposition~3.14, p.~196; and \S3.16, p.~197 for the $D_{12}/Z_{12}$ integral-monodromy collision.}
}

@article{MilnorOrlik1970,
  author  = {Milnor, J. and Orlik, P.},
  title   = {Isolated singularities defined by weighted homogeneous polynomials},
  journal = {Topology},
  volume  = {9},
  year    = {1970},
  pages   = {385--393},
  note    = {The paper gives explicit formulas for the Milnor number and characteristic polynomial of the monodromy from the weights.}
}

@article{Varchenko1976,
  author  = {Varchenko, A. N.},
  title   = {Zeta-function of monodromy and Newton's diagram},
  journal = {Invent. Math.},
  volume  = {37},
  year    = {1976},
  pages   = {253--262},
  note    = {The main theorem computes the monodromy zeta function of a Newton-nondegenerate germ from its Newton diagram; its two-variable specialization is used in \eqref{eq:Newton2D}.}
}

@article{Durfee1975,
  author  = {Durfee, Alan},
  title   = {The characteristic polynomial of the monodromy},
  journal = {Pacific Journal of Mathematics},
  year    = {1975},
  volume  = {59},
  pages   = {21--26},
  doi     = {10.2140/pjm.1975.59.21}
}

@article{Leviant2017,
  author  = {Leviant, Peter and Shustin, Eugenii},
  title   = {Morsifications of real plane curve singularities},
  journal = {J. Singularities 18 (2018), 307--328},
  year    = {2017},
  doi     = {10.48550/arxiv.1703.05510}
}

@article{yoshinaga1979topological,
  author  = {Yoshinaga, Etsuo and Suzuki, Masahiko},
  title   = {Topological types of quasihomogeneous singularities in {C}$^2$},
  journal = {Topology},
  volume  = {18},
  number  = {2},
  year    = {1979},
  pages   = {113--116},
  doi     = {10.1016/0040-9383(79)90013-3}
}

@incollection{grima1976monodromie,
  author    = {Grima, Marie-Claire},
  title     = {La monodromie rationnelle ne d{\'e}termine pas la topologie d'une hypersurface complexe},
  booktitle = {Fonctions de plusieurs variables complexes {II}},
  series    = {Lecture Notes in Mathematics},
  volume    = {538},
  editor    = {Norguet, Fran{\c{c}}ois},
  publisher = {Springer},
  address   = {Berlin, Heidelberg},
  year      = {1976},
  pages     = {58--79}
}

@article {CDGZ2,
    AUTHOR = {Campillo, A. and Delgado, F. and Gusein-Zade, S. M.},
     TITLE = {On the topological type of a set of plane valuations with
              symmetries},
   JOURNAL = {Math. Nachr.},
  FJOURNAL = {Mathematische Nachrichten},
    VOLUME = {290},
      YEAR = {2017},
    NUMBER = {13},
     PAGES = {1925--1938},
      ISSN = {0025-584X,1522-2616},
   MRCLASS = {13A18 (13D40 14B05)},
  MRNUMBER = {3695804},
MRREVIEWER = {Guillaume\ Rond},
       DOI = {10.1002/mana.201600198},
       URL = {https://doi.org/10.1002/mana.201600198},
}

@book{EisenbudNeumann1985,
  title     = {Three-Dimensional Link Theory and Invariants of Plane Curve Singularities},
  author    = {Eisenbud, David and Neumann, Walter D.},
  series    = {Annals of Mathematics Studies},
  volume    = {110},
  year      = {1985},
  publisher = {Princeton University Press},
  address   = {Princeton, NJ},
  isbn      = {978-0691083810}
}

@article{HerschendIyama2010,
  author  = {Herschend, Martin and Iyama, Osamu},
  title   = {Selfinjective quivers with potential and 2-representation-finite algebras},
  journal = {Compositio Math. 147 (2011) 1885-1920},
  year    = {2010},
  doi     = {10.1112/S0010437X11005367}
}

@misc{Keller2026_GradedPreprint,
  author       = {Keller, B.},
  title        = {Non degenerate homogeneous potentials},
  howpublished = {preprint},
  year         = {2026},
}

@misc{Wu2026Graded,
  author       = {Wu, Y.},
  title        = {\href{https://cloud.math.univ-paris-diderot.fr/s/2P364q8sRGLfqWT?dir=/Today\%27s\%20slides\%20or\%20notes&editing=false&openfile=true}{Graded Higgs categories}},
  howpublished = {Notes from a talk at the Paris algebra seminar},
  year         = {2026},
  month        = {June}
}

@misc{Keller2020Cartan,
  author       = {Keller, B.},
  title        = {\href{https://cloud.math.univ-paris-diderot.fr/s/Y6RnZcFTnQxzoP7?dir=/&editing=false&openfile=true}{Cartan matrices and Calabi-Yau completions}},
  howpublished = {Notes from a talk at the Sherbrooke Meeting on Representation Theory of Algebras},
  year         = {2020},
  month        = {September}
}

@article{FujitaMurakami2022,
  author  = {Fujita, Ryo and Murakami, Kota},
  title   = {Deformed Cartan Matrices and Generalized Preprojective Algebras I: Finite Type},
  journal = {International Mathematics Research Notices},
  year    = {2022},
  volume  = {2023},
  pages   = {6924--6975},
  doi     = {10.1093/imrn/rnac054}
}

@article{FanKellerQiu2024,
  author  = {Fan, Li and Keller, Bernhard and Qiu, Yu},
  title   = {Dg enhanced orbit categories and applications},
  journal = {arXiv},
  year    = {2024},
  doi     = {10.48550/arxiv.2405.00093}
}

@article{ACampo1998,
  author  = {N. A'Campo},
  title   = {Generic immersions of curves, knots, monodromy and Gordian number},
  journal = {Publ. Math. Inst. Hautes \`Etudes Sci.},
  volume  = {88},
  year    = {1998},
  pages   = {151--169}
}

@article{ACampo1999,
  author  = {N. A'Campo},
  title   = {Real deformations and complex topology of plane curve singularities},
  journal = {Ann. Fac. Sci. Toulouse Math. (6)},
  volume  = {8},
  number  = {1},
  year    = {1999},
  pages   = {5--23},
  note    = {erratum, ibid. no.~2, 343}
}

@article{AO,
  author  = {C. Amiot and S. Oppermann},
  title   = {Cluster equivalence and graded derived equivalence},
  journal = {Doc. Math.},
  volume  = {19},
  year    = {2014},
  pages   = {1155--1206}
}

@article{CDGZ,
  author  = {A. Campillo and F. Delgado and S. M. Gusein-Zade},
  title   = {The Alexander polynomial of a plane curve singularity via the ring of functions on it},
  journal = {Duke Math. J.},
  volume  = {117},
  number  = {1},
  year    = {2003},
  pages   = {125--156}
}

@article{CasalsGao,
  author  = {R. Casals and H. Gao},
  title   = {A Lagrangian filling for every cluster seed},
  journal = {Invent. Math.},
  volume  = {237},
  year    = {2024},
  pages   = {809--868}
}

@article{DWZ,
  author  = {H. Derksen and J. Weyman and A. Zelevinsky},
  title   = {Quivers with potentials and their representations I: Mutations},
  journal = {Selecta Math. (N.S.)},
  volume  = {14},
  year    = {2008},
  pages   = {59--119}
}

@article{FominNeville,
  author  = {S. Fomin and S. Neville},
  title   = {Cyclically ordered quivers},
  journal = {Adv. Math.},
  volume  = {500},
  year    = {2026},
  pages   = {Article 111059},
  note    = {arXiv:2406.03604v3}
}

@article{FPST,
  author  = {S. Fomin and P. Pylyavskyy and E. Shustin and D. Thurston},
  title   = {Morsifications and mutations},
  journal = {J. Lond. Math. Soc. (2)},
  volume  = {105},
  number  = {4},
  year    = {2022},
  pages   = {2478--2554}
}

@misc{Ginzburg,
  author       = {V. Ginzburg},
  title        = {Calabi--Yau algebras},
  howpublished = {arXiv:math/0612139}
}

@article{KellerYang,
  author  = {B. Keller and D. Yang},
  title   = {Derived equivalences from mutations of quivers with potential},
  journal = {Adv. Math.},
  volume  = {226},
  number  = {3},
  year    = {2011},
  pages   = {2118--2168}
}

@misc{Ladkani,
  author       = {S. Ladkani},
  title        = {On cluster algebras from once punctured closed surfaces},
  howpublished = {arXiv:1310.4454}
}

@article{ACampo,
  author  = {N. A'Campo},
  title   = {Planar trees, slalom curves and hyperbolic knots},
  journal = {Publ. Math. Inst. Hautes \`Etudes Sci.},
  volume  = {88},
  year    = {1998},
  pages   = {171--180},
  url     = {https://arxiv.org/abs/math/9906087}
}

@article{BorodzikHom,
  author  = {M. Borodzik and J. Hom},
  title   = {Involutive Heegaard Floer homology and rational cuspidal curves},
  journal = {Proc. Lond. Math. Soc.},
  volume  = {118},
  year    = {2019},
  pages   = {1337--1381}
}

@misc{Neville,
  author       = {S. Neville},
  title        = {Mutation-acyclic quivers are totally proper},
  howpublished = {\url{https://arxiv.org/abs/2409.17832}}
}

@article{CasalsWeng22,
  author  = {R. Casals and D. Weng},
  title   = {Microlocal theory of Legendrian links and cluster algebras},
  journal = {Geometry \& Topology},
  volume  = {28},
  year    = {2024},
  pages   = {901--1000}
}

@misc{kontsevich_soibelman_2008,
  author       = {M. Kontsevich and Y. Soibelman},
  title        = {Stability structures, motivic Donaldson-Thomas invariants and cluster transformations},
  howpublished = {arXiv preprint arXiv:0811.2435},
  year         = {2008}
}

@article{BaaderDehornoy,
  author  = {S. Baader and P. Dehornoy},
  title   = {Trefoil plumbing},
  journal = {Proc. Amer. Math. Soc.},
  volume  = {144},
  number  = {1},
  year    = {2016},
  pages   = {387--397},
}

@article{Ferretti,
  author  = {L. Ferretti},
  title   = {On positive braids, monodromy groups and framings},
  journal = {Algebr. Geom. Topol.},
  volume  = {25},
  number  = {1},
  year    = {2025},
  pages   = {161--205},
}

@book{Rolfsen,
  author    = {D. Rolfsen},
  title     = {Knots and Links},
  series    = {Mathematics Lecture Series},
  volume    = {7},
  publisher = {Publish or Perish},
  address   = {Berkeley, CA},
  year      = {1976}
}

@article{Yamamoto1984,
  author  = {M. Yamamoto},
  title   = {Classification of isolated algebraic singularities by their Alexander polynomials},
  journal = {Topology},
  volume  = {23},
  number  = {3},
  year    = {1984},
  pages   = {277--287},
  doi     = {10.1016/0040-9383(84)90018-0},
  url     = {https://doi.org/10.1016/0040-9383(84)90018-0}
}

@article{Burau1932,
  author  = {W. Burau},
  title   = {Kennzeichnung der Schlauchknoten},
  journal = {Abhandlungen aus dem Mathematischen Seminar der Universit\"{a}t Hamburg},
  volume  = {9},
  year    = {1932},
  pages   = {125--133}
}

@article{Le1972,
  author  = {D. T. L\^{e}},
  title   = {Sur les noeuds alg\'{e}briques},
  journal = {Compositio Mathematica},
  volume  = {25},
  year    = {1972},
  pages   = {281--321}
}

@article{Fomin2001,
  author  = {S. Fomin and A. Zelevinsky},
  title   = {Cluster algebras I: Foundations},
  journal = {Journal of the American Mathematical Society},
  volume  = {15},
  year    = {2001},
  pages   = {497--529},
  doi     = {10.1090/s0894-0347-01-00385-x}
}

@article{Fomin2003,
  author  = {S. Fomin and A. Zelevinsky},
  title   = {Cluster algebras II: Finite type classification},
  journal = {Inventiones mathematicae},
  volume  = {154},
  year    = {2003},
  pages   = {63--121},
  doi     = {10.1007/s00222-003-0302-y}
}

@article{Berenstein2005,
  author  = {A. Berenstein and S. Fomin and A. Zelevinsky},
  title   = {Cluster algebras III: Upper bounds and double Bruhat cells},
  journal = {Duke Mathematical Journal},
  volume  = {126},
  year    = {2005},
  doi     = {10.1215/s0012-7094-04-12611-9}
}

@misc{BKM13,
  author       = {K. Baur and A. King and B. Marsh},
  title        = {Dimer models and cluster categories of Grassmannians},
  howpublished = {arXiv},
  year         = {2013},
  url          = {https://doi.org/10.1112/plms/pdw029}
}

@article{Kalman2006,
  author    = {K{\'a}lm{\'a}n, T.},
  title     = {Braid-positive {L}egendrian links},
  journal   = {International Mathematics Research Notices},
  volume    = {2006},
  year      = {2006},
  pages     = {Art. ID 14874, 24},
  doi       = {10.1155/IMRN/2006/14874},
  eprint    = {math/0608457},
  archivePrefix = {arXiv},
  primaryClass  = {math.SG}
}

@article{Pressland19,
  author  = {M. Pressland},
  title   = {Calabi-Yau properties of Postnikov diagrams},
  journal = {Forum of Mathematics, Sigma},
  volume  = {10},
  year    = {2022},
  pages   = {e56},
  url     = {https://doi.org/10.1017/fms.2022.52}
}

@incollection{Conway1970,
  author    = {J. H. Conway},
  title     = {An enumeration of knots and links, and some of their algebraic properties},
  booktitle = {Computational Problems in Abstract Algebra (Proc. Conf., Oxford, 1967)},
  editor    = {J. Leech},
  publisher = {Pergamon Press},
  address   = {Oxford},
  year      = {1970},
  pages     = {329--358},
  doi       = {10.1016/B978-0-08-012975-4.50034-5}
}

@incollection{Morton2015,
  author    = {H. R. Morton},
  title     = {Mutant knots},
  booktitle = {New Ideas in Low Dimensional Topology},
  editor    = {L. H. Kauffman and V. O. Manturov},
  series    = {Series on Knots and Everything},
  volume    = {56},
  publisher = {World Scientific},
  address   = {Hackensack, NJ},
  year      = {2015},
  pages     = {379--412},
  doi       = {10.1142/9789814630627_0010}
}

@article{FreydEtAl1985,
  author  = {P. Freyd and D. Yetter and J. Hoste and W. B. R. Lickorish and K. C. Millett and A. Ocneanu},
  title   = {A new polynomial invariant of knots and links},
  journal = {Bull. Amer. Math. Soc. (N.S.)},
  volume  = {12},
  number  = {2},
  year    = {1985},
  pages   = {239--246},
  doi     = {10.1090/S0273-0979-1985-15361-3}
}

@article{Hoste1986,
  author  = {J. Hoste},
  title   = {A polynomial invariant of knots and links},
  journal = {Pacific J. Math.},
  volume  = {124},
  number  = {2},
  year    = {1986},
  pages   = {295--320}
}

@article{LickorishMillett1987,
  author  = {W. B. R. Lickorish and K. C. Millett},
  title   = {A polynomial invariant of oriented links},
  journal = {Topology},
  volume  = {26},
  number  = {1},
  year    = {1987},
  pages   = {107--141},
  doi     = {10.1016/0040-9383(87)90025-5}
}

@article{GL,
  author  = {P. Galashin and T. Lam},
  title   = {Plabic links, quivers, and skein relations},
  journal = {Algebraic Combinatorics},
  volume  = {7},
  number  = {2},
  year    = {2024},
  pages   = {431--474},
  url     = {https://doi.org/10.5802/alco.345}
}

@article{Schwartz,
  author  = {A. Schwartz},
  title   = {The HOMFLY polynomial of a forest quiver},
  journal = {International Mathematics Research Notices},
  volume  = {2026},
  number  = {8},
  year    = {2026},
  pages   = {rnag069},
  url     = {https://doi.org/10.1093/imrn/rnag069}
}

@book{wall2004,
  author    = {C. T. C. Wall},
  title     = {Singular Points of Plane Curves},
  series    = {London Mathematical Society Student Texts},
  volume    = {63},
  publisher = {Cambridge University Press},
  address   = {Cambridge},
  year      = {2004}
}

@book{zariski2006,
  author    = {O. Zariski},
  title     = {The Moduli Problem for Plane Branches},
  series    = {University Lecture Series},
  volume    = {39},
  publisher = {American Mathematical Society},
  address   = {Providence, RI},
  year      = {2006}
}

@book{ghys2017,
  author    = {\'{E}. Ghys},
  title     = {A Singular Mathematical Promenade},
  publisher = {ENS \'{E}ditions},
  address   = {Lyon},
  year      = {2017}
}

@book{casas2000,
  author    = {E. Casas-Alvero},
  title     = {Singularities of Plane Curves},
  series    = {London Mathematical Society Lecture Note Series},
  volume    = {276},
  publisher = {Cambridge University Press},
  address   = {Cambridge},
  year      = {2000}
}

@book{brieskorn1986,
  author    = {E. Brieskorn and H. Kn\"{o}rrer},
  title     = {Plane Algebraic Curves},
  publisher = {Birkh\"{a}user Verlag},
  address   = {Basel},
  year      = {1986}
}

@book{milnor1968,
  author    = {J. Milnor},
  title     = {Singular Points of Complex Hypersurfaces},
  series    = {Annals of Mathematics Studies},
  volume    = {61},
  publisher = {Princeton University Press},
  address   = {Princeton, NJ},
  year      = {1968}
}
\end{document}